\numberwithin{equation}{section}
\begin{document}
	\theoremstyle{plain}
	\newtheorem{thm}{Theorem}[section]
	\newtheorem{lem}[thm]{Lemma}
	\newtheorem{cor}[thm]{Corollary}
	\newtheorem{cor*}[thm]{Corollary*}
	\newtheorem{prop}[thm]{Proposition}
	\newtheorem{prop*}[thm]{Proposition*}
	\newtheorem{conj}[thm]{Conjecture}
	\theoremstyle{definition}
	\newtheorem{construction}{Construction}
	\newtheorem{notations}[thm]{Notations}
	\newtheorem{question}[thm]{Question}
	\newtheorem{prob}[thm]{Problem}
	\newtheorem{rmk}[thm]{Remark}
	\newtheorem{remarks}[thm]{Remarks}
	\newtheorem{defn}[thm]{Definition}
	\newtheorem{claim}[thm]{Claim}
	\newtheorem{assumption}[thm]{Assumption}
	\newtheorem{assumptions}[thm]{Assumptions}
	\newtheorem{properties}[thm]{Properties}
	\newtheorem{exmp}[thm]{Example}
	\newtheorem{comments}[thm]{Comments}
	\newtheorem{blank}[thm]{}
	\newtheorem{observation}[thm]{Observation}
	\newtheorem{defn-thm}[thm]{Definition-Theorem}
	\newtheorem*{Setting}{Setting}

	\newcommand{\sA}{\mathscr{A}}
	\newcommand{\sB}{\mathscr{B}}
	\newcommand{\sC}{\mathscr{C}}
	\newcommand{\sD}{\mathscr{D}}
	\newcommand{\sE}{\mathscr{E}}
	\newcommand{\sF}{\mathscr{F}}
	\newcommand{\sG}{\mathscr{G}}
	\newcommand{\sH}{\mathscr{H}}
	\newcommand{\sI}{\mathscr{I}}
	\newcommand{\sJ}{\mathscr{J}}
	\newcommand{\sK}{\mathscr{K}}
	\newcommand{\sL}{\mathscr{L}}
	\newcommand{\sM}{\mathscr{M}}
	\newcommand{\sN}{\mathscr{N}}
	\newcommand{\sO}{\mathscr{O}}
	\newcommand{\sP}{\mathscr{P}}
	\newcommand{\sQ}{\mathscr{Q}}
	\newcommand{\sR}{\mathscr{R}}
	\newcommand{\sS}{\mathscr{S}}
	\newcommand{\sT}{\mathscr{T}}
	\newcommand{\sU}{\mathscr{U}}
	\newcommand{\sV}{\mathscr{V}}
	\newcommand{\sW}{\mathscr{W}}
	\newcommand{\sX}{\mathscr{X}}
	\newcommand{\sY}{\mathscr{Y}}
	\newcommand{\sZ}{\mathscr{Z}}
	\newcommand{\bZ}{\mathbb{Z}}
	\newcommand{\bN}{\mathbb{N}}
	\newcommand{\bQ}{\mathbb{Q}}
	\newcommand{\bC}{\mathbb{C}}
	\newcommand{\bR}{\mathbb{R}}
	\newcommand{\bH}{\mathbb{H}}
	\newcommand{\bD}{\mathbb{D}}
	\newcommand{\bE}{\mathbb{E}}
	\newcommand{\bP}{\mathbb{P}}
	\newcommand{\bV}{\mathbb{V}}
	\newcommand{\cV}{\mathcal{V}}
	\newcommand{\cF}{\mathcal{F}}
	\newcommand{\bfM}{\mathbf{M}}
	\newcommand{\bfN}{\mathbf{N}}
	\newcommand{\bfX}{\mathbf{X}}
	\newcommand{\bfY}{\mathbf{Y}}
	\newcommand{\spec}{\textrm{Spec}}
	\newcommand{\dbar}{\bar{\partial}}
	\newcommand{\ddbar}{\partial\bar{\partial}}
	\newcommand{\redref}{{\color{red}ref}}
	
	\title[Stratified Hyperbolicity of the moduli stack of stable minimal models, I] {Stratified Hyperbolicity of the moduli stack of stable minimal models, I}
	
	\author[Junchao Shentu]{Junchao Shentu}
	\email{stjc@ustc.edu.cn}
	\address{School of Mathematical Sciences,
		University of Science and Technology of China, Hefei, 230026, China}

	\begin{abstract}
		In this paper, we introduce a birationally admissible stratification on the Deligne-Mumford stack of stable minimal models (e.g., the KSBA moduli stack), such that the universal family over each stratum admits a simple normal crossing log birational model. We further demonstrate that each stratum is hyperbolic in the sense that every schematic generically finite covering of any closed substack is of logarithmic general type. This provides a partial answer to C.Birkar's question regarding the global geometry of the moduli of stable minimal models.
	\end{abstract}
	
	\maketitle
	
	\section{Introduction}
	The construction of compact moduli spaces for algebraic varieties is a foundational problem in algebraic geometry. Since the groundbreaking work of P. Deligne and D. Mumford \cite{Deligne1969} on the moduli of stable curves, significant efforts have been devoted to extending the construction of compact moduli spaces to higher-dimensional varieties. The primary goal is to establish a robust framework for studying families of algebraic varieties with prescribed properties. These endeavors have resulted in the development of a rich and diverse theory of moduli spaces, which has found applications across various areas of algebraic geometry and related fields. Notable examples include meaningful compactifications of moduli spaces for polarized abelian varieties \cite{Alexeev2002}, plane curves \cite{Hacking2004}, manifolds of general type \cite{Kollar1988,Kollar2010,Kollar2023}, polarized Calabi-Yau manifolds \cite{KX2020}, and $K$-polystable Fano manifolds \cite{Xu2020}, among others.
	Given $d \in \bN$, $c \in \bQ^{\geq 0}$, a finite set $\Gamma \subset \bQ^{> 0}$, and $\sigma \in \bQ[t]$, C. Birkar \cite{Birkar2022} introduced the concept of a $(d, \Phi_c, \Gamma, \sigma)$-stable minimal model as a triple $(X, B), A$, where $X$ is a projective scheme of finite type over ${\rm Spec}(\bC)$, and $A \geq 0$, $B \geq 0$ are $\bQ$-divisors on $X$ satisfying the following conditions:  
	\begin{itemize}  
		\item $\dim X = d$, $(X, B)$ forms an slc projective pair, and $K_X + B$ is semi-ample,  
		\item the coefficients of $A$ and $B$ lie in $c \bZ^{\geq 0}$,  
		\item $(X, B + tA)$ is slc, and $K_X + B + tA$ is ample for some $t > 0$,  
		\item ${\rm vol}(K_X + B + tA) = \sigma(t)$ for $0 \leq t \ll 1$, and  
		\item ${\rm vol}(A|_F) \in \Gamma$, where $F$ denotes any general fiber of the fibration $f: X \to Z$ determined by $K_X + B$.  
	\end{itemize}
	The moduli stack $\sM_{\rm slc}(d,\Phi_c,\Gamma,\sigma)$ of $(d,\Phi_c,\Gamma,\sigma)$-stable minimal models (see \cite{Birkar2022} or Section \ref{section_moduli}) is complete and admits a projective good coarse moduli space (\cite[Theorem 1.14]{Birkar2022}), which provides a meaningful compactification of the moduli of birational equivalence classes of projective manifolds with arbitrary Kodaira dimension. In this article, we investigate the global geometry of $\sM_{\rm slc}(d,\Phi_c,\Gamma,\sigma)$, with particular emphasis on its stratified hyperbolicity.  
	
	Research in this area was initiated by the pioneering works of A. N. Par\v{s}in \cite{Parsin1968}, S. Ju. Arakelov \cite{Arakelov1971}, and a series of contributions by Viehweg-Zuo \cite{VZ2001,VZ2002,VZ2003}, focusing on the principal stratum of certain moduli spaces where the fibers are smooth varieties. In this paper, we establish the stratified hyperbolicity properties of $\sM_{\rm slc}(d,\Phi_c,\Gamma,\sigma)$, encompassing both the principal stratum and the boundary strata.
	
	The main result of this paper is the following theorem:  
	\begin{thm}\label{thm_main}  
		Let $f^o:(X^o,B^o),A^o \to S^o$ be a birationally admissible family of $(d,\Phi_c,\Gamma,\sigma)$-stable minimal models over a smooth quasi-projective variety $S^o$, defining a generically finite classifying map $\xi^o:S^o \to \sM_{\rm slc}(d,\Phi_c,\Gamma,\sigma)$. Then $S^o$ is of log general type.  
	\end{thm}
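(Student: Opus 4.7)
The plan is to reduce the theorem to an application of the Campana--P\u{a}un pseudo-effectivity theorem (see Campana--P\u{a}un, \emph{Orbifold generic semi-positivity}) by producing a big subsheaf of some symmetric power of $\Omega^1_S(\log D)$, where $S \supset S^o$ is a smooth projective compactification with $D := S \setminus S^o$ simple normal crossing. The existence of such a ``Viehweg--Zuo sheaf'' will be extracted from the variation of Hodge structure attached to a canonically constructed cyclic cover of the log birational model furnished by the stratification result.

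First I would use the birational admissibility hypothesis together with the stratification theorem alluded to in the abstract to replace $f^o$, after possibly shrinking $S^o$ and taking a generically finite base change that preserves the log general type property, with a projective log smooth model $f: (X, B+tA+f^{-1}D) \to S$ over a smooth projective $S$ such that $D$ is SNC, the pair is log smooth over $S^o$, and $K_{X/S}+B+tA$ is $f$-ample for some fixed small rational $t>0$. By semi-ampleness of $K_{X^o/S^o}+B^o$ and ampleness of $K_{X^o/S^o}+B^o+tA^o$ on fibers, a sufficiently divisible multiple $N(K_{X/S}+B+tA)$ is represented by an effective divisor $H$ with mild singularities relative to the log structure. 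From $H$ I would form a cyclic cover $Y \to X$, resolve it log canonically, and denote the resulting log smooth family by $g: (Y, \Delta_Y) \to S$.

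Next I would run log Hodge theory on $g$: the relative de Rham cohomology with log poles yields a graded logarithmic Higgs bundle $(E = \bigoplus E^{p,q}, \theta)$ on $S$ with poles along $D$. The Higgs field $\theta$ and its iterates give maps
\[
\theta^{(m)}: \mathrm{Sym}^m T_S(-\log D) \otimes E^{n,0} \longrightarrow E^{n-m,m},
\]
and hence, after dualising the top non-zero piece of the kernel filtration, a morphism from a line subsheaf $L \subset E^{n,0}$ into $\mathrm{Sym}^m \Omega^1_S(\log D) \otimes (E^{n-m,m})^\vee$. Following the Viehweg--Zuo scheme (as extended to the log/klt/slc setting by Popa--Schnell, Taji, Wei--Wu and others), the $f$-ampleness of $K_{X/S}+B+tA$ implies that an appropriate push-forward line bundle built from $K_{X/S}+B+tA$ sits inside $E^{n,0}$ as a \emph{big} sub-line bundle $L$, using the Viehweg positivity/weak semi-positivity of direct image sheaves $f_*\omega_{X/S}^{[k]}(kB+\lfloor ktA\rfloor)$ combined with Koll\'ar's ampleness-on-moduli type results recorded in Birkar's construction of $\mathcal M_{\mathrm{slc}}(d,\Phi_c,\Gamma,\sigma)$.

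The generic finiteness of $\xi^o$ is then used to show that the iterated Kodaira--Spencer map is generically injective in some direction: were $\theta^{(m)}(L)$ to vanish on an open set of $S$ for every $m$, the infinitesimal variation of Hodge structure would be trivial along fibers of a positive-dimensional foliation, contradicting that $\xi^o$ has finite generic fiber on the moduli stack (here Birkar's infinitesimal Torelli-type input for stable minimal models, together with the observation that the VHS carries complete information about the birational class of the fibers, is essential). Consequently, for some $m \geq 1$ the composition $L \hookrightarrow \mathrm{Sym}^m \Omega^1_S(\log D) \otimes (E^{n-m,m})^\vee$ is non-zero, and since $(E^{n-m,m})^\vee$ is weakly positive by Zuo's negativity theorem in the log setting, the saturation of the image in $\mathrm{Sym}^m \Omega^1_S(\log D)$ is big. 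Applying Campana--P\u{a}un then gives bigness of $K_S + D$, i.e.\ $S^o$ is of log general type.

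The main obstacle I anticipate is Step~1 combined with the construction in Step~3: one must actually realize the Hodge-theoretic package for the slc family using the SNC log birational model provided by the stratification, and prove the weak positivity and negativity statements for the Hodge/Higgs bundles in this singular setting with the correct boundary contributions from $D$ and from the branch divisor $H$. The admissibility of the family (which controls how $B+tA$ interacts with $f^{-1}D$) should be precisely the input that makes the usual Viehweg--Zuo positivity machinery go through without losing positivity to boundary poles.
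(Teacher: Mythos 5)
Your overall architecture (cyclic cover, logarithmic Higgs bundle, iterated Higgs field, negativity of the kernel, Campana--P\u{a}un) matches the paper's, but there is a genuine gap at the point where you invoke generic finiteness. You propose to prove non-vanishing of the iterated Kodaira--Spencer map by a Torelli-type argument (``the VHS carries complete information about the birational class of the fibers''). No such infinitesimal Torelli statement is available for stable minimal models, and Birkar does not provide one; this step would fail. The paper uses generic finiteness in a completely different and purely positivity-theoretic way: the pullback $\det f_\ast(\sO(rK_{X/S}+rB+raA)) = \xi^\ast\lambda_{a,r}$ of the ample Koll\'ar polarization is \emph{big} because $\xi$ is generically finite; Viehweg's fiber product trick (applied to the $N$-fold fiber product of the family, $N = klr$, via Lemma \ref{lem_mild_pushforward} and Lemma \ref{lem_Viehweg_inclusion}) converts this bigness of a determinant into a big line subsheaf $\sL$ of the top Hodge piece $\widetilde{H}^{w,0}$ of the Higgs bundle of the covering family. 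Then, letting $n_0$ be the first step at which the iterates of $\theta$ applied to $\sL$ land in $K=\ker\theta$, one gets $\sL\otimes K^\vee \to \Omega_S^{\otimes n_0}(\log D)$; since $-c_1(K)$ is pseudo-effective (Proposition \ref{prop_negative_Higgs_kernel}), the source is big, which \emph{forces} $n_0>0$ --- no Torelli input is needed. Your plan also omits the fiber product trick entirely, and without it bigness of the determinant of a rank-$l$ push-forward does not yield the big line subbundle $L\subset E^{n,0}$ you need.

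A second, more structural issue: you reduce to a log smooth family with smooth total space, but birational admissibility for slc (non-normal) fibers only provides a \emph{simple normal crossing} (reducible) birational model, and the fiber products used above are again reducible. This is precisely why the paper develops the Borel--Moore variation of \emph{mixed} Hodge structures (Sections 2--3): the relevant push-forward $f_\ast\sO_X(K_{X/S}+D)$ appears as the top Hodge piece of Borel--Moore cohomology of the snc fibers (Lemma \ref{lem_top_Hodge_BM}), not of an ordinary pure VHS of smooth fibers. Your appeal to standard log Hodge theory for a smooth total space does not cover this case, and the admissibility/weak positivity package you defer to ``the usual Viehweg--Zuo machinery'' is exactly what has to be rebuilt in the mixed, non-unipotent setting.
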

    A family of stable minimal models $(X^o, B^o), A^o \to S^o$ is said to be \emph{birationally admissible} if $(X^o, A^o + B^o)$ admits a simple normal crossing log birational model (Definition \ref{defn_admissible}). Informally, the birationally admissible condition implies that the family does not possess degenerate fibers in the context of birational geometry. This condition is essential for Theorem \ref{thm_main}, as illustrated by the following examples:      
    \begin{enumerate}  
    	\item \emph{The presence of a degenerating fiber.} Let $f: X \to \bP^1$ be a Lefschetz pencil with $S \subset \bP^1$ denoting the set of its critical values, such that the general fibers of $f$ are canonically polarized $d$-folds with volume $v$. Then $f: (X, 0), 0 \to \bP^1$ forms a family of $(d, \Phi_0, \{1\}, v)$-stable minimal models. While $\bP^1$ is not hyperbolic, $\bP^1 \setminus S$ is hyperbolic ($|S| \geq 3$, as shown in \cite{VZ2001}).  
    	\item \emph{The presence of a degenerating polarization.} Let $E$ be an elliptic curve and $x_0 \in E(\bC)$. Define $X = E \times E$, and let $f: E \times E \to E$ denote the projection onto the first component. Let $A = \frac{1}{2}(E \times \{x_0\} \cup \Delta_E)$, where $\Delta_E \subset E \times E$ is the diagonal. Then $f: (X, 0), A \to E$ forms a family of $(1, \Phi_{\frac{1}{2}}, \{1\}, t)$-stable minimal models. Although the underlying family of elliptic curves is trivial, the family of polarizations is non-isotrivial and degenerates at $x_0$. Consequently, the entire base $E$ is not hyperbolic, whereas $E \setminus \{x_0\}$ is hyperbolic.  
    \end{enumerate} 
	\begin{exmp}[KSBA-stable families]
		A projective pair $(X, B)$ is said to be a KSBA-stable pair if $(X, B), 0$ is a stable minimal model. A family of KSBA-stable varieties is considered admissible if it admits a simple normal crossing log birational model. The base of a maximally variational admissible family of KSBA pairs satisfies the hyperbolicity properties stated in Theorem \ref{thm_main}.
	\end{exmp}
\subsection{Stratified hyperbolicity of moduli of stable minimal models}
Given a functorial semi-log desingularization procedure (c.f. \cite{Bierstone2013}, \cite[\S 10.4]{Kollar2013}), one can associate with $\sM_{\rm slc}(d,\Phi_c,\Gamma,\sigma)$ a stratification such that over each stratum $\sS$, the universal family is strictly birationally admissible (i.e., any base change admits a simple normal crossing log birational model). Such a stratification is referred to as a \emph{birationally admissible stratification} (Definition \ref{defn_BAstr}). A birationally admissible stratification always exists but is typically not unique. It is determined by the choice of the functorial desingularization functor.  A typical example arises in the KSBA compactification of the moduli space of principally polarized manifolds. In this case, the open substack parameterizing smooth fibers constitutes a birationally admissible stratum, while the birationally admissible stratification on the boundary is determined by the semi-log desingularization process. 

Let $S$ be a smooth variety. We say that $S$ is of \emph{log general type} if there exists a smooth compactification $\overline{S} \supseteq S$ such that $D = \overline{S} \setminus S$ is a divisor and $K_{\overline{S}} + D$ is big. The condition of being of log general type is independent of the choice of compactification.  
A smooth Deligne-Mumford stack $\sS$ is said to be \emph{hyperbolic} if, for every generically finite morphism $S \to \sS$ from a quasi-projective smooth variety $S$, the variety $S$ is of log general type.

A direct corollary of Theorem \ref{thm_main} is the following result.  
\begin{thm}\label{thm_main_moduli}
Each stratum of a birationally admissible stratification of $\sM_{\rm slc}(d, \Phi_c, \Gamma, \sigma)$ is hyperbolic.  
\end{thm}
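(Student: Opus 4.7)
The plan is to reduce Theorem \ref{thm_main_moduli} directly to Theorem \ref{thm_main}. Given a birationally admissible stratification of $\sM_{\rm slc}(d,\Phi_c,\Gamma,\sigma)$ with a stratum $\sS$, together with a generically finite morphism $\varphi: S \to \sS$ from a smooth quasi-projective variety $S$, the goal is to exhibit a birationally admissible family of $(d,\Phi_c,\Gamma,\sigma)$-stable minimal models over $S$ whose classifying map is generically finite; Theorem \ref{thm_main} will then immediately yield that $S$ is of log general type.

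First I would form the composition $\xi^o: S \xrightarrow{\varphi} \sS \hookrightarrow \sM_{\rm slc}(d,\Phi_c,\Gamma,\sigma)$. Since $\sS$ is a locally closed substack of $\sM_{\rm slc}(d,\Phi_c,\Gamma,\sigma)$ and $\varphi$ is generically finite, the map $\xi^o$ is generically finite onto its image. I would then pull back the universal family along $\xi^o$ to obtain a family $f^o:(X^o,B^o), A^o \to S$ of $(d,\Phi_c,\Gamma,\sigma)$-stable minimal models whose classifying map is precisely $\xi^o$.

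The key input is the definition of a birationally admissible stratification (Definition \ref{defn_BAstr}): the restriction of the universal family to $\sS$ is \emph{strictly} birationally admissible, which means that any base change of it to a smooth scheme admits a simple normal crossing log birational model. Applying this with the base change $S \to \sS$ shows that $f^o$ is birationally admissible in the sense of Definition \ref{defn_admissible}, so all the hypotheses of Theorem \ref{thm_main} are satisfied. Invoking that theorem gives that $S$ is of log general type, and since this holds for every generically finite $S \to \sS$ from a smooth quasi-projective variety, $\sS$ is hyperbolic by definition.

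The main (and essentially only) obstacle I anticipate is a bookkeeping one: verifying that the ``strictly'' clause in the definition of a birationally admissible stratification is genuinely broad enough to produce a simple normal crossing log birational model after base change by an arbitrary generically finite morphism from a smooth quasi-projective variety, not merely by a smooth atlas of $\sS$. Once this functoriality is in hand, the corollary requires no further geometric input beyond Theorem \ref{thm_main}.
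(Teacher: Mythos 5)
Your proposal is correct and is essentially the paper's argument: the paper treats Theorem \ref{thm_main_moduli} as a direct consequence of Theorem \ref{thm_main}, reducing exactly as you do by pulling back the universal family along $S\to\sS\hookrightarrow\sM_{\rm slc}(d,\Phi_c,\Gamma,\sigma)$. The functoriality issue you flag is precisely what the ``strictly'' clause is designed for, and the paper disposes of it by the remark immediately following the definition of strict birational admissibility, namely that the base change of a strictly birationally admissible family along any morphism from a reduced scheme is birationally admissible.
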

For $\overline{\sM}_{g,n}$, the moduli stack of genus $g$ stable curves with $n$ marked points satisfying $2g - 2 + n > 0$, the boundary simple normal crossing divisor $\partial\sM_{g,n} := \overline{\sM}_{g,n} \setminus \sM_{g,n}$ induces a \emph{canonical stratification}. The distinct strata of this stratification correspond precisely to the distinct topological types of the fibers. This stratification is birationally admissible. Consequently, we derive the following result, which is novel as it elucidates the hyperbolicity of the strata on the boundary.  
\begin{cor}  
	Every stratum of the canonical stratification of $\overline{\sM}_{g,n}$ is hyperbolic. In particular, $\sM_{g,n}$ is hyperbolic.  
\end{cor}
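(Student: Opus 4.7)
The plan is to deduce the corollary directly from Theorem \ref{thm_main_moduli} by verifying that the canonical stratification of $\overline{\sM}_{g,n}$ is birationally admissible in the sense of Definition \ref{defn_BAstr}. Recall that the strata are indexed by isomorphism classes of stable $n$-pointed dual graphs $\Gamma$ of arithmetic genus $g$, with $\sS_\Gamma$ parametrizing curves whose dual graph is isomorphic to $\Gamma$; these strata refine the stratification coming from intersections of components of $\partial\sM_{g,n}$.

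First I would analyze the universal family $\pi:\sC_\Gamma\to\sS_\Gamma$ together with its $n$ marked sections $p_1,\ldots,p_n$. Since the dual graph of the fiber is locally constant along $\sS_\Gamma$, no node of any fiber is smoothed in the family, and consequently the nodes sweep out finitely many pairwise disjoint \'etale sections $\sigma_1,\ldots,\sigma_m$ of $\pi$. \'Etale locally at each $\sigma_j$, the total space $\sC_\Gamma$ is isomorphic to $\{xy=0\}\times \sS_\Gamma$, so $\sC_\Gamma$ is a union of smooth components meeting transversally along the $\sigma_j$'s. The marked sections, being disjoint from one another and from the $\sigma_j$'s by stability, contribute a simple normal crossing divisor to the boundary. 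Hence $(\sC_\Gamma,\sum_i p_i)$ is already a semi-simple-normal-crossing pair, and this property is preserved under any smooth base change $T\to\sS_\Gamma$. The identity morphism therefore serves as a simple normal crossing log birational model in the sense of Definition \ref{defn_admissible}, with no blow-up required.

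With birational admissibility of the canonical stratification established, Theorem \ref{thm_main_moduli} immediately yields that every stratum $\sS_\Gamma$ is hyperbolic. The final assertion follows by taking $\Gamma_0$ to be the one-vertex, no-edge dual graph with $n$ legs, so that $\sM_{g,n}=\sS_{\Gamma_0}$. I do not expect a substantive obstacle here: the only delicate point is matching the \emph{a priori} functorially defined notion of a birationally admissible stratification with the intrinsic stratification by topological type. However, since the universal family over each canonical stratum is already semi-snc, any reasonable functorial semi-log desingularization must act trivially on it, so the two notions coincide automatically.
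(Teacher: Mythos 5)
Your proposal is correct and follows essentially the same route as the paper: both deduce the corollary from Theorem \ref{thm_main_moduli} by observing that the stratification of $\overline{\sM}_{g,n}$ by topological type is birationally admissible. The paper simply asserts this last fact, whereas you supply the (standard) justification via the local structure $\{xy=0\}\times\sS_\Gamma$ of the universal curve along the non-smoothing nodes and the disjointness of the marked sections, so that the universal family over each stratum is already its own simple normal crossing log birational model.
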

\subsection{Historical accounts}
	Significant efforts have been devoted to investigating the hyperbolicity properties of various family of polarized projective manifolds (which are automatically admissible). For an incomplete list, see \cite{Kovacs1996,Kovacs2000,OV2001,VZ2001,Kovacs2002,Kovacs2003,KK2008,KK20082,KK2010,Patakfalvi2012,PS17,CP2019,Taji2021,WeiWu2023}.  
	
	In the case of families of canonically polarized manifolds (known as the Viehweg hyperbolicity conjecture), the result was first established by Campana-P\v{a}un \cite{CP2019}, building upon their work on foliations and the Viehweg-Zuo sheaves constructed in \cite{VZ2003}. By employing Saito's theory of Hodge modules \cite{MSaito1988}, Popa-Schnell \cite{PS17} extended the construction of Viehweg-Zuo sheaves to base spaces of arbitrary families of projective varieties with a geometric generic fiber admitting a good minimal model, thereby proving the corresponding hyperbolicity property for these families. Popa-Schnell's result was subsequently generalized by Wei-Wu \cite{WeiWu2023} to log smooth families of pairs of log general type. Additionally, there exist other admissible conditions ensuring hyperbolicity; see, for example, \cite{Kovacs2021,Park2022}.
	\subsection{Strategy and organization}
	The main strategy of this paper builds upon the work of Viehweg-Zuo \cite{VZ2001,VZ2003}, employing variations of Hodge structures to construct a Higgs sheaf, while incorporating the foliation theory developed by Campana-P\v{a}un \cite{CP2019}. A significant challenge arises from the presence of singularities and boundary divisors on the fibers, even after performing semi-resolutions of singularities. This motivates us to utilize variations of mixed Hodge structures to analyze families of pairs. However, there exist multiple types of mixed Hodge structures associated with singular pairs, such as singular cohomology, intersection cohomology, cohomology with compact supports, Borel-Moore cohomology, and others. Inspired by Fujino-Fujisawa's work \cite{Fujino2014}, which employs cohomology with compact supports to study families of simple normal crossing pairs, leading to the proof of the projectivity of the coarse moduli space of KSBA pairs \cite{Fujino2018,Kovacs2017}, we choose in this paper the Borel-Moore cohomology (the dual of cohomology with compact supports), along with its mixed Hodge structure and its variation within families, to construct a Higgs sheaf of Viehweg-Zuo type. 
	
	Given a datum $(d, \Phi_c, \Gamma, \sigma)$, an integer $r \gg 1$, and a rational number $0 < a \ll 1$, the coherent sheaves $f_\ast(\sO_X(rK_{X/S} + rB + raA))$ associated with families of $(d, \Phi_c, \Gamma, \sigma)$-stable minimal models $f: (X, B), A \to S$ over reduced base schemes $S$ yield a locally free coherent sheaf on the Deligne-Mumford stack $\sM_{\rm slc}(d, \Phi_c, \Gamma, \sigma)$. The determinant line bundle descends to an ample $\bQ$-line bundle on the coarse moduli space (\cite{Fujino2018,Kovacs2017}), thereby ensuring the projectivity of the coarse moduli space via Koll\'ar's criterion \cite{Kollar1990}.  
	
	The main contribution of this paper is the observation that the Koll\'ar-type polarization $\det f_\ast(\sO_X(rK_{X/S} + rB + raA))$ encapsulates substantial information about the moduli stack. More precisely, it is demonstrated that certain admissible variations of Borel-Moore cohomology contain $\det f_\ast(\sO_X(rK_{X/S} + rB + raA))$. This variation of mixed Hodge structures encodes critical information about the family. In particular the associated logarithmic Higgs bundle induces a non-zero morphism  
	$$
	\det f_\ast(\sO_X(rK_{X/S} + rB + raA)) \otimes K \to \Omega^{\otimes k}_S(\log D),
	$$  
	where $m > 0$ is some integer, $D$ is a simple normal crossing divisor on $S$ over which $f$ fails to be birationally admissible, and $K$ is a weakly positive coherent sheaf. As a result, $K_S + D$ is big due to the work of Campana-P\v{a}un \cite{CP2019}. In a subsequent paper, we will utilize this Viehweg-Zuo type Higgs sheaf to explore additional global geometric properties of $\sM_{\rm slc}(d, \Phi_c, \Gamma, \sigma)$.
	
	In Section 2, we conduct an in-depth analysis of the Borel-Moore cohomology of simple normal crossing projective pairs, their mixed Hodge structures, and their variations within families. The primary result of this section is that the Borel-Moore cohomologies of a projective simple normal crossing family of pairs carry a graded polarizable variation of mixed Hodge structures (Theorem \ref{thm_VMHS_BM}).
	
	In Section 3, we examine the lower canonical extension of the variation of Borel-Moore cohomologies. One of the key results of this section is the admissibility of the variation of Borel-Moore cohomologies associated with a simple normal crossing family of projective pairs (Theorem \ref{thm_VMHS_BM_admissible}). Although Fujino-Fujisawa \cite{Fujino2014} have established the theory of variations of cohomologies with compact supports, we still require a detailed investigation of the variation of Borel-Moore cohomology due to the necessity of constructing its Deligne's lower canonical extension geometrically (Theorem \ref{thm_log_Higgs_geo}). In this section, we do not assume unipotency of local monodromies, which enhances applicability.
	
	In Section 4, we provide a general construction of a Higgs sheaf of Viehweg-Zuo type associated with a projective morphism from a simple normal crossing pair to a smooth variety.
	
	In Section 5, we review the foundational constructions of Birkar \cite{Birkar2022} on stable minimal models and their moduli. Subsequently, we introduce the birationally admissible condition on families of stable minimal models (Definition \ref{defn_admissible}) and construct a Higgs sheaf of Viehweg-Zuo type associated with an admissible family of stable minimal models (Theorem \ref{thm_big_Higgs_sheaf}).
	
	The proofs of Theorem \ref{thm_main} and Theorem \ref{thm_main_moduli} are presented in Section 6.
    \section{Variation of Borel-Moore cohomology}
    \subsection{Basic notations}
    \begin{defn}[Simple normal crossing pair]\label{defn_SNC_family}
    	Let $S$ and $X$ be reduced schemes of finite type over $\operatorname{Spec}(\mathbb{C})$, and let $D \geq 0$ be a Weil $\bQ$-divisor on $X$. Let $f: X \to S$ be a morphism. The morphism $f: (X, D) \to S$ is said to be \emph{simple normal crossing over $S$} at a point $x \in X$ if there exists a Zariski open neighborhood $U$ of $x$ in $X$ that can be embedded into a scheme $Y$, which is smooth over $S$. In this embedding, $Y$ admits a regular system of parameters $(z_1, \dots, z_p, y_1, \dots, y_r)$ over $S$ at the point corresponding to $x = 0$, such that $U$ is defined by the monomial equation $z_1 \cdots z_p = 0$ and  
    	\[ D|_U = \sum_{i=1}^r a_i (y_i = 0)|_U, \quad \text{where } a_i \geq 0, \]  
    	over $S$.
    	
    	The pair $f: (X, D) \to S$ is said to be a \emph{simple normal crossing family over $S$} if it satisfies the simple normal crossing condition over $S$ at every point of $X$. A family is referred to as a \emph{log smooth family over $S$} if it is a simple normal crossing family over $S$ and each fiber $X_s := f^{-1}(s)$, for $s \in S$, is smooth (not necessarily connected).
    	
    	In the special case where $S = \operatorname{Spec}(\mathbb{C})$, the pair $(X, D)$ is referred to as a \emph{simple normal crossing pair} if $(X, D)$ forms a simple normal crossing family over $\operatorname{Spec}(\mathbb{C})$. In this context, $X$ has Gorenstein singularities and possesses an invertible dualizing sheaf $\omega_X$. The canonical divisor $K_X$ is defined up to linear equivalence via the isomorphism $\omega_X \simeq \sO_X(K_X)$.
    	
    	A simple normal crossing pair $(X, D)$ is called a \emph{log smooth pair} if $X$ is smooth (note that $X$ is not necessarily connected). In this case, the support $\operatorname{Supp}(D)$ has simple normal crossings.
    \end{defn}
\begin{defn}[Semistable morphism]\label{defn_semistable_cod1}
	Let $f: (X, \Delta) \to (S, D_S)$ be a morphism from a simple normal crossing pair $(X, \Delta)$ to a log smooth pair $(S, D_S)$. The morphism $f$ is called \emph{semistable} if the following conditions are satisfied:
	\begin{enumerate}
		\item $D_S$ is a reduced, smooth (not necessarily connected) divisor on $S$ such that $\operatorname{Supp}(f^{-1}(D_S))$ is a simple normal crossing divisor contained in $\Delta_{\text{red}}$. Note that the components of $f^{-1}(D_S)$ may have multiplicities.
		\item $f: (X, \Delta) \to S$ is a simple normal crossing family over $S \setminus D_S$.
		\item Each stratum of $(X, \Delta)$ lying in $f^{-1}(D_S)$ is mapped submersively by $f$ onto an irreducible component of $D_S$.
	\end{enumerate}
\end{defn}
\begin{rmk}\label{rmk_semi_log_resolution}
	Let $X$ be a reduced scheme of finite type over $\operatorname{Spec}(\mathbb{C})$, and let $D \geq 0$ be a Weil divisor on $X$. Let $S$ be a smooth variety and $f:(X,D)\to S$ be a projective morphism such that each stratum of $X$ dominates $S$. 
	By \cite{Bierstone2013} or \cite[\S 10.4]{Kollar2013}, there is a \emph{semi-log resolution} of singularities $\mu:(X',\Delta')\to(X,\Delta)$ such that the following conditions hold.
	\begin{itemize}
		\item $(X',\Delta')$ is a simple normal crossing pair.
		\item Let $X_{\rm sn}\subset X$ be the maximal Zariski open subset such that $(X_{\rm sn},X_{\rm sn}\cap D)$ is a simple normal crossing pair. $\mu$ is an isomorphism over $X_{\rm sn}$, and $\Delta'$ is the strict transform of $\Delta$.
		\item There is a dense Zariski open subset $U\subset S$ and a smooth (but not necessarily connected) divisor $D_U$ on $U$ such that ${\rm codim}_S(S\backslash U)\geq 2$ and $(f^{-1}(U),{\rm supp}(\Delta)\cap f^{-1}(U))\to (U,D_U)$ is a semistable morphism.
	\end{itemize}
\end{rmk}
    \subsection{The mixed Hodge structure on a log smooth projective pair}
    Let $M$ be a complex manifold. Let $DFW(M)$ denote the derived bi-filtered category on $M$ (cf. \cite{Deligne1971, Deligne1974, Steenbrink2008}). Consider an object $H = (K^\bullet, F^\bullet, W_\bullet) \in DFW(M)$ and let $m \in \mathbb{Z}$. The shift of $H$ by $m$, denoted as $H[m]$, is defined as  $$H[m]=(K^\bullet[m],F^\bullet(H[m]),W_\bullet(H[m]))$$ where
    $$F^p(H[m])=F^p[m],\quad W_k(H[m])=W_{k-m}[m].$$
    Its Tate twist $H(m)$ is defined as 
    $$((2\pi\sqrt{-1})^{2m}K^\bullet,F^\bullet(H(m)),W_\bullet(H(m)))$$ where
    $$F^p(H(m))=F^{p+m},\quad W_k(H(m))=W_{k+2m}.$$
    
    Let $(X, D)$ be a log smooth projective pair where $X$ is a projective smooth variety and $D$ is a simple normal crossing divisor on $X$. We now recall the mixed Hodge complex associated with $(X, D)$.
    
    Let $X^o := X \setminus D$, and let $j: X^o \to X$ be the open immersion. The rational structure is given by the complex of constructible sheaves $Rj_\ast \mathbb{Q}_{X^o}$ together with the canonical filtration (the rational weight filtration) $\tau = \{\tau_m := \tau^{\leq m}(Rj_\ast \mathbb{Q}_{X^o})\}_{m\in\bZ}$.
    
    Let $\Omega^\bullet_X(\log D)$ be the logarithmic de Rham complex of $(X, D)$. This complex is endowed with the naive filtration (the Hodge filtration) $F^\bullet = \{F^p := \Omega^{\geq p}_X(\log D)\}_{p \geq 0}$ and the weight filtration $W_\bullet$, characterized by
    \begin{align*}
    W_m \Omega^p_X(\log D) = \begin{cases}
    0, & m < 0 \\
    \Omega^p_X(\log D), & m \geq p \\
    \Omega^{p-m}_X \wedge \Omega^m_X(\log D), & 0 \leq m \leq p
    \end{cases}.
    \end{align*}
    \begin{prop}\emph{(\cite[\S 4.3]{Steenbrink2008})}\label{prop_logsmooth_mHC}
    	$\left((Rj_\ast\bQ_{X^o},\tau),(\Omega^\bullet_X(\log D),W_\bullet,F^\bullet)\right)$ is a mixed Hodge complex of sheaves. 
    \end{prop}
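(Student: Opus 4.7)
The plan is to verify the two defining properties of a mixed Hodge complex of sheaves in the sense of Deligne: first, that there is a filtered quasi-isomorphism $(Rj_\ast\bQ_{X^o},\tau)\otimes\bC \simeq (\Omega^\bullet_X(\log D), W_\bullet)$ in the filtered derived category; and second, that for every $m$ the complex $\operatorname{Gr}^W_m\Omega^\bullet_X(\log D)$, equipped with the filtration induced by $F^\bullet$ and the rational structure $\operatorname{Gr}^\tau_m Rj_\ast\bQ_{X^o}$, forms a cohomological pure Hodge complex of sheaves of weight $m$.

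The central geometric input is the Poincar\'e residue. For $m\geq 0$, let $D^{(m)}$ denote the normalization of the codimension-$m$ strata of $D$ (so $D^{(0)}=X$), and let $a_m:D^{(m)}\to X$ be the natural finite morphism; each $D^{(m)}$ is a smooth projective variety. I would first establish the Poincar\'e residue isomorphism
$$
\operatorname{Res}_m:\operatorname{Gr}^W_m\Omega^\bullet_X(\log D)\xrightarrow{\sim} a_{m\ast}\Omega^{\bullet-m}_{D^{(m)}}
$$
by a local computation using the generators $\tfrac{dz_{i_1}}{z_{i_1}}\wedge\cdots\wedge\tfrac{dz_{i_m}}{z_{i_m}}\wedge\eta$ of $W_m\Omega^\bullet_X(\log D)$. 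On the rational side, I would compute the stalks of $\operatorname{Gr}^\tau_m Rj_\ast\bQ_{X^o}$ at a point lying on $k$ branches of $D$ via the cohomology of $(\Delta^\ast)^k\times\Delta^{n-k}$, whose exterior-algebra structure yields the canonical identification
$$
\operatorname{Gr}^\tau_m Rj_\ast\bQ_{X^o}\simeq (a_{m\ast}\bQ_{D^{(m)}})[-m](-m).
$$
Combining the two descriptions—absorbing the factors of $2\pi\sqrt{-1}$ into the Tate twist—yields the desired filtered quasi-isomorphism. Concretely, I would factor the comparison through the inclusion $(\Omega^\bullet_X(\log D),W_\bullet)\hookrightarrow (j_\ast\Omega^\bullet_{X^o},\tau)$, which one checks is a filtered quasi-isomorphism by computing on associated graded pieces, and then invoke $(Rj_\ast\bQ_{X^o})\otimes\bC\simeq j_\ast\Omega^\bullet_{X^o}$ together with compatibility of $\tau$.

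For the purity in axiom (ii), the residue identification shows that $\operatorname{Gr}^W_m\Omega^\bullet_X(\log D)$ with its induced Hodge filtration, together with the rational structure $\operatorname{Gr}^\tau_m Rj_\ast\bQ_{X^o}$, is precisely the classical Hodge complex of sheaves of the smooth projective variety $D^{(m)}$, shifted by $-m$ and Tate-twisted by $(-m)$. Hence by classical pure Hodge theory it produces a pure Hodge structure of weight $m-2m+m=m$ on $\mathbb{H}^\bullet(X,\operatorname{Gr}^W_m\Omega^\bullet_X(\log D))$. The main technical delicacy, and the step I would check with care, is the \emph{rationality} of the residue: one must verify that, after scaling by $(2\pi\sqrt{-1})^{-m}$, the algebraic Poincar\'e residue coincides with the topological (Leray) residue on $\bQ$-coefficients. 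This reduces to the one-variable statement that $\tfrac{1}{2\pi\sqrt{-1}}\tfrac{dz}{z}$ represents the class dual to a small positively oriented loop around the origin, and extends to higher codimension by multiplicativity on K\"unneth factors, at which point both axioms of the mixed Hodge complex follow.
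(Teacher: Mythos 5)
Your proposal is correct and follows exactly the classical Deligne/Peters--Steenbrink argument that the paper invokes by citation (the paper offers no proof of its own beyond the reference to \cite[\S 4.3]{Steenbrink2008}): Poincar\'e residue on $\operatorname{Gr}^W_m$, the local computation of $\operatorname{Gr}^\tau_m Rj_\ast\bQ_{X^o}$ on punctured polydiscs, the $(2\pi\sqrt{-1})^{-m}$ rationality check, and purity of weight $m$ from the smooth projective strata $D^{(m)}$, which is precisely the content of the paper's Lemma~\ref{lem_mHC_logsmoothpair}. The only blemish is the garbled arithmetic ``$m-2m+m=m$'' (the intended bookkeeping is: shift $[-m]$ contributes weight $-m$, the Tate twist $(-m)$ contributes $+2m$), which does not affect the correctness of the conclusion.
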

    Let $D = \bigcup_{i \in I} D_i$. For every non-empty subset $J \subset I$ and integer $k > 0$, denote
    \[ D_J = \bigcap_{i \in J} D_i \quad \text{and} \quad D_{[k]} = \coprod_{\substack{J \subset I \\ |J| = k}} D_J. \]
    We also set $D_{[0]} = X$. The Hodge filtration $F^\bullet$ induces a filtration on $\operatorname{Gr}^W_m \Omega^\bullet_X(\log D)$, which is denoted by $F^\bullet \operatorname{Gr}^W_m \Omega^\bullet_X(\log D)$.
    \begin{lem}\emph{(\cite[\S 4.3]{Steenbrink2008})}\label{lem_mHC_logsmoothpair}
    	The filtered complex $(\operatorname{Gr}^W_m \Omega^\bullet_X(\log D), F^\bullet \operatorname{Gr}^W_m \Omega^\bullet_X(\log D))$ is filtered quasi-isomorphic to $(\Omega^\bullet_{D_{[m]}}, F^\bullet)(-m)[-m]$, where $(\Omega^\bullet_{D_{[m]}}, F^\bullet)$ is the standard Hodge complex of sheaves on $D_{[m]}$. In particular, there is a natural isomorphism
    	\[ \operatorname{Gr}^p_F \operatorname{Gr}^W_m \Omega^\bullet_X(\log D) \simeq \Omega^{p-m}_{D_{[m]}}[-p]. \]
    \end{lem}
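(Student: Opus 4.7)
The plan is to construct an explicit isomorphism via the Poincaré residue map and then track both filtrations locally. First I would work in local coordinates: near a point of $D_J$ with $J = \{i_1, \dots, i_m\}$, pick local coordinates $(z_1, \dots, z_n)$ so that $D_i = \{z_i = 0\}$. Then $\Omega^p_X(\log D)$ is locally free on the symbols $\frac{dz_{k_1}}{z_{k_1}} \wedge \cdots \wedge \frac{dz_{k_s}}{z_{k_s}} \wedge dz_{l_1} \wedge \cdots \wedge dz_{l_{p-s}}$, and $W_m\Omega^p_X(\log D)$ is spanned by those symbols with $s \leq m$. Define the Poincaré residue
\[ \mathrm{Res}_J : W_m \Omega^p_X(\log D) \to a_{m,*}\Omega^{p-m}_{D_{[m]}}, \qquad \frac{dz_{i_1}}{z_{i_1}} \wedge \cdots \wedge \frac{dz_{i_m}}{z_{i_m}} \wedge \omega \mapsto \omega|_{D_J}, \]
with appropriate sign convention, extended by zero on terms with fewer than $m$ logarithmic poles. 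A coordinate change computation shows these local maps glue to a global map $\mathrm{Res}: W_m\Omega^p_X(\log D) \to a_{m,*}\Omega^{p-m}_{D_{[m]}}$ that kills $W_{m-1}$.

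Next I would verify, by inspecting the local basis above, that the induced map
\[ \mathrm{Res}: \operatorname{Gr}^W_m \Omega^p_X(\log D) \xrightarrow{\sim} a_{m,*}\Omega^{p-m}_{D_{[m]}} \]
is an isomorphism of sheaves: injectivity comes from the fact that any class in $\operatorname{Gr}^W_m$ has a canonical representative involving exactly $m$ logarithmic factors, and surjectivity follows by lifting a form on $D_J$ to the wedge product form above. Compatibility of $\mathrm{Res}$ with the differentials (which is a direct check using $d(dz_i/z_i) = 0$) upgrades this to an isomorphism of complexes $\operatorname{Gr}^W_m \Omega^\bullet_X(\log D) \simeq a_{m,*}\Omega^{\bullet-m}_{D_{[m]}}$, where the right-hand side lives in degree $\geq m$ and is thus naturally $\Omega^\bullet_{D_{[m]}}[-m]$.

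Finally I would track the Hodge filtration. Because $F^p \Omega^\bullet_X(\log D) = \Omega^{\geq p}_X(\log D)$, a local representative with $m$ log-factors contributing to $F^p$ lies in degree $\geq p$, and its residue lies in degree $\geq p - m$ on $D_{[m]}$. Thus $\mathrm{Res}(F^p \operatorname{Gr}^W_m) = \Omega^{\geq p-m}_{D_{[m]}}$, which is exactly $F^p$ of $(\Omega^\bullet_{D_{[m]}},F^\bullet)(-m)[-m]$, recalling that the Tate twist $(-m)$ shifts the Hodge filtration by $+m$ and the shift $[-m]$ places the complex in appropriate degrees. The factor $(2\pi\sqrt{-1})^m$ absorbed into the definition of $\mathrm{Res}$ accounts for the rational structure of the Tate twist. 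Taking $\operatorname{Gr}^p_F$ then yields the stated identification $\operatorname{Gr}^p_F\operatorname{Gr}^W_m\Omega^\bullet_X(\log D) \simeq \Omega^{p-m}_{D_{[m]}}[-p]$.

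The main obstacle is really a bookkeeping one rather than a conceptual difficulty: one has to fix a coherent sign/normalization convention for $\mathrm{Res}_J$ so that the local maps glue globally and so that both the $(2\pi\sqrt{-1})^m$ factor in the Tate twist and the degree shifts land in the stated positions. Once this normalization is pinned down, every other verification is a direct local calculation on the standard basis of logarithmic forms.
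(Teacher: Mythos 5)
Your residue-map argument is exactly the standard proof of this lemma; the paper does not reprove it but simply cites \cite[\S 4.3]{Steenbrink2008}, where the identification of $\operatorname{Gr}^W_m$ with $\Omega^{\bullet-m}_{D_{[m]}}$ is established by precisely this Poincar\'e residue computation, including the filtration bookkeeping you describe. Your proposal is correct and matches the referenced approach.
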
 
    \subsection{The Gysin map}\label{section_Gysin_map}
    Let $(X,D)$ be a log smooth pair, and let $Y$ be a smooth divisor on $X$ that intersects transversally with every stratum of $D$. Consequently, $(Y, D \cap Y)$ forms a log smooth pair. For notational convenience, we use $j$ to denote both the open immersions $X^o := X \backslash D \to X$ and $Y^o := Y \backslash D \to Y$. In this section, we establish that the Gysin map $Rj_\ast\bQ_{Y^o}(-1)[-2] \to Rj_\ast\bQ_{X^o}$, which is the Verdier dual of the restriction map $Rj_\ast\bQ_{X^o} \to Rj_\ast\bQ_{Y^o}$, constitutes a morphism between mixed Hodge complexes of sheaves.
    
    Consider the residue sequence
    \begin{align}\label{align_residue_sequence}
    	0 \to \Omega^\bullet_{X}(\log D) \stackrel{\iota}{\to} \Omega^\bullet_{X}(\log (D+Y)) \stackrel{{\rm Res}_{Y}}{\to} \Omega^\bullet_{Y}(\log (D \cap Y))[-1] \to 0.
    \end{align}
    This sequence preserves the Hodge filtrations and weight filtrations in the following sense:
    \[
    \iota(F^p) \subset F^p, \quad \iota(W_m) \subset W_m,
    \]
    and
    \[
    {\rm Res}_{Y}(F^p) \subset F^{p-1}[-1], \quad {\rm Res}_{Y}(W_m) \subset W_{m-1}[-1].
    \]
    \begin{lem}
    	The sequence (\ref{align_residue_sequence}) is bi-filtered exact, meaning that the induced sequence
    	\[
    	0 \to {\rm Gr}^p_F {\rm Gr}^W_m \Omega^\bullet_{X}(\log D) \stackrel{\iota}{\to} {\rm Gr}^p_F {\rm Gr}^W_m \Omega^\bullet_{X}(\log (D+Y)) \stackrel{{\rm Res}_{Y}}{\to} {\rm Gr}^{p-1}_F {\rm Gr}^W_{m-1} \Omega^\bullet_{Y}(\log (D \cap Y))[-1] \to 0
    	\]
    	is exact for all $m$ and $p$.
    \end{lem}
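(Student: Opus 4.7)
The statement is local on $X$, so my plan is to establish the bi-filtered exactness by a direct computation in local coordinates. At any point of $Y$, the hypothesis that $Y$ meets every stratum of $D$ transversally lets me choose analytic coordinates $(z_0, z_1, \dots, z_n)$ in which $Y = \{z_0 = 0\}$ and $D = \{z_1 \cdots z_k = 0\}$ for some $k$, so that $D+Y = \{z_0 z_1 \cdots z_k = 0\}$. Decomposing a logarithmic form according to whether the factor $\frac{dz_0}{z_0}$ occurs yields the natural $\sO_X$-linear splitting
\[
\Omega^p_X(\log(D+Y)) \;=\; \Omega^p_X(\log D) \;\oplus\; \tfrac{dz_0}{z_0} \wedge \Omega^{p-1}_X(\log D),
\]
which realises the sequence (\ref{align_residue_sequence}) locally as the short exact sequence associated with this direct sum: $\iota$ is the inclusion of the first summand, and ${\rm Res}_Y$ sends $\frac{dz_0}{z_0} \wedge \eta \mapsto \eta|_Y$.

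The next step is to check that this splitting respects both filtrations. For the Hodge filtration, $F^p = \Omega^{\geq p}$ is defined degreewise, hence the decomposition above is automatically $F^\bullet$-compatible and the induced sequence on ${\rm Gr}^p_F$ is split exact. For the weight filtration, the basis description shows that a product $\frac{dz_0}{z_0} \wedge \eta$ has at most $m$ logarithmic factors along $D+Y$ precisely when $\eta$ has at most $m-1$ logarithmic factors along $D$; therefore
\[
W_m \Omega^p_X(\log(D+Y)) \;=\; W_m \Omega^p_X(\log D) \;\oplus\; \tfrac{dz_0}{z_0} \wedge W_{m-1} \Omega^{p-1}_X(\log D).
\]
The transversality of $Y$ with strata of $D$ further yields the restriction isomorphism $W_{m-1}\Omega^{p-1}_X(\log D)|_Y \simeq W_{m-1}\Omega^{p-1}_Y(\log(D\cap Y))$, so under ${\rm Res}_Y$ the second summand is identified bi-filtration by bi-filtration with the appropriate piece of the shifted complex $\Omega^\bullet_Y(\log(D\cap Y))[-1]$.

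Passing to ${\rm Gr}^p_F {\rm Gr}^W_m$ of the local splitting above then gives, term by term, a split short exact sequence, which patches globally because the splitting, though non-canonical, induces a canonical decomposition on associated graded pieces (detected by the residue along the chosen stratum $Y$). The only bookkeeping issue is verifying that the weight shift on the quotient precisely matches the index $m-1$ on $\Omega^\bullet_Y(\log(D\cap Y))[-1]$ under the paper's convention $W_k(H[m]) = W_{k-m}[m]$; this amounts to a direct shift calculation tracking the Tate-type index conventions. I expect no serious obstacle beyond this indexing check: once the local direct sum splitting compatible with $F$ and $W$ is in place, the bi-filtered exactness is immediate, and the content of the lemma is precisely the compatibility of the residue with the prescribed shifts in weight and Hodge filtration.
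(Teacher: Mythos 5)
Your overall strategy (local coordinates, basis monomials sorted by whether $\frac{dz_0}{z_0}$ occurs, then checking compatibility with $F$ and $W$) is the right kind of argument, but the central claim on which you hang everything is false: the decomposition
\[
\Omega^p_X(\log(D+Y)) \;=\; \Omega^p_X(\log D)\;\oplus\;\tfrac{dz_0}{z_0}\wedge\Omega^{p-1}_X(\log D)
\]
is not a direct sum. The two submodules together do span $\Omega^p_X(\log(D+Y))$, but their intersection contains $dz_0\wedge\Omega^{p-1}_X(\log D)=z_0\cdot\bigl(\tfrac{dz_0}{z_0}\wedge\Omega^{p-1}_X(\log D)\bigr)\neq 0$, since $dz_0$ is one of the regular generators of $\Omega^1_X(\log D)$ ($Y$ is not a component of $D$). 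More structurally, the residue sequence (\ref{align_residue_sequence}) cannot split $\sO_X$-linearly in any degree: its quotient $\Omega^{p-1}_Y(\log(D\cap Y))$ is supported on $Y$, hence torsion, and a torsion module cannot be a direct summand of the locally free middle term. The same defect infects your displayed identity for $W_m\Omega^p_X(\log(D+Y))$ and the final step, where you pass "term by term" to a \emph{split} exact sequence on ${\rm Gr}^p_F{\rm Gr}^W_m$; as written, that step has no content because the splitting it invokes does not exist.

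The argument is repairable: replace the direct sums by sums, and record instead the three facts your basis computation actually gives, namely (i) $W_m\Omega^p_X(\log(D+Y))=W_m\Omega^p_X(\log D)+\tfrac{dz_0}{z_0}\wedge W_{m-1}\Omega^{p-1}_X(\log D)$ and ${\rm Res}_Y(F^pW_m)=F^{p-1}W_{m-1}$ (surjectivity of ${\rm Res}_Y$ on each bi-filtered piece), (ii) $\Omega^p_X(\log D)\cap F^pW_m\Omega^\bullet_X(\log(D+Y))=F^pW_m\Omega^\bullet_X(\log D)$ (bi-strictness of $\iota$), and (iii) $\ker({\rm Res}_Y)=\Omega^p_X(\log D)$; these three together give exactness of the sequence of graded pieces without any splitting. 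Note that the paper's own proof sidesteps all of this by quoting Lemma \ref{lem_mHC_logsmoothpair}: each term ${\rm Gr}^p_F{\rm Gr}^W_m$ is identified with a sheaf of forms on the $m$-fold strata, and since $(D+Y)_{[m]}=D_{[m]}\sqcup(Y\cap D_{[m-1]})$ the middle graded term is literally the direct sum of the outer two. So a genuine splitting does appear — but only after taking ${\rm Gr}_F{\rm Gr}_W$, which is exactly the level at which you tried (and failed) to produce it on the nose.
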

    \begin{proof}
    	According to Lemma \ref{lem_mHC_logsmoothpair}, the sequence is isomorphic to
    	\[
    	0 \to \Omega^{p-m}_{D_{[m]}}[-p] \to \Omega^{p-m}_{D_{[m]}}[-p] \oplus \Omega^{p-m}_{Y \cap D_{[m-1]}}[-p] \to \Omega^{p-m}_{Y \cap D_{[m-1]}}[-p] \to 0,
    	\]
    	which is exact for all $m$ and $p$.
    \end{proof}
    As a consequence, (\ref{align_residue_sequence}) yields a distinguished triangle
    \[
    \Omega^\bullet_{X}(\log D) \stackrel{\iota}{\to} \Omega^\bullet_{X}(\log (D+Y)) \stackrel{{\rm Res}_{Y}}{\to} \Omega^\bullet_{Y}(\log (D \cap Y))(-1)[-1] \to \Omega^\bullet_{X}(\log D)[1]
    \]
    in $DFW(X)$. The boundary map induces the Gysin map
    \[
    \Omega^\bullet_{Y}(\log (D \cap Y))(-1)[-2] \to \Omega^\bullet_{X}(\log D)
    \]
    in $DFW(X)$, which is compatible with the topological Gysin map
    \[
    Rj_\ast\bQ_{Y^o}(-1)[-2] \to Rj_\ast\bQ_{X^o}.
    \]
    \subsection{Borel-Moore cohomology}\label{section_BM_coh}
    Let $X^o$ be an algebraic variety of pure dimension $n$. Its Borel-Moore cohomology is defined by
    \[
    H^i_{\rm BM}(X^o, \bQ) := H^{2n-i}_c(X^o, \bQ)^\ast, \quad \forall i,
    \]
    where $H^\ast_c$ denotes the compactly supported cohomology. Now assume that $X^o = X \backslash D$, where $(X, D)$ is a simple normal crossing projective pair. We are particularly interested in its Borel-Moore cohomology because $H^0(X, \sO_X(K_X + D))$ embeds into $H^i_{\rm BM}(X^o, \bQ) \otimes \bC$ as the lowest Hodge piece, as observed by Fujino-Fujisawa \cite{Fujino2014}. We will elaborate on this observation, as it provides insight into the construction we employ in the Viehweg-Zuo Higgs sheaves associated with a simple normal crossing family.
    
    For the remainder of this section, we assume that $(X, D)$ is a simple normal crossing projective pair, where $X$ is of pure dimension $n$. Let $X^o := X \backslash D$, and let $j: X^o \to X$ be the open immersion. Denote the irreducible decompositions as follows:
    \[
    X = \bigcup_{i \in I} X_i \quad \text{and} \quad D = \bigcup_{\lambda \in \Lambda} D_\lambda,
    \]
    where $I$ and $\Lambda$ are finite sets. For every non-empty subset $J \subset I$ and integer $k > 0$, define
    \[
    X_J := \bigcap_{i \in J} X_i \quad \text{and} \quad X_{[k]} := \coprod_{\substack{J \subset I \\ |J| = k}} X_J.
    \]
    Let $X^o_J := X_J \backslash D$ for $\emptyset \neq J \subset I$, and set $X^o_{[k]} := \coprod_{\substack{J \subset I \\ |J| = k}} X^o_J$. Denote by $i^o_{[k]}: X^o_{[k]} \to X^o$ the natural morphism. We then have the following quasi-isomorphism:
    \begin{align}\label{align_const_resolution}
    \bQ_{X^o} \simeq_{\rm qis} i^o_{[1]\ast}\bQ_{X^o_{[1]}} \to i^o_{[2]\ast}\bQ_{X^o_{[2]}} \to \cdots,
    \end{align}
    where the map $\bQ_{X^o_{[k]}} \to \bQ_{X^o_{[k+1]}}$ is the alternating sum of restriction maps for each $k>0$. To simplify notation, we still use $j$ to denote the immersions $X^o_{[k]} \to X_{[k]}$. Applying $Rj_!$ on both sides of (\ref{align_const_resolution}) yields
    \begin{align}\label{align_cpt_complex}
    	Rj_!\bQ_{X^o}\simeq_{\rm qis}{\rm Tot}\left(Rj_!i^o_{[1]\ast}\bQ_{X^o_{[1]}}\to Rj_!i^o_{[2]\ast}\bQ_{X^o_{[2]}}\to\cdots\right).
    \end{align}
    Note that
    \[
    H^i_{c}(X^o, \bQ) \simeq \bH^i(X, Rj_!\bQ_{X^o}).
    \]
    By Verdier duality, we have
    \[
    H^i_{\rm BM}(X^o, \bQ) \simeq \bH^i(X, \bD_X(Rj_!\bQ_{X^o}[2n])),
    \]
    where $\bD_X$ denotes the Verdier duality operator on $D(X)$. Applying Verdier duality to (\ref{align_cpt_complex}), we obtain
    \begin{align}\label{align_BM_complex_resolve}
    	\bD_X(Rj_!\bQ_{X^o}[2n]) &\simeq_{\rm qis} {\rm Tot}\left(\cdots \to \bD_X Rj_! i^o_{[2]\ast} \bQ_{X^o_{[2]}} \to \bD_X Rj_!i^o_{[1]\ast}  \bQ_{X^o_{[1]}}\right)[-2n] \\\nonumber 
    	&\simeq_{\rm qis} {\rm Tot}\left(\cdots \to Rj_\ast i^o_{[2]\ast} \bD_{X^o_{[2]}} \bQ_{X^o_{[2]}} \to Rj_\ast i^o_{[1]\ast} \bD_{X^o_{[1]}} \bQ_{X^o_{[1]}}\right)[-2n] \\\nonumber 
    	&\simeq_{\rm qis} {\rm Tot}\left(\cdots \to i_{[2]\ast} Rj_\ast \bQ_{X^o_{[2]}}(-1)[-2] \to i_{[1]\ast} Rj_\ast \bQ_{X^o_{[1]}}\right),
    \end{align}
    where $i_{[k]}: X_{[k]} \to X$ ($k > 0$) is the natural morphism, and $\bQ_{X^o_{[k+1]}}[-2k] \to \bQ_{X^o_{[k]}}[-2k+2]$ ($k > 0$) is the alternating sum of Gysin maps. Note that $i_{[1]\ast} Rj_\ast \bQ_{X^o_{[1]}}$ lies in the 0th term of the complex on the right-hand side of (\ref{align_BM_complex_resolve}).
    \subsection{Mixed Hodge structure on the Borel-Moore cohomology}\label{section_BM_absolute}
    (\ref{align_BM_complex_resolve}) suggests the canonical mixed Hodge structure on $H^i_{\rm BM}(X^o,\bQ)$. In fact,  the complex
    \begin{align}\label{align_BM_complex}
    \cdots\to i_{[3]\ast}Rj_\ast\bQ_{X^o_{[3]}}(-2)[-4]\to i_{[2]\ast}Rj_\ast\bQ_{X^o_{[2]}}(-1)[-2]\to i_{[1]\ast} Rj_\ast\bQ_{X^o_{[1]}}
    \end{align}
    (where $i_{[1]\ast} Rj_\ast\bQ_{X^o_{[1]}}$ is placed in the 0th term) underlies a complex of mixed Hodge complexes (or a complex of mixed Hodge modules).
    \subsubsection{The rational structure and the weight filtration}
    For each $k \geq 0$, let $\tau_{[k]}$ denote the canonical filtration of $Rj_\ast\bQ_{X^o_{[k]}}$, defined by $\tau_{[k],i} := \tau^{\leq i}(Rj_\ast\bQ_{X^o_{[k]}})$. These filtrations induce an increasing filtration (the rational weight filtration) $\tau_\bullet$ on $\bD_X(Rj_!\bQ_{X^o})$, via (\ref{align_BM_complex_resolve}), by defining
    \begin{align}\label{align_BM_weight_filtation}
    \tau_m&:={\rm Tot}\left(\cdots\to i_{[2]\ast}\tau^{\leq m-1 }(Rj_\ast\bQ_{X^o_{[2]}}(-1)[-2])\to i_{[1]\ast}\tau^{\leq m}(Rj_\ast\bQ_{X^o_{[1]}})\right)
    \end{align}
    \subsubsection{The Hodge filtration}
    Let $\emptyset \neq K \subset J \subset I$ such that $|K| = |J| - 1$, and let $D_K := X_K \backslash X^o_K$. Note that $X_J$ is a smooth divisor on $X_K$, and both $D_K$ and $X_J + D_K$ are simple normal crossing divisors on $X_K$. Consider the Gysin map
    \begin{align*}
    \Omega^\bullet_{X_J}(\log X_J\cap D_K)(-1)[-2]\to \Omega^\bullet_{X_K}(\log D_K)
    \end{align*}
    in $DFW(X)$, constructed in \S \ref{section_Gysin_map}. Denote $$D_{[k]}:=X_{[k]}\backslash X^o_{[k]}=\coprod_{\substack{J\subset I,\\|J|=k}}D_J,$$ a simple normal crossing divisor on $X_{[k]}$.   
    The alternative sum of the Gysin maps provides a complex
    \begin{align*}
    \Omega^\bullet_{(X,D),{\rm BM}}:={\rm Tot}\left(\cdots\to i_{[2]\ast}\Omega^\bullet_{X_{[2]}}(\log D_{[2]})[-2]\to i_{[1]\ast}\Omega^\bullet_{X_{[1]}}(\log D_{[1]})\right).
    \end{align*}
    Define the weight filtration $W_{\bullet}$ by
    \begin{align*}
    W_{m}&:={\rm Tot}\left(\cdots\to i_{[2]\ast}W_{m-1}(\Omega^\bullet_{X_{[2]}}(\log D_{[2]})(-1)[-2])\to i_{[1]\ast}W_m\Omega^\bullet_{X_{[1]}}(\log D_{[1]})\right)\\\nonumber
    &={\rm Tot}\left(\cdots\to i_{[2]\ast}W_{m-1}\Omega^\bullet_{X_{[2]}}(\log D_{[2]})[-2]\to i_{[1]\ast}W_m\Omega^\bullet_{X_{[1]}}(\log D_{[1]})\right).
    \end{align*}
    Define its Hodge filtration $F^\bullet$ by
    \begin{align}\label{align_Hodge_filtration}
    F^p&:={\rm Tot}\left(\cdots\to i_{[2]\ast}F^p(\Omega^{\bullet}_{X_{[2]}}(\log D_{[2]})(-1)[-2])\to i_{[1]\ast}F^p(\Omega^{\bullet}_{X_{[1]}}(\log D_{[1]}))\right)\\\nonumber
    &={\rm Tot}\left(\cdots\to i_{[2]\ast}\Omega^{\geq p-1}_{X_{[2]}}(\log D_{[2]})[-2]\to i_{[1]\ast}\Omega^{\geq p}_{X_{[1]}}(\log D_{[1]})\right).
    \end{align}
    Let $D_{[k]} = \bigcup_{i \in I_k} D_{[k],i}$ be the irreducible decomposition. For every non-empty subset $J \subset I$ and integer $l > 0$, denote
    $$D_{[k],J}=\bigcap_{i\in J}D_{[k],i}\quad\textrm{and}\quad D_{[k],[l]}=\coprod_{\substack{J\subset I,\\|J|=l}}D_{[k],J}.$$
    We also set $D_{[k],[0]} := X_{[k]}$.
    According to Lemma \ref{lem_mHC_logsmoothpair}, one observes that
    \begin{align}\label{align_BM_GrW}
    {\rm Gr}^W_m\Omega^\bullet_{(X,D),{\rm BM}}&\simeq_{\rm qis}\bigoplus_{i\geq0}{\rm Gr}^W_{m-i}(\Omega^{\bullet}_{X_{[i+1]}}(\log D_{[i+1]})(-i)[-2i])[i]\\\nonumber
    &\simeq_{\rm qis}\bigoplus_{i\geq0}{\rm Gr}^W_{m-i}(\Omega^{\bullet}_{X_{[i+1]}}(\log D_{[i+1]}))(-i)[-i]\\\nonumber
    &\simeq_{\rm qis}\bigoplus_{i\geq0}\Omega^\bullet_{D_{[i+1],[m-i]}}(-m)[-m].
    \end{align}
    This is a Hodge complex of sheaves with weight $m$, whose induced Hodge filtration is given by
    \begin{align}\label{align_BM_GrW_F}
    F^p({\rm Gr}^W_m\Omega^\bullet_{(X,D),{\rm BM}})&\simeq_{\rm qis}\bigoplus_{i\geq0}F^p{\rm Gr}^W_{m-i}(\Omega^{\bullet}_{X_{[i+1]}}(\log D_{[i+1]})(-i)[-2i])[i]\\\nonumber
    &\simeq_{\rm qis}\bigoplus_{i\geq0}F^p(\Omega^\bullet_{D_{[i+1],[m-i]}}(-m)[-m])\\\nonumber
    &\simeq_{\rm qis}\bigoplus_{i\geq0}\Omega^{\geq p-m}_{D_{[i+1],[m-i]}}[-m].
    \end{align}
    Therefore
    \begin{align*}
    {\rm Gr}^p_F{\rm Gr}^W_m\Omega^\bullet_{(X,D),{\rm BM}}\simeq_{\rm qis}\bigoplus_{i\geq0}\Omega^{p-m}_{D_{[i+1],[m-i]}}[-p].
    \end{align*}
    \begin{lem}\label{lem_mHC_logsmooth}
    	$\left((\bD_X(Rj_!\bQ_{X^o}),\tau_\bullet),(\Omega^\bullet_{(X,D),{\rm BM}},W_{\bullet},F^\bullet)\right)$ is a mixed Hodge complex of sheaves.
    \end{lem}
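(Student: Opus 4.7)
The proof proposal is as follows. The plan is to reduce the statement to Proposition \ref{prop_logsmooth_mHC} applied stratum by stratum, and then check that the Gysin maps assembled in \S \ref{section_Gysin_map} glue the strata into a mixed Hodge complex of sheaves in the sense of Deligne.

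First I would verify the rational comparison. By Deligne's classical theorem, for each $k\geq 1$ the canonical comparison
\[
\Omega^\bullet_{X_{[k]}}(\log D_{[k]})\otimes\bC\simeq_{\rm qis} Rj_\ast\bC_{X^o_{[k]}}
\]
carries the weight filtration $W_\bullet$ on the left to the canonical filtration $\tau_{[k],\bullet}$ on the right. Pushing forward along $i_{[k]}$ and taking the total complex in (\ref{align_BM_complex_resolve}), the assembled map $\Omega^\bullet_{(X,D),{\rm BM}}\otimes\bC\to \bD_X(Rj_!\bC_{X^o})$ is a quasi-isomorphism provided the Gysin boundaries on both sides match. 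This compatibility was precisely the content of \S \ref{section_Gysin_map}: the Gysin map in $DFW(X)$ between logarithmic de Rham complexes induces the topological Gysin map in the derived category under the comparison, and it respects both Hodge and weight filtrations up to a shift by the Tate twist $(-1)[-2]$.

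Next I would check that the weight filtration is sent correctly. A priori $W_\bullet$ on $\Omega^\bullet_{(X,D),{\rm BM}}$ is induced by $W_{m-(k-1)}\Omega^\bullet_{X_{[k]}}(\log D_{[k]})$ on the $k$-th summand, while $\tau_\bullet$ on $\bD_X(Rj_!\bQ_{X^o})$ is induced by $\tau^{\leq m-(k-1)}Rj_\ast\bQ_{X^o_{[k]}}$; the shifts by $k-1$ on both sides are identical, coming from the fact that each Gysin differential raises Tate-weight by one. Thus the stratumwise identifications $W_m\simeq\tau_{\leq m}$ assemble to give a filtered quasi-isomorphism between the total complexes.

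Finally I would verify the purity of each graded piece. The computation in (\ref{align_BM_GrW}) identifies
\[
{\rm Gr}^W_m\Omega^\bullet_{(X,D),{\rm BM}}\simeq_{\rm qis}\bigoplus_{i\geq 0}\Omega^\bullet_{D_{[i+1],[m-i]}}(-m)[-m],
\]
and each $D_{[i+1],[m-i]}$ is a disjoint union of smooth projective varieties (being strata of a simple normal crossing divisor on the projective variety $X_{[i+1]}$). Together with the Hodge filtration recorded in (\ref{align_BM_GrW_F}), each summand is the standard Hodge complex on a smooth projective variety, Tate-twisted by $-m$ and shifted by $-m$, so it carries a pure Hodge structure of weight $m$ in the correct degree. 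Combined with the rational comparison above, this verifies the three Deligne axioms of a mixed Hodge complex of sheaves.

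The main obstacle I anticipate is bookkeeping the shifts: one must confirm that the shift by $k-1$ intrinsic to the Gysin differentials on the total complex matches exactly the shift inside the canonical and weight filtrations, so that the filtered quasi-isomorphism holds term-by-term rather than merely up to decalage. Once this is pinned down, the rest is a routine assembly of the log smooth case (Proposition \ref{prop_logsmooth_mHC}) across the strata via the bi-filtered Gysin maps constructed in \S \ref{section_Gysin_map}.
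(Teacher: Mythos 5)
Your proposal is correct and follows essentially the same route as the paper: apply Proposition \ref{prop_logsmooth_mHC} to each log smooth stratum $(X_{[k]},D_{[k]})$, assemble the stratumwise filtered comparisons through the bi-filtered Gysin maps of \S \ref{section_Gysin_map} to get the filtered quasi-isomorphism $(\bD_X(Rj_!\bQ_{X^o}),\tau_\bullet)\otimes\bC\simeq(\Omega^\bullet_{(X,D),{\rm BM}},W_\bullet)$, and then use (\ref{align_BM_GrW}) and (\ref{align_BM_GrW_F}) to identify each ${\rm Gr}^W_m$ with a Tate-twisted, shifted direct sum of standard Hodge complexes on the smooth projective strata $D_{[i+1],[m-i]}$. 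The paper additionally spells out that the resulting Hodge spectral sequence for each ${\rm Gr}^W_m$ decomposes into the classical $E_1$-degenerate spectral sequences of these smooth projective varieties, which is implicit in your last step.
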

    \begin{proof}
    	According to Proposition \ref{prop_logsmooth_mHC}, $(Rj_\ast\bQ_{X^o_{[k]}}, \tau_{[k],\bullet}) \otimes_{\bQ} \bC$ is canonically filtered quasi-isomorphic to $(\Omega^\bullet_{X_{[k]}}(\log D_{[k]}), W_\bullet \Omega^\bullet_{X_{[k]}}(\log D_{[k]}))$. Consequently, 
    	\[
    	(\bD_X(Rj_!\bQ_{X^o}), \tau_\bullet) \otimes_{\bQ} \bC
    	\]
    	is filtered quasi-isomorphic to 
    	\[
    	(\Omega^\bullet_{(X,D),{\rm BM}}, W_\bullet).
    	\]
    	By (\ref{align_BM_GrW}) and (\ref{align_BM_GrW_F}), 
    	\[
    	({\rm Gr}^W_m \Omega^\bullet_{(X,D),{\rm BM}}, F^\bullet({\rm Gr}^W_m \Omega^\bullet_{(X,D),{\rm BM}}))
    	\]
    	is a Hodge complex of sheaves of weight $m$ for any $m \in \bZ$. It remains to show that the spectral sequence
    	\begin{align}\label{align_BM_ss_weightm}
    	E^{p,q}_1:=\bH^{p+q}(X,{\rm Gr}^p_F{\rm Gr}^W_m\Omega^\bullet_{(X,D),{\rm BM}})\Rightarrow \bH^{p,q}(X, {\rm Gr}^W_m\Omega^\bullet_{(X,D),{\rm BM}})
    	\end{align}
    	degenerates at the $E_1$-page for every $m$.
    	
    	By (\ref{align_BM_GrW}) and (\ref{align_BM_GrW_F}), the spectral sequence (\ref{align_BM_ss_weightm}) decomposes into a direct sum of spectral sequences
    	\[
    	E_1^{p,q}:=H^{q-p}(X, \Omega^{p-m}_{D_{[i+1],[m-i]}}) \Rightarrow H^{q-m}(X, \Omega^\bullet_{D_{[i+1],[m-i]}})
    	\]
    	for $0 \leq i \leq m$. Each of these spectral sequences degenerates at the $E_1$-page because each $D_{[i+1],[m-i]}$ is a smooth projective variety. Therefore, the spectral sequence (\ref{align_BM_ss_weightm}) degenerates at the $E_1$-page, completing the proof of the lemma.
    \end{proof}
    \subsubsection{Top Hodge piece}
    According to the construction of the Hodge filtration (\ref{align_Hodge_filtration}), the top Hodge piece is given by
    \begin{align}\label{align_top_Hodge_ab}
    F^n_{(X,D),{\rm BM}}:=\cdots\to  i_{[2]\ast}\omega_{X_{[2]}}(D_{[2]})[-1-n]\to i_{[1]\ast}\omega_{X_{[1]}}(D_{[1]})[-n],
    \end{align}
    \begin{lem}\label{lem_top_Hodge_BM}
    	$F^n_{(X,D),{\rm BM}}\simeq_{\rm qis}\sO_X(K_X+D)[-n]$.
    \end{lem}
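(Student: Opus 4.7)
The statement is local on $X$, so it suffices to verify the quasi-isomorphism in a neighborhood of each point, where we may choose coordinates with $X=\{z_1\cdots z_m=0\}$ in a smooth ambient variety and $D$ a further SNC divisor transverse to the stratification of $X$.

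First, I unwind the \textrm{Tot} construction. Since $\dim X_{[k+1]}=n-k$, the filtered piece $F^n\bigl(\Omega^\bullet_{X_{[k+1]}}(\log D_{[k+1]})(-k)[-2k]\bigr)$ collapses to the single top-degree sheaf $\omega_{X_{[k+1]}}(D_{[k+1]})$, and internal differentials within each layer vanish. Hence $F^n_{(X,D),\mathrm{BM}}$ reduces to the complex of sheaves built from $i_{[k+1]\ast}\omega_{X_{[k+1]}}(D_{[k+1]})$ placed in degree $n+k$, with the alternating sum of Gysin boundary morphisms of Section \ref{section_Gysin_map} as differentials.

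The key step is to identify this complex, after a uniform shift, with the Grothendieck--Serre dual of the \v{C}ech-type resolution attached to the cover of $X$ by its irreducible components, twisted by the invertible sheaf $\sO_X(-D)$:
\[
0\to \sO_X(-D)\to i_{[1]\ast}\sO_{X_{[1]}}(-D_{[1]})\to i_{[2]\ast}\sO_{X_{[2]}}(-D_{[2]})\to\cdots.
\]
Exactness of this resolution is the standard inclusion--exclusion identity for the local cover of $X$ by coordinate hyperplanes, tensored with the locally free sheaf $\sO_X(-D)$. Applying $\mathbb{R}\mathcal{H}om_{\sO_X}(-,\omega_X^\bullet)$ with $\omega_X^\bullet=\omega_X[n]$, and using $i^!_{[k+1]}\omega_X^\bullet\simeq \omega_{X_{[k+1]}}[n-k]$ for the (smooth) stratum inclusions, yields
\[
\mathbb{R}\mathcal{H}om_{\sO_X}\bigl(i_{[k+1]\ast}\sO_{X_{[k+1]}}(-D_{[k+1]}),\omega_X^\bullet\bigr)\simeq i_{[k+1]\ast}\omega_{X_{[k+1]}}(D_{[k+1]})[n-k].
\]
The functoriality of Grothendieck duality with respect to the Poincar\'e residue sequence of Section \ref{section_Gysin_map} identifies the dualized restriction differentials with exactly the alternating-sum Gysin boundary morphisms appearing in $F^n_{(X,D),\mathrm{BM}}$. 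Tracking the degree shifts, this produces the asserted quasi-isomorphism
\[
F^n_{(X,D),\mathrm{BM}}\;\simeq\;\sO_X(K_X+D)[-n].
\]

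The principal technical hurdle is the sign- and degree-bookkeeping in the duality argument, specifically matching the Grothendieck dual of the \v{C}ech restriction differentials with the Gysin morphisms defined via the residue sequence of Section \ref{section_Gysin_map}. This compatibility is established by a direct local calculation on the coordinate model, where both morphisms can be written explicitly as Poincar\'e residues of the logarithmic de Rham complex, so that after signed summation the two agree.
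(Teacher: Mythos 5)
Your proposal is correct and follows essentially the same route as the paper: identify the top Hodge piece with the complex of the sheaves $i_{[k]\ast}\omega_{X_{[k]}}(D_{[k]})$, apply Grothendieck--Serre duality to recognize it as $\bD^{\rm coh}_X$ of the \v{C}ech-type resolution $i_{[1]\ast}\sO_{X_{[1]}}(-D_{[1]})\to i_{[2]\ast}\sO_{X_{[2]}}(-D_{[2]})\to\cdots$ of $\sO_X(-D)$, and conclude that it is $\sO_X(K_X+D)[-n]$. Your explicit local check that the dualized restriction differentials agree with the Gysin/residue morphisms is a point the paper's proof passes over silently, so including it is a reasonable (and welcome) refinement rather than a deviation.
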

    \begin{proof}
    	Denote by $\bD^{\rm coh}_X(-)=Rhom_{\sO_X}(-,\omega^\bullet_X)$ the duality operator on the derived category of coherent sheaves on $X$, where $\omega^\bullet_X\simeq_{\rm qis}\sO_X(K_X)[n]$ represents the dualizing complex of $X$. Then the Grothendieck-Serre duality theorem implies that
    	\begin{align*}
    	F^n_{(X,D),{\rm BM}}\simeq_{\rm qis}&\cdots\to i_{[2]\ast}\omega_{X_{[2]}}(D_{[2]})[-1-n]\to i_{[1]\ast}\omega_{X_{[1]}}(D_{[1]})[-n]\\\nonumber
    	\simeq_{\rm qis}&{\rm Tot}\left(\cdots\to i_{[2]\ast}\bD^{\rm coh}_{X_{[2]}}(\sO_{X_{[2]}}(-D_{[2]}))[-2n]\to i_{[1]\ast}\bD^{\rm coh}_{X_{[1]}}(\sO_{X_{[1]}}(-D_{[1]}))[-2n]\right)\\\nonumber
    	\simeq_{\rm qis}&{\rm Tot}\left(\cdots\to\bD^{\rm coh}_{X}(i_{[2]\ast}\sO_{X_{[2]}}(-D_{[2]}))\to \bD^{\rm coh}_{X}(i_{[1]\ast}\sO_{X_{[1]}}(-D_{[1]}))\right)[-2n]\\\nonumber
    	\simeq_{\rm qis}&\bD^{\rm coh}_{X}\left(i_{[1]\ast}\sO_{X_{[1]}}(-D_{[1]})\to i_{[2]\ast}\sO_{X_{[2]}}(-D_{[2]})\to\cdots\right)[-2n]\\\nonumber
    	\simeq_{\rm qis}&\bD^{\rm coh}_{X}\left(\sO_X(-D)\otimes^L_{\sO_X}(i_{[1]\ast}\sO_{X_{[1]}}\to i_{[2]\ast}\sO_{X_{[2]}}\to\cdots)\right)[-2n]\\\nonumber
    	\simeq_{\rm qis}&\bD^{\rm coh}_{X}\left(\sO_X(-D)\otimes^L_{\sO_X}\sO_X\right)[-2n]\\\nonumber
    	\simeq_{\rm qis}&RHom_{\sO_X}(\sO_X(-D),\omega^\bullet_X[-n])[-n]\\\nonumber
    	\simeq_{\rm qis}&\sO_X(K_X+D)[-n].
    	\end{align*}
    \end{proof}
    This relation is the primary motivation for considering Borel-Moore cohomology in this paper. It generalizes Fujino's observations \cite{Fujino2014,Fujino2018} to the sheaf-theoretic level.
    \subsection{Variation of Borel-Moore cohomology}\label{section_VMHS_BM}
    Let $f:(X,D) \to S$ be a projective simple normal crossing family over a smooth base variety $S$. Suppose $\dim S = d$ and $\dim X = n + d$. In this section, we establish that the local system determined by the Borel-Moore cohomology groups $H^i_{\rm BM}(X^o_y, \mathbb{Q})$, where $X^o_y := f^{-1}(\{y\}) \cap X^o$, carries a variation of mixed Hodge structures. The detailed constructions will be presented in the subsequent subsections.
    \subsubsection{Underlying complex of constructible sheaves}
    For simplicity, we denote $$\bV^{\rm BM}_{(X,D)}:=\bD_X(Rj_!\bQ_{X^o}[2n+2d])\quad\textrm{and}\quad \bV^k_{f,{\rm BM}}:=R^kf_\ast(\bV^{\rm BM}_{(X,D)}).$$
    According to (\ref{align_BM_complex_resolve}), one obtains
    \begin{align}\label{align_BM_local_system}
    \bV^k_{f,{\rm BM}}&\simeq_{\rm qis}R^kf_\ast\left({\rm Tot}\left(\cdots\to i_{[2]\ast}Rj_\ast\bQ_{X^o_{[2]}}(-1)[-2]\to i_{[1]\ast} Rj_\ast\bQ_{X^o_{[1]}}\right)\right)\\\nonumber
    &\simeq_{\rm qis}h^k{\rm Tot}\left(\cdots\to Rf_\ast Rj_\ast\bQ_{X^o_{[2]}}(-1)[-2]\to Rf_\ast Rj_\ast\bQ_{X^o_{[1]}}\right)
    \end{align}
    where $h^k$ denotes the $k$-th cohomology sheaf. 
    \begin{lem}
    	The sheaf $\bV^k_{f,{\rm BM}}$ is a local system for every $k$.
    \end{lem}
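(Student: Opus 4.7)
The plan is to establish that $f:(X,D)\to S$ is topologically locally trivial in a stratified sense, from which local constancy of $\bV^k_{f,{\rm BM}}$ will follow by proper base change. The key input is the very rigid structure built into Definition \ref{defn_SNC_family}.

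First I would extract from Definition \ref{defn_SNC_family} the following stratification statement. Around every point $x\in X$, the SNC-over-$S$ condition provides a smooth ambient $Y/S$ together with a regular system of parameters $(z_1,\ldots,z_p,y_1,\ldots,y_r)$ over $S$ such that, locally, $X$ is cut out by $z_1\cdots z_p=0$ and $D$ is a non-negative combination of $\{y_i=0\}$. Grouping points of $X$ by the combinatorial type of the vanishing of the $z_i$'s and of the components of ${\rm Supp}(D)$ produces a canonical finite stratification of $X$. The explicit coordinate description shows that each stratum is smooth over $S$ and that $f$ restricts to a submersion on every stratum.

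Next, since $f$ is projective (hence proper) and a stratified submersion along this canonical stratification, Thom's first isotopy lemma (equivalently, a logarithmic version of Ehresmann's theorem adapted to SNC families) furnishes analytic local triviality: for each $s\in S$ there exist an open neighborhood $U\subset S$ of $s$ and a stratum-preserving homeomorphism $\Phi:f^{-1}(U)\xrightarrow{\sim} X_s\times U$ carrying $D|_{f^{-1}(U)}$ to $D_s\times U$ and intertwining $f$ with the second projection $p_2:X_s\times U\to U$. Under this identification, the constructible complex $\bV^{\rm BM}_{(X,D)}$ restricts, canonically, to the pullback $p_1^\ast\bV^{\rm BM}_{(X_s,D_s)}$ along the first projection $p_1:X_s\times U\to X_s$, because the operations $Rj_!$ and $\bD$ out of which $\bV^{\rm BM}_{(X,D)}$ is built commute with pullback along the product structure.

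Finally, I would invoke proper base change for $p_2$ (using that $X_s$ is compact) to conclude
$Rf_\ast\bV^{\rm BM}_{(X,D)}|_U \simeq \underline{R\Gamma(X_s,\bV^{\rm BM}_{(X_s,D_s)})}_U$.
Taking $k$-th cohomology sheaves yields $\bV^k_{f,{\rm BM}}|_U\simeq \underline{H^k_{\rm BM}(X^o_s,\bQ)}_U$, a constant sheaf, which gives the required local constancy. I expect the main hurdle to be the stratified local triviality step: while the coordinate description of SNC families makes the strata manifest, trivializing $f^{-1}(U)$ over $U$ simultaneously with its stratification requires Thom's isotopy lemma (or an equivalent logarithmic Ehresmann argument via integration of controlled vector fields). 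Once this topological input is secured, the base change conclusion is routine.
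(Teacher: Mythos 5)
Your argument is correct, but it takes a genuinely different route from the paper. The paper never invokes topological local triviality of the total space: it works with the resolution (\ref{align_BM_complex_resolve}) of $\bV^{\rm BM}_{(X,D)}$ by the terms $i_{[k]\ast}Rj_\ast\bQ_{X^o_{[k]}}$, filters it by the stupid filtration, and obtains the spectral sequence (\ref{align_Lerayss_BM}) whose $E_1$-terms are $R^qf_\ast Rj_\ast\bQ_{X^o_{[p+1]}}$. Since each $(X_{[k]},D_{[k]})\to S$ is a \emph{log smooth} family, these $E_1$-terms are local systems by the classical (smooth-strata) case, and $\bV^k_{f,{\rm BM}}$, being a finite iterated extension of subquotients of local systems, is itself a local system. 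Your proof instead establishes local constancy directly on the singular total space via Thom's first isotopy lemma applied to the canonical stratification of the simple normal crossing family, followed by proper base change. What the paper's route buys is that it only uses the log smooth case as a black box and stays entirely within the framework of constructible complexes and spectral sequences already set up for the Hodge-theoretic arguments; what your route buys is a direct identification of the stalks with $H^k_{\rm BM}(X^o_s,\bQ)$ without any spectral sequence. Two points in your sketch deserve care but are not gaps: the trivialization $\Phi$ is only a stratum-preserving homeomorphism, so one must observe that all operations entering $\bV^{\rm BM}_{(X,D)}$ ($Rj_!$ of a constant sheaf and Verdier duality) are purely topological and hence compatible with $\Phi$; and the shift bookkeeping $\bD_{X_s\times U}(p_1^\ast Rj_!\bQ[2n+2d])\simeq p_1^!\bD_{X_s}(Rj_!\bQ[2n+2d])\simeq p_1^\ast\bD_{X_s}(Rj_!\bQ[2n])$ must be checked so that the restriction of the relative complex really is the pullback of the fiberwise one.
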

    \begin{proof}
    	Define the stupid filtration $L^\bullet$ of
    	$${\rm Tot}\left(\cdots\to i_{[2]\ast}Rj_\ast\bQ_{X^o_{[2]}}(-1)[-2]\to i_{[1]\ast} Rj_\ast\bQ_{X^o_{[1]}}\right)$$
    	by
    	$$L^p={\rm Tot}\left(\cdots\to i_{[p+2]\ast}Rj_\ast\bQ_{X^o_{[p+2]}}(-p-1)[-2p-2]\to i_{[p+1]\ast} Rj_\ast\bQ_{X^o_{[p+1]}}(-p)[-2p]\to0\to\cdots\to 0\right).$$ 
    	It gives rise to a spectral sequence
    	\begin{align}\label{align_Lerayss_BM}
    		E_1^{p,q}=R^{p+q}f_\ast({\rm Gr}^p_L(\bV^{\rm BM}_{(X,D)}))=R^qf_\ast Rj_\ast\bQ_{X^o_{[p+1]}}\Rightarrow \bV^{p+q}_{f,{\rm BM}}.
    	\end{align}  
    	Since $(X_{[k]},D_{[k]})$ is log smooth over $S$, the sheaf $R^qf_\ast Rj_\ast\bQ_{X^o_{[k]}}$ is a local system for every $k$ and $q$. Consequently, by employing (\ref{align_Lerayss_BM}), we obtain the lemma.
    \end{proof}
    Given $k \geq 0$, the local system $\bV^k_{f,{\rm BM}}$ induces a flat connection $(\cV^k_{f,{\rm BM}}, \nabla)$, where $\cV^k_{f,{\rm BM}} := \sO_S \otimes_{\bQ} \bV^k_{f,{\rm BM}}$ and $\nabla := d \otimes {\rm Id}$. Consider the complex $f^{-1}\Omega_S^\bullet \otimes_{\bQ} \bV^{\rm BM}_{(X,D)}$, equipped with a filtration $\{L^p\}_{p \geq 0}$, where $L^p := f^{-1}\Omega_S^{\geq p} \otimes_{\bQ} \bV^{\rm BM}_{(X,D)}$. Notice that
    \begin{align}\label{align_GrL}
    	L^p/L^{p+1} \simeq f^{-1}\Omega_S^p \otimes_{\bQ} \bV^{\rm BM}_{(X,D)}[-p].
    \end{align}
    Applying the functor $Rf_\ast$ to this filtered complex yields a spectral sequence
    \begin{align*}
    	E^{p,q}_1 := \Omega_S^p \otimes_{\bQ} R^qf_\ast(\bV^{\rm BM}_{(X,D)}) \Rightarrow R^{p+q}f_\ast(f^{-1}\Omega_S^\bullet \otimes_{\bQ}\bV^{\rm BM}_{(X,D)}).
    \end{align*}
    The morphisms of the $E_1$-terms give rise to
    $$\nabla:=d\otimes{\rm Id}:\Omega_S^p\otimes_{\bQ}R^qf_\ast(\bV^{\rm BM}_{(X,D)})\to \Omega_S^{p+1}\otimes_{\bQ}R^qf_\ast(\bV^{\rm BM}_{(X,D)}).$$
    In the special case when $p=0$, the morphism $E^{0,q} \to E^{1,q}$ is given by
    $$\nabla:=d\otimes{\rm Id}:\sO_S\otimes_{\bQ}R^qf_\ast(\bV^{\rm BM}_{(X,D)})\to \Omega_S\otimes_{\bQ}R^qf_\ast(\bV^{\rm BM}_{(X,D)}),$$
    which is the flat bundle $(\cV^k_{f,{\rm BM}}, \nabla)$ associated with $\bV^q_{f,{\rm BM}}=R^qf_\ast(\bV^{\rm BM}_{(X,D)})$ via the Riemann-Hilbert correspondence.
    \subsubsection{Flat connection}\label{section_flat_connection}
    We introduce the complex
    $$\Omega^\bullet_{(X,D),{\rm BM}}:={\rm Tot}\left(\cdots\to i_{[2]\ast}\Omega^\bullet_{X_{[2]}}(\log D_{[2]})[-2]\to i_{[1]\ast}\Omega^\bullet_{X_{[1]}}(\log D_{[1]})\right),$$
    and define the Koszul filtration $K^\bullet$ on it by 
    $$K^p={\rm Tot}\left(\cdots\to f^\ast\Omega^p_S\wedge i_{[2]\ast}\Omega^{\bullet-p}_{X_{[2]}}(\log D_{[2]})[-2]\to f^\ast\Omega^p_S\wedge i_{[1]\ast}\Omega^{\bullet-p}_{X_{[1]}}(\log D_{[1]})\right).$$
    We define the complex of $f^{-1}\sO_S$-modules
    $$\Omega^\bullet_{f,{\rm BM}}={\rm Tot}\left(\cdots\to i_{[2]\ast}\Omega^\bullet_{X_{[2]}/S}(\log D_{[2]})(-1)[-2]\to i_{[1]\ast}\Omega^\bullet_{X_{[1]}/S}(\log D_{[1]})\right).$$
    Then, we have
    \begin{align}\label{align_Gr_K}
    	K^p/K^{p+1}\simeq   f^{-1}\Omega_S^p\otimes_{f^{-1}\sO_S}\Omega^\bullet_{f,{\rm BM}}[-p].
    \end{align}
    Applying the functor $Rf_\ast$ to this filtered complex yields the spectral sequence
    \begin{align*}
    	E^{p,q}_1:=\Omega_S^p\otimes_{\sO_S}R^qf_\ast(\Omega^\bullet_{f,{\rm BM}})\Rightarrow R^{p+q}f_\ast(\Omega^\bullet_{(X,D),{\rm BM}}).
    \end{align*}
    The morphisms of the $E_1$-terms give rise to the Gauss-Manin connections
    $$\nabla_{\rm GM}:\Omega_S^p\otimes_{\sO_S}R^qf_\ast(\Omega^\bullet_{f,{\rm BM}})\to \Omega_S^{p+1}\otimes_{\sO_S}R^qf_\ast(\Omega^\bullet_{f,{\rm BM}}),$$
    which are defined as the connecting morphism $\delta$ in the long exact sequences
    $$\cdots\to R^{p+q}f_\ast(K^p/K^{p+2})\to\Omega_S^p\otimes_{\sO_S}R^qf_\ast(\Omega^\bullet_{f,{\rm BM}})\stackrel{\delta}{\to} \Omega_S^{p+1}\otimes_{\sO_S}R^qf_\ast(\Omega^\bullet_{f,{\rm BM}})\to\cdots$$
    associated with the distinguished triangle
    \begin{align}\label{align_nablaBM_def}
    \small\xymatrix{
    	K^{p+1}/K^{p+2}\ar[d]^{\simeq} \ar[r]&K^p/K^{p+2}\ar[r] &K^p/K^{p+1}\ar[d]^{\simeq}\ar[r]&\\
    	f^{-1}\Omega_S^{p+1}\otimes_{f^{-1}\sO_S}\Omega^\bullet_{f,{\rm BM}}[-p-1] & &f^{-1}\Omega_S^{p}\otimes_{f^{-1}\sO_S}\Omega^\bullet_{f,{\rm BM}}[-p] &
    }.
    \end{align}
    In the special case when $p=0$, the morphism $E^{0,q}\to E^{1,q}$ gives us the flat bundle
    $$\nabla_{\rm GM}:R^qf_\ast(\Omega^\bullet_{f,{\rm BM}})\to \Omega_S\otimes_{\sO_S} R^qf_\ast(\Omega^\bullet_{f,{\rm BM}}).$$
    \begin{lem}\label{lem_BM_rel_Poincare}
    	There exists a canonical filtered quasi-isomorphism
    	$$\left(f^{-1}\Omega_S^\bullet\otimes_{\bQ}\bV^{\rm BM}_{(X,D)},L^\bullet\right)\simeq_{\rm qis}\left(\Omega^\bullet_{(X,D),{\rm BM}},K^\bullet\right).$$
    	In particular, there is an isomorphism between the flat connections
    	$$(\sO_S\otimes_{\bQ} \bV^k_{f,{\rm BM}},d\otimes{\rm Id})\simeq (R^kf_\ast(\Omega^\bullet_{f,{\rm BM}}),\nabla_{\rm GM})$$
    	for every $k$.
    \end{lem}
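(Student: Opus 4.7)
The plan is to establish the filtered quasi-isomorphism by comparing the associated graded pieces with respect to $L^\bullet$ and $K^\bullet$, and then to derive the asserted isomorphism of flat connections by applying $R^k f_\ast$ to the edge terms of the induced spectral sequences. First I would note, using (\ref{align_GrL}) and (\ref{align_Gr_K}), that
$$\mathrm{Gr}^p_L\bigl(f^{-1}\Omega_S^\bullet\otimes_{\bQ}\bV^{\rm BM}_{(X,D)}\bigr)\simeq f^{-1}\Omega^p_S\otimes_\bQ \bV^{\rm BM}_{(X,D)}[-p]$$
and
$$\mathrm{Gr}^p_K\bigl(\Omega^\bullet_{(X,D),{\rm BM}}\bigr)\simeq f^{-1}\Omega^p_S\otimes_{f^{-1}\sO_S}\Omega^\bullet_{f,{\rm BM}}[-p].$$
Since $f^{-1}\Omega^p_S$ is locally free over $f^{-1}\sO_S$, it suffices to produce a canonical quasi-isomorphism $f^{-1}\sO_S\otimes_\bQ\bV^{\rm BM}_{(X,D)}\simeq_{\rm qis}\Omega^\bullet_{f,{\rm BM}}$ that is compatible with the natural Koszul-type structure on each side.

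To construct this quasi-isomorphism I would exploit the explicit presentations from Section \ref{section_BM_coh} and Section \ref{section_flat_connection}: both $\bV^{\rm BM}_{(X,D)}$, via (\ref{align_BM_complex_resolve}), and $\Omega^\bullet_{f,{\rm BM}}$, by definition, are totalizations of double complexes whose $k$-th column is respectively
$$i_{[k]\ast}Rj_\ast\bQ_{X^o_{[k]}}(-k+1)[-2k+2]\quad\textrm{and}\quad i_{[k]\ast}\Omega^\bullet_{X_{[k]}/S}(\log D_{[k]})(-k+1)[-2k+2],$$
connected by Gysin maps. By Definition \ref{defn_SNC_family}, each stratum $(X_{[k]},D_{[k]})$ is log smooth over $S$, so the classical relative logarithmic Poincaré lemma yields a natural quasi-isomorphism $f^{-1}\sO_S\otimes_\bQ Rj_\ast\bQ_{X^o_{[k]}}\simeq_{\rm qis}\Omega^\bullet_{X_{[k]}/S}(\log D_{[k]})$ on each column.

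The main obstacle is to verify that these stratum-wise quasi-isomorphisms are compatible with the Gysin maps on both sides, so that they assemble into a quasi-isomorphism of the two totalizations. Compatibility follows from a relative version of the construction in Section \ref{section_Gysin_map}: the relative residue sequence among the logarithmic de Rham complexes of $X_K$, of $X_K$ with the added smooth divisor $X_J$, and of $X_J$ is bi-filtered exact over $S$, hence yields a distinguished triangle whose boundary map realizes the algebraic relative Gysin map, and this map corresponds under the relative Poincaré lemma to the Verdier dual of the topological restriction. Tracking signs and Tate twists through the totalization is the step where one must be most attentive. Once the filtered quasi-isomorphism is in place, I would apply $R^k f_\ast$: the spectral sequences associated with $L^\bullet$ and $K^\bullet$ have $d_1$-differentials given respectively by $d\otimes\mathrm{Id}$ and by the Gauss-Manin connection $\nabla_{\rm GM}$ from (\ref{align_nablaBM_def}), so the identification of filtered complexes induces the asserted isomorphism of flat bundles with connection.
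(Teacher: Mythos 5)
Your proposal is correct and follows essentially the same route as the paper: construct the comparison morphism from the stratum-wise canonical maps $Rj_\ast\bQ_{X^o_{[k]}}\to\Omega^\bullet_{X_{[k]}}(\log D_{[k]})$, check it respects the filtrations, verify it is a quasi-isomorphism on graded pieces via the relative logarithmic Poincar\'e lemma for the log smooth families $(X_{[k]},D_{[k]})\to S$ (using (\ref{align_GrL}) and (\ref{align_Gr_K})), and then identify the two connections as the $E^{0,k}_1\to E^{1,k}_1$ edge morphisms. Your explicit attention to the compatibility of the column-wise quasi-isomorphisms with the Gysin maps is a point the paper leaves implicit, but it is the same argument.
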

    \begin{proof}
    	By (\ref{align_BM_complex_resolve}) one may replace $\bV^{\rm BM}_{(X,D)}$ with $${\rm Tot}\left(\cdots\to i_{[2]\ast}Rj_\ast\bQ_{X^o_{[2]}}(-1)[-2]\to i_{[1]\ast}Rj_\ast\bQ_{X^o_{[1]}}\right).$$
    	Consequently, there exists a morphism
    	$$\bV^{\rm BM}_{(X,D)}\to \Omega^\bullet_{(X,D),{\rm BM}}$$
    	induced by canonical morphisms
    	$Rj_\ast\bQ_{X^o_{[i]}}\to \Omega^\bullet_{X_{[i]}}(\log D_{[i]})$ ($i\geq0$). This extends to a filtered morphism
    	$$\varphi:f^{-1}\Omega_S^\bullet\otimes_{\bQ}\bV^{\rm BM}_{(X,D)}\to\Omega^\bullet_{(X,D),{\rm BM}}$$
    	such that $\varphi(L^k) \subset K^k$ for all $k$.
    	Since $(X_{[i]}, D_{[i]}) \to S$ is a log smooth family, there exists a quasi-isomorphism
    	\begin{align*}
    		f^{-1}\sO_S \otimes_{\bQ} Rj_\ast\bQ_{X^o_{[i]}} \simeq \Omega^\bullet_{X_{[i]}/S}(\log D_{[i]})
    	\end{align*}
    	for each $i$. According to (\ref{align_GrL}) and (\ref{align_Gr_K}), this demonstrates that $\varphi$ is a filtered isomorphism. This establishes the first claim of the lemma.
    	
    	Furthermore, since the left (resp. right) flat connection arises from the morphism $E^{0,k}_1 \to E^{1,k}_1$ of the spectral sequence associated with the left (resp. right) filtered complex, the second claim follows.
    \end{proof}  
    \subsubsection{Hodge and Weight filtrations}\label{section_Hodge_weight_BM}
    According to Lemma \ref{lem_BM_rel_Poincare}, there exists a canonical quasi-isomorphism
    \begin{align}\label{align_relPoincare_BM}
    	f^{-1}\sO_S \otimes_{\bQ} \bV^{\rm BM}_{(X,D)} \simeq_{\rm qis} \Omega^\bullet_{f,{\rm BM}}
    \end{align}
    induced by the quasi-isomorphisms
    \begin{align}\label{align_relPoincare_logsmooth}
    	(fi_{[k]})^{-1}(\sO_S) \otimes_{\bQ} Rj_\ast\bQ_{X^o_{[k]}} \simeq_{\rm qis} \Omega^\bullet_{X_{[k]}/S}(\log D_{[k]})
    \end{align}
    for all $k \geq 0$.
    
    For every $k \geq 0$ and for all $m, p \in \bZ$, we define
    \begin{align*}
    	W_m\Omega^p_{X_{[k]}/S}(\log D_{[k]}) = 
    	\begin{cases}
    		0, & m < 0 \\
    		\Omega^p_{X_{[k]}/S}(\log D_{[k]}), & m \geq p \\
    		\Omega^{p-m}_{X_{[k]}/S} \wedge \Omega^m_{X_{[k]}/S}(\log D_{[k]}), & 0 \leq m \leq p
    	\end{cases}
    \end{align*}
    and
    \[
    F^p\Omega^\bullet_{X_{[k]}/S}(\log D_{[k]}) := \Omega^{\geq p}_{X_{[k]}/S}(\log D_{[k]}).
    \]
    We define the weight filtration $W_{\bullet}$ of $\Omega_{f,{\rm BM}}^\bullet$ by
    \begin{align*}
    W_m\Omega_{f,{\rm BM}}^\bullet&:={\rm Tot}\left(\cdots\to i_{[2]\ast}W_{m-1}(\Omega^\bullet_{X_{[2]}/S}(\log D_{[2]})(-1)[-2])\to i_{[1]\ast}W_m\Omega^\bullet_{X_{[1]}/S}(\log D_{[1]})\right)\\\nonumber
    &={\rm Tot}\left(\cdots\to i_{[2]\ast}W_{m-1}\Omega^\bullet_{X_{[2]}/S}(\log D_{[2]})[-2]\to i_{[1]\ast}W_m\Omega^\bullet_{X_{[1]}/S}(\log D_{[1]})\right)
    \end{align*}
    and the Hodge filtration $F^\bullet$ by
    \begin{align*}
    F^p\Omega_{f,{\rm BM}}^\bullet&:=\sigma^{\geq p}\Omega_{f,{\rm BM}}^\bullet\\\nonumber
    &={\rm Tot}\left(\cdots\to i_{[2]\ast}F^p(\Omega^{\bullet}_{X_{[2]}/S}(\log D_{[2]})(-1)[-2])\to i_{[1]\ast}F^p(\Omega^{\bullet}_{X_{[1]}/S}(\log D_{[1]}))\right)\\\nonumber
    &={\rm Tot}\left(\cdots\to i_{[2]\ast}\Omega^{\geq p-1}_{X_{[2]}/S}(\log D_{[2]})[-2]\to i_{[1]\ast}\Omega^{\geq p}_{X_{[1]}/S}(\log D_{[1]})\right).
    \end{align*}
The weight filtration $\tau_\bullet$ defined in (\ref{align_BM_weight_filtation}) on $\bV^{\rm BM}_{(X,D)}$ induces the corresponding weight filtration $f^{-1}\sO_S \otimes_{\bQ} \tau_\bullet$ on $f^{-1}\sO_S \otimes_{\bQ} \bV^{\rm BM}_{(X,D)}$.
    \begin{lem}\label{lem_filtered_rePoincare_BM}
    	(\ref{align_relPoincare_BM}) provides a filtered quasi-isomorphism
    	$$\left(f^{-1}\sO_S\otimes_{\bQ}\bV^{\rm BM}_{(X,D)},f^{-1}\sO_S\otimes_{\bQ}\tau_\bullet\right)\simeq_{\rm qis}\left(\Omega^\bullet_{f,{\rm BM}},W_\bullet\right).$$
    \end{lem}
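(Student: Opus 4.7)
The plan is to establish the filtered quasi-isomorphism column-by-column on each log smooth piece $(X_{[k]}, D_{[k]}) \to S$ of the totalization defining $\Omega^\bullet_{f,{\rm BM}}$, and then to assemble these into a single filtered quasi-isomorphism of totalizations. The underlying (unfiltered) quasi-isomorphism (\ref{align_relPoincare_BM}) is already constructed by assembling the column-wise quasi-isomorphisms (\ref{align_relPoincare_logsmooth}), so the entire question reduces to checking that this assembly respects the filtrations.

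First, I would invoke the relative version of the classical Deligne-Steenbrink result for log smooth families: for each $k \geq 0$, the canonical morphism
\[ (fi_{[k]})^{-1}\sO_S \otimes_{\bQ} Rj_\ast\bQ_{X^o_{[k]}} \to \Omega^\bullet_{X_{[k]}/S}(\log D_{[k]}) \]
is a filtered quasi-isomorphism with respect to the canonical filtration $\tau$ on the source and the weight filtration $W_\bullet$ on the target. The absolute analog is the content of Proposition \ref{prop_logsmooth_mHC}; the relative version follows by the same residue-based argument, since the associated graded pieces with respect to $W_\bullet$ on the de Rham side are controlled by (the relative version of) Lemma \ref{lem_mHC_logsmoothpair}, and the relative Poincar\'e lemma for log smooth families identifies these graded pieces with those of the topological canonical filtration on the strata $D_{[k],[m]}$.

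Second, I would assemble the column-wise filtered quasi-isomorphisms. The Tate twists on the rational side are invisible after tensoring with $\sO_S$ (they are merely $\bQ$-structure rescalings), while the cohomological shifts $[-2(k-1)]$ on the $k$-th column account for the uniform index shift $\tau^{\leq m-k+1}$ versus $W_{m-k+1}$ that appears in (\ref{align_BM_weight_filtation}) and in the definition of $W_m\Omega^\bullet_{f,{\rm BM}}$. Compatibility of the transition maps between columns with the respective filtrations is a consequence of the construction in \S\ref{section_Gysin_map}: on the Hodge side, the Gysin map arises from the boundary of the bifiltered exact residue sequence (\ref{align_residue_sequence}), and on the rational side the Gysin map respects $\tau$ by construction as the Verdier dual of a restriction map, which is strict for the canonical filtration.

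The main technical point is the bookkeeping of shifts under totalization together with strictness of the transition maps. Once column-wise filtered quasi-isomorphisms and strict compatibility of the transition differentials are known, a standard spectral sequence argument --- analogous to the one used in the proof of Lemma \ref{lem_mHC_logsmooth}, and comparing the two filtered spectral sequences arising from the column filtration of each totalization --- shows that the morphism $\varphi$ of Lemma \ref{lem_BM_rel_Poincare} upgrades to a filtered quasi-isomorphism. This yields the desired identification and completes the proof.
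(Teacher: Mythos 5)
Your proposal is correct and follows essentially the same route as the paper: the paper's proof likewise reduces to the columnwise relative filtered Poincar\'e lemma for each log smooth piece $(X_{[k]},D_{[k]})\to S$ (the relative version of Proposition \ref{prop_logsmooth_mHC}, via Lemma \ref{lem_mHC_logsmooth}) and then assembles these into the totalization, with the shift/Tate-twist bookkeeping and the filtration-compatibility of the Gysin maps from \S\ref{section_Gysin_map} playing exactly the role you describe. The only difference is that the paper states this in two sentences, whereas you spell out the assembly step explicitly.
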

\begin{proof}
	The relative version of Lemma \ref{lem_mHC_logsmooth} demonstrates that (\ref{align_relPoincare_logsmooth}) is a filtered quasi-isomorphism that preserves the weight filtration $(fi_{[k]})^{-1}(\sO_S) \otimes_{\bQ} \tau_{[k],\bullet}$ on the left-hand side and the weight filtration $W_\bullet\Omega^\bullet_{X_{[k]}/S}(\log D_{[k]})$ on the right-hand side. Consequently, this leads to the lemma.
\end{proof}
    As the relative version of (\ref{align_BM_GrW}), one observes that
    \begin{align}\label{align_BM_GrW_rel}
    	{\rm Gr}^W_m\Omega^\bullet_{f,{\rm BM}} &\simeq_{\rm qis} \bigoplus_{i \geq 0} {\rm Gr}^W_{m-i}(\Omega^{\bullet}_{X_{[i+1]}/S}(\log D_{[i+1]})(-i)[-2i])[i] \\\nonumber
    	&\simeq_{\rm qis} \bigoplus_{i \geq 0} {\rm Gr}^W_{m-i}(\Omega^{\bullet}_{X_{[i+1]}/S}(\log D_{[i+1]}))(-i)[-i] \\\nonumber
    	&\simeq_{\rm qis} \bigoplus_{i \geq 0} \Omega^\bullet_{D_{[i+1],[m-i]}/S}(-m)[-m],
    \end{align}
    with the induced Hodge filtration
    \begin{align}\label{align_BM_GrW_F_rel}
    	F^p({\rm Gr}^W_m\Omega^\bullet_{f,{\rm BM}}) &\simeq_{\rm qis} \bigoplus_{i \geq 0} F^p{\rm Gr}^W_{m-i}(\Omega^{\bullet}_{X_{[i+1]}/S}(\log D_{[i+1]})(-i)[-2i])[i] \\\nonumber
    	&\simeq_{\rm qis} \bigoplus_{i \geq 0} F^p(\Omega^\bullet_{D_{[i+1],[m-i]}/S}(-m)[-m]) \\\nonumber
    	&\simeq_{\rm qis} \bigoplus_{i \geq 0} \Omega^{\geq p-m}_{D_{[i+1],[m-i]}/S}[-m].
    \end{align}
    Consequently,
    \begin{align*}
    	{\rm Gr}^p_F{\rm Gr}^W_m\Omega^\bullet_{f,{\rm BM}} \simeq_{\rm qis} \bigoplus_{i \geq 0} \Omega^{p-m}_{D_{[i+1],[m-i]}/S}[-p].
    \end{align*}
    According to (\ref{align_BM_GrW_rel}), the weight filtration induces the spectral sequence
    \begin{align}\label{align_rel_ss_BM}
    E^{-m,k+m}_1(Rf_\ast(\Omega^\bullet_{f,{\rm BM}}),W)=R^kf_\ast({\rm Gr}_m^{W}\Omega^\bullet_{f,{\rm BM}})=\bigoplus_{i\geq0} R^{k-m}f_\ast(\Omega^\bullet_{D_{[i+1],[m-i]}/S})(-m)\Rightarrow R^{k}f_\ast(\Omega^\bullet_{f,{\rm BM}}).
    \end{align}
    Since each $D_{[i+1],[m-i]}$ is smooth and projective over $S$, it follows that each $R^{k-m} f_\ast(\Omega^\bullet_{D_{[i+1],[m-i]}/S})(-m)$ underlies a polarizable variation of Hodge structure of weight $k+m$, with the Hodge filtration given by $R^kf_\ast(F^\bullet(\operatorname{Gr}^W_m \Omega^\bullet_{f, \text{BM}}))$. In particular, the morphism $E^{-m,k+m}_1 \to E^{-m+1,k+m}_1$ on the $E_1$-page is strictly compatible with the Hodge filtrations. Consequently, the limit of this spectral sequence induces a weight filtration on $R^k f_\ast(\mathbb{V}^{\text{BM}}_{(X,D)})$ defined by
    \begin{align*}
    W_mR^kf_\ast(\bV^{\rm BM}_{(X,D)}):={\rm Im}\left(R^kf_\ast(\tau_{m-k}(\bV^{\rm BM}_{(X,D)}))\to R^kf_\ast(\bV^{\rm BM}_{(X,D)})\right),
    \end{align*}
    and a weight filtration on $R^kf_\ast(\Omega^\bullet_{f,{\rm BM}})$ defined by
    \begin{align*}
    W_mR^kf_\ast(\Omega^\bullet_{f,{\rm BM}}):={\rm Im}\left(R^kf_\ast(W_{m-k}\Omega^\bullet_{f,{\rm BM}})\to R^kf_\ast(\Omega^\bullet_{f,{\rm BM}})\right).
    \end{align*}
    As a consequence of Lemma (\ref{lem_filtered_rePoincare_BM}), we have the following result:   
    \begin{lem}
    	For every integer $k$, there exists a filtered quasi-isomorphism
    	\[ \sO_S \otimes_{\mathbb{Q}} (R^k f_\ast(\mathbb{V}^{{\rm BM}}_{(X,D)}), W_\bullet R^k f_\ast(\mathbb{V}^{{\rm BM}}_{(X,D)})) \simeq_{\rm qis} (R^k f_\ast(\Omega^\bullet_{f, {\rm BM}}), W_\bullet R^k f_\ast(\Omega^\bullet_{f, {\rm BM}})). \]
    \end{lem}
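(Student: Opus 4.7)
The plan is to deduce the lemma from the filtered quasi-isomorphism of Lemma \ref{lem_filtered_rePoincare_BM} by applying $R^kf_\ast$ and then invoking a projection-formula argument to extract the $\sO_S$-factor. Recall that both weight filtrations appearing in the statement were defined as images of natural morphisms, namely $R^kf_\ast(\tau_{m-k}\bV^{\rm BM}_{(X,D)})\to R^kf_\ast(\bV^{\rm BM}_{(X,D)})$ and $R^kf_\ast(W_{m-k}\Omega^\bullet_{f,{\rm BM}})\to R^kf_\ast(\Omega^\bullet_{f,{\rm BM}})$. So it suffices to identify these two morphisms with one another, after tensoring the first with $\sO_S$ over $\bQ$, via the quasi-isomorphism of Lemma \ref{lem_BM_rel_Poincare}.

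First I would apply $Rf_\ast$ to the filtered quasi-isomorphism
$(f^{-1}\sO_S \otimes_\bQ \bV^{\rm BM}_{(X,D)},\, f^{-1}\sO_S \otimes_\bQ \tau_\bullet) \simeq_{\rm qis} (\Omega^\bullet_{f,{\rm BM}}, W_\bullet)$
supplied by Lemma \ref{lem_filtered_rePoincare_BM}, obtaining for every $m$ and $k$ a commutative square of $\sO_S$-modules in which the vertical arrows are isomorphisms and the horizontal arrows are the inclusions into the respective full objects. Next I would invoke the projection formula
\[
R^kf_\ast(f^{-1}\sO_S \otimes_\bQ \sG) \simeq \sO_S \otimes_\bQ R^kf_\ast(\sG),
\]
valid for any bounded complex $\sG$ of sheaves of $\bQ$-vector spaces on $X$ because $\sO_S$ is flat over $\bQ$ and $f^{-1}$ is exact. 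Specializing to $\sG=\tau_{m-k}\bV^{\rm BM}_{(X,D)}$ and to $\sG=\bV^{\rm BM}_{(X,D)}$ recovers the plain flat sheaves $\sO_S \otimes_\bQ R^kf_\ast(-)$ on the left-hand side.

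Finally, again using flatness of $\sO_S$ over $\bQ$, the functor $\sO_S\otimes_\bQ(-)$ commutes with taking images, so
\[
\sO_S \otimes_\bQ \mathrm{Im}\bigl(R^kf_\ast(\tau_{m-k}\bV^{\rm BM}_{(X,D)}) \to R^kf_\ast(\bV^{\rm BM}_{(X,D)})\bigr) = \mathrm{Im}\bigl(\sO_S \otimes_\bQ R^kf_\ast(\tau_{m-k}\bV^{\rm BM}_{(X,D)}) \to \sO_S \otimes_\bQ R^kf_\ast(\bV^{\rm BM}_{(X,D)})\bigr),
\]
and by the previous step this image coincides with $W_m R^kf_\ast(\Omega^\bullet_{f,{\rm BM}})$ under the isomorphism of Lemma \ref{lem_BM_rel_Poincare}. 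The only subtle point I expect is verifying that the projection-formula isomorphism is compatible term-by-term with the filtered inclusions of Lemma \ref{lem_filtered_rePoincare_BM}, but this is formal once one observes that the filtrations on $\bV^{\rm BM}_{(X,D)}$ are by $\bQ$-subcomplexes and that $f^{-1}\sO_S\otimes_\bQ(-)$ is exact, so no derived correction appears.
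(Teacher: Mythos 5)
Your proposal is correct and follows essentially the same route as the paper, which states this lemma as an immediate consequence of Lemma \ref{lem_filtered_rePoincare_BM} without writing out the details; you have simply supplied the standard intermediate steps (push forward the filtered quasi-isomorphism, apply the projection formula, and use flatness of $\sO_S$ over $\bQ$ to commute $\sO_S\otimes_\bQ(-)$ with the images defining the two weight filtrations). One small correction: the projection formula $R^kf_\ast(f^{-1}\sO_S\otimes_\bQ\sG)\simeq\sO_S\otimes_\bQ R^kf_\ast(\sG)$ does not follow from flatness of $\sO_S$ over $\bQ$ and exactness of $f^{-1}$ alone — since the stalks of $\sO_S$ are infinite-dimensional over $\bQ$, one needs properness of $f$ (which holds here, $f$ being projective), so that $Rf_\ast=Rf_!$ and proper base change identify the stalks.
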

    By the lemma on two filtrations (\cite{Delign1974}), we obtain the following theorem.
    \begin{thm}\label{thm_BM_preVMHS}
    	\begin{enumerate}
    		\item The spectral sequence (\ref{align_rel_ss_BM}) degenerates at the $E_2$-page. 
    		\item The three filtrations $F_{\text{dir}}$, $F_{\text{dir}}^\ast$, and $F_{\text{ind}}$ coincide on each page of the spectral sequence (\ref{align_rel_ss_BM}).
    		\item The spectral sequence 
    		$${_F}E^{p,q}_1=R^{p+q}f_\ast({\rm Gr}_F^p\Omega^\bullet_{f,{\rm BM}})\Rightarrow R^{p+q}f_\ast(\Omega^\bullet_{f,{\rm BM}})$$
    		degenerates at $E_1$. In particular, the natual maps
    		$$R^kf_\ast(F^p\Omega^\bullet_{f,{\rm BM}})\to R^kf_\ast(\Omega^\bullet_{f,{\rm BM}})$$
    		are injective, and their images, denoted by $F^p R^k f_\ast(\Omega^\bullet_{f,{\rm BM}})$, coincide with $F_{\text{dir}} = F_{\text{dir}}^\ast = F_{\text{ind}}$. Moreover,
    		$${\rm Gr}_F^pR^kf_\ast(F^p\Omega^\bullet_{f,{\rm BM}})\simeq R^kf_\ast({\rm Gr}_F^p\Omega^\bullet_{f,{\rm BM}}).$$
    		\item For every integer $k$, the data $$\left((R^kf_\ast(\bV^{\rm BM}_{(X,D)}),W_\bullet R^kf_\ast(\bV^{\rm BM}_{(X,D)})),(R^kf_\ast(\Omega^\bullet_{f,{\rm BM}}),W_\bullet R^kf_\ast(\Omega^\bullet_{f,{\rm BM}}),F^\bullet R^kf_\ast(\Omega^\bullet_{f,{\rm BM}}))\right)$$ forms a graded polarizable pre-variation of mixed Hodge structures.
    		\item The spectral sequence for the filtered complex
    		$({\rm Gr}_F^pR^kf_\ast(\Omega^\bullet_{f,{\rm BM}}),W)$ degenerates at $E_2$.
    	\end{enumerate}
    \end{thm}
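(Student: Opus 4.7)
The plan is to run the standard machinery of Deligne's \emph{lemma on two filtrations} in the relative setting on the bi-filtered complex $(Rf_\ast \Omega^\bullet_{f,{\rm BM}}, W_\bullet, F^\bullet)$ on $S$. The crucial structural input, which replaces the ``weight spectral sequence is a morphism of Hodge structures'' principle from the absolute theory, is that the $E_1$-page of (\ref{align_rel_ss_BM}) is a direct sum of polarizable variations of Hodge structure of pure weight, and its $d_1$-differentials are morphisms of such variations.

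Concretely, (\ref{align_BM_GrW_rel}) and (\ref{align_BM_GrW_F_rel}) identify
$$
E_1^{-m,k+m} \;=\; R^k f_\ast({\rm Gr}^W_m \Omega^\bullet_{f,{\rm BM}}) \;\simeq\; \bigoplus_{i\geq 0} R^{k-m}f_\ast(\Omega^\bullet_{D_{[i+1],[m-i]}/S})(-m),
$$
equipped with the Hodge filtration $R^kf_\ast(F^\bullet {\rm Gr}^W_m \Omega^\bullet_{f,{\rm BM}})$. Since each $D_{[i+1],[m-i]}\to S$ is smooth and projective, each summand is the de Rham realization of a polarizable variation of Hodge structure of pure weight $k+m$, with Hodge-to-de Rham degeneration and Griffiths transversality built in. The $d_1$-differentials, assembled from alternating sums of the Gysin and restriction maps constructed in \S \ref{section_Gysin_map} (which are morphisms in $DFW$ preserving the bi-filtration up to the appropriate Tate shift), are morphisms of these variations. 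Strictness of morphisms of polarizable VHS with respect to the Hodge filtration then ensures that $\ker d_1 / {\rm im}\, d_1$ inherits a polarizable VHS structure of the same pure weight, on which the three candidate filtrations $F_{\rm dir}$, $F_{\rm dir}^\ast$, $F_{\rm ind}$ automatically coincide. Any higher differential $d_r$ with $r\geq 2$ would be a morphism between pure VHS of different weights, and therefore vanishes. This establishes (1) and (2).

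For (3), I verify stalkwise that the Hodge spectral sequence ${_F}E_1^{p,q} = R^{p+q}f_\ast({\rm Gr}_F^p \Omega^\bullet_{f,{\rm BM}})$ degenerates at $E_1$: at each $s\in S$, proper base change identifies this with the absolute Hodge-to-de Rham spectral sequence computing $H^{p+q}_{\rm BM}(X^o_s, \bC)$, which degenerates by Lemma \ref{lem_mHC_logsmooth}. A dimension count then forces injectivity of $R^kf_\ast(F^p\Omega^\bullet_{f,{\rm BM}}) \to R^kf_\ast(\Omega^\bullet_{f,{\rm BM}})$ and the stated formula for ${\rm Gr}_F^p$. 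Part (4) is then an assembly: the rational weight filtration (\ref{align_BM_weight_filtation}) cuts out sub-local systems on $R^kf_\ast(\bV^{\rm BM}_{(X,D)})$ via Lemma \ref{lem_filtered_rePoincare_BM}, Griffiths transversality follows because $\nabla_{\rm GM}$ arises from the Koszul filtration and the exterior derivative shifts $F^p$ to $F^{p-1}$, and the graded pieces are polarizable VHS as just established. Part (5) is once more the same two-filtration argument applied to $({\rm Gr}_F^p Rf_\ast \Omega^\bullet_{f,{\rm BM}}, W)$, whose $E_1$-page is the Hodge graded piece of the earlier $E_1$-page and thus again a direct sum of classical polarizable VHS of pure weight. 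The main obstacle is verifying that the Gysin-type $d_1$-differentials are genuine morphisms of polarizable variations of Hodge structure, preserving both the Hodge filtration and the polarizations; once this is reduced to the bi-filtered compatibility already recorded in \S \ref{section_Gysin_map} together with the Tate twist bookkeeping in (\ref{align_BM_GrW_rel}), the rest is a routine transfer of Deligne's absolute argument to the relative context.
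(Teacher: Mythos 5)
Your proposal is correct and follows essentially the same route as the paper, which simply invokes the relative two-filtration argument of \cite[Theorem 3.18]{Steenbrink2008} together with the key observation (recorded just before the theorem) that the $E_1$-page of (\ref{align_rel_ss_BM}) is a direct sum of polarizable variations of pure Hodge structure on which $d_1$ is strict. Your stalkwise base-change verification of (3) is a minor, equally standard variant of the same machinery.
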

    \begin{proof}
	    The proof follows the same argument as in \cite[Theorem 3.18]{Steenbrink2008}.
    \end{proof}
    \begin{thm}\label{thm_VMHS_BM}
    	For every integer $k$, the data $$\left((R^kf_\ast(\bV^{\rm BM}_{(X,D)}),W_\bullet R^kf_\ast(\bV^{\rm BM}_{(X,D)})),(R^kf_\ast(\Omega^\bullet_{f,{\rm BM}}),W_\bullet R^kf_\ast(\Omega^\bullet_{f,{\rm BM}}),F^\bullet R^kf_\ast(\Omega^\bullet_{f,{\rm BM}}))\right)$$ form a graded polarizable variation of mixed Hodge structures.
    \end{thm}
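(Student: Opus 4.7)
The pre-variation structure is already provided by Theorem~\ref{thm_BM_preVMHS}, so what remains to verify is Griffiths transversality: the Gauss--Manin connection $\nabla_{\rm GM}$ on $R^kf_\ast(\Omega^\bullet_{f,{\rm BM}})$ must satisfy $\nabla_{\rm GM}(F^p)\subset \Omega_S\otimes F^{p-1}$.

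My plan is to refine the Koszul filtration $K^\bullet$ of Section~\ref{section_flat_connection} on $\Omega^\bullet_{(X,D),{\rm BM}}$ into a bi-filtration together with the Hodge filtration $F^\bullet$. The key identification to verify is that, under the isomorphism (\ref{align_Gr_K}), the induced Hodge filtration on the Koszul-graded piece takes the form
$$F^p(K^q/K^{q+1})\;\simeq\; f^{-1}\Omega^q_S\otimes_{f^{-1}\sO_S}F^{p-q}\Omega^\bullet_{f,{\rm BM}}[-q],$$
so that pulling out $q$ factors from the base lowers the remaining relative Hodge index by $q$. I will check this term by term on each log smooth family $(X_{[k]},D_{[k]})\to S$—where it is the standard Katz--Oda compatibility for relative logarithmic de Rham complexes—and then verify that the alternating sum of Gysin maps, together with their Tate twists $(-1)[-2]$, preserves this identification, so that it passes to the totalization defining $\Omega^\bullet_{(X,D),{\rm BM}}$.

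Granted this compatibility, the short exact sequence of filtered complexes $0\to K^{q+1}/K^{q+2}\to K^q/K^{q+2}\to K^q/K^{q+1}\to 0$ is strict with respect to $F^\bullet$, and the connecting morphism (\ref{align_nablaBM_def}) defining $\nabla_{\rm GM}$ therefore sends $F^p$ into $F^{p-1}$ after accounting for the degree shift $[1]$; this is the relative Griffiths transversality at the level of filtered complexes. Applying $Rf_\ast$ and invoking the strictness of $F^\bullet$ provided by the $E_1$-degeneration of Theorem~\ref{thm_BM_preVMHS}(3), the desired estimate
$$\nabla_{\rm GM}\bigl(F^pR^kf_\ast(\Omega^\bullet_{f,{\rm BM}})\bigr)\subset \Omega_S\otimes F^{p-1}R^kf_\ast(\Omega^\bullet_{f,{\rm BM}})$$
follows, completing the promotion from pre-VMHS to VMHS.

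The main obstacle I anticipate is the first step: tracking the bi-filtration $(K^\bullet,F^\bullet)$ through the totalization. Unlike the absolute case treated in Lemma~\ref{lem_mHC_logsmooth}, here one must simultaneously handle the Tate twist $(-i)[-2i]$ on the $i$-th row of the simplicial complex, the Gysin differentials between rows (which must be bi-filtered), and the relative stupid filtration $\sigma^{\geq p}$ on each row. The crucial point is that the Gysin maps constructed in \S\ref{section_Gysin_map} are strict with respect to both $K^\bullet$ and $F^\bullet$; this reduces to a relative analogue of the bi-filtered exactness of the residue sequence (\ref{align_residue_sequence}). Once this bookkeeping is in place, the remainder is a mechanical application of strictness, parallel to Steenbrink's treatment of the absolute setting.
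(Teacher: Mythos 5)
Your proposal is correct and follows essentially the same route as the paper: reduce to Griffiths transversality via Theorem~\ref{thm_BM_preVMHS}(4), observe that $\sigma^{\geq p}(K^0/K^2)$ sits in a distinguished triangle with $f^{-1}\Omega_S\otimes F^{p-1}\Omega^\bullet_{f,{\rm BM}}[-1]$ and $F^p\Omega^\bullet_{f,{\rm BM}}$ (your bi-filtration identification, which is essentially definitional since $F^\bullet$ is the naive truncation of the total complex), and then apply $Rf_\ast$ together with the $E_1$-degeneration of Theorem~\ref{thm_BM_preVMHS}(3). The extra bookkeeping you anticipate for the Gysin differentials and Tate twists is harmless and is absorbed into the paper's direct use of the commutative diagram of distinguished triangles.
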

\begin{rmk}\label{rmk_BM_VMHS}
	By a slight abuse of notation, the variation of mixed Hodge structures described in Theorem \ref{thm_VMHS_BM} is denoted by $\bV^k_{f,{\rm BM}}$. This is referred to as the \emph{Borel-Moore variation of mixed Hodge structures associated with the morphism $f:(X,D)\to S$}.
\end{rmk}
    \begin{proof}
    	According to Theorem \ref{thm_BM_preVMHS}-(4), it suffices to show the Griffiths transversality
    	$$\nabla(F^pR^kf_\ast(\Omega^\bullet_{f,{\rm BM}}))\subset F^{p-1}R^kf_\ast(\Omega^\bullet_{f,{\rm BM}})\otimes_{\sO_S}\Omega_S,\quad\forall p.$$
    	By Lemma \ref{lem_BM_rel_Poincare}, it is equivalent to show
    	\begin{align}\label{align_BM_GT}
    	\nabla_{\rm GM}(F^pR^kf_\ast(\Omega^\bullet_{f,{\rm BM}}))\subset F^{p-1}R^kf_\ast(\Omega^\bullet_{f,{\rm BM}})\otimes_{\sO_S}\Omega_S,\quad\forall p.
    	\end{align}
    	Recall that $\nabla_{\text{GM}}$ is the connecting map of the long exact sequence associated with (\ref{align_nablaBM_def}). By considering the long exact sequences arising from the commutative diagram of distinguished triangles
    	$$
    	\xymatrix{
    		f^{-1}\Omega_S\otimes_{f^{-1}\sO_S}F^{p-1}\Omega^\bullet_{f,{\rm BM}}[-1]\ar[d] \ar[r]&\sigma^{\geq p}(K^0/K^2)\ar[r]\ar[d] &F^p\Omega^\bullet_{f,{\rm BM}}\ar[d]\ar[r]&\\
    		f^{-1}\Omega_S\otimes_{f^{-1}\sO_S}\Omega^\bullet_{f,{\rm BM}}[-1]\ar[r] &K^0/K^{2}\ar[r] &\Omega^\bullet_{f,{\rm BM}}\ar[r] & 
    	}
    	$$
    	we obtain a commutative diagram:
    	$$
    	\xymatrix{
    		R^kf_\ast(F^p\Omega^\bullet_{f,{\rm BM}})\ar[r]\ar[d] & \Omega_S\otimes_{\sO_S}R^kf_\ast(F^{p-1}\Omega^\bullet_{f,{\rm BM}})\ar[d]\\
    		R^kf_\ast(\Omega^\bullet_{f,{\rm BM}})\ar[r]^-{\nabla_{\rm GM}} & \Omega_S\otimes_{\sO_S}R^kf_\ast(\Omega^\bullet_{f,{\rm BM}})\\
    	}.
    	$$
    Combining this with Theorem \ref{thm_BM_preVMHS} (3), we conclude that (\ref{align_BM_GT}) holds.
    \end{proof}
\section{Logarithmic extension of $\bV^k_{f,{\rm BM}}$}\label{section_log_extension_VMHS_BM}
In this section, we consider the logarithmic version of the Borel-Moore variation of mixed Hodge structures $\mathbb{V}^k_{f,{\rm BM}}$ for a semistable family of simple normal crossing pairs. This version plays a crucial role in establishing the admissibility of $\mathbb{V}^k_{f, \text{BM}}$ and in the geometric construction of Deligne's lower canonical extension of $\mathbb{V}^k_{f, \text{BM}}$ (see Theorem \ref{thm_VMHS_BM_admissible} below).

Let $\overline{f}: (\overline{X}, \overline{D}) \to (\overline{S}, D_{\overline{S}})$ be a projective semistable morphism (Definition \ref{defn_semistable_cod1}) from a simple normal crossing pair $(\overline{X}, \overline{D})$ to a log smooth pair $(\overline{S}, D_{\overline{S}})$. Suppose $\dim S = d$ and $\dim X = n + d$. Define $S := \overline{S} \setminus D_{\overline{S}}$ and $(X, D) := (\overline{X}, \overline{D}) \cap f^{-1}(S)$. Consequently, the restriction $f := \overline{f}|_X: (X, D) \to S$ is a simple normal crossing projective family. For every integer $k$, this yields the Borel-Moore variation of mixed Hodge structures $\mathbb{V}^k_{f, \text{BM}}$ (Remark \ref{rmk_BM_VMHS}).
\subsection{Local monodromy}
\begin{lem}\label{lem_quasi-unipotency}
	For each $i \geq 1$ and $q \geq 0$, the cohomology sheaf $h^q(Rf_\ast Rj_\ast \mathbb{Z}_{X^o_{[i]}})$ has quasi-unipotent monodromy around every component of $D_{\overline{S}}$. The underlying local system of $\mathbb{V}^k_{f, {\rm BM}}$ admits a $\mathbb{Z}$-structure, and the local monodromy of $\mathbb{V}^k_{f, {\rm BM}}$ around every component of $D_{\overline{S}}$ is quasi-unipotent.
\end{lem}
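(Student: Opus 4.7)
The plan is to reduce quasi-unipotency of the monodromy of $\bV^k_{f,\mathrm{BM}}$ (together with its $\bZ$-structure) to the corresponding statement for each $h^q(Rf_\ast Rj_\ast \bZ_{X^o_{[i]}})$, and then propagate these properties along the spectral sequence (\ref{align_Lerayss_BM}).

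First I would check that, under the semistability hypothesis (Definition \ref{defn_semistable_cod1}), each stratum $\overline{X}_{[i]} \to \overline{S}$ is projective and smooth in a neighborhood of any point of $D_{\overline{S}}$, and that $\overline{D}_{[i]} + (\overline{f}\,i_{[i]})^{-1}(D_{\overline{S}})$ is a simple normal crossing divisor on $\overline{X}_{[i]}$. This is a direct consequence of condition (3) of Definition \ref{defn_semistable_cod1}: every stratum of $(\overline{X},\overline{D})$ lying in $\overline{f}^{-1}(D_{\overline S})$ is mapped submersively to a component of $D_{\overline S}$, and this property is preserved when one passes to the $i$-fold intersections of the irreducible components of $\overline{X}$. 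Consequently $(\overline{X}_{[i]}, \overline{D}_{[i]} + (\overline{f}\,i_{[i]})^{-1}(D_{\overline{S}}))$ is a good compactification of the open family $X^o_{[i]} \to S$.

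Next I would apply the classical monodromy theorem. The local system $R^q(\overline{f}\,i_{[i]})_\ast Rj_\ast \bQ_{X^o_{[i]}}|_S$ computes the relative cohomology of the open fibers $X^o_{[i],y}$, and thus underlies a geometric admissible variation of mixed Hodge structures. By the Borel--Grothendieck--Landman quasi-unipotency theorem, applied to each weight-graded piece of this variation (which is a polarizable pure VHS), its local monodromy around every component of $D_{\overline{S}}$ is quasi-unipotent; and quasi-unipotency of the graded pieces forces it for the whole variation. The $\bZ$-structure is tautological because $Rj_\ast \bZ_{X^o_{[i]}}$ is defined over $\bZ$ and $R^q(\overline{f}\,i_{[i]})_\ast$ preserves constructible $\bZ$-complexes.

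Finally, the spectral sequence (\ref{align_Lerayss_BM}) abuts to $\bV^k_{f,\mathrm{BM}}$ with $E_1$-terms that are direct sums of Tate twists of such local systems. Since quasi-unipotency of monodromy is stable under direct sums, Tate twists, subquotients, and extensions, $\bV^k_{f,\mathrm{BM}}$ inherits quasi-unipotent local monodromy. A $\bZ$-lattice on $\bV^k_{f,\mathrm{BM}}$ is obtained by taking the associated graded of the integral filtration induced by (\ref{align_Lerayss_BM}), modulo torsion.

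The main expected obstacle is the first step: rigorously verifying, from the local normal-form description in Definition \ref{defn_semistable_cod1}, that intersecting irreducible components of $\overline{X}$ preserves the semistability (log smoothness with reduced SNC preimage of $D_{\overline{S}}$) of the restriction $\overline{X}_{[i]} \to \overline{S}$. This is essentially a bookkeeping of strata but is the only non-formal input; once it is settled, the monodromy theorem and standard manipulations with the spectral sequence complete the proof.
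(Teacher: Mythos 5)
Your proof is correct and rests on the same essential inputs as the paper's own (two-line) argument: the integral structure $h^k\operatorname{Tot}\left(\cdots\to Rf_\ast Rj_\ast\bZ_{X^o_{[2]}}(-1)[-2]\to Rf_\ast Rj_\ast\bZ_{X^o_{[1]}}\right)$ coming from the $\bZ$-version of the resolution complex, together with Borel's monodromy theorem; the only difference is that the paper applies Borel's theorem directly to $\bV^k_{f,{\rm BM}}$ with this $\bZ$-structure, whereas you apply it to the $E_1$-terms of (\ref{align_Lerayss_BM}) and then propagate quasi-unipotency through direct sums, subquotients and extensions, which is equally valid. Two minor remarks: the semistability bookkeeping in your first step is not actually needed for quasi-unipotency (Borel's theorem requires only the $\bZ$-structure and the graded-polarizable variation established in Section 2, not a good compactification), and the $\bZ$-lattice is most naturally taken to be the image of the integral $h^k$ modulo torsion in $\bV^k_{f,{\rm BM}}$ rather than an ``associated graded'' of the induced filtration.
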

\begin{proof}
According to (\ref{align_BM_local_system}), the local system $\mathbb{V}^k_{f, \text{BM}}$ has the integral structure
\[
\mathbb{V}^k_{f, \text{BM}, \mathbb{Z}} := h^k \operatorname{Tot}\left(\cdots \to Rf_\ast Rj_\ast \mathbb{Z}_{X^o_{[2]}}(-1)[-2] \to Rf_\ast Rj_\ast \mathbb{Z}_{X^o_{[1]}}\right).
\]
The quasi-unipotency of the local monodromies follows from the existence of this $\mathbb{Z}$-structure (cf. Borel's theorem \cite[Lemma 4.5]{Schmid1973}).
\end{proof}
\subsection{Logarithmic extension of the flat connection}\label{section_log_GMconnection_VBM}
We adopt the notations from \S \ref{section_BM_coh}. Specifically, we set $X^o := \overline{X} \setminus \overline{D} = X \setminus D$ and let $j: X^o \to \overline{X}$ denote the open immersion. Let $\overline{X} = \bigcup_{i \in I} \overline{X}_i$ and $\overline{D} = \bigcup_{\lambda \in \Lambda} \overline{D}_\lambda$ be the irreducible decompositions, where $I$ and $\Lambda$ are finite index sets. For every non-empty subset $J \subset I$ and integer $k > 0$, we define
\[
\overline{X}_J := \bigcap_{i \in J} \overline{X}_i \quad \text{and} \quad \overline{X}_{[k]} := \coprod_{\substack{J \subset I,\\ |J| = k}} \overline{X}_J.
\]
Let $i_{[k]}: \overline{X}_{[k]} \to \overline{X}$ be the natural map. Define $X^o_J := \overline{X}_J \setminus \overline{D}$ for $\emptyset \neq J \subset I$, and set 
\[
X^o_{[k]} := \coprod_{\substack{J \subset I,\\ |J| = k}} X^o_J,
\]
with the natural morphism denoted by $i^o_{[k]}: X^o_{[k]} \to X^o$. Let $\overline{D}_{[k]} := \overline{X}_{[k]} \setminus X^o$. Then, for each $k$, $(\overline{X}_{[k]}, \overline{D}_{[k]})$ is a log smooth pair, and the morphism $(\overline{X}_{[k]}, \overline{D}_{[k]}) \to (\overline{S}, D_{\overline{S}})$ is semistable with log smooth generic fibers. We denote by $$\Omega_{\overline{X}_{[k]}/\overline{S}}(\log \overline{D}_{[k]}):=\Omega_{\overline{X}_{[k]}}(\log \overline{D}_{[k]})/f^\ast\Omega_{\overline{S}}(\log D_{\overline{S}})$$ the relative logarithmic cotangent sheaf and by $\Omega^\bullet_{\overline{X}_{[k]}/\overline{S}}(\log \overline{D}_{[k]})$ the associated relative logarithmic de Rham complex.

As in \S \ref{section_flat_connection},
we introduce the Koszul filtration $K^\bullet_{\overline{f}}$ on 
$$\Omega^\bullet_{(\overline{X},\overline{D}),{\rm BM}}:={\rm Tot}\left(\cdots\to i_{[2]\ast}\Omega^\bullet_{\overline{X}_{[2]}}(\log \overline{D}_{[2]})[-2]\to i_{[1]\ast}\Omega^\bullet_{\overline{X}_{[1]}}(\log \overline{D}_{[1]})\right),$$
defined by 
$$K^p_{\overline{f}}={\rm Tot}\left(\cdots\to \overline{f}^\ast\Omega^p_{\overline{S}}(\log D_{\overline{S}})\wedge i_{[2]\ast}\Omega^{\bullet-p}_{\overline{X}_{[2]}}(\log \overline{D}_{[2]})[-2]\to \overline{f}^\ast\Omega^p_{\overline{S}}(\log D_{\overline{S}})\wedge i_{[1]\ast}\Omega^{\bullet-p}_{\overline{X}_{[1]}}(\log \overline{D}_{[1]})\right).$$
We define the complex of $\overline{f}^{-1}\sO_{\overline{S}}$-modules
$$\Omega^\bullet_{\overline{f},{\rm BM}}={\rm Tot}\left(\cdots\to i_{[2]\ast}\Omega^\bullet_{\overline{X}_{[2]}/\overline{S}}(\log \overline{D}_{[2]})[-2]\to i_{[i]\ast}\Omega^\bullet_{\overline{X}_{[1]}/\overline{S}}(\log \overline{D}_{[1]})\right).$$
Then we have
\begin{align*}
	K^p_{\overline{f}}/K^{p+1}_{\overline{f}}\simeq \overline{f}^{-1}\Omega_{\overline{S}}^p(\log D_{\overline{S}})\otimes_{\overline{f}^{-1}\sO_{\overline{S}}}\Omega^\bullet_{\overline{f},{\rm BM}}[-p].
\end{align*}
Applying the functor $R\overline{f}_\ast$ to this filtered complex yields the spectral sequence
\begin{align*}
	E^{p,q}_1:=\Omega_{\overline{S}}^p(\log D_{\overline{S}})\otimes_{\sO_{\overline{S}}}R^q\overline{f}_\ast(\Omega^\bullet_{\overline{f},{\rm BM}})\Rightarrow R^{p+q}\overline{f}_\ast(\Omega^\bullet_{(\overline{X},\overline{D}),{\rm BM}}).
\end{align*}
The morphisms of the $E_1$-terms give rise to the logarithmic Gauss-Manin connections
$$\nabla_{\rm GM}:\Omega_{\overline{S}}^p(\log D_{\overline{S}})\otimes_{\sO_{\overline{S}}}R^q\overline{f}_\ast(\Omega^\bullet_{\overline{f},{\rm BM}})\to \Omega_{\overline{S}}^{p+1}(\log D_{\overline{S}})\otimes_{\sO_{\overline{S}}}R^q\overline{f}_\ast(\Omega^\bullet_{\overline{f},{\rm BM}}).$$
This connection is defined as the connecting morphism $\delta$ in the long exact sequence
$$\cdots\to R^{p+q}\overline{f}_\ast(K^p_{\overline{f}}/K^{p+2}_{\overline{f}})\to\Omega_{\overline{S}}^p(\log D_{\overline{S}})\otimes_{\sO_{\overline{S}}}R^q\overline{f}_\ast(\Omega^\bullet_{\overline{f},{\rm BM}})\stackrel{\delta}{\to} \Omega_{\overline{S}}^{p+1}(\log D_{\overline{S}})\otimes_{\sO_{\overline{S}}}R^qf_\ast(\Omega^\bullet_{\overline{f},{\rm BM}})\to\cdots$$
which is associated with the distinguished triangle
\begin{align*}
	{\small\xymatrix{
		 K^{p+1}_{\overline{f}}/K^{p+2}_{\overline{f}}\ar[d]^{\simeq} \ar[r]&K^p_{\overline{f}}/K^{p+2}_{\overline{f}}\ar[r] &K^p_{\overline{f}}/K^{p+1}_{\overline{f}}\ar[d]^{\simeq}\ar[r]&\\
		\overline{f}^{-1}\Omega_{\overline{S}}^{p+1}(\log D_{\overline{S}})\otimes_{\overline{f}^{-1}\sO_{\overline{S}}}\Omega^\bullet_{\overline{f},{\rm BM}}[-p-1] & &\overline{f}^{-1}\Omega_{\overline{S}}^{p}(\log D_{\overline{S}})\otimes_{\overline{f}^{-1}\sO_{\overline{S}}}\Omega^\bullet_{\overline{f},{\rm BM}}[-p] &
	}}.
\end{align*}
In the special case when $p=0$, the morphism $E^{0,q}\to E^{1,q}$ yields the logarithmic flat connection
$$\nabla_{\rm GM}:R^q\overline{f}_\ast(\Omega^\bullet_{\overline{f},{\rm BM}})\to \Omega_{\overline{S}}(\log D_{\overline{S}})\otimes_{\sO_{\overline{S}}} R^q\overline{f}_\ast(\Omega^\bullet_{\overline{f},{\rm BM}}).$$
We will demonstrate in Proposition \ref{prop_weightss_E2_BM} that $R^q \overline{f}_\ast (\Omega^\bullet_{\overline{f}, \text{BM}})$ is locally free for every $q$.
\subsection{Logarithmic Hodge and Weight filtrations}
As in \S \ref{section_Hodge_weight_BM}, we formally define the logarithmic weight filtration and Hodge filtration on $\Omega^\bullet_{\overline{f},{\rm BM}}$. Let $E := \overline{f}^{-1}(D_{\overline{S}})$ denote the preimage of $D_{\overline{S}}$ under the map $\overline{f}$. Then $E_{[k]} := i_{[k]}^{-1}(E)$ is a simple normal crossing divisor on $\overline{X}_{[k]}$. For every non-negative integer $k$ and integers $m, p \in \mathbb{Z}$, we establish the following definitions:
	\begin{align*}
		W_m\Omega^p_{\overline{X}_{[k]}/\overline{S}}(\log \overline{D}_{[k]})=\begin{cases}
			0, &m<0\\
			\Omega^p_{\overline{X}_{[k]}/\overline{S}}(\log E_{[k]}), &m\geq p\\
			\Omega_{\overline{X}_{[k]}/\overline{S}}^{p-m}(\log E_{[k]})\wedge\Omega^m_{\overline{X}_{[k]}/\overline{S}}(\log \overline{D}_{[k]}), &0\leq m\leq p
		\end{cases}
	\end{align*}
	and set $$F^p\Omega^\bullet_{\overline{X}_{[k]}/\overline{S}}(\log \overline{D}_{[k]}):=\Omega^{\geq p}_{\overline{X}_{[k]}/\overline{S}}(\log \overline{D}_{[k]}).$$
	We define the weight filtration $W_{\bullet}$ of $\Omega_{\overline{f},{\rm BM}}^\bullet$ as follows:
	\begin{align}\label{align_weight_filtration_BM}
		W_m\Omega_{\overline{f},{\rm BM}}^\bullet&:={\rm Tot}\left(\cdots\to W_{m-1}\Omega^\bullet_{\overline{X}_{[2]}/\overline{S}}(\log \overline{D}_{[2]})[-2]\to W_m\Omega^\bullet_{\overline{X}_{[1]}/\overline{S}}(\log \overline{D}_{[1]})\right).
	\end{align}
	Additionally, we define its logarithmic Hodge filtration $F^\bullet$ by
	\begin{align}\label{align_log_Hodge_fil}
		F^p\Omega_{\overline{f},{\rm BM}}^\bullet&:=\sigma^{\geq p}\Omega_{\overline{f},{\rm BM}}^\bullet={\rm Tot}\left(\cdots\to \Omega^{\geq p-1}_{\overline{X}_{[2]}/\overline{S}}(\log \overline{D}_{[2]})[-2]\to \Omega^{\geq p}_{\overline{X}_{[1]}/\overline{S}}(\log \overline{D}_{[1]})\right).
	\end{align}	
	In particular, the top Hodge complex is given by
	\begin{align}\label{align_top_Hodge_complex}
	F^{n}\Omega^\bullet_{\overline{f},{\rm BM}}&\simeq{\rm Tot}\left(\cdots\to \omega_{\overline{X}_{[2]}/\overline{S}}(\log \overline{D}_{[2]})[-1-n]\to \omega_{\overline{X}_{[1]}/\overline{S}}(\log \overline{D}_{[1]})[-n]\right)\\\nonumber
	&\simeq F^{n+d}\Omega^\bullet_{(\overline{X},\overline{D}),{\rm BM}}[d]\otimes_{\sO_{\overline{X}}}(\overline{f}^\ast\omega_{\overline{S}}(D_{\overline{S}}))^{-1}\quad (\ref{align_top_Hodge_ab})\\\nonumber
	&\simeq\sO_{\overline{X}}(K_{\overline{X}}+\overline{D})[-n]\otimes_{\sO_{\overline{X}}}(\overline{f}^\ast\omega_{\overline{S}}(D_{\overline{S}}))^{-1}\quad\textrm{(Lemma \ref{lem_top_Hodge_BM})}\\\nonumber
	&\simeq \sO_{\overline{X}}(K_{\overline{X}/\overline{S}}+\overline{D}-\overline{f}^\ast(D_{\overline{S}}))[-n].
	\end{align}
    These subcomplexes induce the (logarithmic) Hodge filtration
    \begin{align*}
    	F^pR^k\overline{f}_\ast(\Omega_{\overline{f},{\rm BM}}^\bullet):={\rm Im}\left(R^k\overline{f}_\ast(F^p\Omega_{\overline{f},{\rm BM}}^\bullet)\to R^k\overline{f}_\ast(\Omega_{\overline{f},{\rm BM}}^\bullet)\right),\quad p\geq 0
    \end{align*}
and the weight filtration
    \begin{align*}
    	W_mR^k\overline{f}_\ast(\Omega_{\overline{f},{\rm BM}}^\bullet):={\rm Im}\left(R^k\overline{f}_\ast(W_{m-k}\Omega_{\overline{f},{\rm BM}}^\bullet)\to R^k\overline{f}_\ast(\Omega_{\overline{f},{\rm BM}}^\bullet)\right),\quad m\in\bZ.
    \end{align*}
	Let $\overline{D}^h_{[k]}$ denote the horizontal part of $\overline{D}_{[k]}$, and let $\overline{D}^h_{[k]} = \bigcup_{i \in I_k} \overline{D}_{[k],i}$ be its irreducible decomposition. For every non-empty subset $\emptyset \neq J \subset I_k$ and positive integer $l > 0$, we define
	$$
	\overline{D}_{[k],J} := \bigcap_{i \in J} \overline{D}_{[k],i} \quad \text{and} \quad \overline{D}_{[k],[l]} := \coprod_{\substack{J \subset I_k, \\ |J| = l}} \overline{D}_{[k],J}.
	$$
	Additionally, we set $\overline{D}_{[k],[0]} := \overline{X}_{[k]}$. Let $E_{[k],[l]} := \overline{D}_{[k],[l]} \cap E$. Then, each morphism $(\overline{D}_{[k],[l]}, E_{[k],[l]}) \to (\overline{S}, D_{\overline{S}})$ forms a semistable family with smooth general fibers.
	Similar to (\ref{align_BM_GrW_rel}) and (\ref{align_BM_GrW_F_rel}), we observe that
	\begin{align}\label{align_BM_GrW_rel_ss}
		{\rm Gr}^W_m\Omega^\bullet_{\overline{f},{\rm BM}}&\simeq_{\rm qis}\bigoplus_{i\geq0}{\rm Gr}^W_{m-i}(\Omega^{\bullet}_{\overline{X}_{[i+1]}/\overline{S}}(\log \overline{D}_{[i+1]})(-i)[-2i])[i]\\\nonumber
		&\simeq_{\rm qis}\bigoplus_{i\geq0}{\rm Gr}^W_{m-i}(\Omega^{\bullet}_{\overline{X}_{[i+1]}/\overline{S}}(\log \overline{D}_{[i+1]}))(-i)[-i]\\\nonumber
		&\simeq_{\rm qis}\bigoplus_{i\geq0}\Omega^\bullet_{\overline{D}_{[i+1],[m-i]}/\overline{S}}(\log E_{[i+1],[m-i]})(-m)[-m],
	\end{align}
	with the induced Hodge filtration given by
	\begin{align*}
		F^p({\rm Gr}^W_m\Omega^\bullet_{\overline{f},{\rm BM}})&\simeq_{\rm qis}\bigoplus_{i\geq0}F^p{\rm Gr}^W_{m-i}(\Omega^{\bullet}_{\overline{X}_{[i+1]}/\overline{S}}(\log \overline{D}_{[i+1]})(-i)[-2i])[i]\\\nonumber
		&\simeq_{\rm qis}\bigoplus_{i\geq0}F^p(\Omega^\bullet_{\overline{D}_{[i+1],[m-i]}/\overline{S}}(\log E_{[i+1],[m-i]})(-m)[-m])\\\nonumber
		&\simeq_{\rm qis}\bigoplus_{i\geq0}\Omega^{\geq p-m}_{\overline{D}_{[i+1],[m-i]}/\overline{S}}(\log E_{[i+1],[m-i]})[-m].
	\end{align*}
	Therefore
	\begin{align}\label{align_GRFW_logBM}
		{\rm Gr}^p_F{\rm Gr}^W_m\Omega^\bullet_{\overline{f},{\rm BM}}\simeq_{\rm qis}\bigoplus_{i\geq0}\Omega^{p-m}_{\overline{D}_{[i+1],[m-i]}/\overline{S}}(\log E_{[i+1],[m-i]})[-p].
	\end{align}
\subsection{The logarithmic weight spectral sequence}
	Recall that for a flat connection $(\mathcal{V}, \nabla)$ on $S$, the lower canonical extension of $(\mathcal{V}, \nabla)$ to $\overline{S}$ is a logarithmic connection
	\[
	\nabla: \widetilde{\mathcal{V}}\to \widetilde{\mathcal{V}}\otimes_{\sO_{\overline{S}}} \Omega_{\overline{S}}(\log D_{\overline{S}})
	\]
	that extends $(\mathcal{V}, \nabla)$ such that the real parts of the eigenvalues of the residue maps along $D_{\overline{S}}$ lie in the interval $[0,1)$. According to \cite[Proposition 5.4]{Deligne1970}, the lower canonical extension always exists and is unique up to isomorphism.
	
	According to (\ref{align_BM_GrW_rel_ss}), the weight filtration $W_\bullet$ (\ref{align_weight_filtration_BM}) on $\Omega^\bullet_{\overline{f},{\rm BM}}$ induces a spectral sequence 
	\begin{align}\label{align_rel_ss_BM_ss}
	E^{-m,k+m}_1(R\overline{f}_\ast(\Omega^\bullet_{\overline{f},{\rm BM}}),W):=\bigoplus_{i\geq0} R^{k-m}\overline{f}_\ast(\Omega^\bullet_{\overline{D}_{[i+1],[m-i]}/\overline{S}}(\log E_{[i+1],[m-i]}))(-m)\Rightarrow R^{k}\overline{f}_\ast(\Omega^\bullet_{\overline{f},{\rm BM}}).
	\end{align}
\begin{prop}\label{prop_weightss_E2_BM}
	The Gauss-Manin connection 
	$$\nabla_{\rm GM}:R^q\overline{f}_\ast(\Omega^\bullet_{\overline{f},{\rm BM}})\to \Omega_{\overline{S}}(\log D_{\overline{S}})\otimes_{\sO_{\overline{S}}}R^q\overline{f}_\ast(\Omega^\bullet_{\overline{f},{\rm BM}}).$$ is isomorphic to Deligne's lower canonical extension of $$\nabla_{\rm GM}:R^q f_\ast(\Omega^\bullet_{f,{\rm BM}})\to \Omega_S\otimes_{\sO_S}R^qf_\ast(\Omega^\bullet_{f,{\rm BM}}).$$
	In particular, each $R^q\overline{f}_\ast(\Omega^\bullet_{\overline{f},{\rm BM}})$ is locally free. Moreover, the weight spectral sequence (\ref{align_rel_ss_BM_ss}) degenerates at the $E_2$-page. Consequently, one has
	$$E^{-m,k+m}_2(R\overline{f}_\ast(\Omega^\bullet_{\overline{f},{\rm BM}}),W)\simeq {\rm Gr}_{k+m}^WR^{k}\overline{f}_\ast(\Omega^\bullet_{\overline{f},{\rm BM}})$$ and ${\rm Gr}_{m}^W R^{k}\overline{f}_\ast(\Omega^\bullet_{\overline{f},{\rm BM}})$, $W_m R^{k}\overline{f}_\ast(\Omega^\bullet_{\overline{f},{\rm BM}})$ are locally free for every $k$ and $m$.
\end{prop}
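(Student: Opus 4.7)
The plan is to analyze the weight spectral sequence (\ref{align_rel_ss_BM_ss}) term by term, reducing each $E_1$-entry to a classical Steenbrink-type statement for log smooth semistable families, and then to propagate local freeness, degeneration, and the identification with the lower canonical extension up the spectral sequence. At the $E_1$-page, each summand
\[
R^{k-m}\overline{f}_\ast\Omega^\bullet_{\overline{D}_{[i+1],[m-i]}/\overline{S}}(\log E_{[i+1],[m-i]})
\]
is the relative logarithmic de Rham cohomology of a projective semistable morphism $(\overline{D}_{[i+1],[m-i]}, E_{[i+1],[m-i]}) \to (\overline{S}, D_{\overline{S}})$, whose source is smooth as a stratum of the snc pair $(\overline{X}_{[i+1]}, \overline{D}_{[i+1]})$ and whose generic fibers are smooth. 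Combining Steenbrink's theorem on logarithmic degenerations with Lemma \ref{lem_quasi-unipotency}, each such sheaf is locally free, its logarithmic Hodge-to-de Rham spectral sequence degenerates at $E_1$, and the resulting logarithmic Gauss-Manin connection realizes Deligne's lower canonical extension of the corresponding pure VHS over $S$.

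Next, I identify the $d_1$ differentials as alternating sums of Gysin morphisms, which are horizontal and strictly compatible with the Hodge filtration up to Tate twists; in particular, $d_1$ is a morphism of polarized VHS on $S$. By functoriality of the lower canonical extension under horizontal maps, $d_1$ extends uniquely to a morphism of logarithmic flat bundles on $\overline{S}$, and since morphisms of polarized VHS are strict, their kernels and images are again VHS with locally free lower canonical extensions. Hence the $E_2$-page is locally free on $\overline{S}$. To upgrade this to $E_2$-degeneration on $\overline{S}$, observe that Theorem \ref{thm_BM_preVMHS}(1) already gives degeneration at $E_2$ over $S$; since the $E_r$-sheaves are locally free (hence torsion-free) and coincide with the restricted spectral sequence on the dense open $S \subset \overline{S}$, all $d_r$ with $r \geq 2$ vanish on $\overline{S}$. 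The isomorphism $E^{-m,k+m}_2 \simeq {\rm Gr}^W_{k+m}R^k\overline{f}_\ast(\Omega^\bullet_{\overline{f},{\rm BM}})$ then follows.

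The remaining statements fall out in sequence. Since there are only finitely many nonzero ${\rm Gr}^W_m R^k\overline{f}_\ast(\Omega^\bullet_{\overline{f},{\rm BM}})$ and each is locally free, induction on $m$ shows that every $W_m R^k\overline{f}_\ast(\Omega^\bullet_{\overline{f},{\rm BM}})$ is an extension of locally free sheaves and is therefore locally free; taking $m \gg 0$ gives local freeness of the ambient sheaf. Finally, the logarithmic Gauss-Manin connection on $R^k\overline{f}_\ast(\Omega^\bullet_{\overline{f},{\rm BM}})$ preserves $W_\bullet$ and its induced connection on each ${\rm Gr}^W_m$ is the lower canonical extension; hence the eigenvalues of its residue along each component of $D_{\overline{S}}$ coincide, with multiplicity, with those on the graded pieces and therefore have real parts in $[0,1)$. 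Uniqueness of the lower canonical extension (\cite[Proposition 5.4]{Deligne1970}) then identifies $(R^k\overline{f}_\ast(\Omega^\bullet_{\overline{f},{\rm BM}}),\nabla_{\rm GM})$ with Deligne's lower canonical extension of $(R^k f_\ast(\Omega^\bullet_{f,{\rm BM}}),\nabla_{\rm GM})$.

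The principal obstacle will be the compatibility check in the second paragraph: verifying that the connecting maps produced by the Koszul-type filtration really do coincide with the usual Gysin morphisms of VHS, and that the induced map of lower canonical extensions on $\overline{S}$ matches the $d_1$ of (\ref{align_rel_ss_BM_ss}). This is a local computation in a neighborhood of $D_{\overline{S}}$ tracking how the residues along horizontal components of $\overline{D}$ interact with the relative weight filtration (\ref{align_weight_filtration_BM}), and the semistability hypothesis is precisely what makes this matching go through.
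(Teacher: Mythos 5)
Your proposal is correct and follows essentially the same route as the paper: Steenbrink's result identifies each $E_1$-term of the weight spectral sequence with a lower canonical extension, the $E_2$-page is then shown to be locally free, $E_2$-degeneration follows from generic degeneration over $S$ plus torsion-freeness, local freeness of $W_m$ follows by induction on the weight, and the abutment is identified with the lower canonical extension. The one step you should make explicit is the exactness of the lower canonical extension functor (\cite[Proposition 5.4(ii)]{Deligne1970}): it is what guarantees that the cohomology of the extended $E_1$-complex on $\overline{S}$ --- i.e., the $E_2$-page of the spectral sequence on $\overline{S}$, not merely the abstract lower canonical extension of the kernel and image of $d_1$ over $S$ --- coincides with the lower canonical extension of the $E_2$-page over $S$, and this is precisely the ingredient the paper invokes at that point.
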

\begin{proof}   
    For every $k > 0$ and $l \geq 0$, note that since $(\overline{D}_{[k],[l]}, E_{[k],[l]}) \to (\overline{S}, D_{\overline{S}})$ is a semistable family extending the smooth family $D_{[k],[l]} \to S$, it follows from \cite[Proposition 2.20]{Steenbrink1975} that $$(R^q \overline{f}_\ast (\Omega^\bullet_{\overline{D}_{[k],[l]}/\overline{S}}(\log E_{[k],[l]})), \nabla_{\text{GM}})$$ is the lower canonical extension of $$(R^q f_\ast (\Omega^\bullet_{D_{[k],[l]}/S}), \nabla_{\text{GM}})$$ for every $q$.
    Consequently, in the spectral sequence (\ref{align_rel_ss_BM_ss}),
	\[
	E^{-m,k+m}_1(R\overline{f}_\ast(\Omega^\bullet_{\overline{f},{\rm BM}}),W):=\bigoplus_{i\geq0} R^{k-m}\overline{f}_\ast(\Omega^\bullet_{\overline{D}_{[i+1],[m-i]}/\overline{S}}(\log E_{[i+1],[m-i]}))
	\]
	is the lower canonical extension of 
	\[
	E^{-m,k+m}_1(Rf_\ast(\Omega^\bullet_{f,{\rm BM}}),W):=\bigoplus_{i\geq0} R^{k-m}f_\ast(\Omega^\bullet_{D_{[i+1],[m-i]}/S}).
	\]
	Since taking the lower canonical extension is an exact functor \cite[Proposition 5.4(ii)]{Deligne1970}, it follows that 
	\[
	E_2^{-m,k+m}(R\overline{f}_\ast(\Omega^\bullet_{\overline{f},{\rm BM}}), W)
	\]
	is the lower canonical extension of 
	\[
	E^{-m,k+m}_2(Rf_\ast(\Omega^\bullet_{f,{\rm BM}}), W)
	\]
	for every $k$ and $m$.
	In particular, this implies that $E_2^{-m,k+m}(R\overline{f}_\ast(\Omega^\bullet_{\overline{f},{\rm BM}}), W)$ is locally free for every $k$ and $m$. Once the local freeness is established, the spectral sequence (\ref{align_rel_ss_BM_ss}) degenerates at the $E_2$-page because its restriction to $S$ degenerates at the $E_2$-page (Theorem \ref{thm_BM_preVMHS} (1)). Consequently,
	\[
	R^k\overline{f}_\ast(\Omega^\bullet_{\overline{f},{\rm BM}}) \simeq \bigoplus_{m} E_2^{-m,k+m}(R\overline{f}_\ast(\Omega^\bullet_{\overline{f},{\rm BM}}), W)
	\]
	is locally free and is the lower canonical extension of 
	\[
	R^k f_\ast(\Omega^\bullet_{f,{\rm BM}}) \simeq \bigoplus_{m} E_2^{-m,k+m}(Rf_\ast(\Omega^\bullet_{f,{\rm BM}}), W)
	\]
	for every $k$. The local freeness of ${\rm Gr}_{m}^W R^{k}\overline{f}_\ast(\Omega^\bullet_{\overline{f},{\rm BM}})$ follows from the $E_2$-degeneration and the local freeness of $R^k\overline{f}_\ast(\Omega^\bullet_{\overline{f},{\rm BM}})$.
	
	Thus, the proposition is proved.
\end{proof}
\subsection{The logarithmic Hodge spectral sequence}
The logarithmic Hodge filtration $F^\bullet$ (\ref{align_log_Hodge_fil}) on $\Omega^\bullet_{\overline{f},{\rm BM}}$ induces a spectral sequence
\begin{align}\label{align_ss_logHodge}
E^{p,q}_1:=R^{p+q}\overline{f}_\ast({\rm Gr}_{F}^p\Omega^\bullet_{\overline{f},{\rm BM}})\Rightarrow R^{p+q}\overline{f}_\ast(\Omega^\bullet_{\overline{f},{\rm BM}}).
\end{align}
The main result of this section is the $E_1$-degeneration of this spectral sequence (Proposition \ref{prop_log_Hodge_fil_BM}). Prior to presenting the proof of this main result, we shall review the theory of the degeneration of variations of Hodge structures in the geometric context, which was developed by Steenbrink \cite{Steenbrink1975,Steenbrink1977} and further extended by Qianyu Chen \cite{Chen2023}.

Let $Y \to \Delta$ be a projective morphism from a complex manifold $Y$ to the unit disc $\Delta$, with $0$ as its unique critical value. Suppose that the schematic fiber $Y_0 := f^{-1}\{0\}$ is a (possibly non-reduced) simple normal crossing divisor on $Y$. Denote $Y^\ast = Y \setminus Y_0$. Then, $R^k f_\ast(\Omega^\bullet_{Y^\ast/\Delta^\ast})$ underlies a variation of Hodge structure of weight $k$. Furthermore, $(R^k f_\ast(\Omega^\bullet_{Y/\Delta}(\log Y_0)), \nabla_{\rm GM})$ is isomorphic to the lower canonical extension of $(R^k f_\ast(\Omega^\bullet_{Y^\ast/\Delta^\ast}), \nabla_{\rm GM})$. The logarithmic Hodge spectral sequence  
\begin{align*}  
	E_1^{p,q} := R^q f_\ast(\Omega^{p}_{Y/\Delta}(\log Y_0)) \Rightarrow R^{p+q} f_\ast(\Omega^\bullet_{Y/\Delta}(\log Y_0))  
\end{align*}  
degenerates at $E_1$. Moreover, $R^q f_\ast(\Omega^{p}_{Y/\Delta}(\log Y_0))$ is locally free for all $p$ and $q$. The canonical morphism  
$$  
F^p:= R^k f_\ast(\Omega^{\geq p}_{Y/\Delta}(\log Y_0)) \to R^k f_\ast(\Omega^\bullet_{Y/\Delta}(\log Y_0))  
$$  
is injective for every $k$, thereby defining a filtration $F^\bullet$ on $R^k f_\ast(\Omega^\bullet_{Y/\Delta}(\log Y_0))$ consisting of subbundles with locally free subquotients.

Let $\cV := R^k f_\ast(\Omega^\bullet_{Y/\Delta}(\log Y_0))$ for simplicity. Denote by $R : \cV(0) \to \cV(0)$ the residue operator associated with $\nabla_{\rm GM}$. Let $R = P + N$ be the Jordan decomposition, where $P$ is a semisimple operator and $N$ is nilpotent. Since the monodromy of $\nabla_{\rm GM}$ is quasi-unipotent, the eigenvalues of $P$ lie in $[0,1) \cap \bQ$. The nilpotent operator $N$ defines the monodromy weight filtration $W_\bullet$ on $\cV(0)$, which satisfies the following two properties:  
\begin{itemize}  
	\item $N(W_i) \subset W_{i-2}$ for every $i$;  
	\item $N^l : {\rm Gr}^W_{k+l}\cV(0) \to {\rm Gr}^W_{k-l}\cV(0)$ is an isomorphism for every $l$.  
\end{itemize}  
Let $F^\bullet(0)$ denote the filtration on $\cV(0)$ induced by $F^\bullet$. When the monodromy operator of $\nabla_{\rm GM}$ is unipotent (i.e., $P = 0$), $(\cV(0), W_\bullet, F^\bullet(0))$ forms a mixed Hodge structure (\cite{Steenbrink1975}). However, in the general case (i.e., $P \neq 0$), the situation becomes more intricate.

Let $0\leq\lambda_1 < \lambda_2 < \cdots < \lambda_m<1$ be the eigenvalues of $P$, which induce a filtration of logarithmic sub-connections:  
$$
0 = \cV_0 \subset \cV_1 \subset \cdots \subset \cV_m = \cV.
$$  
Here, $\cV_i$ is the submodule of $\cV$ spanned by the generalized eigenspaces $\ker(R - \lambda)^\infty$ for $\lambda \geq \lambda_i$. Let $W_{\bullet,i}$ and $F^\bullet_i$ denote the induced filtrations on $\cV_i(0)/\cV_{i-1}(0)$, defined as follows:  
$$
W_{m,i} = \frac{W_m \cap \cV_i(0) + \cV_{i-1}(0)}{\cV_{i-1}(0)}, \quad \forall m,
$$  
$$
F^p_i = \frac{F^p \cap \cV_i(0) + \cV_{i-1}(0)}{\cV_{i-1}(0)}, \quad \forall p.
$$  
\begin{thm}\emph{\cite[Theorem A]{Chen2023}}\label{thm_LMHS}
	For every $i=1,\dots,m$, $(\cV_i(0)/\cV_{i-1}(0),W_{\bullet,i}, F^\bullet_i)$ forms a mixed Hodge structure.
\end{thm}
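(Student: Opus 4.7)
The strategy is to reduce Theorem \ref{thm_LMHS} to Steenbrink's classical theorem on the limit mixed Hodge structure in the unipotent case \cite{Steenbrink1975} by means of a cyclic base change that trivialises the semisimple part $P$. Since all $\lambda_i$ lie in $[0,1)\cap\bQ$, choose an integer $d\geq 1$ with $d\lambda_i\in\bZ$ for every $i$, and let $\pi:\widetilde{\Delta}\to\Delta$ denote the totally ramified cover $t\mapsto t^d$, with Galois group $G=\mu_d$. Starting from the fibre product $Y\times_\Delta\widetilde{\Delta}$, apply $G$-equivariant semistable reduction to obtain a semistable family $\widetilde{f}:\widetilde{Y}\to\widetilde{\Delta}$ whose local monodromy at the origin is unipotent.

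Next, apply Steenbrink's theorem to $\widetilde{f}$ to obtain a canonical mixed Hodge structure $(\widetilde{\cV}(0),\widetilde{W}_\bullet,\widetilde{F}^\bullet(0))$, where $\widetilde{\cV}:=R^k\widetilde{f}_\ast(\Omega^\bullet_{\widetilde{Y}/\widetilde{\Delta}}(\log\widetilde{Y}_0))$, $\widetilde{W}_\bullet$ is the monodromy weight filtration of the (now nilpotent) residue $\widetilde{N}$, and $\widetilde{F}^\bullet$ is induced by the logarithmic Hodge filtration. The Galois action of $G$ preserves both filtrations, so the decomposition of $\widetilde{\cV}(0)$ into isotypic components for the characters $\chi_i:\zeta\mapsto\zeta^{d\lambda_i}$ is a direct sum decomposition of mixed Hodge structures.

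The third step is to identify the isotypic piece $\widetilde{\cV}(0)_{\chi_i}$ with $\cV_i(0)/\cV_{i-1}(0)$. On the punctured disc the pullback $\pi^\ast(\cV|_{\Delta^\ast})$ splits $G$-equivariantly, with the $\chi_i$-isotypic piece arising from the generalised $\lambda_i$-eigenspace of $R$. Deligne's lower canonical extension of that piece to $\widetilde{\Delta}$ is obtained by twisting the naive pullback by a suitable power of the uniformiser so that the residue eigenvalues are returned to $[0,1)$; a direct comparison then identifies $\widetilde{\cV}$ with the $G$-equivariant lower canonical extension of $\pi^\ast(\cV|_{\Delta^\ast})$. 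Under this identification the filtrations $W_{\bullet,i}$ and $F^\bullet_i$ correspond (up to a Tate twist) to the restrictions of $\widetilde{W}_\bullet$ and $\widetilde{F}^\bullet$ to the $\chi_i$-isotypic component, so the desired structure is inherited from Steenbrink's mixed Hodge structure.

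The main obstacle will be the careful bookkeeping across the ramified base change. The semistable model $\widetilde{Y}$ is a birational modification of $Y\times_\Delta\widetilde{\Delta}$, so the logarithmic Hodge filtration on $\widetilde{\cV}$ is not literally the pullback of $F^\bullet\cV$; one must verify that this modification preserves each isotypic component exactly up to Tate twist, by running the comparison arguments of \cite{Steenbrink1975,Steenbrink1977} in $G$-equivariant form. A further subtlety is that $W_{\bullet,i}$ is defined from the nilpotent part $N$ of the residue on $\cV$, whereas $\widetilde{W}_\bullet$ is defined from $\widetilde{N}$; on the $\chi_i$-isotypic piece these two nilpotent operators differ by the factor $d$, which leaves the induced weight filtration unchanged but must be accounted for when matching the Hodge numerics. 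Handling these two comparisons simultaneously and $G$-equivariantly is the technical heart of the argument.
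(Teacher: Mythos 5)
First, a structural point: the paper does not prove this statement at all --- it is quoted as Theorem A of \cite{Chen2023}, so there is no internal proof to compare against. Your proposal is therefore an attempt to supply an independent proof, and while the overall strategy (kill the semisimple part $P$ by the cover $t\mapsto t^d$, apply Steenbrink in the unipotent case, decompose by the $\mu_d$-action) is a legitimate classical device, it has a genuine gap at its central step. The unproblematic parts are the identification of underlying vector spaces and of the weight filtrations: $N$ commutes with $P$, so the monodromy weight filtration of $\cV(0)$ is the direct sum of the monodromy filtrations of the generalized eigenspaces, and rescaling $N$ by $d$ does not change it.

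The gap is your assertion that, under the twisted identification of $\cV_i(0)/\cV_{i-1}(0)$ with the $\chi_i$-isotypic piece of $\widetilde{\cV}(0)$, the filtration $F^\bullet_i$ corresponds to the restriction of Steenbrink's Hodge filtration. On the base-changed side the Galois group acts by automorphisms of the limit mixed Hodge structure, so $\widetilde{F}^p(0)$ is the direct sum of its isotypic pieces. On the original side, however, the residue $R$ does not preserve $F^p(0)$ --- Griffiths transversality only gives $R(F^p(0))\subset F^{p-1}(0)$ --- so $F^p(0)$ is in general \emph{not} compatible with the generalized eigenspace decomposition of $\cV(0)$, and the intersect-and-project filtration $F^p_i=(F^p(0)\cap\cV_i(0)+\cV_{i-1}(0))/\cV_{i-1}(0)$ can strictly contain the naive piece $F^p(0)\cap\ker(R-\lambda_i)^\infty$. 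Proving that this subquotient filtration nevertheless agrees with the limit Hodge filtration of the $\lambda_i$-eigencomponent is precisely the strictness statement that constitutes the theorem (equivalently, the compatibility of the Hodge filtration with the $V$-filtration across a ramified base change); calling it ``bookkeeping'' assumes the conclusion. To close the gap you would need either that compatibility as an input (essentially Saito's theory of the $V$-filtration), or a direct geometric argument as in \cite{Chen2023}, which builds a Steenbrink-type bifiltered complex for each eigenvalue on the original family and verifies the mixed Hodge complex axioms without any base change.
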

The following lemma will be used in proving Proposition \ref{prop_log_Hodge_fil_BM}.
\begin{lem}\label{lem_filstrict}
	Let $(V, V_\bullet, F^\bullet_V)$ and $(W, W_\bullet, F^\bullet_W)$ be two bi-filtered vector spaces, where $V_\bullet$ and $W_\bullet$ are regular increasing filtrations on $V$ and $W$, respectively, and $F^\bullet_V$ and $F^\bullet_W$ are regular decreasing filtrations on $V$ and $W$, respectively. Let $\varphi : V \to W$ be a linear operator that preserves both filtrations. For each $i$, let $F^\bullet(V_i / V_{i-1})$ denote the filtration induced on $V_i / V_{i-1}$, defined as  
	$$
	F^p(V_i / V_{i-1}) = \frac{F^p_V \cap V_i + V_{i-1}}{V_{i-1}}, \quad \forall p.
	$$  
	Similarly, define $F^\bullet(W_i / W_{i-1})$ for the filtration induced on $W_i / W_{i-1}$. Assume that the induced operator $$\varphi_i := \varphi\!\!\!\!\mod V_{i-1} : V_i / V_{i-1} \to W_i / W_{i-1}$$ is strictly compatible with $F^\bullet(V_i / V_{i-1})$ and $F^\bullet(W_i / W_{i-1})$ for all $i = 1, \dots, m$. Then, $\varphi$ is strictly compatible with $F^\bullet_V$ and $F^\bullet_W$.
\end{lem}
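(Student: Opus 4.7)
The plan is to prove the lemma by induction on the number of nontrivial steps in the filtration $V_\bullet$. In the base case (one nontrivial step), $V$ coincides with a single graded piece $V_i/V_{i-1}$, so $\varphi = \varphi_i$ and strictness in $F$ is the hypothesis itself.

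For the inductive step, let $i_0$ be the smallest index with $V_{i_0} \neq 0$. The short exact sequence $0 \to V_{i_0} \to V \to V/V_{i_0} \to 0$ (together with its $W$-analogue) carries induced filtrations by $V_\bullet$ and $F^\bullet$. The descended map $\tilde\varphi \colon V/V_{i_0} \to W/W_{i_0}$ has graded pieces $\varphi_j$ for $j > i_0$, each strict by hypothesis, and the induced filtration on $V/V_{i_0}$ has fewer nontrivial steps, so the inductive hypothesis yields strictness of $\tilde\varphi$ with respect to $F$. On the subspace, $\varphi|_{V_{i_0}} = \varphi_{i_0}$ is strict by hypothesis.

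Given $w \in F^p_W \cap \varphi(V)$ with $w = \varphi(v_0)$, I would pass to the quotient to obtain $\bar w \in F^p(W/W_{i_0}) \cap \tilde\varphi(V/V_{i_0})$; strictness of $\tilde\varphi$ then produces $\bar u \in F^p(V/V_{i_0})$ with $\tilde\varphi(\bar u) = \bar w$. Lifting $\bar u$ to some $u \in F^p_V$ yields $w - \varphi(u) \in F^p_W \cap W_{i_0}$. Provided this residual lies in $\varphi(V_{i_0})$, strictness of $\varphi|_{V_{i_0}}$ supplies $v' \in F^p_V \cap V_{i_0}$ with $\varphi(v') = w - \varphi(u)$, and then $v := u + v' \in F^p_V$ satisfies $\varphi(v) = w$, completing the induction.

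The main obstacle is ensuring $w - \varphi(u) \in \varphi(V_{i_0})$ rather than merely $\varphi(V) \cap W_{i_0}$, which would otherwise demand an ambient form of $V_\bullet$-strictness that is not automatic. I would handle this by exploiting the freedom in the choice of $\bar u$: since $\tilde\varphi(\bar v_0) = \bar w$ and $\bar u - \bar v_0 \in \ker\tilde\varphi$, one can adjust $\bar u$ within its $F^p$-coset so that $v_0 - u$ actually lies in $V_{i_0}$, at which point $w - \varphi(u) = \varphi(v_0 - u)$ automatically belongs to $\varphi(V_{i_0})$. Alternatively, the statement may be viewed as a specialization to a two-term complex of Deligne's classical lemma of two filtrations, which can be invoked directly.
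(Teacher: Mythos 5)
You have put your finger on exactly the right spot, but the repair you propose does not work and, more seriously, cannot work: the lemma as stated is missing a hypothesis. After choosing $u\in F^p_V$ lifting $\bar u$, the residual $w-\varphi(u)=\varphi(v_0-u)$ lies in $F^p_W\cap W_{i_0}\cap\mathrm{Im}(\varphi)$, but strictness of $\varphi_{i_0}=\varphi|_{V_{i_0}}$ only applies to elements of $\varphi(V_{i_0})$. The only freedom you have in $\bar u$ is to add elements of $\ker\tilde\varphi\cap F^p(V/V_{i_0})$ (the adjustment must stay inside $F^p(V/V_{i_0})$, or else $u\notin F^p_V$), so forcing $\bar u=\bar v_0$ would require $\bar v_0\in F^p(V/V_{i_0})$ --- which is essentially the strictness statement you are trying to establish; the fix is circular. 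Nor can Deligne's two-filtration lemma be ``invoked directly'': take $V=\bC v$ with $V_1=0\subset V_2=V$ and $F^1_V=0\subset F^0_V=V$, and $W=\bC w_1\oplus\bC w_2$ with $W_1=\bC w_1\subset W_2=W$ and $F^2_W=0\subset F^1_W=\bC w_1\subset F^0_W=W$, with $\varphi(v)=w_1$. Then $\varphi$ preserves both filtrations, every $\varphi_i$ is the zero map (hence strict), yet $w_1\in F^1_W\cap\mathrm{Im}(\varphi)$ while $\varphi(F^1_V)=0$, so $\varphi$ is not strict.

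What is missing is a compatibility of $\varphi$ with the increasing filtrations, e.g.\ $\mathrm{Im}(\varphi)\cap W_i\subset\varphi(V_i)+W_{i-1}$ for all $i$ (which fails for $i=1$ in the example above). The paper's own argument --- a descending induction producing $x_m\in F^p_V$ with $\varphi(x-x_m)\in W_{m-1}$ --- uses this same inclusion tacitly when it passes from strictness of $\varphi_i$ to the inclusion $F^p_W\cap\mathrm{Im}(\varphi)\cap W_i\subset\varphi(F^p_V)+W_{i-1}$: to apply strictness of $\varphi_m$ one must first know that the class of $\varphi(x-x_{m+1})$ in $W_m/W_{m-1}$ lies in $\mathrm{Im}(\varphi_m)$, not merely in the image of $\varphi$. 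In the intended application (Proposition \ref{prop_log_Hodge_fil_BM}) this is harmless, because $d_1(s)$ commutes with the residue operators and therefore respects the generalized-eigenspace decompositions underlying $\cV_\bullet$ and $\cV'_\bullet$, so the needed inclusion is automatic. Once that hypothesis is added, your bottom-up induction (quotienting by the lowest filtration step) and the paper's top-down induction both close and are essentially equivalent.
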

\begin{proof}
	Our goal is to demonstrate that $F^p_W \cap \mathrm{Im}(\varphi) \subset \varphi(F^p_V)$ for every $p$. Let $a < b$ be integers such that $V_b = V$, $V_a = 0$, $W_b = W$, and $W_a = 0$. By assumption, we have  
	$$
	F^p(W_i/W_{i-1}) \cap \mathrm{Im}(\varphi_i) \subset \varphi(F^p(V_i/V_{i-1}))
	$$  
	for every $i$. This implies that  
	\begin{align}\label{align_strict_graded}
		F^p_W \cap \mathrm{Im}(\varphi) \cap W_i \subset \varphi(F^p_V) + W_{i-1}, \quad \forall i = a, \dots, b.
	\end{align}  
	Let $x \in V$ such that $\varphi(x) \in F^p_W$. We claim that for each $a \leq m \leq b$, there exists $x_m \in F^p_V$ such that $\varphi(x - x_m) \in W_{m-1}$. We prove this claim by induction. The base case $m = b$ follows from (\ref{align_strict_graded}) by taking $i = b$. Now suppose that there exists $x_{m+1} \in F^p_V$ such that $\varphi(x - x_{m+1}) \in W_m$. Then $\varphi(x - x_{m+1}) \in F^p_W \cap \mathrm{Im}(\varphi) \cap W_m$. According to (\ref{align_strict_graded}) (taking $i = m$), there exists $y \in F^p_V$ such that $\varphi(x - x_{m+1}) - \varphi(y) \in W_{m-1}$. By letting $x_m = x_{m+1} + y$, the claim follows.  
	
	As a consequence, there exists $x_{a+1} \in F^p_V$ such that $\varphi(x) - \varphi(x_{a+1}) \in W_a = 0$. Therefore, $\varphi(x) \in \varphi(F^p_V)$. This completes the proof of the lemma.
\end{proof}
\begin{prop}\label{prop_log_Hodge_fil_BM}
	The spectral sequence (\ref{align_ss_logHodge}) degenerates at the $E_1$-page. Moreover, for every $p$, $m$, and $k$, both 
	\[
	{\rm Gr}^k_F {\rm Gr}^W_{-m} R^k \overline{f}_\ast(\Omega^\bullet_{\overline{f},{\rm BM}})
	\]
	and 
	\[
	R^k \overline{f}_\ast(F^p \Omega^\bullet_{\overline{f},{\rm BM}}) \simeq F^p R^k \overline{f}_\ast(\Omega^\bullet_{\overline{f},{\rm BM}})
	\]
	are locally free.
\end{prop}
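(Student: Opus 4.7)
The plan is to apply the lemma on two filtrations to the bi-filtered complex $(\Omega^\bullet_{\overline{f},{\rm BM}},F^\bullet,W_\bullet)$, after first establishing the logarithmic Hodge-to-de Rham degeneration on each weight-graded piece. This parallels the argument of Theorem \ref{thm_BM_preVMHS} on the open part $S$, but now extended to $\overline{S}$ using the logarithmic Steenbrink-Chen input as the replacement for the smooth Hodge theory used there.

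First I would reduce to the semistable case on weight-graded pieces. By (\ref{align_BM_GrW_rel_ss}) and (\ref{align_GRFW_logBM}), the graded complex ${\rm Gr}^W_m\Omega^\bullet_{\overline{f},{\rm BM}}$ decomposes as a direct sum over $i\geq 0$ of $\Omega^\bullet_{\overline{D}_{[i+1],[m-i]}/\overline{S}}(\log E_{[i+1],[m-i]})(-m)[-m]$, where each $(\overline{D}_{[i+1],[m-i]},E_{[i+1],[m-i]})\to(\overline{S},D_{\overline{S}})$ is a projective semistable morphism whose generic fiber is a smooth log pair. By Lemma \ref{lem_quasi-unipotency} the local monodromy is quasi-unipotent, so Steenbrink's theory together with Chen's refinement \cite{Chen2023} applies: the logarithmic Hodge-to-de Rham spectral sequence on each such factor degenerates at $E_1$, and the coherent sheaves $R^q\overline{f}_\ast(\Omega^p_{\overline{D}_{[i+1],[m-i]}/\overline{S}}(\log E_{[i+1],[m-i]}))$ are locally free. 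This yields $E_1$-degeneration of the Hodge-to-de Rham spectral sequence associated with each ${\rm Gr}^W_m\Omega^\bullet_{\overline{f},{\rm BM}}$, together with the local freeness of ${\rm Gr}^p_F R^k\overline{f}_\ast({\rm Gr}^W_m\Omega^\bullet_{\overline{f},{\rm BM}})$.

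Next I would combine this with the $E_2$-degeneration of the weight spectral sequence (\ref{align_rel_ss_BM_ss}) established in Proposition \ref{prop_weightss_E2_BM} and the local freeness of ${\rm Gr}^W_m R^k\overline{f}_\ast(\Omega^\bullet_{\overline{f},{\rm BM}})$. The lemma on two filtrations then simultaneously produces: the coincidence of the three induced filtrations $F_{\rm dir}$, $F_{\rm dir}^\ast$, $F_{\rm ind}$ on every page of the weight spectral sequence; the $E_1$-degeneration of the Hodge spectral sequence (\ref{align_ss_logHodge}); the injectivity of $R^k\overline{f}_\ast(F^p\Omega^\bullet_{\overline{f},{\rm BM}})\hookrightarrow R^k\overline{f}_\ast(\Omega^\bullet_{\overline{f},{\rm BM}})$ with image equal to $F^pR^k\overline{f}_\ast(\Omega^\bullet_{\overline{f},{\rm BM}})$; and the identification ${\rm Gr}^p_F R^k\overline{f}_\ast(\Omega^\bullet_{\overline{f},{\rm BM}})\simeq R^k\overline{f}_\ast({\rm Gr}^p_F\Omega^\bullet_{\overline{f},{\rm BM}})$, which is then locally free. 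The local freeness of $F^pR^k\overline{f}_\ast(\Omega^\bullet_{\overline{f},{\rm BM}})$ itself follows by descending induction on $p$ from the local freeness of its graded quotients.

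The main obstacle is verifying the strict compatibility of the Hodge filtration with the weight filtration on each semistable factor when the local monodromy is only quasi-unipotent rather than unipotent. In that regime the nearby fiber $(\cV(0),W_\bullet,F^\bullet(0))$ is not itself a mixed Hodge structure; by Chen's Theorem \ref{thm_LMHS}, mixed Hodge structures exist only on each generalized-eigenspace quotient $\cV_i(0)/\cV_{i-1}(0)$. To lift the strict compatibility of $F^\bullet$ from these eigenspace quotients up to the full nearby fiber (which is what feeds into the lemma on two filtrations above) one invokes Lemma \ref{lem_filstrict}, applied to the eigenspace filtration $\cV_\bullet(0)$ with $\varphi$ the residue of $\nabla_{\rm GM}$ or any suitable morphism built from it. This elevation of strictness from eigenspace-graded pieces to the total sheaf is the delicate technical heart of the proof; once it is in hand, the remaining arguments are formal consequences of the two spectral sequences and the $E_2$-degeneration already supplied by Proposition \ref{prop_weightss_E2_BM}.
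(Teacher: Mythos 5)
Your overall architecture coincides with the paper's proof: reduce to the semistable factors appearing in ${\rm Gr}^W_m\Omega^\bullet_{\overline{f},{\rm BM}}$, feed Steenbrink's logarithmic degeneration and Chen's Theorem~\ref{thm_LMHS} into the lemma on two filtrations together with the $E_2$-degeneration of the weight spectral sequence from Proposition~\ref{prop_weightss_E2_BM}, and use Lemma~\ref{lem_filstrict} to upgrade strictness from generalized-eigenspace quotients in the quasi-unipotent setting. That is exactly the route the paper takes.

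However, you misidentify the object whose strictness is the crux. What the two-filtration lemma requires is not a strictness property \emph{of} the residue of $\nabla_{\rm GM}$, nor a statement ``on each semistable factor'' in isolation: it is the strict compatibility, with respect to $F_{\rm rec}$, of the $E_1$-differential
$d_1(s)$ of the weight spectral sequence (\ref{align_rel_ss_BM_ss}) evaluated at every point $s\in\overline{S}$ --- i.e.\ of the Gysin/restriction map from $\bigoplus_i R^{k-m}\overline{f}_\ast(\Omega^\bullet_{\overline{D}_{[i+1],[m-i]}/\overline{S}}(\log E_{[i+1],[m-i]}))\otimes\bC(s)$ to the corresponding sum in one lower weight. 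Lemma~\ref{lem_filstrict} is applied with $\varphi=d_1(s)$, not with the residue operator; the residue only enters by defining the eigenvalue filtration $\cV_\bullet$ and the monodromy weight filtration. Moreover, your proposal skips the step that makes Chen's theorem and Lemma~\ref{lem_filstrict} applicable to $d_1(s)$ at all: one must first check that $d_1$ commutes with the residue/monodromy operators, which the paper deduces from the fact that $d_1$ underlies a morphism of nearby-cycle complexes $\bigoplus_i R^{k-m}\overline{f}_\ast(\psi_{[i+1],[m-i]})\to\bigoplus_i R^{k-m+2}\overline{f}_\ast(\psi_{[i+1],[m-i-1]})$; only then does $d_1(s)$ preserve the generalized eigenspaces and induce morphisms of mixed Hodge structures on each quotient $\cV_i/\cV_{i-1}\to\cV'_i/\cV'_{i-1}$. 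Without that compatibility your invocation of Lemma~\ref{lem_filstrict} has no hypotheses to stand on. Once these two points are repaired, the rest of your argument goes through as in the paper.
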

\begin{proof}
    Consider the morphism of the $E_1$-page of the weight spectral sequence (\ref{align_rel_ss_BM_ss}):
    \begin{align}\label{align_d1_E_1}
    \xymatrix{
    E^{-m,k+m}_1(R\overline{f}_\ast(\Omega^\bullet_{\overline{f},{\rm BM}}), W)\ar[d]^{d_1}\ar[r]^-{\simeq} &  \bigoplus_{i\geq0} R^{k-m}\overline{f}_\ast(\Omega^\bullet_{\overline{D}_{[i+1],[m-i]}/\overline{S}}(\log E_{[i+1],[m-i]}))(-m)\\
    E^{-m+1,k+m}_1(R\overline{f}_\ast(\Omega^\bullet_{\overline{f},{\rm BM}}), W) \ar[r]^-{\simeq}& 
    \bigoplus_{i\geq0} R^{k-m+2}\overline{f}_\ast(\Omega^\bullet_{\overline{D}_{[i+1],[m-i-1]}/\overline{S}}(\log E_{[i+1],[m-i-1]}))(-m+1)
    }.
    \end{align}
    By the lemma on two filtrations (\cite{Deligne1974}, see also \cite[Theorem 3.12]{Steenbrink2008}), $d_1$ preserves the filtration $F$, as $F_{\rm rec} = F_d = F_{d^\ast}$ on the $E_1$-terms in general, and $F_{\rm rec}$ on $E^{-m,k+m}_1(R\overline{f}_\ast(\Omega^\bullet_{\overline{f},{\rm BM}}), W)$ coincides with the logarithmic Hodge filtrations on 
    \[
    \bigoplus_{i\geq 0} R^{k-m}\overline{f}_\ast(\Omega^\bullet_{\overline{D}_{[i+1],[m-i]}/\overline{S}}(\log E_{[i+1],[m-i]}))(-m).
    \]
    We aim to demonstrate that the restriction of (\ref{align_d1_E_1}) to each point $s \in \overline{S}$,  
    \begin{align}\label{align_log_Hodge_fil_BM_d1}
    	d_1(s):& \bigoplus_{i\geq0}R^{k-m}\overline{f}_\ast(\Omega^\bullet_{\overline{D}_{[i+1],[m-i]}/\overline{S}}(\log E_{[i+1],[m-i]})) \otimes \mathbb{C}(s)\\\nonumber
    	\to & \bigoplus_{i\geq0} R^{k-m+2}\overline{f}_\ast(\Omega^\bullet_{\overline{D}_{[i+1],[m-i-1]}/\overline{S}}(\log E_{[i+1],[m-i-1]})) \otimes \mathbb{C}(s),
    \end{align}  
    is strictly compatible with the logarithmic Hodge filtrations.
    Since $d_1|_S$ is a morphism between variations of Hodge structures, it follows that $d_1(s)$ is strictly compatible with the logarithmic Hodge filtration for any $s \in S$.
    
    Now, let us consider a point $s \in \overline{S} \setminus S$. Consider those $k$ and $l$ such that $s\in \overline{f}(E_{[k],[l]})$. Then $(R^q\overline{f}_\ast(\Omega^\bullet_{\overline{D}_{[k],[l]}/\overline{S}}(\log E_{[k],[l]})),\nabla_{\rm GM})$ is a logarithmic connection whose eigenvalues of the residue map $R_s$ at $s$ lie in $[0,1)\cap\bQ$. Denote by $$\cV^q_{[k],[l]}:=R^q\overline{f}_\ast(\Omega^\bullet_{\overline{D}_{[k],[l]}/\overline{S}}(\log E_{[k],[l]}))$$ and $\cV^q_{[k],[l]}(s)=\cV^q_{[k],[l]}\otimes\bC(s)$. Let $$\{F^p:=R^q\overline{f}_\ast(\Omega^{\geq p}_{\overline{D}_{[k],[l]}/\overline{S}}(\log E_{[k],[l]}))\}$$ be the logarithmic Hodge filtration on $\cV^q_{[k],[l]}$, which induces a filtration $\{F^p(s):=F^p\otimes\bC(s)\}$ on $\cV^q_{[k],[l]}(s)$. Let $W_\bullet$ be the monodromy weight filtration on $\cV^q_{[k],[l]}(s)$ associated with the nilpotent part of $R_s$.
    
    Let $$0\leq\lambda_1 < \lambda_2 < \cdots < \lambda_m<1$$ be the eigenvalues of $R_s:\cV^q_{[k],[l]}(s)\to \cV^q_{[k],[l]}(s)$. They induce a filtration of logarithmic sub-connections:  
    $$
    0 = \cV_0 \subset \cV_1 \subset \cdots \subset \cV_m = \cV^q_{[k],[l]}(s).
    $$  
    Here, $\cV_i$ is the linear subspace of $\cV^q_{[k],[l]}(s)$ spanned by the generalized eigenspaces $\ker(R_s-\lambda)^\infty$ for $\lambda \geq \lambda_i$. Let $W_{\bullet}(\cV_i/\cV_{i-1})$ and $F^\bullet(\cV_i/\cV_{i-1})$ denote the induced filtrations on $\cV_i/\cV_{i-1}$, defined as follows:  
    $$
    W_m(\cV_i/\cV_{i-1}) = \frac{W_m \cap \cV_i + \cV_{i-1}}{\cV_{i-1}}, \quad \forall m,
    $$  
    $$
    F^p(\cV_i/\cV_{i-1}) = \frac{F^p(s) \cap \cV_i + \cV_{i-1}}{\cV_{i-1}}, \quad \forall p.
    $$  
    According to Theorem \ref{thm_LMHS}, the triple
    $$
    \left(\cV_i/\cV_{i-1}, W_\bullet(\cV_i/\cV_{i-1}), F^\bullet(\cV_i/\cV_{i-1}) \right)
    $$  
    forms a mixed Hodge structure for every $i=1,\dots,m$.
    
    Now let us analyze (\ref{align_log_Hodge_fil_BM_d1}). Since $d_1$ admits a rational structure: 
    \begin{align*}
    	d_1: \bigoplus_{i\geq0}R^{k-m}\overline{f}_\ast(\psi_{[i+1],[m-i]}) \to \bigoplus_{i\geq0} R^{k-m+2}\overline{f}_\ast(\psi_{[i+1],[m-i-1]}),
    \end{align*}  
    where $\psi_{[i+1],[m-i]}$ denotes the nearby cycle complex associated with the semistable morphism $(\overline{D}_{[i+1],[m-i]},E_{[i+1],[m-i]}) \to (\overline{S},D_{\overline{S}})$ at $s$, it follows that $d_1$ commutes with the monodromy operators. Consequently, $d_1(s)$ preserves both the generalized eigenspaces of the residue maps at $s$ and the monodromy weight filtrations. Let  
    $$
    0 \leq \lambda_1 < \lambda_2 < \cdots < \lambda_m < 1
    $$  
    be the union of eigenvalues of the residue maps (on the source and target of $d_1(s)$). Define  
    $$
    0 = \cV_0 \subset \cV_1 \subset \cdots \subset \cV_m = \bigoplus_{i\geq0}R^{k-m}\overline{f}_\ast(\Omega^\bullet_{\overline{D}_{[i+1],[m-i]}/\overline{S}}(\log E_{[i+1],[m-i]}))\otimes\bC(s),
    $$  
    where $\cV_i$ is the submodule spanned by the generalized eigenspaces $\ker(R-\lambda)^\infty$ for $\lambda \geq \lambda_i$, where $R$ is the direct sum of residue maps. Similarly, one obtains the filtration  
    $$
    0 = \cV'_0 \subset \cV'_1 \subset \cdots \subset \cV'_m = \bigoplus_{i\geq0} R^{k-m+2}\overline{f}_\ast(\Omega^\bullet_{\overline{D}_{[i+1],[m-i-1]}/\overline{S}}(\log E_{[i+1],[m-i-1]}))\otimes\bC(s),
    $$  
    where $\cV'_i$ is the submodule spanned by the generalized eigenspaces $\ker(R-\lambda)^\infty$ for $\lambda \geq \lambda_i$. Then  
    $$
    d_1(s)(\cV_i) \subset \cV'_i, \quad i=1,\dots,m,
    $$  
    and  
    $$
    d_1(s)\!\!\!\!\mod\cV_{i-1}: \frac{\cV_i}{\cV_{i-1}} \to \frac{\cV'_i}{\cV'_{i-1}}
    $$  
    preserves the monodromy weight filtrations and the logarithmic Hodge filtrations. Hence, $d_1(s)\!\!\!\mod\cV_{i-1}$ defines a morphism between mixed Hodge structures. Therefore, $d_1(s)\!\!\!\mod\cV_{i-1}$ is strictly compatible with the logarithmic Hodge filtrations for every $i=1,\dots,m$. By Lemma \ref{lem_filstrict}, we conclude that $d_1(s)$ is strictly compatible with the logarithmic Hodge filtrations.

    This demonstrates that $d_1(s)$ is strictly compatible with the logarithmic Hodge filtration for every $s \in \overline{S}$. By applying \cite[Lemma 3.4(iv)]{Fujino2014} to the filtered complex  
    \[
    \left(E^{\bullet,k+m}_1(R\overline{f}_\ast(\Omega^\bullet_{\overline{f},{\rm BM}}), W), F_{\rm rec}\right),
    \]  
    it follows that  
    \[
    {\rm Gr}^p_{F_{\rm rec}} E_2^{-m,k+m}(R\overline{f}_\ast(\Omega^\bullet_{\overline{f},{\rm BM}}), W)
    \]  
    is locally free for all $p$, $k$, and $m$, and that $F_{\rm rec} = F_d = F_{d^\ast}$ holds on $E_2^{-m,k+m}(R\overline{f}_\ast(\Omega^\bullet_{\overline{f},{\rm BM}}), W)$ (see \cite[Theorem 3.12]{Steenbrink2008}). Furthermore, the morphism (\ref{align_d1_E_1}) is strictly compatible with $F_{\rm rec}$ (see \cite[Lemma 3.4(iv)]{Fujino2014}).  
    Therefore, by Proposition \ref{prop_weightss_E2_BM}, it follows that 
    \[
    {\rm Gr}^p_{F_{\rm rec}} E_2^{-k,k+m}(R\overline{f}_\ast(\Omega^\bullet_{\overline{f},{\rm BM}}), W) \simeq {\rm Gr}^p_{F_{\rm rec}} {\rm Gr}^W_{k+m} R^{k} \overline{f}_\ast(\Omega^\bullet_{\overline{f},{\rm BM}})
    \]
    is locally free for all $p$, $q$, and $k$. 
      
    Since the spectral sequence (\ref{align_rel_ss_BM_ss}) degenerates at the $E_2$-page (Proposition \ref{prop_weightss_E2_BM}) and the morphism of the $E_1$-page is strictly compatible with $F_{\rm rec}$, it follows from the lemma on two filtrations that the spectral sequence (\ref{align_ss_logHodge}) degenerates at the $E_1$-page. Consequently,  
    \[
    R^k \overline{f}_\ast(F^p \Omega^\bullet_{\overline{f},{\rm BM}}) \simeq F^p R^k \overline{f}_\ast(\Omega^\bullet_{\overline{f},{\rm BM}})
    \]  
    is locally free for all $p$ and $k$. Furthermore, ${\rm Gr}^p_FR^k \overline{f}_\ast(\Omega^\bullet_{\overline{f},{\rm BM}})$ is locally free for all $p$ and $k$, as a result of the $E_1$-degeneration and the local freeness of $R^k \overline{f}_\ast(\Omega^\bullet_{\overline{f},{\rm BM}})$ (Proposition \ref{prop_weightss_E2_BM}).
\end{proof}
\begin{lem}\label{lem_Griff_trans_logBM}
	The logarithmic Hodge filtration $F^\bullet R^k\overline{f}_\ast(\Omega^\bullet_{\overline{f},\rm BM})$ satisfies the Griffiths tranversality:
	$$\nabla_{\rm GM}(F^p R^k\overline{f}_\ast(\Omega^\bullet_{\overline{f},\rm BM}))\subset F^{p-1} R^k\overline{f}_\ast(\Omega^\bullet_{\overline{f},\rm BM})\otimes_{\sO_{\overline{S}}} \Omega_{\overline{S}}(\log D_{\overline{S}}),\quad\forall p.$$
\end{lem}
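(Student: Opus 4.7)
The plan is to adapt the Griffiths transversality argument from the proof of Theorem~\ref{thm_VMHS_BM} to the present logarithmic setting. Recall from \S\ref{section_log_GMconnection_VBM} that $\nabla_{\rm GM}$ was constructed as the connecting morphism associated to the short exact sequence
\[ 0 \to K^1_{\overline{f}}/K^2_{\overline{f}} \to K^0_{\overline{f}}/K^2_{\overline{f}} \to K^0_{\overline{f}}/K^1_{\overline{f}} \to 0 \]
of complexes coming from the logarithmic Koszul filtration, together with the identifications $K^0_{\overline{f}}/K^1_{\overline{f}} \simeq \Omega^\bullet_{\overline{f},{\rm BM}}$ and $K^1_{\overline{f}}/K^2_{\overline{f}} \simeq \overline{f}^{-1}\Omega_{\overline{S}}(\log D_{\overline{S}}) \otimes_{\overline{f}^{-1}\sO_{\overline{S}}} \Omega^\bullet_{\overline{f},{\rm BM}}[-1]$. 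The first step is to intersect this sequence with the stupid Hodge filtration $\sigma^{\geq p}$ on $\Omega^\bullet_{(\overline{X},\overline{D}),{\rm BM}}$ to obtain a Hodge-filtered version.

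Since $\sigma^{\geq p}$ is exact termwise, this yields a commutative diagram of short exact sequences of complexes
\[
\xymatrix{
0 \ar[r] & \overline{f}^{-1}\Omega_{\overline{S}}(\log D_{\overline{S}})\otimes F^{p-1}\Omega^\bullet_{\overline{f},{\rm BM}}[-1]\ar[d] \ar[r] & \sigma^{\geq p}(K^0_{\overline{f}}/K^2_{\overline{f}})\ar[r]\ar[d] & F^p\Omega^\bullet_{\overline{f},{\rm BM}}\ar[d]\ar[r] & 0 \\
0 \ar[r] & \overline{f}^{-1}\Omega_{\overline{S}}(\log D_{\overline{S}})\otimes \Omega^\bullet_{\overline{f},{\rm BM}}[-1]\ar[r] & K^0_{\overline{f}}/K^{2}_{\overline{f}}\ar[r] & \Omega^\bullet_{\overline{f},{\rm BM}}\ar[r] & 0
}
\]
with the natural inclusions as vertical arrows (the identification of the leftmost column uses $\sigma^{\geq p}(C^\bullet[-1]) = \sigma^{\geq p-1}(C^\bullet)[-1]$). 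Applying $R\overline{f}_\ast$ and comparing the connecting morphisms of the two associated long exact sequences in degree $k$ produces a commutative square
\[
\xymatrix{
R^k\overline{f}_\ast(F^p\Omega^\bullet_{\overline{f},{\rm BM}})\ar[r]\ar[d] & \Omega_{\overline{S}}(\log D_{\overline{S}})\otimes R^k\overline{f}_\ast(F^{p-1}\Omega^\bullet_{\overline{f},{\rm BM}})\ar[d] \\
R^k\overline{f}_\ast(\Omega^\bullet_{\overline{f},{\rm BM}})\ar[r]^-{\nabla_{\rm GM}} & \Omega_{\overline{S}}(\log D_{\overline{S}})\otimes R^k\overline{f}_\ast(\Omega^\bullet_{\overline{f},{\rm BM}})
}
\]
for every $k$ and $p$.

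The conclusion will then follow by invoking Proposition~\ref{prop_log_Hodge_fil_BM}, which identifies the two vertical arrows with the natural inclusions $F^pR^k\overline{f}_\ast(\Omega^\bullet_{\overline{f},{\rm BM}}) \hookrightarrow R^k\overline{f}_\ast(\Omega^\bullet_{\overline{f},{\rm BM}})$ and its $F^{p-1}$-analogue. A direct chase through the square then gives the desired containment $\nabla_{\rm GM}(F^p) \subset F^{p-1} \otimes \Omega_{\overline{S}}(\log D_{\overline{S}})$. The main obstacle of this approach has in fact already been dispatched in the previous section: without the $E_1$-degeneration of the logarithmic Hodge spectral sequence (Proposition~\ref{prop_log_Hodge_fil_BM}), the pushforward $R^k\overline{f}_\ast$ would not commute with taking the stupid Hodge truncation, so the vertical arrows in the square above could fail to be injective with the claimed image, and the argument would break down. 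Once that result is available, Griffiths transversality in the logarithmic setting reduces to the formal diagram chase outlined above, exactly parallel to the unfiltered case treated in Theorem~\ref{thm_VMHS_BM}.
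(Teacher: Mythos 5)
Your proposal is correct and follows essentially the same route as the paper: the same commutative diagram of distinguished triangles built from $\sigma^{\geq p}$ applied to $K^0_{\overline{f}}/K^2_{\overline{f}}$, the same induced square of connecting morphisms after applying $R\overline{f}_\ast$, and the same appeal to Proposition~\ref{prop_log_Hodge_fil_BM} to identify $R^k\overline{f}_\ast(F^p\Omega^\bullet_{\overline{f},{\rm BM}})$ with the subsheaf $F^pR^k\overline{f}_\ast(\Omega^\bullet_{\overline{f},{\rm BM}})$. Your added remark on why the $E_1$-degeneration is the essential input is accurate and consistent with the paper's (terser) argument.
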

\begin{proof}
    By considering the long exact sequences arising from the commutative diagram
	$$
	\xymatrix{
		\overline{f}^{-1}\Omega_{\overline{S}}(\log D_{\overline{S}})\otimes_{\overline{f}^{-1}\sO_{\overline{S}}}F^{p-1}\Omega^\bullet_{\overline{f},{\rm BM}}[-1]\ar[d] \ar[r]&\sigma^{\geq p}(K^0_{\overline{f}}/K^2_{\overline{f}})\ar[r]\ar[d] &F^p\Omega^\bullet_{\overline{f},{\rm BM}}\ar[d]\ar[r]&\\
		\overline{f}^{-1}\Omega_{\overline{S}}(\log D_{\overline{S}})\otimes_{\overline{f}^{-1}\sO_{\overline{S}}}\Omega^\bullet_{\overline{f},{\rm BM}}[-1]\ar[r] &K^0_{\overline{f}}/K^2_{\overline{f}}\ar[r] &\Omega^\bullet_{\overline{f},{\rm BM}}\ar[r] &
	}
	$$
	we obtain a commutative diagram:
	$$
	\xymatrix{
		R^k\overline{f}_\ast(F^p\Omega^\bullet_{\overline{f},{\rm BM}})\ar[r]\ar[d] & \Omega_{\overline{S}}(\log D_{\overline{S}})\otimes_{\sO_{\overline{S}}}R^k\overline{f}_\ast(F^{p-1}\Omega^\bullet_{\overline{f},{\rm BM}})\ar[d]\\
		R^k\overline{f}_\ast(\Omega^\bullet_{\overline{f},{\rm BM}})\ar[r]^-{\nabla_{\rm GM}} & \Omega_{\overline{S}}(\log D_{\overline{S}})\otimes_{\sO_{\overline{S}}}R^k\overline{f}_\ast(\Omega^\bullet_{\overline{f},{\rm BM}})\\
	}.
	$$
	By Proposition \ref{prop_log_Hodge_fil_BM}, we establish the assertion of the lemma. 
\end{proof}
\subsection{Relative monodromy weight filtration}
In this section, we assume that $\overline{S} = \Delta$ is the unit disc with a coordinate $t$, and $D_{\overline{S}} = \{t = 0\}$. By Lemma \ref{lem_quasi-unipotency}, one can always achieve unipotent monodromy by performing a base change via a finite map $\Delta \to \Delta$, $z \mapsto z^m$ for some $m \in \bZ_{>0}$. Specifically, the following condition holds:  
\begin{framed}
	{\bf (UM):} For each $k\geq 1$, $l\geq 0$ and $q \geq 0$, the local system $h^q(Rf_\ast Rj_\ast \mathbb{Z}_{D_{[k],[l]}})$ has unipotent monodromy around $0$.
\end{framed}
Henceforth, we assume that condition {\bf (UM)} holds throughout this section. The main result of this section is the existence of the relative monodromy weight filtration (Theorem \ref{thm_relMonfil_BM}).
\subsubsection{The residue map}
Recall that $E := \overline{f}^{-1}(0)$. The canonical morphism  
$$
\Omega^\bullet_{\overline{f},{\rm BM}} \to \Omega^\bullet_{\overline{f},{\rm BM}} \otimes_{\sO_{\overline{X}}} \sO_{E_{\rm red}}
$$  
induces a morphism:  
\begin{align*}
	\tau_0 : R\overline{f}_\ast(\Omega^\bullet_{\overline{f},{\rm BM}}) \otimes\bC(0) \to R\Gamma(E_{\rm red}, \Omega^\bullet_{\overline{f},{\rm BM}} \otimes_{\sO_{\overline{X}}} \sO_{E_{\rm red}}).
\end{align*}  
\begin{lem}\label{lem_log_ext_fiber}
	$\tau_{0}$ is a quasi-isomorphism.
\end{lem}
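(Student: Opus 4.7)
My plan is to reduce the lemma to Steenbrink's classical theorem \cite{Steenbrink1975} on semistable families, applied to each stratum $\overline{X}_{[k]}$ separately, and then glue the pieces via the double-complex structure defining $\Omega^\bullet_{\overline{f},{\rm BM}}$.

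First, I would apply cohomology and base change. Proposition \ref{prop_weightss_E2_BM} establishes the local freeness of $R^q\overline{f}_\ast(\Omega^\bullet_{\overline{f},{\rm BM}})$, hence
$$R\overline{f}_\ast(\Omega^\bullet_{\overline{f},{\rm BM}})\otimes\bC(0) \simeq R\overline{f}_\ast\!\left(\Omega^\bullet_{\overline{f},{\rm BM}}\otimes^L_{\overline{f}^{-1}\sO_{\overline{S}}}\bC(0)\right).$$
Since $\bC(0)$ is resolved by $[\overline{f}^{-1}\sO_{\overline{S}}\xrightarrow{t}\overline{f}^{-1}\sO_{\overline{S}}]$, and $t$ cuts out the Cartier divisor $E_{[k]}$ on each smooth stratum $\overline{X}_{[k]}$ (so acts as a non-zero divisor on every term of $\Omega^\bullet_{\overline{f},{\rm BM}}$), this derived restriction coincides with the ordinary restriction to the schematic fiber $\Omega^\bullet_{\overline{f},{\rm BM}}\otimes_{\sO_{\overline{X}}}\sO_E$. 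Consequently, $\tau_0$ is a quasi-isomorphism if and only if the natural surjection
$$\Omega^\bullet_{\overline{f},{\rm BM}}\otimes_{\sO_{\overline{X}}}\sO_E \longrightarrow \Omega^\bullet_{\overline{f},{\rm BM}}\otimes_{\sO_{\overline{X}}}\sO_{E_{\rm red}}$$
is a quasi-isomorphism in $D(\overline{X})$.

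Second, I would filter $\Omega^\bullet_{\overline{f},{\rm BM}}$ by the stupid filtration indexed by the stratum number $k$, whose associated graded pieces are the single-stratum relative log de Rham complexes $\Omega^\bullet_{\overline{X}_{[k]}/\overline{S}}(\log\overline{D}_{[k]})$ (with Tate twists and shifts). For each $k$, the morphism $(\overline{X}_{[k]},\overline{D}_{[k]})\to(\overline{S},D_{\overline{S}})$ is semistable with unipotent monodromy thanks to assumption \textbf{(UM)}, and Steenbrink's theorem identifies
$$\Omega^\bullet_{\overline{X}_{[k]}/\overline{S}}(\log\overline{D}_{[k]})\otimes\sO_{E_{[k]}} \simeq_{\rm qis} \Omega^\bullet_{\overline{X}_{[k]}/\overline{S}}(\log\overline{D}_{[k]})\otimes\sO_{(E_{[k]})_{\rm red}}$$
by the standard local Koszul computation in semistable coordinates $t=z_1^{a_1}\cdots z_r^{a_r}$: the nilpotent kernel $(z_1\cdots z_r)/(z_1^{a_1}\cdots z_r^{a_r})\subset\sO_{E_{[k]}}$ becomes acyclic upon tensoring with the log de Rham complex, since the generators $dz_i/z_i$ annihilate the relevant powers of $z_i$.

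Finally, the induced restriction $\Omega^\bullet_{\overline{f},{\rm BM}}\otimes\sO_E\to\Omega^\bullet_{\overline{f},{\rm BM}}\otimes\sO_{E_{\rm red}}$ is a morphism of bounded filtered complexes whose associated graded is a quasi-isomorphism by the previous step, so a standard spectral sequence comparison yields the desired quasi-isomorphism and hence the lemma. The main obstacle is the compatibility of the restriction map with the Gysin boundary differentials $\Omega^\bullet_{\overline{X}_{[k+1]}/\overline{S}}(\log\overline{D}_{[k+1]})(-1)[-2]\to\Omega^\bullet_{\overline{X}_{[k]}/\overline{S}}(\log\overline{D}_{[k]})$ within the total complex: one must check that these Gysin maps descend coherently to the reduced fibers, which follows from the semistable condition (Definition \ref{defn_semistable_cod1}(iii)) that every stratum contained in $E_{\rm red}$ maps submersively onto a component of $D_{\overline{S}}$, so that the residue/Poincar\'e-type construction of the Gysin map is preserved by reduction modulo $t$.
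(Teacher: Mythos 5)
Your proposal is correct and shares the paper's overall architecture (reduce to the schematic fiber via local freeness and base change, then show that passing from $\sO_E$ to $\sO_{E_{\rm red}}$ is a quasi-isomorphism by filtering the complex and invoking Steenbrink), but it differs at the key step in the choice of filtration. You filter $\Omega^\bullet_{\overline{f},{\rm BM}}$ by the stupid filtration in the stratum index $k$, so your graded pieces are the full relative log de Rham complexes $\Omega^\bullet_{\overline{X}_{[k]}/\overline{S}}(\log \overline{D}_{[k]})$, which still carry logarithmic poles along the horizontal divisor $\overline{D}^h_{[k]}$; you therefore need the extension of Steenbrink's computation to semistable morphisms with horizontal boundary, which you justify by the local Koszul argument (correct, since the horizontal generators $dy_j/y_j$ split off as a tensor factor and the acyclicity of the vertical Koszul piece under \textbf{(UM)} carries over). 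The paper instead filters by the weight filtration $W_\bullet$ of (\ref{align_weight_filtration_BM}), whose graded pieces, by (\ref{align_BM_GrW_rel_ss}), are the complexes $\Omega^\bullet_{\overline{D}_{[i+1],[m-i]}/\overline{S}}(\log E_{[i+1],[m-i]})$ of the closed strata with log poles only along the vertical divisor; this lands verbatim in the setting of \cite[\S 4.14]{Steenbrink1975} (semistable families with smooth general fibers) at the cost of a finer decomposition. Both routes are valid; yours is slightly more economical but leans on an unproved (though standard) log variant of the cited result, while the paper's reduces strictly to the literature. Your closing worry about compatibility with the Gysin differentials is not really an issue in either approach: the reduction map $\otimes\sO_E\to\otimes\sO_{E_{\rm red}}$ is applied termwise to a fixed total complex and the filtrations used are by subcomplexes, so the comparison of spectral sequences is automatic.
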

\begin{proof} 
	The weight filtration $W_\bullet$ (\ref{align_weight_filtration_BM}) on $\Omega^\bullet_{\overline{f},{\rm BM}}$ induces filtrations (still denoted by $W_\bullet$) on $\Omega^\bullet_{\overline{f},{\rm BM}} \otimes_{\sO_{\overline{X}}} \sO_{E}$ and $\Omega^\bullet_{\overline{f},{\rm BM}} \otimes_{\sO_{\overline{X}}} \sO_{E_{\rm red}}$, respectively. The restriction gives a filtered morphism:  
	\begin{align}  
	\tau : (\Omega^\bullet_{\overline{f},{\rm BM}} \otimes_{\sO_{\overline{X}}} \sO_{E}, W_\bullet) \to (\Omega^\bullet_{\overline{f},{\rm BM}} \otimes_{\sO_{\overline{X}}} \sO_{E_{\rm red}}, W_\bullet).  
	\end{align}  
	
	By (\ref{align_BM_GrW_rel_ss}), we have  
	\begin{align*}  
	{\rm Gr}^W_m(\Omega^\bullet_{\overline{f},{\rm BM}} \otimes_{\sO_{\overline{X}}} \sO_{E}) = \bigoplus_{i \geq 0} R^{k-m}\overline{f}_\ast(\Omega^\bullet_{\overline{D}_{[i+1],[m-i]}/\Delta}(\log E_{[i+1],[m-i]})) \otimes \sO_{E_{[i+1],[m-i]}},  
	\end{align*}  
	and  
	\begin{align*}  
	{\rm Gr}^W_m(\Omega^\bullet_{\overline{f},{\rm BM}} \otimes_{\sO_{\overline{X}}} \sO_{E_{\rm red}}) = \bigoplus_{i \geq 0} R^{k-m}\overline{f}_\ast(\Omega^\bullet_{\overline{D}_{[i+1],[m-i]}/\Delta}(\log E_{[i+1],[m-i]})) \otimes \sO_{E_{[i+1],[m-i],{\rm red}}}.  
	\end{align*}  
	
	According to \cite[\S 4.14]{Steenbrink1975} and the assumption {\bf (UM)}, the induced morphism  
	\begin{align*}  
		{\rm Gr}^W_m(\tau) : {\rm Gr}^W_m(\Omega^\bullet_{\overline{f},{\rm BM}} \otimes_{\sO_{\overline{X}}}\sO_{E}) \to {\rm Gr}^W_m(\Omega^\bullet_{\overline{f},{\rm BM}} \otimes_{\sO_{\overline{X}}}\sO_{E_{\rm red}})  
	\end{align*}  
	is a quasi-isomorphism for every $m$. Consequently, $\tau$ is a quasi-isomorphism. As a result, the morphism  
	\begin{align}\label{align_limit_fiber1}
		R\Gamma(E, \Omega^\bullet_{\overline{f},{\rm BM}} \otimes_{\sO_{\overline{X}}} \sO_{E}) \to R\Gamma(E_{\rm red}, \Omega^\bullet_{\overline{f},{\rm BM}} \otimes_{\sO_{\overline{X}}} \sO_{E_{\rm red}})
	\end{align}  
	is also a quasi-isomorphism. By Theorem \ref{prop_weightss_E2_BM}, $R^k\overline{f}_\ast(\Omega^\bullet_{\overline{f},{\rm BM}})$ is locally free and commutes with arbitrary base changes for every $k$. Therefore, there exists a canonical isomorphism  
	\begin{align}\label{align_limit_fiber2}
		R\overline{f}_\ast(\Omega^\bullet_{\overline{f},{\rm BM}}) \otimes \bC(0) \simeq R\Gamma(E, \Omega^\bullet_{\overline{f},{\rm BM}} \otimes_{\sO_{\overline{X}}} \sO_{E}).
	\end{align}  
	Combining (\ref{align_limit_fiber1}) and (\ref{align_limit_fiber2}), we conclude that $\tau_0$ is a quasi-isomorphism.
\end{proof}
The residue sequences  
\begin{align*}  
0 \to & \Omega^\bullet_{\overline{X}_{[k]}/\Delta}(\log\overline{D}_{[k]}) \otimes_{\sO_{\overline{X}_{[k]}}} \sO_{E_{[k],\rm red}}[-1] \stackrel{\wedge\frac{dt}{t}}{\longrightarrow} \Omega^\bullet_{\overline{X}_{[k]}}(\log\overline{D}_{[k]}) \otimes_{\sO_{\overline{X}_{[k]}}} \sO_{E_{[k],\rm red}} \\  
\to & \Omega^\bullet_{\overline{X}_{[k]}/\Delta}(\log\overline{D}_{[k]}) \otimes_{\sO_{\overline{X}_{[k]}}} \sO_{E_{[k],\rm red}} \to 0  
\end{align*}  
for all $k \geq 0$ induce the residue distinguished triangle:  
\begin{align}\label{align_residue_triangle}  
\Omega^\bullet_{\overline{f},{\rm BM}} \otimes_{\sO_{\overline{X}}} \sO_{E_{\rm red}}[-1] \stackrel{\wedge\frac{dt}{t}}{\longrightarrow} \Omega^\bullet_{(\overline{X},\overline{D}),{\rm BM}} \otimes_{\sO_{\overline{X}}} \sO_{E_{\rm red}} \to \Omega^\bullet_{\overline{f},{\rm BM}} \otimes_{\sO_{\overline{X}}} \sO_{E_{\rm red}} \to.  
\end{align}  
The connecting map of this triangle yields a natural morphism  
\begin{align*}  
{\rm Res}^q_0 : \bH^q(E_{\rm red}, \Omega^\bullet_{\overline{f},{\rm BM}} \otimes_{\sO_{\overline{X}}} \sO_{E_{\rm red}}) \to \bH^q(E_{\rm red}, \Omega^\bullet_{\overline{f},{\rm BM}} \otimes_{\sO_{\overline{X}}} \sO_{E_{\rm red}})  
\end{align*}  
for every $q$.
\begin{lem}\label{lem_residue_BM}
	The map ${\rm Res}^q_{0}$ is the residue map associated with  
	$$
	\nabla_{\rm GM}: R^q\overline{f}_\ast(\Omega^\bullet_{\overline{f},{\rm BM}}) \to \Omega_{\Delta}(\log 0) \otimes_{\sO_{\Delta}} R^q\overline{f}_\ast(\Omega^\bullet_{\overline{f},{\rm BM}})
	$$  
	via the identification of fibers given by Lemma \ref{lem_log_ext_fiber}:  
	$$
	\tau^q_{0}: R^q\overline{f}_\ast(\Omega^\bullet_{\overline{f},{\rm BM}}) \otimes \bC(0) \simeq \bH^q(E_{\rm red}, \Omega^\bullet_{\overline{f},{\rm BM}} \otimes_{\sO_{\overline{X}}} \sO_{E_{\rm red}}).
	$$  
\end{lem}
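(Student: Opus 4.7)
The plan is to compare two distinguished triangles via restriction to $E_{\rm red}$: the triangle from the Koszul filtration in \S \ref{section_log_GMconnection_VBM} (which by construction defines $\nabla_{\rm GM}$), and the residue triangle (\ref{align_residue_triangle}) (which by construction defines ${\rm Res}^q_0$). Since $\overline{S} = \Delta$ is one-dimensional, $\Omega^p_{\overline{S}}(\log D_{\overline{S}}) = 0$ for $p \geq 2$, and hence $K^2_{\overline{f}} = 0$. The Koszul filtration therefore collapses to a single short exact sequence of complexes
\begin{equation*}
0 \to \overline{f}^{-1}\Omega_{\overline{S}}(\log D_{\overline{S}}) \otimes_{\overline{f}^{-1}\sO_{\overline{S}}} \Omega^\bullet_{\overline{f},{\rm BM}}[-1] \to \Omega^\bullet_{(\overline{X},\overline{D}),{\rm BM}} \to \Omega^\bullet_{\overline{f},{\rm BM}} \to 0,
\end{equation*}
whose connecting morphism after $R\overline{f}_\ast$ is $\nabla_{\rm GM}$.

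Next I would restrict this sequence termwise to $E_{\rm red}$, i.e., tensor over $\sO_{\overline{X}}$ with $\sO_{E_{\rm red}}$, and identify the outcome with the residue triangle. Under the trivialization $\overline{f}^\ast\Omega_{\overline{S}}(\log D_{\overline{S}})|_{E_{\rm red}} \simeq \sO_{E_{\rm red}} \cdot \frac{dt}{t}$, the leftmost term becomes $\Omega^\bullet_{\overline{f},{\rm BM}} \otimes \sO_{E_{\rm red}}[-1]$ and the first map becomes $\wedge \frac{dt}{t}$. Componentwise, on each log smooth stratum $(\overline{X}_{[k]}, \overline{D}_{[k]})$, the restricted sequence is exactly the standard residue short exact sequence that appears in the definition of (\ref{align_residue_triangle}), which is exact because $\overline{f}$ is semistable (the relative log differentials $\Omega^p_{\overline{X}_{[k]}/\overline{S}}(\log \overline{D}_{[k]})$ are locally free). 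Since each $i_{[k]}$ is a closed immersion with $E_{[k],\rm red} = i_{[k]}^{-1}(E_{\rm red})$, the functor $i_{[k]\ast}$ commutes with restriction to $E_{\rm red}$, so assembling the total complex across $k$ recovers precisely (\ref{align_residue_triangle}).

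Finally, applying $R\Gamma(E_{\rm red}, -)$ to the two isomorphic distinguished triangles yields identical connecting maps in cohomology. One side is ${\rm Res}^q_0$ by definition. For the other side, using Lemma \ref{lem_log_ext_fiber} to identify $R\overline{f}_\ast(\Omega^\bullet_{\overline{f},{\rm BM}}) \otimes \bC(0)$ with $R\Gamma(E_{\rm red}, \Omega^\bullet_{\overline{f},{\rm BM}} \otimes \sO_{E_{\rm red}})$, and invoking the base-change property guaranteed by the local freeness in Proposition \ref{prop_weightss_E2_BM}, the connecting map is the fiber at $0$ of $\nabla_{\rm GM}$, composed with the evaluation $\Omega_{\Delta}(\log 0) \otimes \bC(0) \to \bC$ sending $\frac{dt}{t} \mapsto 1$, which is exactly the residue map.

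The main technical obstacle I anticipate is verifying the compatibility of termwise restriction with the total-complex construction (including the alternating Gysin differentials), together with the commutativity of the connecting morphisms under the identifications above. This is essentially a bookkeeping task resolved by the componentwise analysis, but care is needed with the signs, shifts, and the interaction between the closed immersions $i_{[k]}$ and the tensor product with $\sO_{E_{\rm red}}$.
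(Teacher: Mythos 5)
Your proposal is correct and follows essentially the same route as the paper: the paper likewise compares the Koszul-filtration triangle defining $\nabla_{\rm GM}$ with the residue triangle (\ref{align_residue_triangle}) via a morphism of distinguished triangles given by restriction to $E_{\rm red}$ (with the residue map $\Omega_\Delta(\log 0)\to\bC(0)$, i.e.\ your evaluation $\frac{dt}{t}\mapsto 1$, on the leftmost term), and then identifies the connecting morphisms using Lemma \ref{lem_log_ext_fiber}.
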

\begin{proof}
	Consider the commutative diagram of distinguished triangles:
	\begin{align*}
	\xymatrix{
			\overline{f}^\ast\Omega_{\Delta}(\log 0)\otimes_{\overline{f}^{-1}\sO_{\Delta}}\Omega^\bullet_{\overline{f},{\rm BM}}[-1] \ar[r]\ar[d]^{\overline{f}^{-1}({\rm Res}_0)}&\Omega^\bullet_{(\overline{X},\overline{D}),{\rm BM}}\ar[r]\ar[d] &\Omega^\bullet_{\overline{f},{\rm BM}}\ar[r]\ar[d]& \\
			\Omega^\bullet_{\overline{f},{\rm BM}} \otimes_{\sO_{\overline{X}}} \sO_{E_{\rm red}}[-1]\ar[r]^{\wedge\frac{dt}{t}}&\Omega^\bullet_{(\overline{X},\overline{D}),{\rm BM}} \otimes_{\sO_{\overline{X}}} \sO_{E_{\rm red}}\ar[r] &\Omega^\bullet_{\overline{f},{\rm BM}} \otimes_{\sO_{\overline{X}}} \sO_{E_{\rm red}}\ar[r]&
	}.
	\end{align*}
	where ${\rm Res}_0:\Omega_{\Delta}(\log 0)\to\bC(0)$ is the residue map.
	The connecting morphisms yields a commutative diagram
	\begin{align*}
	\xymatrix{
		R^q\overline{f}_\ast(\Omega^\bullet_{\overline{f},{\rm BM}}) \ar[r]^-{\nabla_{\rm GM}}\ar[d]^{\rm Lemma\  \ref{lem_log_ext_fiber}} & \Omega_{\Delta}(\log 0) \otimes_{\sO_{\Delta}} R^q\overline{f}_\ast(\Omega^\bullet_{\overline{f},{\rm BM}}) \ar[d]^{\rm Res_0} \\
		\bH^q(E_{\rm red}, \Omega^\bullet_{\overline{f},{\rm BM}} \otimes_{\sO_{\overline{X}}} \sO_{E_{\rm red}}) \ar[r]^{{\rm Res}^q_0} & \bH^q(E_{\rm red}, \Omega^\bullet_{\overline{f},{\rm BM}} \otimes_{\sO_{\overline{X}}} \sO_{E_{\rm red}}) 
	},
	\end{align*}
	Therefore, the lemma is established.
\end{proof}
\subsubsection{Bi-complex resolution}
Motivated by \cite{Steenbrink1975}, we aim to introduce a bi-complex resolution of $\Omega^\bullet_{\overline{f},{\rm BM}} \otimes_{\sO_{\overline{X}}} \sO_{E_{\rm red}}$, where the operator ${\rm Res}_0(\nabla_{\rm GM})$ is defined at the level of complexes.

Let $k \geq 1$ be given. Observe that $(\overline{X}_{[k]}, \overline{D}_{[k]})$ forms a log smooth pair, and the morphism $(\overline{X}_{[k]}, \overline{D}_{[k]}) \to (\Delta, 0)$ is semistable with log smooth general fibers. Define $E_{[k]}:=i^{-1}_{[k]}(E)$ as the pullback of $E$ via the natural morphism $\overline{X}_{[k]} \to \overline{X}$, and let $\overline{D}^h_{[k]}$ denote the horizontal component of $\overline{D}_{[k]}$.

Let $W^{E_{[k]}}_\bullet \Omega^{\bullet}_{\overline{X}_{[k]}}(\log \overline{D}_{[k]})$ represent the weight filtration on $\Omega^{\bullet}_{\overline{X}_{[k]}}(\log \overline{D}_{[k]})$ with respect to the divisor $E_{[k]}$, which is explicitly defined as follows:  
\begin{align*}  
	W^{E_{[k]}}_m \Omega^{p}_{\overline{X}_{[k]}}(\log \overline{D}_{[k]}) = \begin{cases}  
		0, & m < 0 \\  
		\Omega^p_{\overline{X}_{[k]}}(\log \overline{D}_{[k]}), & m \geq p \\  
		\Omega^{p-m}_{\overline{X}_{[k]}}(\log \overline{D}^h_{[k]}) \wedge \Omega^m_X(\log \overline{D}_{[k]}), & 0 \leq m \leq p  
	\end{cases}.  
\end{align*}  
Let $A^{\bullet,\bullet}_{(\overline{X}_{[k]},\overline{D}_{[k]})/ \Delta}$ denote a bi-complex of sheaves on $E_{[k]}$, where  
$$
A^{p,q}_{(\overline{X}_{[k]},\overline{D}_{[k]})/ \Delta} = \Omega^{p+q+1}_{\overline{X}_{[k]}}(\log \overline{D}_{[k]}) / W^{E_{[k]}}_p \Omega^{p+q+1}_{\overline{X}_{[k]}}(\log \overline{D}_{[k]}).
$$  
The differentials are defined as follows:  
$$
d' = \frac{dt}{t} \wedge : A^{p,q}_{(\overline{X}_{[k]},\overline{D}_{[k]})/ \Delta} \to A^{p+1,q}_{(\overline{X}_{[k]},\overline{D}_{[k]})/ \Delta},
$$  
and  
$$
d'' = d : A^{p,q}_{(\overline{X}_{[k]},\overline{D}_{[k]})/\Delta} \to A^{p,q+1}_{(\overline{X}_{[k]},\overline{D}_{[k]})/\Delta}.
$$  
The bi-complex $A^{\bullet,\bullet}_{\overline{X}_{[k]}/ \Delta}$ is equipped with a weight filtration given by  
$$
W^{\overline{f}}_{m}(A^{p,q}_{(\overline{X}_{[k]},\overline{D}_{[k]})/\Delta}) := W_{m}^{\overline{D}^h_{[k]}} \Omega^{p+q+1}_{\overline{X}_{[k]}}(\log \overline{D}_{[k]}) / W^{E_{[k]}}_p \Omega^{p+q+1}_{\overline{X}_{[k]}}(\log \overline{D}_{[k]}),
$$  
where $W^{\overline{D}^h_{[k]}}_\bullet \Omega^{\bullet}_{\overline{X}_{[k]}}(\log \overline{D}_{[k]})$ denotes the weight filtration on $\Omega^{\bullet}_{\overline{X}_{[k]}}(\log \overline{D}_{[k]})$ with respect to the divisor $\overline{D}^h_{[k]}$. Explicitly,  
$$
W^{\overline{D}^h_{[k]}}_m \Omega^{p}_{\overline{X}_{[k]}}(\log \overline{D}_{[k]}) = 
\begin{cases}  
	0, & m < 0 \\  
	\Omega^p_{\overline{X}_{[k]}}(\log \overline{D}_{[k]}), & m \geq p \\  
	\Omega^{p-m}_{\overline{X}_{[k]}}(\log E_{[k]}) \wedge \Omega^m_X(\log \overline{D}_{[k]}), & 0 \leq m \leq p.  
\end{cases}
$$  
Additionally, $A^{\bullet,\bullet}_{(\overline{X}_{[k]},\overline{D}_{[k]})/\Delta}$ is endowed with a monodromy weight filtration defined as  
$$
W_{m}(A^{p,q}_{(\overline{X}_{[k]},\overline{D}_{[k]})/\Delta}) := W^{\overline{D}_{[k]}}_{m+2p+1} \Omega^{p+q+1}_{\overline{X}_{[k]}}(\log \overline{D}_{[k]}) / W^{E_{[k]}}_p \Omega^{p+q+1}_{\overline{X}_{[k]}}(\log \overline{D}_{[k]}), \quad m \in \mathbb{Z}.
$$  
We define the Hodge filtration as:  
$$
F^r(A^{\bullet,\bullet}_{(\overline{X}_{[k]},\overline{D}_{[k]})/\Delta}) = \bigoplus_{p} \bigoplus_{q \geq r} A^{p,q}_{(\overline{X}_{[k]},\overline{D}_{[k]})/\Delta}, \quad r \in \mathbb{Z}.
$$  
Let $A^{\bullet}_{(\overline{X}_{[k]},\overline{D}_{[k]})/ \Delta}$ denote the total complex of $A^{\bullet,\bullet}_{(\overline{X}_{[k]},\overline{D}_{[k]})/ \Delta}$. The induced monodromy weight filtration, weight filtration, and Hodge filtration on $A^{\bullet}_{(\overline{X}_{[k]},\overline{D}_{[k]})/ \Delta}$ are denoted by $W_\bullet(A^{\bullet}_{(\overline{X}_{[k]},\overline{D}_{[k]})/ \Delta})$, $W^{\overline{f}}_{[k],\bullet}(A^{\bullet}_{(\overline{X}_{[k]},\overline{D}_{[k]})/ \Delta})$, and $F^\bullet(A^{\bullet}_{(\overline{X}_{[k]},\overline{D}_{[k]})/ \Delta})$, respectively.  

The morphism  
$$
\Omega^q_{\overline{X}_{[k]}}(\log \overline{D}_{[k]}) \to A^{0,q}_{(\overline{X}_{[k]},\overline{D}_{[k]})/ \Delta}, \quad \omega \mapsto (-1)^q \frac{dt}{t} \wedge \omega\!\!\!\!\mod W^{E_{[k]}}_0
$$  
induces a morphism  
\begin{align}\label{align_mapthetak}
	\theta_{[k]} : \Omega^\bullet_{\overline{X}_{[k]}/\Delta}(\log \overline{D}_{[k]}) \otimes_{\sO_{\overline{X}_{[k]}}}\sO_{E_{\rm red}} \to A^{\bullet}_{(\overline{X}_{[k]},\overline{D}_{[k]})/\Delta}.
\end{align} 
On the other hand, the projection map  
$$
A^{p,q}_{(\overline{X}_{[k]},\overline{D}_{[k]})/\Delta} \to A^{p+1,q-1}_{(\overline{X}_{[k]},\overline{D}_{[k]})/ \Delta}, \quad \omega \mapsto \omega\!\!\!\!\mod W^{E_{[k]}}_{p+1}
$$  
commutes with $d'$ and $d''$. Consequently, it induces an endomorphism  
\begin{align}\label{align_mapnuk}
	\nu_{[k]} : A^{\bullet}_{(\overline{X}_{[k]},\overline{D}_{[k]})/ \Delta} \to A^{\bullet}_{(\overline{X}_{[k]},\overline{D}_{[k]})/ \Delta}.
\end{align}  
Let  
\begin{align*}  
	A^\bullet_{\overline{f},{\rm BM}} := {\rm Tot}\left(\cdots \to A^{\bullet}_{(\overline{X}_{[2]},\overline{D}_{[2]})/ \Delta}[-2] \to A^{\bullet}_{(\overline{X}_{[1]},\overline{D}_{[1]})/ \Delta}\right).  
\end{align*}  
The complex $A^\bullet_{\overline{f},{\rm BM}}$ is endowed with a weight filtration  
\begin{align*}  
	W^{\overline{f}}_m := {\rm Tot}\left(\cdots \to W^{\overline{f}}_{m-1}A^{\bullet}_{(\overline{X}_{[2]},\overline{D}_{[2]})/ \Delta}[-2] \to W^{\overline{f}}_mA^{\bullet}_{(\overline{X}_{[1]},\overline{D}_{[1]})/ \Delta}\right), \quad m \in \bZ,  
\end{align*}  
and a monodromy weight filtration  
\begin{align*}  
	W_m := {\rm Tot}\left(\cdots \to W_{m-1}A^{\bullet}_{(\overline{X}_{[2]},\overline{D}_{[2]})/ \Delta}[-2] \to W_mA^{\bullet}_{(\overline{X}_{[1]},\overline{D}_{[1]})/ \Delta}\right), \quad m \in \bZ.  
\end{align*}  
We define the Hodge filtration as  
\begin{align*}  
	F^p := {\rm Tot}\left(\cdots \to F^{p-1}A^{\bullet}_{(\overline{X}_{[2]},\overline{D}_{[2]})/ \Delta}[-2] \to F^pA^{\bullet}_{(\overline{X}_{[1]},\overline{D}_{[1]})/ \Delta}\right), \quad p \in \bZ.  
\end{align*}  
Observe that the weight filtration $W_\bullet$ and the Hodge filtration $F^\bullet$ on $\Omega^\bullet_{\overline{f},{\rm BM}}$ naturally induce corresponding filtrations on $\Omega^\bullet_{\overline{f},{\rm BM}} \otimes_{\sO_{\overline{X}}} \sO_{E_{\rm red}}$, which are still denoted by $W_\bullet$ and $F^\bullet$.  
The maps (\ref{align_mapthetak}) induce a morphism  
\begin{align*}
	\theta: \Omega^\bullet_{\overline{f},{\rm BM}} \otimes_{\sO_{\overline{X}}} \sO_{E_{\rm red}} \to A^\bullet_{\overline{f},{\rm BM}},  
\end{align*}  
which preserves both the weight filtration and the Hodge filtration on both sides.
\begin{lem}\label{lem_theta_qis}
	The map $$\theta: (\Omega^\bullet_{\overline{f},{\rm BM}} \otimes_{\sO_{\overline{X}}} \sO_{E_{\rm red}}, W_\bullet, F^\bullet) \to (A^\bullet_{\overline{f},{\rm BM}}, W^{\overline{f}}_\bullet, F^\bullet)$$ is a bi-filtered quasi-isomorphism. In particular, $\theta$ induces an isomorphism  
	$$
	\bH^q(E_{\rm red}, \Omega^\bullet_{\overline{f},{\rm BM}} \otimes_{\sO_{\overline{X}}} \sO_{E_{\rm red}}) \to \bH^q(E_{\rm red}, A^\bullet_{\overline{f},{\rm BM}})
	$$  
	for every $q$.
\end{lem}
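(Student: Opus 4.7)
My plan is to reduce the lemma to a ``componentwise'' statement for each log smooth pair $(\overline{X}_{[k]},\overline{D}_{[k]}) \to \Delta$ and then package the resulting quasi-isomorphisms together by totalization. More precisely, for each $k\geq 1$, the map $\theta_{[k]}$ defined in (\ref{align_mapthetak}) produces a candidate morphism
\[
	\theta_{[k]}:\Omega^\bullet_{\overline{X}_{[k]}/\Delta}(\log\overline{D}_{[k]})\otimes_{\sO_{\overline{X}_{[k]}}}\sO_{E_{[k],{\rm red}}}\to A^{\bullet}_{(\overline{X}_{[k]},\overline{D}_{[k]})/\Delta},
\]
and I will first argue that each $\theta_{[k]}$ is a bi-filtered quasi-isomorphism with respect to the horizontal weight filtration $W^{\overline{f}}_{\bullet}$ and the Hodge filtration $F^{\bullet}$. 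The global morphism $\theta$ is precisely the totalization of the $\theta_{[k]}$'s (with the prescribed shifts and signs), and both filtrations on the source and target are defined as totalizations of the individual filtrations. Hence, once the componentwise bi-filtered quasi-isomorphism is established, the global statement follows by the standard principle that a morphism of filtered totalization complexes is a filtered quasi-isomorphism whenever it is so on each term (together with the associated spectral sequence argument).

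For the componentwise claim, I will follow Steenbrink's argument in \cite{Steenbrink1975} (in the form extended by Fujino--Fujisawa \cite{Fujino2014} to allow a horizontal boundary). The key observation is that in $A^{p,q}_{(\overline{X}_{[k]},\overline{D}_{[k]})/\Delta}$ the operator $d'=\frac{dt}{t}\wedge$ is an isomorphism of graded pieces modulo $W^{E_{[k]}}_{\bullet}$, so the Koszul-type complex $(A^{\bullet,\bullet}_{(\overline{X}_{[k]},\overline{D}_{[k]})/\Delta},d')$ has cohomology concentrated in the column $p=0$, where it recovers $\Omega^{\bullet}_{\overline{X}_{[k]}/\Delta}(\log\overline{D}_{[k]})\otimes\sO_{E_{[k],{\rm red}}}$. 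Concretely I would compute
\[
	{\rm Gr}^{W^{E_{[k]}}}_{p}\Omega^{p+q+1}_{\overline{X}_{[k]}}(\log\overline{D}_{[k]})\simeq \Omega^{q+1}_{E^{(p)}_{[k]}}(\log(\overline{D}^h_{[k]}\cap E^{(p)}_{[k]}))
\]
on the iterated intersection strata of $E_{[k]}$, and use this, together with the Poincar\'e residue isomorphism, to identify $\theta_{[k]}$ with a quasi-isomorphism in the derived category. Compatibility of $\theta_{[k]}$ with the horizontal weight filtration is immediate from the definition, and compatibility with the Hodge filtration follows from the naive filtration $\sigma^{\geq r}$ being preserved by $\theta_{[k]}$ up to the shift in the bidegree.

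The strict compatibility on associated graded pieces is the heart of the argument: I need
\[
	{\rm Gr}^{F^r}{\rm Gr}^{W^{\overline{f}}}_m\theta_{[k]}
\]
to be a quasi-isomorphism for each $r$ and $m$. Both sides unwind, by the description (\ref{align_BM_GrW_rel_ss})--(\ref{align_GRFW_logBM}) on the left and by the analogous computation on the $A$-side, into shifted logarithmic de Rham complexes on the semistable strata $\overline{D}_{[k],[l]}$. The comparison then reduces to the classical Steenbrink quasi-isomorphism on each stratum $(\overline{D}_{[k],[l]},E_{[k],[l]})\to(\Delta,0)$. Assembling these componentwise isomorphisms through the totalization finishes the bi-filtered quasi-isomorphism for $\theta$; the hypercohomology statement then follows because a bi-filtered quasi-isomorphism induces an isomorphism of the associated cohomology spectral sequences and hence of the limit groups.

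The main technical obstacle I anticipate is organizing the double bookkeeping: one totalization comes from the simplicial resolution $X_{[\bullet]}$ that produces the Borel--Moore complex, and the other comes from the Koszul-type bi-complex $A^{\bullet,\bullet}$ implementing the residue. Keeping track of the interaction between $W^{\overline{f}}_{\bullet}$ (inherited from the $\overline{D}^h_{[k]}$-weight filtrations) and the Hodge filtration $F^{\bullet}$ across both directions of totalization is delicate; the cleanest way I expect to proceed is to run a double spectral sequence in which one differential is the Gysin-type assembly from (\ref{align_BM_complex_resolve}) and the other is the internal $d'+d''$, verifying componentwise $E_1$-degeneracy and then invoking the lemma on two filtrations (as used in Theorem \ref{thm_BM_preVMHS} and Proposition \ref{prop_log_Hodge_fil_BM}) to conclude.
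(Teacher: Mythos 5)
Your proposal is correct and follows essentially the same route as the paper: both reduce the claim to the bigraded pieces $\mathrm{Gr}_F\mathrm{Gr}^{W^{\overline{f}}}$, which decompose (because the Gysin maps shift weight) into the classical Steenbrink comparison $\Omega^{p}_{\overline{D}_{[k],[l]}/\Delta}\otimes\sO_{E_{[k],[l],\mathrm{red}}}\simeq \mathrm{Gr}^{p}_F A^\bullet_{(\overline{D}_{[k],[l]},E_{[k],[l]})/\Delta}[p]$ on each semistable stratum, invoked under the standing unipotency assumption \textbf{(UM)} via \cite[Lemma 4.15]{Steenbrink1975}. The only cosmetic difference is that you first pass through the componentwise maps $\theta_{[k]}$ and contemplate an extra double spectral sequence, whereas the paper computes the total bigraded pieces directly via (\ref{align_Gr_Wf_BM}); no degeneration argument is actually needed once the bigraded isomorphisms are in hand.
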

\begin{proof}
	According to (\ref{align_GRFW_logBM}), one has  
	\begin{align*}  
		\text{Gr}^p_F \text{Gr}^W_m(\Omega^\bullet_{\overline{f},\text{BM}} \otimes_{\sO_{\overline{X}}} \sO_{E_{\text{red}}}) \simeq_{\text{qis}} \bigoplus_{i \geq 0} \Omega^{p-m}_{\overline{D}_{[i+1],[m-i]}/\Delta}(\log E_{[i+1],[m-i]})\otimes_{\sO_{\overline{D}_{[i+1],[m-i]}}} \sO_{E_{[i+1],[m-i],\text{red}}}[-p].  
	\end{align*}  
	On the other hand, one has  
	\begin{align}\label{align_Gr_Wf_BM}  
		\text{Gr}^{W^{\overline{f}}}_m A^\bullet_{\overline{f},\text{BM}} &\simeq_{\text{qis}} \bigoplus_{i \geq 0} \text{Gr}^{W^{\overline{f}}}_{m-i}(A^\bullet_{(\overline{X}_{[i+1]}, \overline{D}_{[i+1]})/\Delta})[-2i][i] \\ \nonumber  
		&\simeq_{\text{qis}} \bigoplus_{i \geq 0} A^\bullet_{(\overline{D}_{[i+1],[m-i]}, E_{[i+1],[m-i]})/\Delta}[-m],  
	\end{align}  
	with the induced Hodge filtration  
	\begin{align*}  
		F^p(\text{Gr}^{W^{\overline{f}}}_m A^\bullet_{\overline{f},\text{BM}}) &\simeq_{\text{qis}} \bigoplus_{i \geq 0} F^p \text{Gr}^{W^{\overline{f}}}_{m-i}(A^\bullet_{(\overline{X}_{[i+1]}, \overline{D}_{[i+1]})/\Delta}(-i)[-2i])[i] \\ \nonumber  
		&\simeq_{\text{qis}} \bigoplus_{i \geq 0} F^p(A^\bullet_{(\overline{D}_{[i+1],[m-i]}, E_{[i+1],[m-i]})/\Delta}(-m)[-m]) \\ \nonumber  
		&\simeq_{\text{qis}} \bigoplus_{i \geq 0} F^{p-m}A^\bullet_{(\overline{D}_{[i+1],[m-i]}, E_{[i+1],[m-i]})/\Delta}[-m].  
	\end{align*}  
	Therefore,  
	\begin{align*}  
		\text{Gr}^p_F \text{Gr}^W_m A^\bullet_{\overline{f},\text{BM}} \simeq_{\text{qis}} \bigoplus_{i \geq 0} \text{Gr}^{p-m}_F A^\bullet_{(\overline{D}_{[i+1],[m-i]}, E_{[i+1],[m-i]})/\Delta}[-m].  
	\end{align*}  
	Since each $(\overline{D}_{[k],[l]}, E_{[k],[l]}) \to (\Delta, 0)$ is a semistable family with smooth general fibers, by condition {\bf (UM)} and \cite[Lemma 4.15]{Steenbrink1975}, one has  
	$$  
	\Omega^{p-m}_{\overline{D}_{[i+1],[m-i]}/\Delta} \otimes_{\sO_{\overline{D}_{[i+1],[m-i]}}} \sO_{E_{[i+1],[m-i],\text{red}}} \simeq \text{Gr}^{p-m}_F A^\bullet_{(\overline{D}_{[i+1],[m-i]}, E_{[i+1],[m-i]})/\Delta}[p-m]  
	$$  
	for every $p$ and $m$. Thus, $\theta$ induces an isomorphism  
	\begin{align*}  
		\text{Gr}^p_F \text{Gr}^W_m(\Omega^\bullet_{\overline{f},\text{BM}} \otimes_{\sO_{\overline{X}}} \sO_{E_{\text{red}}}) \simeq_{\text{qis}} \text{Gr}^p_F \text{Gr}^{W^{\overline{f}}}_m(A^\bullet_{\overline{f},\text{BM}})
	\end{align*}  
for every $p$ and $m$.
	The lemma is proved.
\end{proof}
\subsubsection{}
The endomorphisms (\ref{align_mapnuk}) induce an endomorphism
\begin{align*}
\nu:A^\bullet_{\overline{f},{\rm BM}}\to A^\bullet_{\overline{f},{\rm BM}}
\end{align*}
which satisfies the following conditions:
\begin{align*}
	\nu(W_m)\subset W_{m-2},\quad \nu(W^{\overline{f}}_m)\subset W^{\overline{f}}_m,\quad\text{and}\quad \nu(F^p)\subset F^{p-1},\quad\forall m, p.
\end{align*}
\begin{lem}\label{lem_VMHS_BM_residue}
	The following diagram is commutative:  
	$$\xymatrix{
	\bH^q(E_{\rm red},\Omega^\bullet_{\overline{f},{\rm BM}}\otimes_{\sO_{\overline{X}}}\sO_{E_{\rm red}})\ar[r]^-{\theta}\ar[d]^{{\rm Res}_0^q}& \bH^q(E_{\rm red},A^\bullet_{\overline{f},{\rm BM}})\ar[d]^{-\nu}\\
	\bH^q(E_{\rm red},\Omega^\bullet_{\overline{f},{\rm BM}}\otimes_{\sO_{\overline{X}}}\sO_{E_{\rm red}})\ar[r]^-{\theta}&\bH^q(E_{\rm red},A^\bullet_{\overline{f},{\rm BM}})
    }.$$
\end{lem}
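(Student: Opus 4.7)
The strategy is to exhibit at the chain level an extension of the bicomplex $A^{\bullet,\bullet}$ whose induced short exact sequence realizes the residue triangle (\ref{align_residue_triangle}) and whose connecting morphism is computed tautologically by $-\nu$. On each log smooth factor $(\overline{X}_{[k]},\overline{D}_{[k]})$, I would define an enlarged bicomplex $\tilde A^{\bullet,\bullet}_{(\overline{X}_{[k]},\overline{D}_{[k]})/\Delta}$ by adjoining a column at $p=-1$ equal, with the appropriate shift, to $\Omega^{\bullet}_{\overline{X}_{[k]}/\Delta}(\log\overline{D}_{[k]})\otimes_{\sO_{\overline{X}_{[k]}}}\sO_{E_{[k],{\rm red}}}$, with $d''$ the relative de Rham differential and $d'\colon\tilde A^{-1,q}\to\tilde A^{0,q}$ equal to the map $\theta_{[k]}$ from (\ref{align_mapthetak}). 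Totalizing and then assembling over the SNC decomposition of $\overline{X}$, in parallel with the construction of $A^\bullet_{\overline{f},{\rm BM}}$ from the individual $A^\bullet_{(\overline{X}_{[k]},\overline{D}_{[k]})/\Delta}$, yields a complex $\tilde A^\bullet_{\overline{f},{\rm BM}}$.

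Next, I would show that $\tilde A^\bullet_{\overline{f},{\rm BM}}$ is canonically quasi-isomorphic to $\Omega^\bullet_{(\overline{X},\overline{D}),{\rm BM}}\otimes_{\sO_{\overline{X}}}\sO_{E_{\rm red}}$. This follows from the analysis of the weight filtration on $\Omega^{\bullet}_{\overline{X}_{[k]}}(\log\overline{D}_{[k]})$ by the number of $dt/t$-poles: the $p=-1$ column accounts for the subsheaf generated by forms without such a pole, while the $A^{p,q}$ with $p\geq 0$ cover the successive quotients. The acyclicity computation is the extended version of Lemma \ref{lem_theta_qis} and ultimately reduces, via the local model near $E$, to \cite[Lemma 4.15]{Steenbrink1975}. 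By construction there is a tautological short exact sequence of complexes
$$0\to\Omega^\bullet_{\overline{f},{\rm BM}}\otimes_{\sO_{\overline{X}}}\sO_{E_{\rm red}}[-1]\to\tilde A^\bullet_{\overline{f},{\rm BM}}\to A^\bullet_{\overline{f},{\rm BM}}\to 0,$$
in which the injection is $\theta$ (sending the shifted $p=-1$ column into $\tilde A^\bullet$) and the surjection forgets it. Under the quasi-isomorphism of the preceding step, the associated distinguished triangle becomes isomorphic to the residue triangle (\ref{align_residue_triangle}), so that its connecting morphism is identified with ${\rm Res}_0^q$ after applying $\theta$.

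To conclude, I would compute the connecting morphism of the above short exact sequence directly on chains. Given a cocycle $\alpha\in A^n_{\overline{f},{\rm BM}}$, lift it to $(0,\alpha)\in\tilde A^n_{\overline{f},{\rm BM}}$; the total differential applied to this lift has its only non-vanishing component in the $p=-1$ part, and for each SNC index $k$ that component is precisely the projection $\alpha\mapsto\alpha\bmod W^{E_{[k]}}_{p+1}$ defining $\nu_{[k]}$. Accounting for the sign produced by the cone construction yields $-\nu$ on cohomology, proving the lemma. The main obstacle is bookkeeping of signs: the total-complex sign of $A^\bullet_{\overline{f},{\rm BM}}$ over the SNC pieces, the internal double-complex signs of each $A^{\bullet,\bullet}_{(\overline{X}_{[k]},\overline{D}_{[k]})/\Delta}$, the factor $(-1)^q$ built into $\theta_{[k]}$, and the conventional $-1$ coming from the cone must all be combined to fix the overall sign as $-\nu$ rather than $+\nu$. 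Once the local acyclicity needed in Step~2 is established, everything else is a direct diagram chase against the connecting morphism.
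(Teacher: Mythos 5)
Your overall strategy --- realize the residue triangle by an explicit complex built from the $A^{p,q}$ so that the connecting morphism can be read off as $-\nu$, and then invoke Lemma \ref{lem_residue_BM} --- is the right one, but the complex you build does not do what you claim, and the error is structural rather than a matter of signs. With $d'\colon\tilde A^{-1,q}\to\tilde A^{0,q}$ given by $\theta_{[k]}$, the differential of the total complex only increases the column index, so the columns with $p\geq 0$ form the \emph{subcomplex} and the adjoined $p=-1$ column is the \emph{quotient}; the short exact sequence you actually obtain is $0\to A^\bullet_{\overline{f},{\rm BM}}\to\tilde A^\bullet\to(\Omega^\bullet_{\overline{f},{\rm BM}}\otimes_{\sO_{\overline{X}}}\sO_{E_{\rm red}})[1]\to 0$, not the one you wrote. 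Worse, your $\tilde A^\bullet$ is then precisely (a shift of) ${\rm cone}(\theta)$, and since $\theta$ is a quasi-isomorphism by Lemma \ref{lem_theta_qis}, $\tilde A^\bullet$ is acyclic; it cannot be quasi-isomorphic to $\Omega^\bullet_{(\overline{X},\overline{D}),{\rm BM}}\otimes_{\sO_{\overline{X}}}\sO_{E_{\rm red}}$. Your chain-level evaluation of the connecting map also cannot be repaired within this setup: the total differential applied to $(0,\alpha)$ has no component in the $p=-1$ column (nothing ever maps backwards in $p$), and the map $\alpha\mapsto\alpha\bmod W^{E_{[k]}}_{p+1}$ defining $\nu_{[k]}$ lands in $A^{p+1,q-1}$, i.e.\ it stays inside the $A$-columns and never hits an adjoined column of relative forms.

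The fix, which is what the paper does, is to take the auxiliary complex to be $B^\bullet:={\rm cone}(\nu\colon A^\bullet_{\overline{f},{\rm BM}}\to A^\bullet_{\overline{f},{\rm BM}})[-1]$, with $B^{p,q}=A^{p,q}_{\overline{f},{\rm BM}}\oplus A^{p,q-1}_{\overline{f},{\rm BM}}$: for the tautological sequence $0\to A^\bullet[-1]\to B^\bullet\to A^\bullet\to 0$ the connecting morphism \emph{is} $-\nu$ by the cone construction, with no computation needed. One then maps the residue sequence (\ref{align_residue_triangle}) into this sequence by $\theta[-1]$, the two-component map $\mu_{[k]}(\omega)=\bigl(\omega\wedge\frac{dt}{t}\bmod W^{E_{[k]}}_0,\ (-1)^{q-1}\omega\bmod W^{E_{[k]}}_0\bigr)$, and $\theta$; commutativity of this ladder plus Lemma \ref{lem_residue_BM} gives $\theta\circ{\rm Res}^q_0=(-\nu)\circ\theta$. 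So the missing idea is that the doubling must happen in the $q$-direction on every $A^{p,q}$ (the cone of $\nu$), not by adjoining a single new column in the $p$-direction (which produces the cone of $\theta$ instead).
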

\begin{proof}
	Let  
	$$
	B^\bullet := \text{cone}(\nu : A^\bullet_{\overline{f},\text{BM}} \to A^\bullet_{\overline{f},\text{BM}})[-1].
	$$  
	Then $B^\bullet$ is the total complex of a double complex $B^{\bullet,\bullet}$, where $B^{p,q} = A^{p,q}_{\overline{f},\text{BM}} \oplus A^{p,q-1}_{\overline{f},\text{BM}}$. Define maps  
	$$
	\mu_{[k]}:\Omega^q_{\overline{X}_{[k]}}(\log \overline{D}_{[k]})\otimes_{\sO_{\overline{X}_{[k]}}} \sO_{E_{[k],\text{red}}} \to A^{0,q}_{(\overline{X}_{[k]},\overline{D}_{[k]})/ \Delta} \oplus A^{0,q-1}_{(\overline{X}_{[k]},\overline{D}_{[k]})/ \Delta},
	$$  
	given explicitly by  
	$$
	\mu_{[k]}(\omega) = \left(\omega \wedge \frac{dt}{t}\!\!\!\!\mod W_0^{E_{[k]}}, (-1)^{q-1} \omega\!\!\!\!\mod W_0^{E_{[k]}}\right).
	$$  
	These maps induce a morphism
	$$
	\mu : \Omega^\bullet_{\overline{f},\text{BM}} \otimes_{\sO_{\overline{X}}} \sO_{E_{\text{red}}} \to B^{\bullet}.
	$$  
	We obtain the following commutative diagram with exact columns, in which the left column is the residue distinguished triangle (\ref{align_residue_triangle}), and the horizontal maps are quasi-isomorphisms (Lemma \ref{lem_theta_qis}):  
	$$
	\xymatrix{
		\Omega^\bullet_{\overline{f},\text{BM}} \otimes_{\sO_{\overline{X}}} \sO_{E_{\text{red}}}[-1] \ar[d]^{\wedge\frac{dt}{t}} \ar[r]^-{\theta[-1]} & A^\bullet_{\overline{f},\text{BM}}[-1] \ar[d] \\
		\Omega^\bullet_{(\overline{X},\overline{D}),\text{BM}} \otimes_{\sO_{\overline{X}}} \sO_{E_{\text{red}}} \ar[r]^-{\mu} \ar[d] & B^\bullet \ar[d] \\
		\Omega^\bullet_{\overline{f},\text{BM}} \otimes_{\sO_{\overline{X}}} \sO_{E_{\text{red}}} \ar[d] \ar[r]^-{\theta} & A^\bullet_{\overline{f},\text{BM}} \ar[d] \\
		 & 
	}.
	$$  
	The lemma follows from Lemma \ref{lem_residue_BM} and the observation that the connecting homomorphism in the long exact hypercohomology sequence associated with the right-hand column is induced by $-\nu$.
\end{proof}
\begin{prop}\label{prop_LMHC_BM}
$(A^\bullet_{\overline{f},{\rm BM}}, W^{\overline{f}}_\bullet, W_\bullet, F^\bullet)$ is a cohomological filtered mixed Hodge complex in the sense of Zein \cite{Zein1991}, i.e., the following conditions are satisfied:  
\begin{enumerate}  
	\item For every $a < b$, $(W^{\overline{f}}_b/W^{\overline{f}}_a, W_\bullet, F^\bullet)$ forms a mixed Hodge complex of sheaves;  
	\item The spectral sequence associated with the hypercohomology of $(A^\bullet_{\overline{f},{\rm BM}}, W^{\overline{f}}_\bullet)$ degenerates at the $E_2$-page.  
\end{enumerate}  
\end{prop}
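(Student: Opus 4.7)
The plan is to verify the two defining conditions of a cohomological filtered mixed Hodge complex by bootstrapping from the classical Steenbrink--Zein theory for log smooth semistable families, exploiting the graded decomposition of $W^{\overline{f}}_\bullet$ already computed in (\ref{align_Gr_Wf_BM}).

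For condition (1), the starting point is that for each $k\geq 1$, Steenbrink's construction, extended to log pairs by Zein \cite{Zein1991}, establishes that $(A^\bullet_{(\overline{X}_{[k]},\overline{D}_{[k]})/\Delta}, W_\bullet, F^\bullet)$ is a cohomological mixed Hodge complex computing the limit mixed Hodge structure of the semistable family $(\overline{X}_{[k]},\overline{D}_{[k]})\to(\Delta,0)$; the unipotency required is furnished by assumption {\bf (UM)}. From (\ref{align_Gr_Wf_BM}) one reads off
\[
{\rm Gr}^{W^{\overline{f}}}_m A^\bullet_{\overline{f},{\rm BM}} \simeq_{\rm qis} \bigoplus_{i\geq 0} A^\bullet_{(\overline{D}_{[i+1],[m-i]}, E_{[i+1],[m-i]})/\Delta}[-m],
\]
and a direct check on the defining formulas shows that the induced filtrations $W_\bullet$ and $F^\bullet$ restrict, summand by summand, to the monodromy weight filtration and the Hodge filtration on the right-hand side (with the appropriate Tate twist and shift by $m$). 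Hence each graded piece is a mixed Hodge complex of sheaves. Condition (1) then follows by induction on $b-a$: for any $a<b$, the pieces ${\rm Gr}^W_m(W^{\overline{f}}_b/W^{\overline{f}}_a)$ are direct sums of Hodge complexes of weight $m$, which is precisely what is required of an MHC.

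For condition (2), I would identify the $W^{\overline{f}}$-spectral sequence on $\bH^\ast(E_{\rm red}, A^\bullet_{\overline{f},{\rm BM}})$ with the fiber at $0$ of the weight spectral sequence on $R\overline{f}_\ast(\Omega^\bullet_{\overline{f},{\rm BM}})$. Composing the quasi-isomorphisms $\tau_0$ from Lemma \ref{lem_log_ext_fiber} and $\theta$ from Lemma \ref{lem_theta_qis} yields a canonical isomorphism
\[
\bH^q(E_{\rm red}, A^\bullet_{\overline{f},{\rm BM}}) \simeq R^q\overline{f}_\ast(\Omega^\bullet_{\overline{f},{\rm BM}})\otimes\bC(0),
\]
compatibly with the filtration induced by $W^{\overline{f}}_\bullet$ on one side and by $W_\bullet R^q\overline{f}_\ast(\Omega^\bullet_{\overline{f},{\rm BM}})$ on the other. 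By Proposition \ref{prop_weightss_E2_BM}, the latter spectral sequence degenerates at $E_2$ and its $E_2$-page consists of locally free sheaves, so specialization to $\bC(0)$ preserves $E_2$-degeneration, yielding condition (2).

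The main obstacle is the first step: confirming that the Steenbrink--Zein complex $(A^\bullet_{(\overline{X}_{[k]},\overline{D}_{[k]})/\Delta}, W_\bullet, F^\bullet)$ is genuinely an MHC when the horizontal part $\overline{D}^h_{[k]}$ of the log structure is non-trivial. Steenbrink's original argument handles the case $\overline{D}^h_{[k]}=0$ directly; the logarithmic generalization requires identifying each ${\rm Gr}^W_m A^\bullet_{(\overline{X}_{[k]},\overline{D}_{[k]})/\Delta}$ with explicit Hodge complexes of weight $m$ on suitable strata of $E_{[k]}\cup \overline{D}^h_{[k]}$, verifying the $E_1$-degeneration of the associated Hodge spectral sequence, and checking compatibility of the rational structure. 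This is the technical heart of \cite{Zein1991}; granted that input, the remaining bookkeeping is a direct totalization argument over the simplicial scheme $\overline{X}_{[\bullet]}$.
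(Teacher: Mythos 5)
Your proposal is correct and follows essentially the same route as the paper: condition (1) is reduced, via the graded decomposition of $W^{\overline{f}}_\bullet$, to the Steenbrink--Zein identification of the $W_\bullet$-graded pieces with explicit Hodge complexes on the strata (the paper cites Zein's formula (3.3.2) directly rather than running your induction on $b-a$, but the key input is the same), and condition (2) is obtained exactly as in the paper by combining Proposition \ref{prop_weightss_E2_BM} with Lemmas \ref{lem_log_ext_fiber} and \ref{lem_theta_qis}.
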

\begin{proof}
	Let $a<b$. According to \cite[(3.3.2)]{Zein1986} there are quasi-isomorphisms 
	\begin{align*}  
		\text{Gr}^W_m(W^{\overline{f}}_b/W^{\overline{f}}_a) 
		\simeq &\bigoplus_{i \geq 0} \text{Gr}^W_{m-i}(W^{\overline{f}}_{b-i}/W^{\overline{f}}_{a-i}(A^\bullet_{(\overline{X}_{[i+1]}, \overline{D}_{[i+1]})/\Delta}))(-i)[-i] \\  
		\simeq &\bigoplus_{\substack{i \geq 0,p \geq 0, j \leq m+p-i, \\ j \in [a-i, b-i]}} \Omega^\bullet_{E_{[i+1],[j],[m+2p-i-j+1]}}(-m-p)[-m-2p].  
	\end{align*}
    with the induced Hodge filtration given by
    \begin{align*}
    	F^{\bullet}(\text{Gr}^W_m(W^{\overline{f}}_b/W^{\overline{f}}_a))&\simeq_{\rm qis}\bigoplus_{i\geq0}F^{\bullet}\text{Gr}^W_{m-i}(W^{\overline{f}}_{b-i}/W^{\overline{f}}_{a-i}(A^\bullet_{(\overline{X}_{[i+1]}, \overline{D}_{[i+1]})/\Delta})(-i)[-2i])[i]\\\nonumber
    	&\simeq \bigoplus_{\substack{i \geq 0,p \geq 0, j \leq m+p-i, \\ j \in [a-i, b-i]}} F^{\bullet}(\Omega^\bullet_{E_{[i+1],[j],[m+2p-i-j+1]}}(-m-p)[-m-2p]).
    \end{align*} 
Here $E_{[i],[j],[k]}$ denotes the union of strata of codimension $k$ of $E_{[i],[j]}$.
	This shows that $$\left(\text{Gr}^W_m(W^{\overline{f}}_b/W^{\overline{f}}_a), F^\bullet(\text{Gr}^W_m(W^{\overline{f}}_b/W^{\overline{f}}_a))\right)$$ is a Hodge complex of sheaves of weight $m$. This proves claim (1). Statement (2) follows from Theorem \ref{prop_weightss_E2_BM}, Lemma \ref{lem_log_ext_fiber} and Lemma \ref{lem_theta_qis}.
\end{proof}
\begin{thm}\label{thm_relMonfil_BM}
	For every $k$, the induced filtration $W_\bullet$ on $\bH^k(E_{\rm red},A^\bullet_{\overline{f},{\rm BM}})$ coincides with the relative monodromy weight filtration of $(\bH^k(E_{\rm red},A^\bullet_{\overline{f},{\rm BM}}),W^{\overline{f}}_\bullet,\nu)$. Consequently, the triple  
	$$
	\left(R^k\overline{f}_\ast(\Omega^\bullet_{\overline{f},{\rm BM}}) \otimes\bC(0),W_\bullet R^k\overline{f}_\ast(\Omega^\bullet_{\overline{f},{\rm BM}}) \otimes\bC(0),{\rm Res}^k_0\right)
	$$  
	admits a relative monodromy weight filtration.
\end{thm}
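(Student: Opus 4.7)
The plan is to verify the two characterizing properties of the relative monodromy weight filtration: given a nilpotent $\nu$ preserving an increasing filtration $W^{\overline{f}}_\bullet$, its relative monodromy weight filtration $M_\bullet$ (if it exists) is the unique increasing filtration satisfying (i) $\nu(M_m)\subset M_{m-2}$ and (ii) for every $k$ and $l\geq 0$, $\nu^l$ induces an isomorphism ${\rm Gr}^{M}_{k+l}{\rm Gr}^{W^{\overline{f}}}_k \simeq {\rm Gr}^{M}_{k-l}{\rm Gr}^{W^{\overline{f}}}_k$. I claim that $W_\bullet$ on $\bH^k(E_{\rm red},A^\bullet_{\overline{f},{\rm BM}})$ is this filtration, with $\nu$ playing the role of the nilpotent operator.

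The property (i) is immediate at the complex level since $\nu(W_m)\subset W_{m-2}$ by construction, and this descends to cohomology. To establish the isomorphism property (ii), I would first invoke the $E_2$-degeneration of the spectral sequence for $(A^\bullet_{\overline{f},{\rm BM}},W^{\overline{f}}_\bullet)$ stated in Proposition \ref{prop_LMHC_BM}(2) to identify
$$
{\rm Gr}^{W^{\overline{f}}}_k\bH^n(E_{\rm red},A^\bullet_{\overline{f},{\rm BM}})\simeq \bH^n(E_{\rm red},{\rm Gr}^{W^{\overline{f}}}_k A^\bullet_{\overline{f},{\rm BM}}),
$$
compatibly with the induced $W$-filtrations and with the action of $\nu$. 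Next, I would use the explicit decomposition (\ref{align_Gr_Wf_BM}),
$$
{\rm Gr}^{W^{\overline{f}}}_k A^\bullet_{\overline{f},{\rm BM}}\simeq_{\rm qis}\bigoplus_{i\geq 0}A^\bullet_{(\overline{D}_{[i+1],[k-i]},E_{[i+1],[k-i]})/\Delta}[-k],
$$
and observe that by construction $\nu$ acts on each summand as the classical Steenbrink endomorphism of the double complex associated with the semistable morphism $(\overline{D}_{[i+1],[k-i]},E_{[i+1],[k-i]})\to (\Delta,0)$.

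Since each such morphism is semistable with log smooth general fibers, assumption \textbf{(UM)} applies, and the classical theorem of Steenbrink \cite{Steenbrink1975} shows that on $\bH^\bullet$ of each summand the induced $W$-filtration is precisely the monodromy weight filtration of $\nu$, i.e., $\nu^l$ induces the required isomorphisms on ${\rm Gr}^W$. Direct summing over $i$ preserves this isomorphism property, which gives property (ii) on ${\rm Gr}^{W^{\overline{f}}}_k\bH^n(E_{\rm red},A^\bullet_{\overline{f},{\rm BM}})$. Combined with (i), this shows that $W_\bullet$ is the relative monodromy weight filtration of $(W^{\overline{f}}_\bullet,\nu)$ on $\bH^k(E_{\rm red},A^\bullet_{\overline{f},{\rm BM}})$.

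Finally, for the ``consequently'' assertion, I would chain the isomorphisms $R^k\overline{f}_\ast(\Omega^\bullet_{\overline{f},{\rm BM}})\otimes\bC(0)\xrightarrow{\tau^k_0}\bH^k(E_{\rm red},\Omega^\bullet_{\overline{f},{\rm BM}}\otimes_{\sO_{\overline{X}}}\sO_{E_{\rm red}})\xrightarrow{\theta}\bH^k(E_{\rm red},A^\bullet_{\overline{f},{\rm BM}})$ from Lemmas \ref{lem_log_ext_fiber} and \ref{lem_theta_qis}, which are compatible with the weight filtrations on both sides. By Lemma \ref{lem_VMHS_BM_residue}, this chain of isomorphisms intertwines ${\rm Res}_0^k$ with $-\nu$; since replacing $\nu$ by $-\nu$ does not alter the monodromy weight filtration, the triple in the statement admits a relative monodromy weight filtration, equal to $W_\bullet$. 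The main obstacle in executing this plan is the bookkeeping of shifts in verifying that the componentwise action of $\nu$ on the decomposition (\ref{align_Gr_Wf_BM}) matches the Steenbrink monodromy endomorphism on each summand, so that Steenbrink's theorem applies directly; all other steps reduce to invocations of results already established in the excerpt.
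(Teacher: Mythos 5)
Your overall strategy is the same as the paper's: reduce to the $E_1$-page of the weight spectral sequence of $(A^\bullet_{\overline{f},{\rm BM}},W^{\overline{f}}_\bullet)$, identify $\nu$ there with Steenbrink's monodromy endomorphism for each semistable family $(\overline{D}_{[i+1],[m-i]},E_{[i+1],[m-i]})\to(\Delta,0)$ via the decomposition (\ref{align_Gr_Wf_BM}), apply {\bf (UM)} and Steenbrink's results, and then transfer the ``consequently'' part through Lemmas \ref{lem_log_ext_fiber}, \ref{lem_theta_qis} and \ref{lem_VMHS_BM_residue}. That last step, and the identification of $\nu$ on the summands with the classical operator, are exactly what the paper does.

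However, there is a genuine gap in the middle of your argument. You claim that $E_2$-degeneration identifies ${\rm Gr}^{W^{\overline{f}}}_k\bH^n(E_{\rm red},A^\bullet_{\overline{f},{\rm BM}})$ with $\bH^n(E_{\rm red},{\rm Gr}^{W^{\overline{f}}}_k A^\bullet_{\overline{f},{\rm BM}})$. This is false in general: the right-hand side is the $E_1$-term of the spectral sequence, whereas $E_2$-degeneration identifies the graded pieces of the abutment with the $E_2$-terms, i.e.\ with the cohomology of the complex $(E_1^{\bullet,\bullet},d_1)$, which is a proper subquotient of $E_1$ unless the spectral sequence already degenerates at $E_1$ (it does not here). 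Consequently, even after you verify via Steenbrink that $W_\bullet$ restricted to each $E_1$-term is the monodromy weight filtration of $\nu$ there (which the paper does using \cite[\S 4.17, Proposition 4.23]{Steenbrink1975}), you still must show that the filtration induced on $E_2$ — and hence on ${\rm Gr}^{W^{\overline{f}}}$ of the abutment — remains the relative monodromy weight filtration. This passage is not formal: it requires that the $d_1$-differentials be morphisms of mixed Hodge structures (hence strict for both $W$ and $F$), which is why the paper establishes that $(A^\bullet_{\overline{f},{\rm BM}},W^{\overline{f}}_\bullet,W_\bullet,F^\bullet)$ is a cohomological filtered mixed Hodge complex (Proposition \ref{prop_LMHC_BM}(1)) and then invokes \cite[Lemma 3.17]{Fujino2014} to conclude. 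Your proposal never uses the Hodge filtration or condition (1) of Proposition \ref{prop_LMHC_BM} at all, which is a sign that the crucial strictness input is missing; as written, the argument for property (ii) on the actual graded pieces of the hypercohomology does not go through.
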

\begin{proof}
Consider the hypercohomology spectral sequence of $(A^\bullet_{\overline{f},\text{BM}}, W^{\overline{f}}_\bullet)$. According to (\ref{align_Gr_Wf_BM}), one has  
\begin{align}  
	E_1^{-m,k+m}(A^\bullet_{\overline{f},\text{BM}}, W^{\overline{f}}_\bullet) \simeq_{\text{qis}} \bigoplus_{i \geq 0} \bH^{k-m}(E_{[i+1],[m-i]}, A^\bullet_{(\overline{D}_{[i+1],[m-i]}, E_{[i+1],[m-i]})/\Delta}).  
\end{align}  
The morphism and filtration on $$\bH^{k-m}(E_{[i+1],[m-i]}, A^\bullet_{(\overline{D}_{[i+1],[m-i]}, E_{[i+1],[m-i]})/\Delta})$$ induced by $\nu$ and $W_\bullet[-m]$ coincide with the residue map and the monodromy weight filtration on the limit mixed Hodge structure of the semistable family $$(\overline{D}_{[i+1],[m-i]}, E_{[i+1],[m-i]}) \to (\Delta,0)$$ (\cite[\S 4.17]{Steenbrink1975}). Therefore, by \cite[Proposition 4.23]{Steenbrink1975}, it follows that the induced filtration $W_\bullet[-m]$ on $$E_1^{-m,k+m}(A^\bullet_{\overline{f},\text{BM}}, W^{\overline{f}}_\bullet)$$ is the relative monodromy weight filtration of the endomorphism $H^k(\text{Gr}^{W^{\overline{f}}}_m\nu)$ with respect to the filtration $W^{\overline{f}}$. Since $(A^\bullet_{\overline{f},{\rm BM}}, W^{\overline{f}}_\bullet, W_\bullet, F^\bullet)$ forms a cohomological filtered mixed Hodge complex and the spectral sequence $E_r^{-m,k+m}(A^\bullet_{\overline{f},\text{BM}}, W^{\overline{f}}_\bullet)$ degenerates at the $E_2$-page (Proposition \ref{prop_LMHC_BM}), the first assertion follows directly from \cite[Lemma 3.17]{Fujino2014}. The second assertion is then a consequence of the first assertion, combined with Lemma \ref{lem_theta_qis} and Lemma \ref{lem_VMHS_BM_residue}.
\end{proof}
\subsection{Admissibility}\label{section_admissibility}
Let $f:(X,D)\to S$ be a projective simple normal crossing family with relative dimension $\dim X-\dim S=n$. Let $\widetilde{S}$ be a smooth variety containing $S$ as a Zariski open subset such that $D_{\widetilde{S}} := \widetilde{S} \setminus S$ is a simple normal crossing divisor. Denote $\cV^k_{f,{\rm BM}} := R^k f_\ast(\Omega^\bullet_{f,{\rm BM}})$. Then, as discussed in \S\ref{section_VMHS_BM}, there exists a variation of mixed Hodge structures  
$$
\bV^k_{f,{\rm BM}} = (\cV^k_{f,{\rm BM}}, \nabla_{\rm GM}, W_\bullet(\cV^k_{f,{\rm BM}}), F^\bullet(\cV^k_{f,{\rm BM}}))
$$  
for every $k \geq 0$.
\begin{thm}\label{thm_VMHS_BM_admissible}
	For every $k$, $\bV^k_{f,{\rm BM}}$ is a graded polarizable variation of mixed Hodge structures which is admissible with respect to $\widetilde{S}$. 
\end{thm}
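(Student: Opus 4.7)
The plan is to verify the standard axioms of admissibility for $\bV^k_{f,{\rm BM}}$ by reducing, via Kashiwara's curve criterion, to the codimension-one situation already analyzed in \S\ref{section_log_extension_VMHS_BM}. Graded polarizability was established in Theorem \ref{thm_VMHS_BM}, so what remains to verify is: quasi-unipotency of the local monodromies; existence of extensions of the Hodge filtration $F^\bullet$ and the weight filtration $W_\bullet$ to Deligne's lower canonical extension $\widetilde{\cV}^k_{f,{\rm BM}}$ by locally free subsheaves satisfying logarithmic Griffiths transversality; and existence of the relative monodromy weight filtration on each limit fiber for the nilpotent part of the local monodromy.

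By Kashiwara's curve test, admissibility of a graded polarizable variation along $D_{\widetilde{S}}$ is equivalent to admissibility along every smooth curve $C\subset\widetilde{S}$ meeting $D_{\widetilde{S}}$ transversally at a single point $p$. I would therefore restrict $f$ to $S\cap C$, replace $\widetilde{S}$ by a small analytic disc $\Delta\ni p$ with $D_{\widetilde{S}}=\{0\}$, and reduce to a projective simple normal crossing family over $\Delta^\ast$. Since $\bV^k_{f,{\rm BM}}$ depends only on the open complement $X^o=X\setminus D$ through its Borel--Moore cohomology (cf.\ \S\ref{section_BM_coh}), we are free to replace $f$ by any simple normal crossing projective compactification of $X^o$ over $\Delta$. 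Applying Remark \ref{rmk_semi_log_resolution} to such a compactification over the one-dimensional base $\Delta$, where the exceptional codimension-$\geq 2$ locus in that remark is automatically empty, produces a projective semistable morphism $\overline{f}:(\overline{X},\overline{D})\to(\Delta,0)$ extending $f|_{\Delta^\ast}$, placing us exactly in the framework of \S\ref{section_log_extension_VMHS_BM}.

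The required conditions then follow by invoking the main results of that section. Quasi-unipotency is Lemma \ref{lem_quasi-unipotency}. For the extensions of filtrations, Proposition \ref{prop_weightss_E2_BM} identifies $R^k\overline{f}_\ast(\Omega^\bullet_{\overline{f},{\rm BM}})$ with Deligne's lower canonical extension $\widetilde{\cV}^k_{f,{\rm BM}}$ and shows that $W_\bullet$ extends by locally free subsheaves with locally free graded pieces; Proposition \ref{prop_log_Hodge_fil_BM} yields the corresponding statement for $F^\bullet$, while Lemma \ref{lem_Griff_trans_logBM} supplies the logarithmic Griffiths transversality. For the relative monodromy weight filtration, Lemma \ref{lem_quasi-unipotency} allows a base change $z\mapsto z^m$ to enforce condition \textbf{(UM)}; Theorem \ref{thm_relMonfil_BM} then produces the relative monodromy weight filtration on $\widetilde{\cV}^k_{f,{\rm BM}}(0)$ with respect to the residue $\mathrm{Res}^k_0$, and uniqueness of the relative monodromy filtration propagates it back to the original VMHS.

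The main obstacle is the identification of the geometric logarithmic extension $R^k\overline{f}_\ast(\Omega^\bullet_{\overline{f},{\rm BM}})$ with Deligne's abstract lower canonical extension of the intrinsic flat bundle; the entire argument hinges on this identification, and it is precisely what Proposition \ref{prop_weightss_E2_BM} establishes by using $E_2$-degeneration of the weight spectral sequence to reduce to the classical semistable log-smooth case of Steenbrink. Once this identification is in hand, every other piece of the admissibility package is already packaged in the propositions and theorems of \S\ref{section_log_extension_VMHS_BM}, and the proof reduces to assembling these citations.
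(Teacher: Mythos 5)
Your proposal is correct and follows essentially the same route as the paper: reduce to a punctured disc via Kashiwara's curve criterion, invoke Lemma \ref{lem_quasi-unipotency} for quasi-unipotency and to enforce \textbf{(UM)} after a ramified base change, and then cite Proposition \ref{prop_weightss_E2_BM}/Proposition \ref{prop_log_Hodge_fil_BM} for the filtration extensions and Theorem \ref{thm_relMonfil_BM} for the relative monodromy weight filtration. The only difference is that you spell out the (correct) intermediate step of producing a semistable model over the disc via Remark \ref{rmk_semi_log_resolution}, which the paper's proof leaves implicit.
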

\begin{rmk}
	The admissibility condition for variations of mixed Hodge structures was first introduced by Steenbrink-Zucker \cite{Steenbrink-Zucker1985} over one-dimensional bases and later generalized by Kashiwara \cite{Kashiwara1986} to general bases. In this paper, we adopt Kashiwara's definition.
\end{rmk}
\begin{proof}
	By \cite[Lemma 1.9.1]{Kashiwara1986} and Lemma \ref{lem_quasi-unipotency}, it suffices to assume that $S$ is a punctured disc and that condition {\bf (UM)} holds for the family. The extension of the Hodge filtration then follows from Proposition \ref{prop_log_Hodge_fil_BM}, while the existence of the relative monodromy weight filtration is guaranteed by Theorem \ref{thm_relMonfil_BM}.
\end{proof}
Denote by $\widetilde{\cV}^k_{f,{\rm BM}}$ the lower canonical extension of $\cV^k_{f,{\rm BM}}$, and let $W_m(\widetilde{\cV}^k_{f,{\rm BM}})$ be the lower canonical extension of $W_m(\cV^k_{f,{\rm BM}})$.  
\begin{cor}\emph{(Kashiwara \cite[Proposition 1.11.3]{Kashiwara1986})}\label{cor_Kashiwara_extendHodge}
	The Hodge filtration $\{F^p(\cV^k_{f,{\rm BM}})\}$ admits an extension $\{F^p(\widetilde{\cV}^k_{f,{\rm BM}})\}$ on $\widetilde{S}$ such that:  
	\begin{enumerate}  
		\item Each $F^p(\widetilde{\cV}^k_{f,{\rm BM}})$ is a subbundle of $\widetilde{\cV}^k_{f,{\rm BM}}$.  
		\item Each ${\rm Gr}_F^p {\rm Gr}_m^W(\widetilde{\cV}^k_{f,{\rm BM}})$ is locally free.  
	\end{enumerate}  
\end{cor}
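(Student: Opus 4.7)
The plan is to deduce this as a direct consequence of the admissibility statement in Theorem \ref{thm_VMHS_BM_admissible}, combined with Kashiwara's general extension theorem for admissible variations of mixed Hodge structures \cite[Proposition 1.11.3]{Kashiwara1986}. Since Theorem \ref{thm_VMHS_BM_admissible} already gives admissibility of $\bV^k_{f,{\rm BM}}$ with respect to $\widetilde{S}$, Kashiwara's result provides a unique extension of the Hodge filtration that is compatible with the lower canonical extension of the weight filtration and has the two properties claimed. For the reader who prefers a geometric argument, my plan is to exhibit the extension via semistable reduction and then identify it with Kashiwara's abstract extension.

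First, I would localize the problem. Since being a subbundle and having locally free graded pieces are statements about coherent sheaves on $\widetilde{S}$, and both can be checked in formal neighborhoods of boundary points, I may assume $\widetilde{S}$ is a polydisc with $D_{\widetilde{S}}$ a union of coordinate hyperplanes. Next, using the quasi-unipotency established in Lemma \ref{lem_quasi-unipotency}, I pass to a finite ramified cover $\pi:\widetilde{S}'\to\widetilde{S}$ of the form $z_i\mapsto z_i^{m_i}$ so that the pulled-back family has unipotent monodromy around every boundary component. Over this cover I apply the semi-log resolution procedure (Remark \ref{rmk_semi_log_resolution}) to produce a semistable model $\overline{f}:(\overline{X},\overline{D})\to(\overline{S}',D_{\widetilde{S}'})$ extending $f$ after base change.

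Then I would invoke Proposition \ref{prop_log_Hodge_fil_BM} on this semistable model: the subcomplexes $F^p\Omega^\bullet_{\overline{f},{\rm BM}}$ yield
\[
F^pR^k\overline{f}_\ast(\Omega^\bullet_{\overline{f},{\rm BM}})\simeq R^k\overline{f}_\ast(F^p\Omega^\bullet_{\overline{f},{\rm BM}}),
\]
which is locally free, and whose subquotients ${\rm Gr}^p_F R^k\overline{f}_\ast(\Omega^\bullet_{\overline{f},{\rm BM}})$ are locally free as well. Combined with the $E_2$-degeneration of the weight spectral sequence (Proposition \ref{prop_weightss_E2_BM}) and the strict compatibility of the $d_1$-differentials with the logarithmic Hodge filtration established in the proof of Proposition \ref{prop_log_Hodge_fil_BM}, this shows that $F^p$ intersects $W_m$ in a locally free subbundle with locally free quotients, so ${\rm Gr}^p_F{\rm Gr}^W_m$ is locally free on $\overline{S}'$. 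I would then descend: the pushforward $\pi_\ast$ applied to this filtration, together with taking Galois invariants, produces the desired filtration on $\widetilde{\cV}^k_{f,{\rm BM}}$, using that both lower canonical extension and the Hodge filtration are functorial under ramified base change.

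The main technical obstacle is the final identification step: showing that the geometric extension on $\overline{S}'$ descends to the correct Kashiwara extension on $\widetilde{S}$, and not merely to some extension. This is where the semisimple part of the residue (the eigenvalues in $[0,1)\cap\bQ$ of Theorem \ref{thm_LMHS}) matters, since the lower canonical extension is distinguished precisely by the eigenvalue condition. The resolution is to check, using Lemma \ref{lem_filstrict} once more, that the $F^\bullet$-filtration respects the decomposition by generalized eigenspaces of the residue, so that taking $\mu_{m_i}$-invariants preserves local freeness of each graded piece. Once this strict compatibility under descent is verified, properties (1) and (2) follow immediately from their counterparts on the semistable model.
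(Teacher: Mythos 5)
Your first paragraph is exactly the paper's argument: the statement is cited directly from Kashiwara's Proposition 1.11.3, applied to the variation $\bV^k_{f,{\rm BM}}$ whose admissibility with respect to $\widetilde{S}$ is the content of Theorem \ref{thm_VMHS_BM_admissible}; the paper offers no further proof, so on this route you are done.

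The longer geometric alternative you sketch is essentially the paper's Theorem \ref{thm_log_Higgs_geo}, but as written it overreaches in one place. The semi-log resolution of Remark \ref{rmk_semi_log_resolution} only produces a semistable model $\overline{f}:(\overline{X},\overline{D})\to(\overline{S},D_{\overline{S}})$ over the complement of a codimension-$\geq 2$ subset on which the boundary divisor is \emph{smooth}; in particular you cannot localize to a polydisc whose boundary is a union of several coordinate hyperplanes and expect a semistable model there, so Propositions \ref{prop_weightss_E2_BM} and \ref{prop_log_Hodge_fil_BM} only give you local freeness and the subbundle property over $\overline{S}$, not over all of $\widetilde{S}$ (the crossing locus of $D_{\widetilde{S}}$ is excluded). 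Extending across that codimension-$\geq 2$ locus is precisely what Kashiwara's theorem supplies, so the geometric construction cannot replace the citation; it can only identify Kashiwara's extension in codimension one, which is how the paper uses it. The descent step through the ramified cover and $\mu_{m_i}$-invariants is plausible but is not needed for the statement and introduces the eigenvalue bookkeeping you flag; the paper avoids it entirely by proving Propositions \ref{prop_weightss_E2_BM} and \ref{prop_log_Hodge_fil_BM} without assuming unipotency, using Theorem \ref{thm_LMHS} and Lemma \ref{lem_filstrict} directly on the quasi-unipotent model.
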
  
Next we give a geometric discription of Kashiwara's extension of Hodge filtrations.
As mentioned in Remark \ref{rmk_semi_log_resolution}, by employing a semi-log resolution, the simple normal crossing family $f:(X,D)\to S$ can be extended to a projective morphism $\widetilde{f}:(\widetilde{X},\widetilde{D})\to(\widetilde{S},D_{\widetilde{S}})$, where $(\widetilde{X},\widetilde{D})$ is a simple normal crossing pair and $(\widetilde{S},D_{\widetilde{S}})$ is a log smooth pair. Furthermore, it is possible to assume the existence of a closed algebraic subset $Z \subset \widetilde{S}$ of codimension $\geq 2$, such that  
$$
(\overline{S},D_{\overline{S}}) := (\widetilde{S},D_{\widetilde{S}}) \setminus Z
$$  
is a log smooth pair with a smooth boundary divisor $D_{\overline{S}}$, and  
$$
(\overline{X},\overline{D}) := (\widetilde{X},\widetilde{D}) \cap \widetilde{f}^{-1}(\overline{S})
$$  
is a simple normal crossing pair. Additionally, the restricted morphism  
$$
\overline{f} := \widetilde{f}|_{\overline{X}} : (\overline{X},\overline{D}) \to (\overline{S},D_{\overline{S}})
$$  
forms a semistable family.
\begin{thm}\label{thm_log_Higgs_geo}
	There is a natural bi-filtered isomorphism between
	$$\left(R^k\overline{f}_\ast(\Omega^\bullet_{\overline{f},{\rm BM}}),W_\bullet(R^k\overline{f}_\ast(\Omega^\bullet_{\overline{f},{\rm BM}})),F^\bullet(R^k\overline{f}_\ast(\Omega^\bullet_{\overline{f},{\rm BM}}))\right)$$
	and 
	$$\left(\widetilde{\cV}^k_{f,{\rm BM}},W_\bullet(\widetilde{\cV}^k_{f,{\rm BM}}),F^\bullet(\widetilde{\cV}^k_{f,{\rm BM}})\right)|_{\overline{S}}.$$
\end{thm}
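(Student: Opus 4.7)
The plan is to verify that $\bigl(R^k\overline{f}_\ast(\Omega^\bullet_{\overline{f},{\rm BM}}),W_\bullet,F^\bullet\bigr)$ satisfies the properties characterizing Kashiwara's admissible extension of $\bV^k_{f,{\rm BM}}$, and then to invoke uniqueness of that extension. I will proceed in three stages: first matching the underlying logarithmic flat bundle, then the weight filtration, and finally the Hodge filtration. The last stage, which requires identifying the fiber of the geometric Hodge filtration at each boundary point with the Hodge filtration of the limit mixed Hodge structure, will be the main obstacle.

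For the underlying logarithmic flat bundle, I invoke Proposition \ref{prop_weightss_E2_BM}, which shows that $\bigl(R^k\overline{f}_\ast(\Omega^\bullet_{\overline{f},{\rm BM}}),\nabla_{\rm GM}\bigr)$ is locally free and is the lower canonical extension of $(\cV^k_{f,{\rm BM}},\nabla_{\rm GM})$; by uniqueness of the lower canonical extension (\cite[Prop.~5.4]{Deligne1970}), this yields the canonical isomorphism with $\widetilde{\cV}^k_{f,{\rm BM}}|_{\overline{S}}$. For the weight filtration, the same proposition ensures that each $W_m R^k\overline{f}_\ast(\Omega^\bullet_{\overline{f},{\rm BM}})$ is a locally free subsheaf; it is preserved by $\nabla_{\rm GM}$ via the Koszul filtration on the weight-subcomplex $W_m\Omega^\bullet_{\overline{f},{\rm BM}}$, extends $W_m\cV^k_{f,{\rm BM}}$, and its induced residue eigenvalues inherit the range $[0,1)\cap\bQ$ from the ambient connection. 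Hence it coincides with the lower canonical extension of $W_m\cV^k_{f,{\rm BM}}$, namely $W_m(\widetilde{\cV}^k_{f,{\rm BM}})|_{\overline{S}}$.

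For the Hodge filtration, Proposition \ref{prop_log_Hodge_fil_BM} shows that $F^p R^k\overline{f}_\ast(\Omega^\bullet_{\overline{f},{\rm BM}})$ is a subbundle with locally free graded pieces ${\rm Gr}^p_F{\rm Gr}^W_m$, and Lemma \ref{lem_Griff_trans_logBM} provides Griffiths transversality. Both this geometric extension and Kashiwara's extension $F^p(\widetilde{\cV}^k_{f,{\rm BM}})|_{\overline{S}}$ are subbundles of the common bundle $\widetilde{\cV}^k_{f,{\rm BM}}|_{\overline{S}}$ that restrict to $F^p\cV^k_{f,{\rm BM}}$ on $S$, so it suffices to match them fiberwise at every $s\in D_{\overline{S}}$. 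The fiber of Kashiwara's extension at $s$ is, by construction of the admissible extension, the Hodge filtration of the limit mixed Hodge structure at $s$, so the remaining task is to identify the fiber of the geometric extension with the same object.

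This last identification is the main obstacle. To handle it, I will reduce to the codimension-one setting of \S \ref{section_log_extension_VMHS_BM} by restricting to a general transversal disk through $s$, and after a cyclic base change ensuring that condition {\bf (UM)} holds (cf.\ Lemma \ref{lem_quasi-unipotency}), I will combine Lemma \ref{lem_log_ext_fiber}, Lemma \ref{lem_theta_qis}, Lemma \ref{lem_VMHS_BM_residue}, Proposition \ref{prop_LMHC_BM}, and Theorem \ref{thm_relMonfil_BM} to exhibit the fiber of $\bigl(R^k\overline{f}_\ast(\Omega^\bullet_{\overline{f},{\rm BM}}),W_\bullet,F^\bullet\bigr)$ at $0$, equipped with the residue ${\rm Res}^k_0$, as the limit mixed Hodge structure of $\bV^k_{f,{\rm BM}}$. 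Descending to the original disk by Galois equivariance of the cyclic cover and using that two subbundles of a common vector bundle coincide as soon as their fibers do, this will complete the fiberwise identification and yield the desired bi-filtered isomorphism.
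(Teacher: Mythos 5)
Your treatment of the underlying logarithmic flat bundle and of the weight filtration matches the paper's: both reduce to Proposition \ref{prop_weightss_E2_BM} and the exactness/uniqueness of Deligne's lower canonical extension. Where you diverge is the Hodge filtration, and that is where your plan has a gap. The paper does not perform any fiberwise identification with a limit mixed Hodge structure: it observes that, by Proposition \ref{prop_log_Hodge_fil_BM}, the geometric filtration $\{R^k\overline{f}_\ast(F^p\Omega^\bullet_{\overline{f},{\rm BM}})\}$ is an extension of $\{F^p(\cV^k_{f,{\rm BM}})\}$ by subbundles with locally free subquotients, and then invokes the \emph{uniqueness} of such extensions (\cite[Corollary 5.2]{Fujino2014}; this is precisely the property characterizing Kashiwara's extension in Corollary \ref{cor_Kashiwara_extendHodge}). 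That two-line argument makes your entire ``main obstacle'' unnecessary.

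Your alternative route, as sketched, does not go through. First, the premise that ``the fiber of Kashiwara's extension at $s$ is, by construction, the Hodge filtration of the limit mixed Hodge structure'' is not the definition of Kashiwara's extension; it is a consequence that itself has to be proved, and in the quasi-unipotent case proving it is essentially equivalent to the uniqueness statement you are trying to avoid. Second, the descent step is flawed: after the ramified base change $z\mapsto z^m$ needed to achieve {\bf (UM)}, the lower canonical extension of the pulled-back connection is \emph{not} the pullback of the lower canonical extension unless the monodromy was already unipotent --- the two differ by a twist coming from the nontrivial eigenvalues of the residue in $[0,1)$ --- so ``Galois equivariance'' does not transport the fiberwise identification back to the original disk. (In addition, the pulled-back family is no longer semistable and must be re-resolved before the geometric filtration upstairs can even be compared with the pullback of the one downstairs.) Note also that the paper deliberately avoids assuming {\bf (UM)} in Proposition \ref{prop_log_Hodge_fil_BM}, handling the quasi-unipotent case directly via Theorem \ref{thm_LMHS} and Lemma \ref{lem_filstrict}; the machinery of \S 3.5 (Lemmas \ref{lem_log_ext_fiber}--\ref{lem_VMHS_BM_residue}, Theorem \ref{thm_relMonfil_BM}) is used only for the relative monodromy weight filtration in the admissibility proof, not for identifying the extended Hodge filtration. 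You should replace your final stage by the uniqueness argument.
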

\begin{proof}
	By 
	Proposition \ref{prop_weightss_E2_BM} there is a filtered isomorphism
	$$\tau:\left(R^k\overline{f}_\ast(\Omega^\bullet_{\overline{f},{\rm BM}}),W_\bullet(R^k\overline{f}_\ast(\Omega^\bullet_{\overline{f},{\rm BM}}))\right)\to \left(\widetilde{\cV}^k_{f,{\rm BM}},W_\bullet(\widetilde{\cV}^k_{f,{\rm BM}})\right)|_{\overline{S}}.$$
	By Proposition\ref{prop_log_Hodge_fil_BM},  $\{\tau(R^k\overline{f}_\ast(F^p\Omega^\bullet_{\overline{f},{\rm BM}}))\}_{p\geq0}$ is an extension of of the Hodge filtrations $\{F^p(\cV^k_{f,{\rm BM}})\}_{p\geq0}$ with locally free subquotients. According to the uniqueness of such extensions (\cite[Corollary 5.2]{Fujino2014}) one concludes that $\tau$ gives an isomorphism
	\begin{align*}
	\tau:F^pR^k\overline{f}_\ast(\Omega^\bullet_{\overline{f},{\rm BM}})\simeq R^k\overline{f}_\ast(F^p\Omega^\bullet_{\overline{f},{\rm BM}})\to \widetilde{F}^p(\cV^k_{f,{\rm BM}})|_{\overline{S}}.
	\end{align*}
    This completes the proof of the lemma.
\end{proof}
\subsection{The associated logarithmic Higgs bundle}\label{section_lHB_BM}
We adopt the notations introduced in \S \ref{section_admissibility}. Let $j:\overline{S}\to\widetilde{S}$ denote the open immersion. According to Theorem \ref{thm_log_Higgs_geo}, it follows that  
$$
F^p(\widetilde{\cV}^k_{f,{\rm BM}}) \simeq j_\ast\left(R^k\overline{f}_\ast(F^p\Omega^\bullet_{\overline{f},{\rm BM}})\right)
$$  
is the reflexive hull of $R^k\overline{f}_\ast(F^p\Omega^\bullet_{\overline{f},{\rm BM}})$ for every $p$. Consequently, by Lemma \ref{lem_Griff_trans_logBM}, $F^p(\widetilde{\cV}^k_{f,{\rm BM}})$ satisfies Griffiths transversality:  
\begin{align}\label{align_Griff_tran_Kashiwara_extension}
	\nabla_{\rm GM}(F^p(\widetilde{\cV}^k_{f,{\rm BM}})) \subset F^p(\widetilde{\cV}^k_{f,{\rm BM}}) \otimes_{\sO_{\widetilde{S}}} \Omega_{\widetilde{S}}(\log D_{\widetilde{S}}).
\end{align}
We observe that $F^{n+1}\widetilde{\cV}^k_{f,{\rm BM}}=0$.
Let  
$$
\widetilde{H}^{p,k-p}_{f,{\rm BM}} := {\rm Gr}_F^p(\widetilde{\cV}^k_{f,{\rm BM}}),\quad p=0,\dots,n.
$$  
Then (\ref{align_Griff_tran_Kashiwara_extension}) implies that $\nabla_{\rm GM}$ induces a morphism  
\begin{align}\label{align_logtheta_BM}
	\theta: \widetilde{H}^{p,k-p}_{f,{\rm BM}} \to \widetilde{H}^{p-1,k-p+1}_{f,{\rm BM}} \otimes_{\sO_{\widetilde{S}}} \Omega_{\widetilde{S}}(\log D_{\widetilde{S}}),\quad p=0,\dots,n.
\end{align}  
Define $\widetilde{H}^{k}_{f,{\rm BM}} := \bigoplus_{p=0}^n \widetilde{H}^{p,k-p}_{f,{\rm BM}}$, and let  
$$
\theta: \widetilde{H}^{k}_{f,{\rm BM}} \to \widetilde{H}^{k}_{f,{\rm BM}} \otimes_{\sO_{\widetilde{S}}} \Omega_{\widetilde{S}}(\log D_{\widetilde{S}})
$$  
denote the induced map. The flatness of $\nabla_{\rm GM}$ (i.e., $\nabla^2_{\rm GM} = 0$) ensures that $\theta^2 = 0$. Consequently, $(\widetilde{H}^{k}_{f,{\rm BM}}, \theta)$ defines a logarithmic Higgs bundle.
\begin{defn}\label{defn_LHB_ass_VHSBM}
	We refer to $(\widetilde{H}^{k}_{f,{\rm BM}}, \theta)$ as the lower canonical logarithmic Higgs bundle associated with the admissible variation of mixed Hodge structures $\bV^k_{f,{\rm BM}}$.
\end{defn}
This logarithmic Higgs bundle is equipped with a weight filtration of Higgs subbundles. For each $k$ and $p$, let  
$$
\{W_m(\widetilde{H}^{p,k-p}_{f,{\rm BM}}) = W_m({\rm Gr}_F^p(\widetilde{\cV}^k_{f,{\rm BM}}))\}_{m \in \bZ}
$$  
denote the induced weight filtration, and define  
$$
W_m(\widetilde{H}^{k}_{f,{\rm BM}}) := \bigoplus_{p=0}^n W_m(\widetilde{H}^{p,k-p}_{f,{\rm BM}}).
$$  
Then $(W_m(\widetilde{H}^{k}_{f,{\rm BM}}), \theta)$ forms a logarithmic Higgs subbundle, i.e.,  
$$
\theta(W_m(\widetilde{H}^{k}_{f,{\rm BM}})) \subset W_m(\widetilde{H}^{k}_{f,{\rm BM}}) \otimes_{\sO_{\widetilde{S}}} \Omega_{\widetilde{S}}(\log D_{\widetilde{S}}).
$$  
This construction yields the sub-quotient logarithmic Higgs bundles  
\begin{align}
	\theta: {\rm Gr}^{W}_m(\widetilde{H}^{k}_{f,{\rm BM}}) \to {\rm Gr}^{W}_m(\widetilde{H}^{k}_{f,{\rm BM}}) \otimes_{\sO_{\widetilde{S}}} \Omega_{\widetilde{S}}(\log D_{\widetilde{S}}).
\end{align}  
This defines the lower canonical logarithmic Higgs bundle associated with the variation of pure Hodge structure ${\rm Gr}^W_m(\bV^k_{f,{\rm BM}})$.
\begin{prop}\label{prop_negative_Higgs_kernel}
	Suppose that $\widetilde{S}$ is projective. Let $K \subset \widetilde{H}^{k}_{f,{\rm BM}}$ be a coherent subsheaf on $\widetilde{S}$ satisfying $\theta(K) = 0$. Then, the class $-c_1(K)$ is pseudo-effective.
\end{prop}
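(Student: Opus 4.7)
The plan is to reduce to the case of variations of pure Hodge structure via the weight filtration, and then to invoke the negativity theorem for kernels of Higgs fields arising from polarized variations of Hodge structure with quasi-unipotent monodromy.

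First, I would exploit the weight filtration discussed in Section \ref{section_lHB_BM}. Since $\theta$ preserves $W_m(\widetilde{H}^{k}_{f,{\rm BM}})$ for every $m$, setting $K_m := K \cap W_m(\widetilde{H}^{k}_{f,{\rm BM}})$ yields an increasing filtration of $K$ whose graded pieces ${\rm Gr}^W_m K$ are coherent subsheaves of ${\rm Gr}^W_m \widetilde{H}^{k}_{f,{\rm BM}}$ annihilated by the induced sub-quotient Higgs field. Because $c_1(K) = \sum_m c_1({\rm Gr}^W_m K)$ and the pseudo-effective cone is closed under addition, it suffices to prove the statement for each sub-quotient logarithmic Higgs bundle ${\rm Gr}^W_m \widetilde{H}^{k}_{f,{\rm BM}}$, which is the lower canonical logarithmic Higgs bundle associated with the pure polarized variation of Hodge structure ${\rm Gr}^W_m \bV^k_{f,{\rm BM}}$.

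For the pure polarized case, I would invoke the negativity theorem for kernels of Higgs fields coming from polarized variations of Hodge structure with quasi-unipotent monodromy, originally established by Zuo in the unipotent setting and later extended by Brunebarbe to the quasi-unipotent case through the lower canonical Deligne extension. The geometric input is the Hodge metric on the flat bundle on $\widetilde{S}\setminus D_{\widetilde{S}}$, whose curvature is semi-negative in Griffiths' sense when restricted to subbundles lying in the kernel of the Higgs field; the singularities of this metric along $D_{\widetilde{S}}$ match precisely with the lower canonical extension, so that the induced singular Hermitian metric on the dual of any coherent subsheaf in $\ker \theta$ has semi-positive curvature current. This forces $-c_1$ of such a subsheaf to be pseudo-effective, and summing over weights concludes the proof.

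The main technical obstacle is to verify that the geometric lower canonical extension produced in Theorem \ref{thm_log_Higgs_geo} coincides with the one on which the negativity theorem is formulated, and that its hypotheses are fulfilled in our admissibility setting. These concerns are settled by the admissibility established in Theorem \ref{thm_VMHS_BM_admissible}, the quasi-unipotency of local monodromies from Lemma \ref{lem_quasi-unipotency}, and the uniqueness of the lower canonical extension recalled in the proof of Theorem \ref{thm_log_Higgs_geo}. Once these identifications are in place, the weight-by-weight reduction and the invocation of the negativity theorem yield the pseudo-effectivity of $-c_1(K)$.
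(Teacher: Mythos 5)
Your proposal is correct and follows essentially the same route as the paper: filter $K$ by the weight filtration, use $-c_1(K)=\sum_m -c_1(\mathrm{Gr}^W_m K)$ to reduce to the graded-polarized pure pieces, and apply the Zuo--Brunebarbe negativity theorem for coherent subsheaves killed by the Higgs field of the lower canonical logarithmic Higgs bundle of a polarized variation of Hodge structure. The additional identifications you flag (admissibility, quasi-unipotency, uniqueness of the lower canonical extension) are exactly the ingredients the paper relies on implicitly.
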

\begin{proof}
	Define  
	\[
	{\rm Gr}^{W}_{m}(K) := \frac{K \cap W_m(\widetilde{H}^{k}_{f,{\rm BM}})}{K \cap W_{m-1}(\widetilde{H}^{k}_{f,{\rm BM}})}.
	\]  
	Then, we have  
	\begin{align}\label{align_Higgs_ker_gr}
		-c_1(K) = \sum_{m} -c_1({\rm Gr}^{W}_{m}(K)).
	\end{align}  
	Note that $\bV^k_{f,{\rm BM}}$ is a graded polarized variation of mixed Hodge structures (Theorem \ref{thm_VMHS_BM}). Consequently,  
	\[
	({\rm Gr}_m^{W}(\cV^k_{f,{\rm BM}}), F^\bullet {\rm Gr}_m^{W}(\cV^k_{f,{\rm BM}}))
	\]  
	is a polarized variation of Hodge structure on $S$, and  
	\[
	\theta: {\rm Gr}^{W}_m(\widetilde{H}^{k}_{f,{\rm BM}}) \to {\rm Gr}^{W}_m(\widetilde{H}^{k}_{f,{\rm BM}}) \otimes \Omega_{\widetilde{S}}(\log D_{\widetilde{S}})
	\]  
	represents the associated lower canonical logarithmic Higgs bundle. Moreover, ${\rm Gr}^{W}_{m}(K)$ is a coherent subsheaf of ${\rm Gr}^{W}_m(\widetilde{H}^{k}_{f,{\rm BM}})$ satisfying $\theta({\rm Gr}^{W}_{m}(K)) = 0$. By \cite[Theorem 1.4]{Brunebarbe2017} (see also \cite[Proposition 1.1]{Zuo2000} for the unipotent version), $-c_1({\rm Gr}^{W}_{m}(K))$ is pseudo-effective for every $m$. Therefore, $-c_1(K)$ is pseudo-effective as well, thanks to (\ref{align_Higgs_ker_gr}).
\end{proof}
\subsubsection{Top Hodge bundle}
Recall that $n = \dim X - \dim S$. Regarding the extension of the top-indexed Hodge bundle, there exists the following inclusion.
\begin{lem}\label{lem_Fujino}
	There exists an inclusion
	$$R^q\widetilde{f}_\ast(\sO_{\widetilde{X}}(K_{\widetilde{X}/\widetilde{S}}+\widetilde{D}^h))\subset \widetilde{H}^{n,q}_{f,{\rm BM}}\otimes_{\sO_{\widetilde{S}}}\sO_{\widetilde{S}}(D_{\widetilde{S}})$$ where $\widetilde{D}^h$ denotes the horizontal part of $\widetilde{D}$.  
\end{lem}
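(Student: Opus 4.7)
My plan proceeds in three steps: first identify the target $\widetilde{H}^{n,q}_{f,{\rm BM}}\otimes\sO_{\widetilde{S}}(D_{\widetilde{S}})$ explicitly on the open subset $\overline{S}$ (where $\widetilde{f}$ restricts to a semistable morphism and the top Hodge complex is computed by (\ref{align_top_Hodge_complex})), then produce the desired inclusion on $\overline{S}$ by a short torsion-freeness argument, and finally extend across the codimension-two locus $Z=\widetilde{S}\setminus\overline{S}$ using that the right-hand side is reflexive on $\widetilde{S}$.

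For the identification, $F^{n+1}\widetilde{\cV}^{n+q}_{f,{\rm BM}}=0$ combined with Theorem \ref{thm_log_Higgs_geo}, Proposition \ref{prop_log_Hodge_fil_BM}, and (\ref{align_top_Hodge_complex}) yields $\widetilde{H}^{n,q}_{f,{\rm BM}}|_{\overline{S}}\simeq R^{q}\overline{f}_\ast\sO_{\overline{X}}(K_{\overline{X}/\overline{S}}+\overline{D}_{\rm red}-\overline{f}^\ast D_{\overline{S}})$. Writing $\overline{f}^\ast D_{\overline{S}}=\sum_i m_i E_i$ with $m_i\ge 1$ and setting $E_{\rm red}=\sum_i E_i$, the semistable hypothesis (Definition \ref{defn_semistable_cod1}) forces $\overline{D}_{\rm red}=\overline{D}^h+E_{\rm red}$, and the projection formula then gives
\[
\widetilde{H}^{n,q}_{f,{\rm BM}}|_{\overline{S}}\otimes\sO_{\overline{S}}(D_{\overline{S}})\simeq R^q\overline{f}_\ast\sO_{\overline{X}}(K_{\overline{X}/\overline{S}}+\overline{D}^h+E_{\rm red}).
\]
The tautological inclusion $\sO_{\overline{X}}(K_{\overline{X}/\overline{S}}+\overline{D}^h)\hookrightarrow\sO_{\overline{X}}(K_{\overline{X}/\overline{S}}+\overline{D}^h+E_{\rm red})$ induces a morphism
\[
\alpha:R^q\overline{f}_\ast\sO_{\overline{X}}(K_{\overline{X}/\overline{S}}+\overline{D}^h)\to R^q\overline{f}_\ast\sO_{\overline{X}}(K_{\overline{X}/\overline{S}}+\overline{D}^h+E_{\rm red})
\]
which is an isomorphism over the generic point of $\overline{S}$ because $E_{\rm red}$ is entirely vertical and hence disjoint from the generic fibre. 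Since the source of $\alpha$ is torsion-free by the Ambro--Fujino torsion-freeness theorem for pushforwards of log canonical divisors on simple normal crossing pairs (applied to $(\overline{X},\overline{D}^h)$), the map $\alpha$ is injective on all of $\overline{S}$.

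To extend to $\widetilde{S}$, Corollary \ref{cor_Kashiwara_extendHodge} ensures that $\widetilde{H}^{n,q}_{f,{\rm BM}}$ is locally free on $\widetilde{S}$, so reflexivity together with $\textrm{codim}_{\widetilde{S}}Z\ge 2$ gives $\widetilde{H}^{n,q}_{f,{\rm BM}}\otimes\sO_{\widetilde{S}}(D_{\widetilde{S}})=j_\ast(\widetilde{H}^{n,q}_{f,{\rm BM}}|_{\overline{S}}\otimes\sO_{\overline{S}}(D_{\overline{S}}))$ for the open immersion $j:\overline{S}\hookrightarrow\widetilde{S}$. Flat base change along $j$ identifies $R^q\widetilde{f}_\ast\sO_{\widetilde{X}}(K_{\widetilde{X}/\widetilde{S}}+\widetilde{D}^h)|_{\overline{S}}$ with the source of $\alpha$, so composing the unit $\textrm{id}\to j_\ast j^\ast$ with $j_\ast\alpha$ yields a natural morphism
\[
R^q\widetilde{f}_\ast\sO_{\widetilde{X}}(K_{\widetilde{X}/\widetilde{S}}+\widetilde{D}^h)\to\widetilde{H}^{n,q}_{f,{\rm BM}}\otimes\sO_{\widetilde{S}}(D_{\widetilde{S}}).
\]
Its kernel is supported on $Z$; applying Ambro--Fujino torsion-freeness a second time, now directly to $\widetilde{f}:(\widetilde{X},\widetilde{D}^h)\to\widetilde{S}$, makes the source torsion-free and therefore it has no local sections supported on the proper closed subset $Z$, so the composition is injective globally.

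The principal external input is the Ambro--Fujino torsion-freeness theorem for pushforwards of log canonical divisors on simple normal crossing pairs, used twice to kill potential torsion: once to propagate the generic isomorphism to an inclusion on $\overline{S}$, and once to propagate the inclusion across the codimension-two locus $Z$. Beyond this, the argument is a bookkeeping exercise combining the explicit Hodge-theoretic identifications already assembled in the paper.
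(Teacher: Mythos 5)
Your proposal is correct and follows essentially the same route as the paper: identify $\widetilde{H}^{n,q}_{f,{\rm BM}}|_{\overline{S}}$ with $R^q\overline{f}_\ast\sO_{\overline{X}}(K_{\overline{X}/\overline{S}}+\overline{D}-\overline{f}^\ast D_{\overline{S}})$ via (\ref{align_top_Hodge_complex}) and Theorem \ref{thm_log_Higgs_geo}, obtain the inclusion over $\overline{S}$ from $\overline{D}^h\leq\overline{D}$, and extend across the codimension-two set using local freeness of $R^q\widetilde{f}_\ast(\sO_{\widetilde{X}}(K_{\widetilde{X}/\widetilde{S}}+\widetilde{D}^h))$. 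Your added justification that the map on higher direct images is injective (generic isomorphism plus torsion-freeness of the source) is a point the paper leaves implicit, and it is handled correctly.
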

\begin{proof}
	According to equation (\ref{align_top_Hodge_complex}) and Theorem \ref{thm_log_Higgs_geo}, we derive the following:  
	\begin{align*}  
		R^q\widetilde{f}_\ast(\sO_{\widetilde{X}}(K_{\widetilde{X}/\widetilde{S}}+\widetilde{D}^h))|_{\overline{S}} &\simeq R^q\overline{f}_\ast(\sO_{\overline{X}}(K_{\overline{X}/\overline{S}}+\overline{D}^h)) \\  
		&\subset R^q\overline{f}_\ast(\sO_{\overline{X}}(K_{\overline{X}/\overline{S}}+\overline{D}-\overline{f}^\ast D_{\overline{S}}))\otimes_{\sO_{\overline{S}}}\sO_{\overline{S}}(D_{\overline{S}}) \\  
		&\simeq F^n(\widetilde{\cV}^{n+q}_{f,{\rm BM}})|_{\overline{S}}\otimes_{\sO_{\overline{S}}}\sO_{\overline{S}}(D_{\overline{S}})\\
		&\simeq\widetilde{H}^{n,q}_{f,{\rm BM}}|_{\overline{S}}\otimes_{\sO_{\overline{S}}}\sO_{\overline{S}}(D_{\overline{S}}).  
	\end{align*}  
	Given that $R^q\widetilde{f}_\ast(\sO_{\widetilde{X}}(K_{\widetilde{X}/\widetilde{S}}+\widetilde{D}^h))$ is locally free as stated in \cite[Theorem 7.3,(b)]{Fujino2014}, taking reflexive hulls on both sides completes the proof of the lemma.
\end{proof}
	\section{The Viehweg-Zuo Higgs sheaves}\label{section_VZ_Higgs}
	\subsection{Setting}\label{section_setting}	
	Throughout this section, we fix a projective morphism $f : X \to S$ from a simple normal crossing variety to a smooth variety, with relative dimension $n = \dim X - \dim S$. Let $\Delta$ be a $\bQ$-divisor on $X$ whose coefficients lie in $[0,1]$, such that $(X, \Delta)$ is a simple normal crossing pair. Assume that there exists a simple normal crossing divisor $D_f \subset S$ such that the restricted morphism $f^o := f|_{X^o} : (X^o, \Delta^o) \to S^o$ is a simple normal crossing family, where $S^o := S \setminus D_f$, $X^o := f^{-1}(S^o)$, and $\Delta^o := \Delta \cap X^o$. Furthermore, assume that ${\rm supp}(\Delta)$ does not contain any component of $f^{-1}(D_f)$.  
	
	We fix a line bundle $L$ on $S$, and a nonzero morphism  
	\begin{align}\label{align_s}  
	s_L : L^{\otimes k} \to f_\ast\left(\sO_X(kK_{X/S} + k\Delta)\right)  
	\end{align}  
	for some $k \geq 1$ such that $k\Delta$ is integral.
	\subsection{The main result: Viehweg-Zuo Higgs sheaves}\label{section_VZ_sheaf}	
	Define  
	$$ B = \omega_{X/S}(\lceil \Delta \rceil) \otimes_{\sO_X} f^{\ast}(L)^{-1}, $$  
	a line bundle on $X$. The map $s_L$ determines a non-zero section  
	$ s \in H^0(X, B^{\otimes k})$.	
	Let $\varpi : X_k \to X$ be the $k:1$ cyclic covering map branched along $\{s = 0\}$. Let $\mu : Z \to X_k$ be a resolution of singularities such that $Z$ is smooth (not necessarily connected) and $\Delta_Z := (\varpi \mu)^{-1}(\Delta \cup X_{\rm sing})$ is a simple normal crossing divisor. Define $g := f \varpi \mu : Z \to S$, and let $D_g \subset \widetilde{X}$ be a reduced closed algebraic subset containing $D_f$, such that $g : (Z, \Delta_Z) \to S$ is a log smooth family over $U := S \setminus D_g$. Let $Z^o := g^{-1}(U)$, and let $g^o := g|_{Z^o} : (Z^o, Z^o \cap \Delta_Z) \to U$ denote the restricted log smooth family.
	
	Let $V \subset S$ be a Zariski open subset such that ${\rm codim}_S(S \setminus V) \geq 2$ and $V \cap D_g$ is a simple normal crossing divisor on $V$. Then $R^n g^{o}_\ast(\bV_{(Z^o, Z^o \cap \Delta_Z), {\rm BM}})$ underlies a graded polarized variation of mixed Hodge structures $\bV^n_{g^o, {\rm BM}} = (\cV^n_{g^o, {\rm BM}}, \nabla, W_{\bullet}(\cV^n_{g^o, {\rm BM}}),F^\bullet_{{\rm BM}}(\cV^n_{g^o, {\rm BM}}))$ on $S'^o$ (Theorem \ref{thm_VMHS_BM}). By Theorem \ref{thm_VMHS_BM_admissible}, $\bV^n_{g^o, {\rm BM}}$ is admissible on $V$. Therefore, according to \S \ref{section_lHB_BM}, one has the lower canonical logarithmic Higgs bundle
	$$
	\left(\widetilde{H}^{k}_{g^o, {\rm BM}} = \bigoplus_{p=0}^n \widetilde{H}^{p, k-p}_{g^o, {\rm BM}}, \theta\right)
	$$  
	on $V$, associated with $\bV^n_{g^o, {\rm BM}}$ (Definition \ref{defn_LHB_ass_VHSBM}). 
\begin{thm}\label{thm_VZ_construction}  
Notations and assumptions as in \S \ref{section_setting} and \S \ref{section_VZ_sheaf}. Then the following statements hold:  
\begin{enumerate}  
	\item There exists a natural inclusion $L \otimes_{\sO_S} \sO_S(-D_f)|_V \subset \widetilde{H}^{n,0}_{g^o, {\rm BM}}$.  
	
	\item Let $$(\bigoplus_{p=0}^n L^p, \theta) \subset (\widetilde{H}^n_{g^o, {\rm BM}}, \theta)$$ be the meromorphic Higgs subsheaf generated by $L^0 := L \otimes_{\sO_S} \sO_S(-D_f)|_V$, where $L^p \subset \widetilde{H}^{n-p,p}_{g^o, {\rm BM}}$. Then the Higgs field  
	$$  
	\theta : L^p \to L^{p+1} \otimes_{\sO_V} \Omega_V(\log (D_g \cap V))  
	$$  
	is holomorphic over $V \setminus D_f$ and has at most logarithmic poles along $D_f$ for each $0 \leq p<n$, i.e.,  
	$$  
	\theta(L^p) \subset L^{p+1} \otimes_{\sO_V} \Omega_V(\log (D_f \cap V)),\quad 0 \leq p< n.  
	$$ 
\end{enumerate}
\end{thm}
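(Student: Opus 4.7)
The plan is to combine Viehweg's cyclic-cover technique with the geometric description of the lower canonical logarithmic Higgs bundle provided by Theorem \ref{thm_log_Higgs_geo} and Lemma \ref{lem_Fujino}. For the embedding in (1), I would produce $L\otimes_{\sO_S}\sO_S(-D_f)|_V$ inside $\widetilde{H}^{n,0}_{g^o,{\rm BM}}$ via the direct image of the cyclic cover. For (2), the key recognition is that the Higgs subsheaf generated by $L^0$ is controlled by the Kodaira-Spencer data of the original family $f$, which is log smooth away from $D_f$, rather than by that of $g$, which is only log smooth away from the (possibly larger) divisor $D_g$.

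For part (1), the cyclic cover $\varpi:X_k\to X$ satisfies $\varpi_\ast\sO_{X_k}=\bigoplus_{i=0}^{k-1}B^{-i}$, and tensoring the natural inclusion $B^{-1}\hookrightarrow\varpi_\ast\sO_{X_k}$ with the line bundle $\omega_{X/S}(\lceil\Delta\rceil)$ yields
\[
f^\ast L \;\simeq\; \omega_{X/S}(\lceil\Delta\rceil)\otimes B^{-1} \;\hookrightarrow\; \omega_{X/S}(\lceil\Delta\rceil)\otimes\varpi_\ast\sO_{X_k}.
\]
Composing with the morphism $\sO_{X_k}\to\mu_\ast\sO_Z$ and using the relation between $(\varpi\mu)^\ast\omega_{X/S}(\lceil\Delta\rceil)$ and $\omega_{Z/S}(\Delta_Z^h)$ up to the branch and ramification divisors, then applying $f_\ast$ together with the projection formula and adjunction, one obtains a morphism $L\to g_\ast\omega_{Z/S}(\Delta_Z^h)$ which refines over $V$ to an inclusion $L\otimes_{\sO_S}\sO_S(-D_f)|_V\hookrightarrow g_\ast(\omega_{Z/S}(\Delta_Z-g^\ast D_f))|_V$. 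The key point is that the extra twist $-D_f$ (as opposed to $-D_g$) appears because, by construction, the section $s_L$ in \eqref{align_s} is defined on all of $S$ and extends across $D_g\setminus D_f$ without further loss. The right-hand side is precisely the top logarithmic Hodge piece of the semistable extension of $g^o$, so by Theorem \ref{thm_log_Higgs_geo} it embeds into $\widetilde{H}^{n,0}_{g^o,{\rm BM}}|_V$.

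For part (2), recall from \S\ref{section_lHB_BM} that, under the identification provided by Theorem \ref{thm_log_Higgs_geo}, the Higgs field $\theta$ is the graded of the logarithmic Gauss-Manin connection on $R^n\overline{g}_\ast(\Omega^\bullet_{\overline{g},{\rm BM}})$, so at the top piece it agrees with the Kodaira-Spencer pairing for the relative log-de-Rham complex of $g$. The crucial observation is that the image of $L^0$ in $\widetilde{H}^{n,0}_{g^o,{\rm BM}}$ lies in the direct image of a subcomplex of the relative log-de-Rham complex of $g$ which is the pullback, via $\varpi\mu$, of the relative log-de-Rham complex of $f$; since the latter is log smooth over $S\setminus D_f$, its Kodaira-Spencer map has at most logarithmic poles along $D_f\cap V$. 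This yields $\theta(L^0)\subset L^1\otimes\Omega_V(\log(D_f\cap V))$. Iterating the same factorization inductively — each $L^{p+1}$ is generated by $\theta(L^p)$ and inherits the same subcomplex property — gives the refinement $\theta(L^p)\subset L^{p+1}\otimes_{\sO_V}\Omega_V(\log(D_f\cap V))$ for all $0\leq p<n$.

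The main obstacle will be establishing the compatibility asserted in part (2) between the cyclic cover construction and the logarithmic Higgs complex, namely the existence of a morphism of logarithmic Higgs complexes with log poles only along $D_f\cap V$ (built from the family $f$) whose image in the Higgs bundle of $g$ contains $\bigoplus_p L^p$. Writing this down requires careful bookkeeping using the bi-complex resolutions of \S 3.6, the eigen-decomposition of $\varpi_\ast\sO_{X_k}$, and a precise account of the contribution of the ramification divisor and the horizontal versus vertical parts of $\Delta_Z$. The projection formula and the functoriality of the Gauss-Manin connection under finite covers should deliver the factorization, but the detailed verification on the discriminant $D_g\setminus D_f$ — where $g$ genuinely degenerates while $f$ does not — will be the principal technical difficulty.
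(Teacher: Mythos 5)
Your proposal follows essentially the same route as the paper: the tautological section of the cyclic cover gives a map from the $B^{-1}$-twisted logarithmic Hodge complex of the semistable model of $f$ (whose Koszul/Gauss--Manin data have log poles only along $D_f$) into that of $g$; the image Higgs subsheaf $\bigoplus_{p,q}G^{p,q}$ then contains $\bigoplus_p L^p$ and satisfies $\theta(G^{p,q})\subset G^{p-1,q+1}\otimes\Omega(\log D_f)$, and part (1) is exactly the top-Hodge-piece computation combined with Lemma \ref{lem_Fujino}. One small correction: $g_\ast(\omega_{Z/S}(\Delta_Z-g^\ast D_f))$ is not the top Hodge piece of the semistable extension of $g^o$ (that piece carries $-g^\ast D_g$); the paper instead lands in $g_{1\ast}(\omega_{Z_1/V'}(\lceil\Delta_Z|_{Z_1}\rceil))$ and uses Lemma \ref{lem_Fujino} to embed this into $\widetilde{H}^{n,0}_{g^o,{\rm BM}}$, the $-D_f$ twist coming from the top Hodge piece of $\Omega^\bullet_{f_1,{\rm BM}}$ on the source side.
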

	The proof will occupy the remainder of this section. It will be completed by constructing a logarithmic Higgs subsheaf $\bigoplus_{p+q=n} G^{p,q}$ of $\widetilde{H}^n_{g^o, {\rm BM}}$ that contains $L \otimes_{\sO_S} \sO_S(-D_f)|_V$, such that the Higgs field is holomorphic on $V \setminus D_f$.
	\subsection{Construction:}
	\subsubsection{Preliminary} We recall some facts and make some assumptions that will be used in the construction of the Viehweg-Zuo Higgs sheaf.
	\begin{enumerate}
		\item By taking a suitable semi-log resolution of $(X, \Delta + f^\ast(D_f))$ and $(Z, \Delta_Z + g^\ast(D_g))$, one can assume that $(X, \Delta + f^\ast(D_f))$ and $(Z, \Delta_Z + g^\ast(D_g))$ are semistable in codimension 1 over $S$. More precisely, there exists a Zariski open subset $V' \subset V$ such that ${\rm codim}_S(S \setminus V') \geq 2$, $D'_g:=V' \cap D_g$ and $D'_f:=V' \cap D_f$ are (not necessarily connected) smooth divisors on $V'$, and the restricted families  
		$$  
		f_1 = f|_{X_1} : (X_1 := f^{-1}(V'), \Delta_1 := \lceil \Delta + f^\ast(D_f) \rceil \cap X_1) \to (V', D'_f)  
		$$  
		and  
		$$  
		g_1 = g|_{Z_1} : (Z_1 := g^{-1}(V'), \Delta_{Z_1} := \lceil \Delta_Z + g^\ast(D_g) \rceil \cap Z_1) \to (V', D'_g)  
		$$  
		are semistable.  
		\item According to \S\ref{section_log_extension_VMHS_BM}, we have a logarithmic connection  
		$$  
		\nabla_{\rm GM} : R^n g_{1\ast}(\Omega^\bullet_{g_1, {\rm BM}}) \to R^n g_{1\ast}(\Omega^\bullet_{g_1, {\rm BM}}) \otimes_{\sO_{V'}} \Omega_{V'}(\log D'_g)  
		$$  
		and a filtration of subbundles $\{F^p R^n g_{1\ast}(\Omega^\bullet_{g_1, {\rm BM}})\}_{p=0,\dots,n}$ satisfying the Griffiths transversality condition:  
		$$  
		\nabla_{\rm GM}(F^p R^n g_{1\ast}(\Omega^\bullet_{g_1, {\rm BM}})) \subset F^{p-1} R^n g_{1\ast}(\Omega^\bullet_{g_1, {\rm BM}}) \otimes_{\sO_{V'}} \Omega_{V'}(\log D'_g), \quad p=0,\dots,n.  
		$$  
		Moreover, one has isomorphisms  
		$$  
		{\rm Gr}^p_F R^n g_{1\ast}(\Omega^\bullet_{g_1, {\rm BM}}) \simeq R^n g_{1\ast}({\rm Gr}^p_F \Omega^\bullet_{g_1, {\rm BM}}), \quad p=0,\dots,n.  
		$$
		Let  
		$$  
		H^{p,n-p}_{g_1, {\rm BM}} := {\rm Gr}^p_F R^n g_{1\ast}(\Omega^\bullet_{g_1, {\rm BM}}), \quad p=0,\dots,n,  
		$$  
		let  
		$$  
		H^n_{g_1, {\rm BM}} := \bigoplus_{p=0}^n H^{p,n-p}_{g_1, {\rm BM}},  
		$$  
		and let  
		$$  
		\theta_{g_1}:= {\rm Gr}_F(\nabla) : H^n_{g_1, {\rm BM}} \to H^n_{g_1, {\rm BM}} \otimes_{\sO_{V'}} \Omega_{V'}(\log D'_g).  
		$$  	
		Then we obtain a logarithmic Higgs bundles $(H^n_{g_1, {\rm BM}}, \theta)$ satisfying  
		$$  
		\theta_{g_1}(H^{p,n-p}_{g_1, {\rm BM}}) \subset H^{p-1,n-p+1}_{g_1, {\rm BM}} \otimes_{\sO_{V'}} \Omega_{V'}(\log D'_g), \quad p=0,\dots,n.  
		$$
		According to Theorem \ref{thm_log_Higgs_geo}, there exists an isomorphism of logarithmic Higgs bundles 
		\begin{align}\label{align_g_1BM_to_extg0}
		\eta:(H^{k}_{g_1,{\rm BM}},\theta_{g_1})\to (\widetilde{H}^k_{g^o,{\rm BM}},\theta)|_{V'}.
		\end{align}
		which sends $H^{p,n-p}_{g_1, {\rm BM}}$ to $\widetilde{H}^{p,n-p}_{g^o, {\rm BM}}|_{V'}$ for every $p=0,\dots,n$. 
	\end{enumerate}
	\subsubsection{Construction}
    Let $\varphi_1 := \varpi\mu|_{Z_1} : Z_1 \to X_1$.  Since $X_k$ is embedded into the total space of the line bundle $B$, the pullback $\varpi^\ast(B)$ admits a tautological section induced by $s$. This implies the existence of a non-zero morphism  
    \begin{align}\label{align_B-1_to_O}  
    \times\varphi_1^{\ast}(s^{\frac{1}{k}}):\varphi_1^\ast(B^{-1}) \to \sO_{Z_1}.  
    \end{align}  
    Recall that both families  
    $$  
    f_1 : (X_1, \Delta_1) \to (V', D'_f) \quad \text{and} \quad g_1 : (Z_1, \Delta_{Z_1}) \to (V', D'_g)  
    $$  
    are semistable. By (\ref{align_log_Hodge_fil}), one has the Hodge filtrations (of complexes) $F^p \Omega^\bullet_{f_1, {\rm BM}}$ and $F^p \Omega^\bullet_{g_1, {\rm BM}}$, for $0 \leq p \leq n$, together with the natural morphisms  
    $$  
    \varphi_1^\ast(F^p \Omega^\bullet_{f_1, {\rm BM}}) \to F^p \Omega^\bullet_{g_1, {\rm BM}}, \quad 0 \leq p \leq n.  
    $$
    Combining these morphisms with (\ref{align_B-1_to_O}), we obtain natural morphisms  
    $$  
    \varphi_1^\ast(F^p \Omega^\bullet_{f_1, {\rm BM}} \otimes_{\sO_{X_1}} B^{-1}) \to F^p \Omega^\bullet_{g_1, {\rm BM}}, \quad 0 \leq p \leq n.  
    $$  
    These induce natural morphisms  
    \begin{align}\label{align_X_1_Z_1}
    F^p \Omega^\bullet_{f_1, {\rm BM}} \otimes_{\sO_{X_1}} B^{-1} \to R\varphi_{1\ast}(F^p \Omega^\bullet_{g_1, {\rm BM}}), \quad 0 \leq p \leq n.  
    \end{align}  
    This further induces a morphism  
    $$  
    \iota_{V'} : R^{p+q} f_{1\ast}({\rm Gr}_F^p \Omega^\bullet_{f_1, {\rm BM}} \otimes_{\sO_{X_1}} B^{-1}) \to R^{p+q} g_{1\ast}({\rm Gr}_F^p \Omega^\bullet_{g_1, {\rm BM}})  
    $$  
    for every $p, q$.
    
    As in \S \ref{section_log_GMconnection_VBM}, we have the Koszul filtrations $\{K^p_{f_1}\}_{p\geq 0}$ and $\{K^p_{g_1}\}_{p\geq 0}$ of $\Omega^\bullet_{(X_1, \Delta_1), {\rm BM}}$ and $\Omega^\bullet_{(Z_1, \Delta_{Z_1}), {\rm BM}}$, respectively.     
    The morphisms (\ref{align_X_1_Z_1}) induce a morphism between distinguished triangles:  
    $$  
    {\small\xymatrix{  
    	f_1^{-1} \Omega_{V'}(\log D'_f) \otimes_{f_1^{-1} \sO_{V'}} {\rm Gr}_F^{p-1} \Omega^\bullet_{f_1, {\rm BM}} \otimes_{\sO_{X_1}} B^{-1}[-1] \ar[r] \ar[d] &  R\varphi_{1\ast} \left(g_1^{-1} \Omega_{V'}(\log D'_g) \otimes_{g_1^{-1} \sO_{V'}} {\rm Gr}_F^{p-1} \Omega^\bullet_{g_1, {\rm BM}}[-1]\right) \ar[d] \\  
    	\frac{\sigma^{\geq p-1}(K^0_{f_1}/K^2_{f_1})}{\sigma^{\geq p+1}(K^0_{f_1}/K^2_{f_1})} \otimes_{\sO_{X_1}} B^{-1} \ar[r]\ar[d] & R\varphi_{1\ast} \left(\frac{\sigma^{\geq p-1}(K^0_{g_1}/K^2_{g_1})}{\sigma^{\geq p+1}(K^0_{g_1}/K^2_{g_1})}\right) \ar[d]\\
    	{\rm Gr}_F^{p} \Omega^\bullet_{f_1, {\rm BM}} \otimes_{\sO_{X_1}} B^{-1} \ar[r]\ar[d] & R\varphi_{1\ast}({\rm Gr}_F^{p} \Omega^\bullet_{g_1, {\rm BM}}) \ar[d]\\
    	&
    }}  
    $$     
    Taking the higher direct images, one obtains a commutative diagram:  
    $$  
    \xymatrix{  
    	R^{p+q} f_{1\ast}({\rm Gr}_F^{p} \Omega^\bullet_{f_1, {\rm BM}} \otimes_{\sO_{X_1}} B^{-1}) \ar[r]^-{\vartheta} \ar[d]^{\iota_{V'}} & R^{p+q} f_{1\ast}({\rm Gr}_F^{p-1} \Omega^\bullet_{f_1, {\rm BM}} \otimes_{\sO_{X_1}} B^{-1}) \otimes_{\sO_{V'}} \Omega_{V'}(\log D'_f) \ar[d]^{\iota_{V'} \otimes {\rm Id}} \\  
    	R^{p+q} g_{1\ast}({\rm Gr}_F^{p} \Omega^\bullet_{g_1, {\rm BM}}) \ar[r]^-{\theta_{g_1}} & R^{p+q} g_{1\ast}({\rm Gr}_F^{p-1} \Omega^\bullet_{g_1, {\rm BM}}) \otimes_{\sO_{V'}} \Omega_{V'}(\log D'_g),  
    }  
    $$  
    where $\vartheta$ is the coboundary map.
    Combining this commutative diagram with the isomorphism (\ref{align_g_1BM_to_extg0}), we obtain a commutative diagram: 
    \begin{align}\label{align_f1_to_extg0}
    \xymatrix{
    	R^{p+q}f_{1\ast}({\rm Gr}_F^{p}\Omega^\bullet_{f_1,{\rm BM}}\otimes_{\sO_{X_1}} B^{-1})\ar[r]^-{\vartheta}\ar[d]^{\eta\iota_{V'}} & R^{p+q}f_{1\ast}({\rm Gr}_F^{p-1}\Omega^\bullet_{f_1,{\rm BM}}\otimes_{\sO_{X_1}} B^{-1})\otimes_{\sO_{V'}}\Omega_{V'}(\log D'_f)\ar[d]^{\eta\iota_{V'}\otimes{\rm Id}}\\
    	\widetilde{H}^{p,q}_{g^o,{\rm BM}}\ar[r]^-{\theta} & \widetilde{H}^{p-1,q+1}_{g^o,{\rm BM}}\otimes_{\sO_{V'}}\Omega_{V'}(\log D_g\cap V')\\
    }.
    \end{align}
	For every $p,q$, we define 
	\begin{align*}
	G^{p,q}:={\rm Im}\left(\eta\iota_{V'}:R^{p+q}f_{1\ast}({\rm Gr}_F^{p}\Omega^\bullet_{f_1,{\rm BM}}\otimes_{\sO_{X_1}} B^{-1})\to \widetilde{H}^{p,q}_{g^o,{\rm BM}}\right).
	\end{align*}
\subsubsection{Proof of Theorem \ref{thm_VZ_construction}}
Theorem \ref{thm_VZ_construction} is the consequence of the following two lemmas.
	\begin{lem}\label{lem_thetaG}
		\begin{align}\label{align_ZV_is_log}
		\theta(G^{p,q})\subset G^{p-1,q+1}\otimes_{\sO_{V'}}\Omega_{V'}(\log D'_f).
		\end{align}
	\end{lem}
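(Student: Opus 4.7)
The plan is to derive the statement as an immediate consequence of the commutative diagram (\ref{align_f1_to_extg0}), which has already been set up prior to the lemma. The essential observation is that in the top row of that diagram, the coboundary map $\vartheta$ naturally takes values in a tensor product with $\Omega_{V'}(\log D'_f)$, rather than with the larger $\Omega_{V'}(\log D'_g)$. This is not an accident: $\vartheta$ arises as the connecting morphism of the Koszul filtration $\{K^p_{f_1}\}$ associated with the semistable morphism $f_1:(X_1,\Delta_1)\to(V',D'_f)$, whose relative logarithmic de Rham complex only involves poles along $D'_f$. By contrast, the Higgs field $\theta$ on $\widetilde{H}^n_{g^o,{\rm BM}}$ a priori only knows that it has logarithmic poles along the larger divisor $D'_g$, since it is built from $g_1:(Z_1,\Delta_{Z_1})\to(V',D'_g)$.

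Concretely, let $\sigma$ be a local section of $G^{p,q}$. By definition there exists a local section $\tau$ of $R^{p+q}f_{1\ast}({\rm Gr}_F^p\Omega^\bullet_{f_1,{\rm BM}}\otimes_{\sO_{X_1}}B^{-1})$ with $\sigma=\eta\iota_{V'}(\tau)$. Commutativity of (\ref{align_f1_to_extg0}) then yields
$$\theta(\sigma)=\theta(\eta\iota_{V'}(\tau))=(\eta\iota_{V'}\otimes{\rm Id})(\vartheta(\tau)).$$
Since $\vartheta(\tau)$ lies in $R^{p+q}f_{1\ast}({\rm Gr}_F^{p-1}\Omega^\bullet_{f_1,{\rm BM}}\otimes_{\sO_{X_1}}B^{-1})\otimes_{\sO_{V'}}\Omega_{V'}(\log D'_f)$, its image under $\eta\iota_{V'}\otimes{\rm Id}$ lies in $G^{p-1,q+1}\otimes_{\sO_{V'}}\Omega_{V'}(\log D'_f)$, giving exactly the desired containment.

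In this formulation there is no real obstacle, because the work has all been concentrated upstream: the nontrivial input is the construction and commutativity of (\ref{align_f1_to_extg0}), which in turn rests on the identification $\eta:(H^n_{g_1,{\rm BM}},\theta_{g_1})\simeq(\widetilde{H}^n_{g^o,{\rm BM}},\theta)|_{V'}$ from Theorem \ref{thm_log_Higgs_geo} and the compatibility between the Koszul filtrations on $\Omega^\bullet_{(X_1,\Delta_1),{\rm BM}}$ and $\Omega^\bullet_{(Z_1,\Delta_{Z_1}),{\rm BM}}$ via the morphism $\varphi_1$ together with the tautological section (\ref{align_B-1_to_O}). The only point worth double-checking during write-up is that the semi-log resolutions performed in step (1) of the construction actually produce families $f_1$ and $g_1$ that are simultaneously semistable over a common open $V'\subset V$ with ${\rm codim}_S(S\setminus V')\geq 2$, so that $\vartheta$ is genuinely defined with logarithmic poles along $D'_f$ alone; this, however, is the standard content of Remark \ref{rmk_semi_log_resolution}, and no additional argument is required at this stage.
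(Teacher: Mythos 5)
Your proposal is correct and follows essentially the same route as the paper: both arguments are a diagram chase on the commutative square (\ref{align_f1_to_extg0}), using that the coboundary $\vartheta$ on the $f_1$-side only has logarithmic poles along $D'_f$. The paper phrases the chase by contracting $\theta$ with a local frame of logarithmic vector fields along $D'_f$ and showing each contraction preserves ${\rm Im}(\eta\iota_{V'})$, whereas you track sections directly through $\eta\iota_{V'}\otimes{\rm Id}$; the two formulations are equivalent (up to the routine remark that local sections of the sheaf image lift after shrinking the open set).
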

	\begin{proof}
		The inclusion (\ref{align_ZV_is_log}) holds over $V'\backslash D'_f$ due to the commutative diagram (\ref{align_f1_to_extg0}). Let $x \in D'_f$, and let $z_1, \dots, z_d$ denote holomorphic local coordinates at $x$ such that $D'_f = \{z_1 = 0\}$. Define  
		\begin{align*}  
			\xi_i := \begin{cases}  
				z_i \frac{\partial}{\partial z_i}, & i = 1, \\  
				\frac{\partial}{\partial z_i}, & i = 2, \dots, d.  
			\end{cases}  
		\end{align*}  
		Let $v \in R^{p+q} f_{1\ast}({\rm Gr}_F^{p}\Omega^\bullet_{f_1,{\rm BM}} \otimes_{\sO_{X_1}} B^{-1})$. From (\ref{align_f1_to_extg0}), it follows that  
		\begin{align*}  
			\theta(\xi_i)(\eta \iota_{V'}(v)) = \eta \iota_{V'}\left(\vartheta(\xi_i)(v)\right) \in {\rm Im}(\eta \iota_{V'}), \quad \forall i = 1, \dots, d.  
		\end{align*}  
		This demonstrates that  
		\begin{align*}  
			\theta(\xi_i)(G^{p,q}) \subset G^{p-1,q+1}, \quad \forall i = 1, \dots, d.  
		\end{align*}  
		Hence, the lemma is proved.
	\end{proof}
	\begin{lem}
		There is a natural inclusion $L\otimes_{\sO_{S}}\sO_{S}(-D_f)|_{V'}\subset G^{n,0}$.
	\end{lem}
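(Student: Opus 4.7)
The plan is to compute the top Hodge piece $R^n f_{1\ast}({\rm Gr}_F^n\Omega^\bullet_{f_1,{\rm BM}}\otimes B^{-1})$ explicitly and then to exhibit $L^0 := L\otimes_{\sO_S}\sO_S(-D_f)|_{V'}$ inside $G^{n,0}$ via $\eta\iota_{V'}$. First, I apply formula (\ref{align_top_Hodge_complex}) to the semistable family $f_1:(X_1,\Delta_1)\to(V',D'_f)$. Since $f^\ast D_f$ has integer coefficients (as $D_f$ is a reduced Cartier divisor on the smooth $S$), one has $\Delta_1 = \lceil\Delta + f^\ast D_f\rceil|_{X_1} = \lceil\Delta\rceil|_{X_1} + f_1^\ast D'_f$, so $\Delta_1 - f_1^\ast D'_f = \lceil\Delta\rceil|_{X_1}$. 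Combined with the defining identity $B|_{X_1} = \omega_{X_1/V'}(\lceil\Delta\rceil|_{X_1})\otimes f_1^\ast L^{-1}|_{V'}$, this yields
\[
	{\rm Gr}_F^n\Omega^\bullet_{f_1,{\rm BM}}\otimes B^{-1} \simeq \sO_{X_1}(K_{X_1/V'}+\lceil\Delta\rceil|_{X_1})[-n]\otimes B^{-1}|_{X_1} \simeq f_1^\ast L|_{V'}[-n],
\]
so $R^n f_{1\ast}({\rm Gr}_F^n\Omega^\bullet_{f_1,{\rm BM}}\otimes B^{-1}) = f_{1\ast}(f_1^\ast L|_{V'})$, which contains $L|_{V'}$ via the adjunction unit $L|_{V'} \to f_{1\ast}(f_1^\ast L|_{V'})$.

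Composing with $\eta\iota_{V'}$ produces a morphism $L|_{V'}\to\widetilde{H}^{n,0}_{g^o,{\rm BM}}|_{V'}$ whose image lies in $G^{n,0}$ by the very definition of $G^{n,0}$. Restricting along the subsheaf inclusion $L^0\hookrightarrow L|_{V'}$ gives a morphism $L^0\to G^{n,0}$, and the statement reduces to showing that this morphism is injective. Since $L^0$ is an invertible sheaf and $\widetilde{H}^{n,0}_{g^o,{\rm BM}}|_{V'}$ is locally free (by Corollary \ref{cor_Kashiwara_extendHodge}), injectivity is equivalent to non-vanishing at the generic point of $V'$, where $L^0$ agrees with $L|_{V'}$.

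At the generic point the map is dictated by the cyclic-cover construction: unwinding (\ref{align_X_1_Z_1}), it is induced by multiplication by the tautological section $s^{1/k}$ of $\varphi_1^\ast B$ from (\ref{align_B-1_to_O}), pairing the identity section of $g_1^\ast L = \varphi_1^\ast(\omega_{X_1/V'}(\lceil\Delta\rceil|_{X_1})\otimes B^{-1})$ with $s^{1/k}$ to produce a relative top form in $\omega_{Z_1/V'}(\Delta_{Z_1}^h)$ on the generic fiber of $g_1$. Since $s$ is nonzero (as $s_L$ in (\ref{align_s}) is nonzero by hypothesis), the section $s^{1/k}$ is nowhere zero on a Zariski-dense open subset of $Z_1$, hence the resulting class is nonzero in $H^{n,0}_{g_1,{\rm BM}}$ at the generic point of $V'$; transport through the isomorphism $\eta$ of (\ref{align_g_1BM_to_extg0}) then yields the desired inclusion $L^0 \hookrightarrow G^{n,0}$. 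I expect the main technical subtlety to be this final generic non-degeneracy check: one must verify that the explicit cyclic-cover map on the generic fiber indeed produces a non-trivial class in the top Hodge piece, which reduces to the non-vanishing of $s$ together with the fact that $s^{1/k}$ does not vanish on the étale locus of $\varphi_1$.
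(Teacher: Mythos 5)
Your proposal follows essentially the same route as the paper: identify the top Hodge piece of ${\rm Gr}_F^n\Omega^\bullet_{f_1,{\rm BM}}\otimes_{\sO_{X_1}}B^{-1}$ via (\ref{align_top_Hodge_complex}), map $L$ into its pushforward by adjunction combined with the tautological section $s^{1/k}$ of the cyclic cover, and deduce injectivity from generic non-vanishing together with the local freeness of $\widetilde{H}^{n,0}_{g^o,{\rm BM}}$. The only inaccuracy is your claimed isomorphism ${\rm Gr}_F^n\Omega^\bullet_{f_1,{\rm BM}}\otimes B^{-1}\simeq f_1^\ast L|_{V'}[-n]$, which would require $f_1^\ast(D'_f)$ to be reduced (its components may carry multiplicities, so $\Delta_1-f_1^\ast(D'_f)$ can be strictly smaller than $\lceil\Delta\rceil|_{X_1}$ after reducing the vertical part); the paper only uses the inclusion $f_1^\ast(L|_{V'}\otimes\sO_{V'}(-D'_f))[-n]\subset {\rm Gr}_F^n\Omega^\bullet_{f_1,{\rm BM}}\otimes B^{-1}$, which always holds and suffices since you restrict to $L\otimes_{\sO_S}\sO_S(-D_f)|_{V'}$ anyway.
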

	\begin{proof}
		According to (\ref{align_top_Hodge_complex}), we have  
		$$
		\omega_{X_1/V'}(\lceil\Delta|_{X_1}\rceil - f_1^\ast(D'_f)) \subset \omega_{X_1/V'}(\lceil\Delta_1 - f_1^\ast(D'_f)\rceil) \simeq F^n\Omega^\bullet_{f_1,{\rm BM}}[n].
		$$  
		Consider the natural maps:
		\begin{align*}
		\beta:L\otimes_{\sO_{S}}\sO_{S}(-D_f)|_{V'}\to f_{1\ast}(f_1^\ast(L|_{V'}\otimes_{\sO_{V'}}\sO_{V'}(-D'_f)))&\simeq f_{1\ast}(B^{-1}\otimes_{\sO_{X_1}}\omega_{X_1/V'}(\lceil\Delta|_{X_1}\rceil-f_1^\ast(D'_f)))\\\nonumber
		&\subset R^nf_{1\ast}(B^{-1}\otimes_{\sO_{X_1}} F^n\Omega^\bullet_{f_1,{\rm BM}}),
		\end{align*}
		and 
		\begin{align*}
		\alpha:L\otimes_{\sO_{S}}\sO_{S}(-D_f)|_{V'}\to f_{1\ast}(f_1^\ast(L|_{V'}\otimes_{\sO_{V'}}\sO_{V'}(-D'_f)))&\subset f_{1\ast}(B^{-1}\otimes_{\sO_{X_1}}\omega_{X_1/V'}(\lceil\Delta|_{X_1}\rceil))\\\nonumber
		&\to g_{1\ast}(\omega_{Z_1/V'}(\lceil\Delta_Z|_{Z_1}\rceil))\subset \widetilde{H}^{n,0}_{g^o,{\rm BM}}|_{V'},
		\end{align*}
		where the last inclusion follows from Lemma \ref{lem_Fujino}, along with the fact that $\Delta_Z$ is $g$-horizontal. By construction, it holds that $\iota_{V'} \circ \beta = \alpha$. Consequently, ${\rm Im}(\alpha) \subset G^{n,0}$.
		
		Now consider the composition of maps:  
		\begin{align*}  
			\alpha|_{S^o}: L|_{S^o} \to f^o_{\ast}(f^{o\ast} L|_{S^o}) \simeq f^o_{\ast}(B^{-1} \otimes_{\sO_{X^o}} \omega_{X^o/S^o}(\lceil \Delta|_{X^o}\rceil)) \stackrel{\iota_{V'}|_{S^o}}{\to} H^{n,0}_{g^o,{\rm BM}}.  
		\end{align*}  
		Observe that $\alpha$ is the multiplication by $s^{\frac{1}{k}}$, combined with the pullback of differential forms $\omega_{X/S} \to (\varpi\mu)_\ast(\omega_{Z/X})$. For each local generator $e$ of $L$, $\alpha(e)$ must be nonzero on some component of $Z$. Consequently, $\alpha|_{S^o}$ is a nonzero map.  
		Since $\widetilde{H}^{n,0}_{g^o,{\rm BM}}$ is locally free, it follows that $\alpha$ is injective. Therefore, we obtain the claimed injective morphism $L \otimes_{\sO_S} \sO_S(-D_f)|_{V'} \subset G^{n,0}$ as stated in the lemma.	
	\end{proof}
    Now we proceed to finalize the proof of Theorem \ref{thm_VZ_construction}. To summarize, we have constructed an injective morphism  
    $$
    \alpha: L \otimes_{\sO_S} \sO_S(-D_f)|_{V'} \to \widetilde{H}^{n,0}_{g^o,{\rm BM}}|_{V'}
    $$  
    such that the image ${\rm Im}(\alpha)$ is contained in a Higgs subsheaf $(\oplus_{p=0}^n G^{p,n-p}, \theta)$ of $(\widetilde{H}^n_{g^o,{\rm BM}}, \theta)|_{V'}$, satisfying  
    \begin{align*}  
    	\theta(G^{p,n-p}) \subset G^{p-1,n-p+1} \otimes \Omega_{V'}(\log D'_f)
    \end{align*}  
    for every $p=0,\cdots,n$.
    Since $\widetilde{H}^n_{g^o,{\rm BM}}$ is locally free and ${\rm codim}_V(V \setminus V') \geq 2$, the morphism $\alpha$ extends to an injective morphism  
    $$
    \alpha: L \otimes \sO_S(-D_f)|_V \to \widetilde{H}^{n,0}_{g^o,{\rm BM}}
    $$  
    that satisfies the assertion of the theorem.
	\section{Admissible families of stable minimal models}\label{section_boundedness}	
	\subsection{Stable minimal models and their moduli}\label{section_moduli}
We now review the main results from \cite{Birkar2022} that will be utilized in the subsequent sections. For the purposes of this article, all schemes are defined over ${\rm Spec}(\bC)$. A \emph{stable minimal model} is a triple $(X, B), A$, where $X$ is a reduced, connected, projective scheme of finite type over ${\rm Spec}(\bC)$, and $A, B \geq 0$ are $\bQ$-divisors satisfying the following conditions:  
\begin{itemize}  
	\item $(X, B)$ is a projective, connected slc (semi-log-canonical) pair,  
	\item $K_X + B$ is semi-ample,  
	\item $K_X + B + tA$ is ample for some $t > 0$, and  
	\item $(X, B + tA)$ is slc for some $t > 0$.  
\end{itemize}  
	Let  
	$$
	d \in \bN, \, c \in \bQ^{\geq 0}, \, \Gamma \subset \bQ^{>0} \text{ a finite set, and } \sigma \in \bQ[t].
	$$  
	A $(d, \Phi_c, \Gamma, \sigma)$-stable minimal model is a stable minimal model $(X, B), A$ satisfying the following conditions:  
	\begin{itemize}  
		\item $\dim X = d$,  
		\item the coefficients of $A$ and $B$ belong to $c \bZ^{\geq 0}$,  
		\item ${\rm vol}(A|_F) \in \Gamma$, where $F$ is any general fiber of the fibration $f: X \to Z$ determined by $K_X + B$, and  
		\item ${\rm vol}(K_X + B + tA) = \sigma(t)$ for $0 \leq t \ll 1$.  
	\end{itemize}  
Let $S$ be a reduced scheme over ${\rm Spec}(\bC)$. A family of $(d, \Phi_c, \Gamma, \sigma)$-stable minimal models over $S$ consists of a projective morphism $X \to S$ of schemes and $\bQ$-divisors $A$ and $B$ on $X$, satisfying the following conditions:  
\begin{itemize}  
	\item $(X, B + tA) \to S$ is a locally stable family (i.e., $K_{X/S} + B + tA$ is $\bQ$-Cartier) for every sufficiently small rational number $t \geq 0$,  
	\item $A = cN$, $B = cD$, where $N, D \geq 0$ are relative Mumford divisors, and  
	\item $(X_s, B_s), A_s$ is a $(d, \Phi_c, \Gamma, \sigma)$-stable minimal model for each point $s \in S$.  
\end{itemize} 
Let ${\rm Sch}_{\bC}^{\rm red}$ denote the category of reduced schemes defined over ${\rm Spec}(\bC)$. Define  
$$
\sM^{\rm red}_{\rm slc}(d, \Phi_c, \Gamma, \sigma): S \mapsto \{\text{families of } (d, \Phi_c, \Gamma, \sigma)\text{-stable minimal models over } S\},
$$  
a functor of groupoids over ${\rm Sch}_{\bC}^{\rm red}$.  

\begin{thm}[Birkar \cite{Birkar2022}] \label{thm_moduli_stable_var}  
	There exists a proper Deligne-Mumford stack $\sM_{\rm slc}(d, \Phi_c, \Gamma, \sigma)$ over ${\rm Spec}(\bC)$ such that the following properties hold:  
	\begin{itemize}  
		\item $\sM_{\rm slc}(d, \Phi_c, \Gamma, \sigma)|_{{\rm Sch}_{\bC}^{\rm red}} = \sM^{\rm red}_{\rm slc}(d, \Phi_c, \Gamma, \sigma)$ as functors of groupoids.  
		\item $\sM_{\rm slc}(d, \Phi_c, \Gamma, \sigma)$ admits a projective good coarse moduli space $M_{\rm slc}(d, \Phi_c, \Gamma, \sigma)$.  
	\end{itemize}  
\end{thm}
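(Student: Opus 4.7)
Since this is Birkar's theorem \cite{Birkar2022}, the strategy is the one developed in the general theory of moduli of stable pairs, adapted to the setting where a polarizing divisor $A$ is included alongside $(X,B)$. The plan is to proceed in four stages: boundedness, local closedness of the moduli functor, properness, and construction of the projective coarse moduli space via a Koll\'ar-type polarization.

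First, I would establish boundedness of the family of $(d,\Phi_c,\Gamma,\sigma)$-stable minimal models. Fix $0<\epsilon\ll 1$ rational so that $\sigma(\epsilon)$ is defined and equal to $\mathrm{vol}(K_X+B+\epsilon A)$. Then $K_X+B+\epsilon A$ is ample with fixed volume, the coefficients of $B+\epsilon A$ lie in a DCC set, and $(X,B+\epsilon A)$ is slc. By Birkar's boundedness theorem for slc polarized pairs with fixed dimension, DCC coefficients and fixed volume, the set of pairs $(X,B+\epsilon A)$ (and hence triples $(X,B),A$) is bounded: there exists $r\gg 0$ such that $r(K_X+B+\epsilon A)$ is very ample with uniform Hilbert polynomial, yielding an embedding into a fixed projective space. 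Using the relative Hilbert scheme of $\mathbb{P}^N$ together with the universal pair on it, I would then cut out a locally closed subscheme $H$ parametrizing embedded stable minimal models with the prescribed data.

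Second, I would check that the functor $\sM^{\rm red}_{\rm slc}(d,\Phi_c,\Gamma,\sigma)$ is locally closed and admits a natural groupoid structure. The conditions ``$(X,B+tA)$ is slc for $0\le t\ll 1$'' and ``$K_X+B$ is semi-ample with $K_X+B+tA$ ample for some $t>0$'' are locally closed in families by Koll\'ar's results on locally stable families and on hulls and husks, while the coefficient and volume constraints are closed. Quotienting $H$ by $\mathrm{PGL}_{N+1}$ produces the candidate stack $\sM_{\rm slc}(d,\Phi_c,\Gamma,\sigma)$; Deligne--Mumford-ness follows from the finiteness of the automorphism groups of $(X,B+\epsilon A)$ with $K_X+B+\epsilon A$ ample (Iitaka's theorem / Koll\'ar). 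Properness (i.e.\ separatedness and completeness) is the heart of the matter and requires the valuative criterion: separatedness uses uniqueness of slc log canonical models, and existence of limits is established by a semistable reduction argument together with Birkar's existence of slc minimal models, keeping track of both $(X,B)$ and the auxiliary $A$.

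Finally, to upgrade the proper DM stack to one with a projective coarse moduli space, I would apply the criterion of Koll\'ar \cite{Kollar1990}. The key input is the locally free sheaf $f_\ast\sO_X(r(K_{X/S}+B+\epsilon A))$ on any family, whose formation commutes with base change for $r$ sufficiently divisible (Fujino--Fujisawa \cite{Fujino2014}, Fujino \cite{Fujino2018}, Koll\'ar \cite{Kovacs2017}); its determinant yields a line bundle on $\sM_{\rm slc}$ which is shown to descend to an ample $\mathbb{Q}$-line bundle on the coarse moduli space via the semi-positivity and Nakai--Moishezon-type criteria of Koll\'ar. The main obstacle in this whole program is properness, specifically the existence part of the valuative criterion: given a family over a DVR-generic point, one must construct a limiting stable minimal model, and this requires the full strength of the MMP for slc pairs together with delicate handling of the non-canonically-determined polarization $A$, ensuring that volumes and coefficients are preserved in the limit.
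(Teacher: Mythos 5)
The paper does not reprove this result: its ``proof'' is a one-line citation of \cite[Theorem 1.14]{Birkar2022} together with the observation that the stack is the quotient $[M^e_{\rm slc}(d,\Phi_c,\Gamma,\sigma,a,r,\bP^n)/{\rm PGL}_{n+1}(\bC)]$ of Birkar's scheme of embedded models. Your outline (boundedness after perturbing by $\epsilon A$, locally closed Hilbert-scheme parametrization, ${\rm PGL}$-quotient, properness via the slc MMP, and projectivity of the coarse space via Koll\'ar's criterion applied to $\det f_\ast\sO_X(r(K_{X/S}+B+aA))$) is a faithful summary of the strategy of that cited proof and of the polarization datum $(a,r,j)$ the paper sets up in \S 5.2, so it is essentially the same approach.
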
  
\begin{proof}  
	See the proof of \cite[Theorem 1.14]{Birkar2022}. Using the notations in \cite[\S 10.7]{Birkar2022}, we have  
	$$
	\sM_{\rm slc}(d, \Phi_c, \Gamma, \sigma) = \left[M_{\rm slc}^e(d, \Phi_c, \Gamma, \sigma, a, r, \bP^n)/{\rm PGL}_{n+1}(\bC)\right],
	$$  
	where the right-hand side denotes the stacky quotient.  
\end{proof}
	\subsection{Polarization on $M_{\rm slc}(d,\Phi_c,\Gamma,\sigma)$}\label{section_polarization_moduli}
In this section, we consider certain natural ample $\bQ$-line bundles on $M_{\rm slc}(d, \Phi_c, \Gamma, \sigma)$. Their constructions are implicitly described in the proof of \cite[Theorem 1.14]{Birkar2022}, relying on Koll\'ar's ampleness criterion \cite{Kollar1990}.  

Fix the data $d, \Phi_c, \Gamma, \sigma$. Since $\sM_{\rm slc}(d, \Phi_c, \Gamma, \sigma)$ is of finite type, there exist constants  
$$
(a, r, j) \in \bQ^{\geq 0} \times (\bZ^{>0})^2,
$$  
depending only on $d, \Phi_c, \Gamma, \sigma$, such that every $(d, \Phi_c, \Gamma, \sigma)$-stable minimal model $(X, B), A$ satisfies the following conditions (cf. \cite[Lemma 10.2]{Birkar2022}):  
\begin{itemize}  
	\item $(X, B + aA)$ is an slc pair,  
	\item $r(K_X + B + aA)$ is a very ample integral Cartier divisor with  
	$$
	H^i(X, \sO_X(k r(K_X + B + aA))) = 0, \quad \forall i > 0, \forall k > 0,  
	$$  
	\item the embedding $X \hookrightarrow \bP(H^0(X, r(K_X + B + aA)))$ is defined by equations of degree $\leq j$, and  
	\item the multiplication map  
	$$
	S^j(H^0(X, \sO_X(r(K_X + B + aA)))) \to H^0(X, \sO_X(j r(K_X + B + aA)))
	$$  
	is surjective.  
\end{itemize}
	\begin{defn}
		A tuple $(a, r, j) \in \bQ^{\geq 0} \times (\bZ^{>0})^2$ that satisfies the conditions above is referred to as a \emph{$(d, \Phi_c, \Gamma, \sigma)$-polarization datum}.
	\end{defn}
	Let $(a, r, j) \in \bQ^{\geq 0} \times (\bZ^{>0})^2$ be a $(d, \Phi_c, \Gamma, \sigma)$-polarization datum. Let $(X, B), A \to S$ be a family of $(d, \Phi_c, \Gamma, \sigma)$-stable minimal models. Then $f_\ast(r(K_{X/S} + B + aA))$ is locally free and commutes with arbitrary base changes. Therefore, the assignment  
	$$
	f: (X, B), A \to S \in \sM_{\rm slc}(d, \Phi_c, \Gamma, \sigma)(S) \mapsto f_\ast(r(K_{X/S} + B + aA))
	$$  
	defines a locally free coherent sheaf on the stack $\sM_{\rm slc}(d, \Phi_c, \Gamma, \sigma)$, denoted by $\Lambda_{a,r}$. Let $\lambda_{a,r} := \det(\Lambda_{a,r})$. Since $\sM_{\rm slc}(d, \Phi_c, \Gamma, \sigma)$ is Deligne-Mumford, some power $\lambda_{a,r}^{\otimes k}$ descends to a line bundle on $M_{\rm slc}(d, \Phi_c, \Gamma, \sigma)$. For this reason, we regard $\lambda_{a,r}$ as a $\bQ$-line bundle on $M_{\rm slc}(d, \Phi_c, \Gamma, \sigma)$.
	\begin{prop}\label{prop_ample_line_bundle_moduli}  
		Let $(a, r, j) \in \bQ^{\geq 0} \times (\bZ^{>0})^2$ be a $(d, \Phi_c, \Gamma, \sigma)$-polarization datum. Then $\lambda_{a,r}$ is ample on $M_{\rm slc}(d, \Phi_c, \Gamma, \sigma)$.  
	\end{prop}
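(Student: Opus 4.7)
My plan is to apply Kollár's ampleness criterion \cite{Kollar1990}, following the strategy developed by Fujino \cite{Fujino2018} and Kov\'acs \cite{Kovacs2017} in the KSBA setting and adapted to Birkar's stable minimal models \cite{Birkar2022}. The argument naturally decomposes into two steps: verifying that $\lambda_{a,r}$ is nef on $M_{\rm slc}(d,\Phi_c,\Gamma,\sigma)$, and then upgrading nefness to ampleness via the GIT presentation of the moduli.

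First I would verify nefness by reducing to curves. By the properness of the coarse moduli space $M_{\rm slc}$, it suffices to show that $\deg(\lambda_{a,r}|_C)\geq 0$ for every smooth projective curve $C$ and every morphism $C\to M_{\rm slc}$. After a finite cover of $C$, such a morphism lifts to a family $f:(X,B),A\to C$ of $(d,\Phi_c,\Gamma,\sigma)$-stable minimal models. Since $(a,r,j)$ is a polarization datum, the divisor $r(K_{X/C}+B+aA)$ is relatively very ample with vanishing higher direct images, so $\Lambda_{a,r}|_C=f_\ast\sO_X(r(K_{X/C}+B+aA))$ is locally free and commutes with base change. I would then invoke the Fujita-Kawamata-type semipositivity theorem for direct images of log pluricanonical sheaves associated with families of semi-log-canonical pairs with relatively ample log canonical divisor, as established in \cite{Fujino2018,Kovacs2017}, to conclude that $\Lambda_{a,r}|_C$ is nef, whence $\deg(\lambda_{a,r}|_C)=\deg(\det\Lambda_{a,r}|_C)\geq 0$.

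Next I would promote nefness to ampleness using the GIT presentation. Following \cite[\S10.7]{Birkar2022}, the stack $\sM_{\rm slc}(d,\Phi_c,\Gamma,\sigma)$ is realized as the stacky quotient $[M^e_{\rm slc}(d,\Phi_c,\Gamma,\sigma,a,r,\bP^n)/{\rm PGL}_{n+1}(\bC)]$, where $M^e_{\rm slc}$ parametrizes embedded stable minimal models in $\bP^n$ via $|r(K+B+aA)|$. The surjection $S^j\Lambda_{a,r}\twoheadrightarrow \Lambda_{a,jr}$ coming from the polarization datum provides a ${\rm PGL}_{n+1}$-equivariant morphism to a Hilbert-type parameter space, under which a suitable power $\lambda_{a,r}^{\otimes N}$ descends from the natural ${\rm PGL}_{n+1}$-linearized ample line bundle on $M^e_{\rm slc}$. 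Combined with the nefness verified in the previous step and the fact that $\lambda_{a,r}^{\otimes N}$ separates orbits (since the families over non-equivalent points of $M_{\rm slc}$ are non-isomorphic as polarized pairs), Kollár's ampleness criterion \cite{Kollar1990} yields the ampleness of $\lambda_{a,r}$ on $M_{\rm slc}$.

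The hardest part is the semipositivity step: the fibers are only semi-log-canonical, $K_X+B$ is only semi-ample (not ample), and the polarizing class mixes this semi-ample class with the ample perturbation $aA$. Classical Viehweg-style weak positivity arguments do not apply verbatim, and one must rely on the extensions to the slc setting due to Fujino \cite{Fujino2018} and Kov\'acs \cite{Kovacs2017}, which ultimately rest on the Hodge-theoretic analysis of families of simple normal crossing pairs (cf.\ Section \ref{section_VMHS_BM}). These are precisely the ingredients that motivate the Borel-Moore variation of mixed Hodge structures constructed earlier in the paper.
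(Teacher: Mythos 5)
Your proposal follows essentially the same route as the paper: the paper's proof is a one-line reduction via Koll\'ar's ampleness criterion \cite{Kollar1990} to the nefness of $f_\ast(\sO_X(r(K_{X/C}+B+aA)))$ over smooth projective curves, which it then attributes to Fujino \cite{Fujino2018} and Kov\'acs--Patakfalvi \cite{Kovacs2017}, exactly the two ingredients you identify. Your additional elaboration of the GIT presentation and the boundedness/finiteness input from the polarization datum is a correct unpacking of what the paper compresses into ``the same arguments as in \cite[\S 2.9]{Kollar1990}.''
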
  	
	\begin{proof}  
		By the same arguments as in \cite[\S 2.9]{Kollar1990}. It suffices to show that $f_\ast(r(K_{X/S} + B + aA))$ is nef when $S$ is a smooth projective curve. This was established by Fujino \cite{Fujino2018} and Kov\'acs-Patakfalvi \cite{Kovacs2017}.  
	\end{proof}  
	\subsection{Fiber product of stable families}\label{section_ssred}
	Following Koll\'ar \cite{Kollar2023}, a flat family of slc pairs $(X, \Delta) \to S$ over a smooth base $S$ is referred to as a \emph{locally stable family} if $K_{X/S} + \Delta$ is $\bQ$-Cartier.
	
	Let $S$ be a smooth variety, and let $f:(X, \Delta) \to S$ be a locally stable family of slc pairs. Denote by $X^{[r]}_S$ the $r$-fold fiber product $X \times_S X \times_S \cdots \times_S X$, and let $f^{[r]}: X^{[r]}_S \to S$ be the projection map. Define  
	$$
	\Delta^{[r]}_S := \sum_{i=1}^r p_i^\ast(\Delta),
	$$  
	where $p_i: X^{[r]}_S \to X$ is the projection onto the $i$-th component. Then $f^{[r]}:(X^{[r]}_S, \Delta^{[r]}_S) \to S$ is also a locally stable family of slc pairs (\cite[Corollary 4.3]{WeiWu2023}). According to \cite[Theorem 4.54]{Kollar2023}, $(X^{[r]}_S, \Delta^{[r]}_S)$ is an slc pair.
	
	Let $\tau: (X^{(r)}_S, \Delta^{(r)}_S) \to (X^{[r]}_S, \Delta^{[r]}_S)$ be a semi-log resolution, where $\Delta^{(r)}_S$ is a simple normal crossing divisor determined by  
	\begin{align}\label{align_slc_adjunction}  
		\tau^\ast(K_{X^{[r]}_S} + \Delta^{[r]}_S) = K_{X^{(r)}_S} + \Delta^{(r)}_S - E,  
	\end{align}  
	with $E$ being an effective $\tau$-exceptional divisor that shares no common components with $\Delta^{(r)}_S$. 
	\begin{lem}\label{lem_mild_pushforward}
		Let $k \geq 1$ such that $kK_{X/S} + k\Delta$ is integral. Then the following statements hold:  
		\begin{enumerate}
			\item $\tau_{\ast}\left(\sO_{X^{(r)}_S}(kK_{X^{(r)}_S/S}+k\Delta^{(r)}_S)\right)\simeq\sO_{X^{[r]}_S}(kK_{X^{[r]}_S/S}+k\Delta^{[r]}_S)$.
			\item $f^{[r]}_\ast\left(\sO_{X^{[r]}_S}(kK_{X^{[r]}_S/S}+k\Delta^{[r]}_S)\right)$ is a reflexive sheaf.
			\item $f^{[r]}_\ast\left(\sO_{X^{[r]}_S}(kK_{X^{[r]}_S/S}+k\Delta^{[r]}_S)\right)\simeq (f_\ast\left(\sO_X(kK_{X/S}+k\Delta)\right)^{\otimes r})^{\vee\vee}$.
		\end{enumerate}
	\end{lem}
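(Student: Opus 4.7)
The three parts will be proved in sequence. Statement (1) is essentially formal, (2) will use Koll\'ar's reflexivity theory for locally stable families, and (3) will combine (1) and (2) with a K\"unneth identification on a big open subset.

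For (1), I would multiply \eqref{align_slc_adjunction} by $k$ and invoke the projection formula to obtain
$$\tau_\ast\sO_{X^{(r)}_S}\!\bigl(k(K_{X^{(r)}_S/S}+\Delta^{(r)}_S)\bigr)\;\simeq\;\sO_{X^{[r]}_S}\!\bigl(k(K_{X^{[r]}_S/S}+\Delta^{[r]}_S)\bigr)\otimes \tau_\ast\sO_{X^{(r)}_S}(kE).$$
The claim then reduces to showing $\tau_\ast\sO_{X^{(r)}_S}(kE)=\sO_{X^{[r]}_S}$, which is standard for effective $\tau$-exceptional divisors on a semi-log resolution of an slc pair (cf.~\cite[\S 10.4]{Kollar2013}), thanks to the $S_2$ property of slc varieties together with the vanishing $\tau_\ast\sO_{X^{(r)}_S}=\sO_{X^{[r]}_S}$ of the structural direct image.

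For (2), I note that $\sL := \sO_{X^{[r]}_S}\!\bigl(k(K_{X^{[r]}_S/S}+\Delta^{[r]}_S)\bigr)$ is a rank-one reflexive sheaf that is flat over the smooth base $S$, whose restriction to each geometric fiber is an $S_2$-sheaf on an slc variety (since slc varieties are $S_2$). The family $f^{[r]}:(X^{[r]}_S,\Delta^{[r]}_S)\to S$ is locally stable by \cite[Corollary 4.3]{WeiWu2023}, so Koll\'ar's $S_2$-preservation theorem for direct images under flat proper families over a smooth base (see \cite{Kollar2023}) will imply that $f^{[r]}_\ast\sL$ is $S_2$ on $S$, and hence reflexive.

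For (3), the local stability combined with \cite[Corollary 4.3]{WeiWu2023} yields the identity $K_{X^{[r]}_S/S}+\Delta^{[r]}_S=\sum_{i=1}^{r} p_i^\ast(K_{X/S}+\Delta)$ of $\bQ$-Cartier divisors, and adjunction then produces a canonical morphism
$$\alpha:\;\bigotimes_{i=1}^{r}f_\ast\sO_X\!\bigl(k(K_{X/S}+\Delta)\bigr)\longrightarrow f^{[r]}_\ast\sL,$$
whose target is reflexive by (2); taking double duals on the source turns $\alpha$ into a morphism of reflexive sheaves on $S$. The plan is then to choose a Zariski open $V\subset S$ with $\operatorname{codim}_S(S\setminus V)\ge 2$ over which $f_\ast\sO_X(k(K_{X/S}+\Delta))$ is locally free and commutes with base change and the higher direct images $R^j f_\ast\sO_X(k(K_{X/S}+\Delta))$ vanish for $j>0$; over such a $V$ the derived K\"unneth formula collapses in degree zero and shows $\alpha|_V$ is an isomorphism, which extends globally by reflexivity. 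The hardest part will be (2): establishing reflexivity of the direct image along a proper flat family with non-normal slc fibers depends on a delicate $S_2$-preservation statement specific to Koll\'ar's framework; once this is in hand, (1) and (3) follow by routine arguments.
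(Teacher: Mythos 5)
Your plan follows the paper's proof essentially step for step: (1) is the projection formula plus $\tau_\ast\sO_{X^{(r)}_S}(kE)\simeq\sO_{X^{[r]}_S}$, and (3) is exactly the paper's argument (identify the sheaf with $\bigotimes_i p_i^\ast\sO_X(kK_{X/S}+k\Delta)$, apply flat base change over the big open set where $f_\ast\sO_X(kK_{X/S}+k\Delta)$ is locally free, and conclude by reflexivity of both sides). The only place you diverge is (2), which you flag as the delicate step requiring an unspecified ``$S_2$-preservation theorem'' for direct images; in fact the paper handles it with a short direct Hartogs argument: since $f^{[r]}$ is flat, the preimage of a codimension $\geq 2$ subset of $S$ has codimension $\geq 2$ in $X^{[r]}_S$, and since $X^{[r]}_S$ is $S_2$ and the sheaf is invertible, sections extend, so the pushforward satisfies the extension property (and is torsion-free), hence is reflexive on the smooth base. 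Note also that if you do argue via an $S_2$ conclusion for $f^{[r]}_\ast\sL$, you still need torsion-freeness to pass from $S_2$ to reflexive, a point your sketch omits; with that caveat, the proposal is correct and matches the paper's route.
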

	\begin{proof}
		Observe that $\tau_\ast(\sO_{X^{(r)}_S}(kE))\simeq \sO_{X^{[r]}_S}$, as $X^{[r]}_S$ satisfies the $S_2$ condition and $E$ is $\tau$-exceptional with $\mathrm{codim}_{X^{[r]}_S}(\tau(E)) \geq 2$. Consequently, the first assertion follows from (\ref{align_slc_adjunction}) and the projection formula.
		
		For the second assertion, it suffices to show that any section of $f^{[r]}_\ast\left(\sO_{X^{[r]}_S}(kK_{X^{[r]}_S/S} + k\Delta^{[r]}_S)\right)$ extends across an arbitrary locus of codimension $\geq 2$. Let $U \subset S$ be an open subset, and let $Z \subset U$ be a Zariski closed subset of codimension $\geq 2$. Let  
		$$
		s \in \Gamma\left(U \setminus Z, f^{[r]}_\ast\left(\sO_{X^{[r]}_S}(kK_{X^{[r]}_S/S} + k\Delta^{[r]}_S)\right)\right) = \Gamma\left((f^{[r]})^{-1}(U \setminus Z), \sO_{X^{[r]}_S}(kK_{X^{[r]}_S/S} + k\Delta^{[r]}_S)\right).
		$$  
		Since $f$ is flat, so is $f^{[r]}$. Hence $(f^{[r]})^{-1}(Z)$ has codimension $\geq 2$ in $(f^{[r]})^{-1}(U)$. Since $X^{[r]}_S$ satisfies the $S_2$ condition and $\sO_{X^{[r]}_S}(kK_{X^{[r]}_S/S} + k\Delta^{[r]}_S)$ is invertible, there exists  
		$$
		\widetilde{s} \in \Gamma\left(U, f^{[r]}_\ast\left(\sO_{X^{[r]}_S}(kK_{X^{[r]}_S/S} + k\Delta^{[r]}_S)\right)\right) = \Gamma\left((f^{[r]})^{-1}(U), \sO_{X^{[r]}_S}(kK_{X^{[r]}_S/S} + k\Delta^{[r]}_S)\right)
		$$  
		that extends $s$. This proves Assertion (2).
		
		Finally, we prove the last assertion. Observe that  
		$$
		\sO_{X^{[r]}_S}(kK_{X^{[r]}_S/S} + k\Delta^{[r]}_S) \simeq \bigotimes_{i=1}^r p_i^\ast \sO_X(kK_{X/S} + k\Delta),
		$$  
		where $p_i: X^{[r]}_S \to X$ denotes the projection onto the $i$-th component. Let $U \subset S$ be the largest open subset such that both $f^{[r]}_\ast\left(\sO_{X^{[r]}_S}(kK_{X^{[r]}_S/S} + k\Delta^{[r]}_S)\right)$ and $f_\ast\left(\sO_X(kK_{X/S} + k\Delta)\right)$ are locally free on $U$. Since the relevant sheaves are torsion-free, the complement $S \setminus U$ has codimension $\geq 2$. By the flat base change theorem, we obtain  
		$$
		f^{[r]}_\ast\left(\sO_{X^{[r]}_S}(kK_{X^{[r]}_S/S} + k\Delta^{[r]}_S)\right)\big|_U \simeq f_\ast\left(\sO_X(kK_{X/S} + k\Delta)\right)^{\otimes r}\big|_U.
		$$  
		Since $f^{[r]}_\ast\left(\sO_{X^{[r]}_S}(kK_{X^{[r]}_S/S} + k\Delta^{[r]}_S)\right)$ and $\left(f_\ast\left(\sO_X(kK_{X/S} + k\Delta)\right)^{\otimes r}\right)^{\vee\vee}$ are reflexive, this completes the proof of Assertion (3).
	\end{proof}
	We will also need the following lemma (originated to Viehweg \cite{Viehweg1983}) in the sequel.
	\begin{lem}\label{lem_Viehweg_inclusion}
		Let $g: Y \to S$ be a projective surjective morphism from a simple normal crossing variety $Y$ to a smooth variety $S$. Let $\tau: S' \to S$ be a flat projective surjective morphism from a smooth variety $S'$. Let $\Delta \geq 0$ be an integral divisor on $Y$. Consider the following commutative diagram:  
		\begin{align}  
			\xymatrix{  
				Y \ar[d]^g & Y'' \ar[l]_{\rho'} \ar[d]^{g''} & Y' \ar[l]_{\rho} \ar[dl]^{g'} \\  
				S & S' \ar[l]_{\tau} &  
			},  
		\end{align}  
		where $g'': Y'' \to S'$ is the base change of $g$, and $\rho: Y' \to Y''$ is a semi-log resolution of singularities. Then, for every $r \geq 1$, there exists a natural inclusion  
		$$  
		g'_\ast\left(\sO_{Y'}(rK_{Y'/S'} + (\rho'\rho)^\ast\Delta)\right) \subset \tau^\ast g_\ast\left(\sO_Y(rK_{Y/S} + \Delta)\right).  
		$$  
	\end{lem}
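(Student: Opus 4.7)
The plan is to combine flat base change along $\tau$ with a discrepancy comparison for the semi-log resolution $\rho$. The key observation is that under flat base change, the sheaf $\tau^\ast g_\ast \sO_Y(rK_{Y/S} + \Delta)$ transports to a pluricanonical sheaf on the base change $Y'' = Y \times_S S'$, and the desired inclusion then reduces to comparing pluricanonical sheaves on $Y'$ and on $Y''$ along the birational morphism $\rho$.

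Concretely, I would proceed in four steps. First, since $\tau$ is flat, flat base change yields $\tau^\ast g_\ast \sO_Y(rK_{Y/S} + \Delta) \simeq g''_\ast \rho'^\ast \sO_Y(rK_{Y/S} + \Delta)$. Second, since $Y$ is SNC (hence Gorenstein) and $\tau$ is flat, base change for the relative dualizing sheaf gives $\rho'^\ast \omega_{Y/S} \simeq \omega_{Y''/S'}$, so the right-hand side becomes $g''_\ast \sO_{Y''}(rK_{Y''/S'} + \rho'^\ast \Delta)$. Third, writing $K_{Y'/S'} = \rho^\ast K_{Y''/S'} + F$ and arguing that $F$ is an effective $\rho$-exceptional divisor (from the slc nature of $Y''$ and the fact that $\rho$ is a semi-log resolution), one has
$$\sO_{Y'}(rK_{Y'/S'} + (\rho'\rho)^\ast \Delta) \simeq \rho^\ast \sO_{Y''}(rK_{Y''/S'} + \rho'^\ast \Delta) \otimes \sO_{Y'}(rF);$$
applying $\rho_\ast$, the projection formula, and $\rho_\ast \sO_{Y'}(rF) \hookrightarrow \sO_{Y''}$ (valid because $F \geq 0$ is $\rho$-exceptional and $Y''$ is $S_2$) yields an inclusion $\rho_\ast \sO_{Y'}(rK_{Y'/S'} + (\rho'\rho)^\ast \Delta) \hookrightarrow \sO_{Y''}(rK_{Y''/S'} + \rho'^\ast \Delta)$. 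Fourth, push forward by $g''$ and chain with the isomorphisms above, using $g' = g'' \rho$.

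The main technical difficulty is the third step, namely ensuring that $F$ is effective. This requires $Y''$ to have sufficiently mild (slc-type) singularities, which follows from $Y$ being SNC and $\tau$ being flat between smooth varieties, but is not entirely automatic. The cleanest route, following Viehweg, is to establish the inclusion over a large Zariski open subset of $S'$ where the semi-resolution $\rho$ and the base change $\tau$ are both well-behaved (for example, over the open locus where $Y$ is smooth and $\tau$ is \'etale), and then extend across the codimension-$\geq 2$ complement using that both sides are torsion-free coherent sheaves on $S'$. This extension across high-codimension loci is the essential Viehweg-style device and circumvents a detailed analysis of the singularities of $Y''$ along the ramification of $\tau$.
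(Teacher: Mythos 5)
Your first two steps (flat base change along $\tau$ and the identification $\rho'^\ast\omega_{Y/S}\simeq\omega_{Y''/S'}$ using that $Y$ is Gorenstein) are exactly how the paper begins. The argument breaks down at your third step and at the proposed repair. First, the effectivity of $F=K_{Y'/S'}-\rho^\ast K_{Y''/S'}$ is not available: $Y''=Y\times_S S'$ is only Gorenstein (lci over $Y$), not known to be normal or slc, so there is no discrepancy inequality to invoke. The paper avoids this entirely by citing Fujino's trace-map lemma, which gives the injection $\rho_\ast\sO_{Y'}(rK_{Y'/S'})\hookrightarrow\sO_{Y''}(rK_{Y''/S'})$ with no discrepancy computation: a pluricanonical section upstairs restricts to one on the locus where $\rho$ is an isomorphism, and then extends as a section of the \emph{line bundle} $\sO_{Y''}(rK_{Y''/S'})$ across the small non-isomorphism locus because $Y''$ is $S_2$. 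In other words, the codimension-two extension you want must be performed on $Y''$, not on $S'$. Your fallback of proving the inclusion over a big open subset of $S'$ and extending fails for two reasons: the locus where $\tau$ is \'etale and where $g$ is a nice SNC family has complement that is typically a \emph{divisor} in $S'$ (the branch locus of $\tau$ and the degeneration locus of $g$), not codimension $\geq 2$ --- and that divisorial locus is precisely where the lemma has content; moreover, extending a morphism of sheaves across codimension two requires the target $\tau^\ast g_\ast(\cdot)$ to be reflexive, which is not established.

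Separately, you treat $Y$ as if it were irreducible. Since $Y$ is a simple normal crossing variety, $g_\ast\sO_Y(rK_{Y/S}+\Delta)$ cannot be compared component by component without control of gluing: the paper's second (and longer) case passes to the normalization $Y^v$ with conductor $D^v$ and identifies $g_\ast\sO_Y(rK_{Y/S}+\Delta)$ with the kernel of the residue-difference operator $\mathrm{Res}_{\widetilde D}-(-1)^r\alpha^\ast\mathrm{Res}_{\widetilde D}$ (Koll\'ar's criterion), applies the irreducible case to the components of $Y^v$ and of $\widetilde D$, and then uses flatness of $\tau$ to commute $\tau^\ast$ with kernels. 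Your proposal contains no mechanism for descending sections through the conductor, so even if the irreducible case were repaired, the statement as claimed would not follow.
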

	\begin{proof}
		{\bf Case I: $Y$ is irreducible.}
		Since $g$ is a Gorenstein morphism, we have $\rho'^\ast(K_{Y/S}) \simeq K_{Y''/S'}$, where $K_{Y''/S'}$ denotes the Cartier divisor associated with the invertible relative dualizing sheaf of $Y''/S'$. Consequently, it follows that $\rho'^\ast(rK_{Y/S} + \Delta) \simeq rK_{Y''/S'} + \rho'^\ast(\Delta)$. By invoking the flat base change theorem, we deduce the following isomorphism:  
		\begin{align}\label{align_Vieh_1}
			\tau^\ast g_\ast\left(\sO_Y(rK_{Y/S}+\Delta)\right) \simeq g''_\ast\left(\sO_{Y''}(rK_{Y''/S'}+ \rho'^\ast(\Delta))\right).
		\end{align}  
		According to \cite[Lemma 3.1.19]{Fujino2020}, the trace map induces an injective morphism:  
		$$
		\rho_\ast\left(\sO_{Y'}(rK_{Y'/S'})\right) \to \sO_{Y''}(rK_{Y''/S'}).
		$$  
		This further implies an injective morphism:  
		$$
		\rho_\ast\left(\sO_{Y'}(rK_{Y'/S'}+(\rho'\rho)^\ast(\Delta))\right) \to \sO_{Y''}(rK_{Y''/S'}+\rho'^\ast(\Delta)).
		$$  
		Consequently, there exists a natural injective morphism:  
		\begin{align*}
			g'_\ast\left(\sO_{Y'}(rK_{Y'/S'}+(\rho'\rho)^\ast(\Delta))\right) \to g''_\ast\left(\sO_{Y''}(rK_{Y''/S'}+ \rho'^\ast(\Delta))\right).
		\end{align*}  
		Combining this result with (\ref{align_Vieh_1}), we obtain the desired inclusion.
		
		{\bf Case II: $Y$ is reducible.} 
	    Let $\varrho: Y^v \to Y$ denote the normalization, and let $D^v \subset Y^v$ represent the conductor divisor. Then $Y^v$ is smooth (possibly with many components), $D^v$ is a simple normal crossing divisor on $Y^v$, and there exists an inclusion:  
	    \begin{align*}
	    	g_\ast\left(\sO_Y(rK_{Y/S} + \Delta)\right) \subseteq (g\varrho)_\ast\left(\sO_{Y^v}(rK_{Y^v/S} + rD^v + \varrho^*(\Delta))\right).
	    \end{align*}  
	    Let $\eta: \widetilde{D} \to D^v$ denote the normalization of $D^v$, and let  
	    \begin{align*}
	    	{\rm Res}_{\widetilde{D}}: \sO_{Y^v}(rK_{Y^v/S} + rD^v + \varrho^*(\Delta)) \to \sO_{\widetilde{D}}(rK_{\widetilde{D}/S} + \eta^\ast(\varrho^*(\Delta)|_{D^v}))
	    \end{align*}  
	    represent the residue map (cf. \cite[\S 4.1]{Kollar2013}). Let $\alpha: \widetilde{D} \to \widetilde{D}$ denote the involution over $D^v$ (cf. \cite[\S 5.2]{Kollar2013}). Consequently, one obtains a morphism:  
	    \begin{align*}
	    	(g\varrho)_\ast({\rm Res}_{\widetilde{D}} - (-1)^r\alpha^\ast{\rm Res}_{\widetilde{D}}): &
	    	(g\varrho)_\ast\left(\sO_{Y^v}(rK_{Y^v/S} + rD^v + \varrho^*(\Delta))\right) \\\nonumber
	    	\to & (g\varrho\eta)_\ast\left(\sO_{\widetilde{D}}(rK_{\widetilde{D}/S} + \eta^\ast(\varrho^*(\Delta)|_{D^v}))\right).
	    \end{align*}  
	    According to \cite[Proposition 5.8]{Kollar2013}, it follows that:  
	    \begin{align}\label{align_Viehweg_normalization1}
	    	g_\ast\left(\sO_Y(rK_{Y/S} + \Delta)\right) = {\rm Ker}\left((g\varrho)_\ast({\rm Res}_{\widetilde{D}} - (-1)^r\alpha^\ast{\rm Res}_{\widetilde{D}})\right).
	    \end{align}
    
	    Similarly, let $\varrho': Y'^v \to Y'$ denote the normalization, and let $D'^v \subset Y'^v$ represent the conductor divisor. Let $\eta': \widetilde{D'} \to D'^v$ denote the normalization of $D'^v$. Let $\alpha': \widetilde{D'} \to \widetilde{D'}$ denote the involution over $D'^v$. Consequently, one obtains a morphism:  
	    \begin{align*}
	    	(g'\varrho')_\ast({\rm Res}_{\widetilde{D'}}-(-1)^r\alpha'^\ast{\rm Res}_{\widetilde{D'}}):&
	    	(g'\varrho')_\ast\left(\sO_{Y'^v}(rK_{Y'^v/S'}+rD'^v+(\rho'\rho\varrho')^*(\Delta))\right)\\\nonumber
	    	\to& (g'\varrho'\eta')_\ast\left(\sO_{\widetilde{D'}}(rK_{\widetilde{D'}/S'}+\eta'^\ast((\rho'\rho\varrho')^*(\Delta)|_{D'^v}))\right).
	    \end{align*}
	    It follows that:  
	    \begin{align}\label{align_Viehweg_normalization2}
	    	g'_\ast\left(\sO_{Y'}(rK_{Y'/S'} + (\rho'\rho)^\ast\Delta)\right) = {\rm Ker}\left((g'\varrho')_\ast({\rm Res}_{\widetilde{D'}} - (-1)^r\alpha'^\ast{\rm Res}_{\widetilde{D'}})\right).
	    \end{align}
	    
		By applying Step 1 to each irreducible component of $Y$ and $\widetilde{D}$, we obtain the following commutative diagram:
		$$\xymatrix{
		(g'\varrho')_\ast\left(\sO_{Y'^v}(rK_{Y'^v/S'}+rD'^v+(\rho'\rho\varrho')^*(\Delta))\right) \ar[d]_{	(g'\varrho')_\ast({\rm Res}_{\widetilde{D'}}-(-1)^r\alpha'^\ast{\rm Res}_{\widetilde{D'}})} \ar[r]& \tau^\ast(g\varrho)_\ast\left(\sO_{Y^v}(rK_{Y^v/S}+rD^v+\varrho^*(\Delta))\right) \ar[d]_{\tau^\ast(g\varrho)_\ast({\rm Res}_{\widetilde{D}}-(-1)^r\alpha^\ast{\rm Res}_{\widetilde{D}})}\\
		(g'\varrho'\eta')_\ast\left(\sO_{\widetilde{D'}}(rK_{\widetilde{D'}/S'}+\eta'^\ast((\rho'\rho\varrho')^*(\Delta)|_{D'^v}))\right)\ar[r] & \tau^\ast(g\varrho\eta)_\ast\left(\sO_{\widetilde{D}}(rK_{\widetilde{D}/S}+\eta^\ast(\varrho^*(\Delta)|_{D^v}))\right)
	    }.$$
    Since $\tau$ is flat, by taking the kernels of the vertical morphisms, one obtains a morphism:  
    \begin{align*}
    	{\rm Ker}\left((g'\varrho')_\ast({\rm Res}_{\widetilde{D'}} - (-1)^r\alpha'^\ast{\rm Res}_{\widetilde{D'}})\right) \to &
    	{\rm Ker}\left(\tau^\ast(g\varrho)_\ast({\rm Res}_{\widetilde{D}} - (-1)^r\alpha^\ast{\rm Res}_{\widetilde{D}})\right) \\\nonumber
    	\simeq &
    	\tau^\ast{\rm Ker}\left((g\varrho)_\ast({\rm Res}_{\widetilde{D}} - (-1)^r\alpha^\ast{\rm Res}_{\widetilde{D}})\right).
    \end{align*}  
    Combining this result with (\ref{align_Viehweg_normalization1}) and (\ref{align_Viehweg_normalization2}), we obtain the desired morphism:  
    $$
    \beta: g'_\ast\left(\sO_{Y'}(rK_{Y'/S'} + (\rho'\rho)^\ast\Delta)\right) \to \tau^\ast g_\ast\left(\sO_Y(rK_{Y/S} + \Delta)\right).
    $$  
    By construction, $\beta$ is an isomorphism over $\tau^{-1}(S^o)$, where $S^o \subset S$ is the dense Zariski open subset over which $\tau$ is smooth and $g: (Y, \Delta) \to S$ is a simple normal crossing family. Consequently, $\beta$ is an injective morphism since $g'_\ast\left(\sO_{Y'}(rK_{Y'/S'} + (\rho'\rho)^\ast\Delta)\right)$ is torsion-free.
	\end{proof}
	\subsection{Higgs sheaves associated to an admissible stable family}\label{section_VZHiggs_stable_family}
	The objective of this section is to define the notion of an admissible family of stable minimal models and to construct the Viehweg-Zuo Higgs sheaf associated with such families (Theorem \ref{thm_big_Higgs_sheaf}). Intuitively, the admissibility condition implies that the family admits an equisingular birational model.
	\begin{defn}\label{defn_semipair}
        A \emph{semi-pair} $(X, \Delta)$ consists of a reduced scheme of finite type over ${\rm Spec}(\bC)$, of pure dimension, together with a $\bQ$-divisor $\Delta\geq 0$ on $X$, satisfying the following conditions:  
        \begin{enumerate}  
        	\item $X$ is an $S_2$-scheme with nodal singularities in codimension one,  
        	\item no component of ${\rm Supp}(\Delta)$ is contained in the singular locus of $X$,  
        	\item $K_X + \Delta$ is $\bQ$-Cartier.  
        \end{enumerate}  
        A \emph{projective semi-pair} is a semi-pair $(X, \Delta)$ where $X$ is a projective scheme.
	\end{defn}
	\begin{defn}\label{defn_semiresolution_semipair}
		Let $(X, \Delta)$ be a projective semi-pair. A morphism between projective semi-pairs $\pi: (X', \Delta') \to (X, \Delta)$ is said to be a \textit{semi-log resolution} if $\pi: X' \to X$ is a semi-log resolution of singularities (\cite{Bierstone2013} or \cite[\S 10.4]{Kollar2013}) such that there exists a $\pi$-exceptional $\bQ$-divisor $E \geq 0$ on $X'$ satisfying the relation $K_{X'} + \Delta' = \pi^{\ast}(K_X + \Delta) + E$.
	\end{defn}
    To provide a precise definition of an admissible family, we extend Fujino's concept of B-birational maps \cite[Definition 1.5]{Fujino2000}, which naturally generalizes birational maps to the context of log pairs.
	\begin{defn}
		Let $(X_1, \Delta_1)$ and $(X_2, \Delta_2)$ be two projective semi-pairs over a variety $S$. A rational map $f: (X_1, \Delta_1) \dasharrow (X_2, \Delta_2)$ over $S$ is said to be a \textit{log birational map over $S$} if there exists a common semi-log resolution $\pi_i: (X', \Delta') \to (X_i, \Delta_i)$, $i = 1, 2$, over $S$ such that $f = \pi_2 \circ \pi_1^{-1}$. We say that a morphism $f: (X, \Delta) \to S$ admits a \textit{simple normal crossing log birational model} if there exists a simple normal crossing family $f': (X', \Delta') \to S$ (Definition \ref{defn_SNC_family}) and a log birational map $(X, \Delta) \dasharrow (X', \Delta')$ over $S$.
	\end{defn}
	\begin{defn}\label{defn_admissible}
		Let $f: (X, B), A \to S$ be a family of stable minimal models over a variety $S$. The morphism $f$ is said to be \emph{birationally admissible} if $(X, B + A) \to S$ admits a log smooth log birational model.
	\end{defn}
	A typical example of an admissible family of stable minimal models is a family $f: (X, B), A \to S$ such that $(X, B + A) \to S$ admits a simultaneous resolution. More precisely, there exists a semi-log resolution $(X', \Delta') \to (X, B + A)$ satisfying the following conditions: $(X', \Delta') \to S$ is a simple normal crossing family, and for each $s \in S$, the induced morphism $(X'_s, \Delta'_s) \to (X_s, B_s + A_s)$ is a semi-log resolution.
	\begin{thm}\label{thm_big_Higgs_sheaf}
		Let $f^o: (X^o, B^o), A^o \to S^o$ be a birationally admissible family of $(d, \Phi_c, \Gamma, \sigma)$-stable minimal models over a smooth quasi-projective variety $S^o$, defining a generically finite morphism $\xi^o: S^o \to M_{\rm slc}(d, \Phi_c, \Gamma, \sigma)$. Let $S$ be a smooth projective variety containing $S^o$ as a Zariski open subset such that $D := S \setminus S^o$ is a reduced simple normal crossing divisor, and $\xi^o$ extends to a morphism $\xi: S \to M_{\rm slc}(d, \Phi_c, \Gamma, \sigma)$. Let $\sL$ be a line bundle on $S$. Then there exist the following data:  
		\begin{enumerate}
			\item An algebraic subset $Z \subset S$ of codimension $\geq 2$ such that $(U := S \setminus Z, U \cap D)$ is a log smooth pair, and a simple normal crossing divisor $E \subset U$ containing $U \cap D$.
			\item A log smooth family $h_0: (Y, \Delta_Y) \to U \setminus E$ induces a graded $\bQ$-polarized admissible variation of mixed Hodge structures $\bV^w_{h_0, {\rm BM}}$ on $U \setminus E$, which in turn gives rise to the lower canonical logarithmic Higgs bundle $(\widetilde{H} = \bigoplus_{p=0}^w \widetilde{H}^{p, w-p}, \theta)$ on $U$ (Definition \ref{defn_LHB_ass_VHSBM}).
		\end{enumerate}
		These data satisfy the following conditions:  
		\begin{enumerate}
			\item There exists a natural inclusion $\sL|_U \subset \widetilde{H}^{w, 0}$.  
			\item Let $(\bigoplus_{p=0}^w L^p, \theta)$ be the logarithmic Higgs subsheaf generated by $L^0 := \sL|_U$, where $L^p \subset \widetilde{H}^{w-p, p}$. Then the logarithmic Higgs field  
			$$
			\theta: L^p \to L^{p+1} \otimes \Omega_U(\log E)
			$$  
			is holomorphic over $U\setminus D$ for each $0 \leq p < w$, i.e.,  
			$$
			\theta(L^p) \subset L^{p+1} \otimes \Omega_U(\log D \cap U),\quad p=0,\dots,w-1.
			$$  
		\end{enumerate}
	\end{thm}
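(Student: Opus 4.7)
The plan is to reduce the theorem to the Viehweg--Zuo construction of Theorem \ref{thm_VZ_construction}, applied to a simple normal crossing log birational model of a suitable fiber-product version of the stable family, with the pluricanonical input $s_L$ manufactured from Koll\'ar's polarization $\lambda_{a,r}$ on $M_{\rm slc}(d,\Phi_c,\Gamma,\sigma)$.

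I first construct a simple normal crossing model of the family in codimension one on $S$. Using the classifying map $\xi:S\to M_{\rm slc}(d,\Phi_c,\Gamma,\sigma)$ together with Theorem \ref{thm_moduli_stable_var}, one obtains (absorbing the stacky \'etale issue by the Deligne--Mumford structure) a family $f:(X,B),A\to S$ of $(d,\Phi_c,\Gamma,\sigma)$-stable minimal models extending $f^o$. Birational admissibility of $f^o$ provides a simple normal crossing log birational model of $(X^o,B^o+A^o)\to S^o$; compactifying and applying the semi-log resolution of Remark \ref{rmk_semi_log_resolution} yields, after discarding a closed subset $Z\subset S$ of codimension $\geq 2$, a projective SNC pair $(\bar X,\bar\Delta)\to U:=S\setminus Z$ that is a simple normal crossing family over $U\setminus(D\cap U)$. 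The essential feature of this model, ensured precisely by the admissibility hypothesis, is that its non-SNC locus in codimension one coincides with $D\cap U$ rather than a strictly larger divisor.

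Next I encode $\sL$ as a pluricanonical section. Fix a $(d,\Phi_c,\Gamma,\sigma)$-polarization datum $(a,r,j)$; by Proposition \ref{prop_ample_line_bundle_moduli}, $\lambda_{a,r}=\det\Lambda_{a,r}$ is ample on $M_{\rm slc}(d,\Phi_c,\Gamma,\sigma)$, so $\xi^\ast\lambda_{a,r}$ is nef and big on $S$ because $\xi$ is generically finite. For $m\gg 1$ the line bundle $(\xi^\ast\lambda_{a,r})^{\otimes m}\otimes(\sL\otimes\sO_S(D))^{-1}$ is big, providing an inclusion $\sL\otimes\sO_S(D)\hookrightarrow\xi^\ast\lambda_{a,r}^{\otimes m}$. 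Chaining this with the antisymmetric embedding $\lambda_{a,r}^{\otimes m}\hookrightarrow\Lambda_{a,r}^{\otimes m\rho}$ (where $\rho:={\rm rank}\,\Lambda_{a,r}$), the identification in codimension one of $\Lambda_{a,r}^{\otimes m\rho}$ with $f^{[m\rho]}_\ast(\sO_{X^{[m\rho]}_S}(r(K_{X^{[m\rho]}_S/S}+(B+aA)^{[m\rho]}_S)))$ from Lemma \ref{lem_mild_pushforward}(3), and the inclusions supplied by Lemma \ref{lem_mild_pushforward}(1) and Lemma \ref{lem_Viehweg_inclusion} applied to a codim-one semi-log resolution $h^\dagger:(X^\dagger,\Delta^\dagger)\to U$ of the fiber product $\bar X^{[m\rho]}_U$ (which is SNC over $U\setminus(D\cap U)$), I obtain, after raising to a suitable $k$-th power via the graded pluricanonical ring structure, a nonzero morphism $s_L:L^{\otimes k}\to h^\dagger_\ast(\sO_{X^\dagger}(rk(K_{X^\dagger/U}+\Delta^\dagger)))$, where $L:=\sL\otimes\sO_U(D\cap U)$.

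I then apply Theorem \ref{thm_VZ_construction} to $h^\dagger:(X^\dagger,\Delta^\dagger)\to U$ with line bundle $L$ and section $s_L$. This produces the cyclic covering, its resolution $Y\to X^\dagger$, the log smooth family $h_0:(Y,\Delta_Y)\to U\setminus E$ for some SNC divisor $E\supseteq D\cap U$, the admissible variation of mixed Hodge structures from Theorem \ref{thm_VMHS_BM_admissible}, and the lower canonical logarithmic Higgs bundle $(\widetilde H=\bigoplus_{p=0}^w\widetilde H^{p,w-p},\theta)$. Theorem \ref{thm_VZ_construction}(1) yields $L\otimes\sO_U(-D_{h^\dagger}\cap U)|_{V'}\subset\widetilde H^{w,0}$ on a codim-two complement $V'\subset U$; since $D_{h^\dagger}\subseteq D\cap U$ by the codimension-one SNC construction in the first step, this reads $\sL|_{V'}\subset\widetilde H^{w,0}$, which extends reflexively to $\sL|_U\subset\widetilde H^{w,0}$. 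Similarly, Theorem \ref{thm_VZ_construction}(2) yields $\theta(L^p)\subset L^{p+1}\otimes\Omega_U(\log D\cap U)$ on the generated Higgs subsheaf, as required. The main obstacle in executing this plan is establishing rigorously that birational admissibility forces the codim-one non-SNC locus of the model constructed in the first step to coincide exactly with $D\cap U$ and not with a larger divisor, so that the Higgs field has logarithmic poles only along $D$ rather than along the larger branching divisor $E$; this relies on the functorial semi-log resolution of Remark \ref{rmk_semi_log_resolution} together with the fact that the SNC log birational model guaranteed over $S^o$ by admissibility extends across the codim-one generic points of $S\setminus D$ without introducing new boundary strata.
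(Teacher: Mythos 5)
Your overall strategy coincides with the paper's: manufacture a pluricanonical section from the ampleness of the Koll\'ar polarization $\lambda_{a,r}$ via Viehweg's fiber-product trick, pass to a simple normal crossing log birational model supplied by admissibility, and feed the result into Theorem \ref{thm_VZ_construction}. However, there is a genuine gap at the very first step. You assert that the classifying map $\xi$ together with Theorem \ref{thm_moduli_stable_var} produces a family $f:(X,B),A\to S$ of stable minimal models over all of $S$ extending $f^o$, ``absorbing the stacky \'etale issue by the Deligne--Mumford structure.'' This is not justified: $\xi$ is a morphism to the \emph{coarse} moduli space, and even the morphism $\xi^o:S^o\to\sM_{\rm slc}$ to the stack need not extend over $S$ without a ramified, generically finite base change (properness of the stack only yields extension after finite base change of traits, and there are monodromy/automorphism obstructions globally). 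Since your entire construction of $s_L$ rests on identifying $\xi^\ast\lambda_{a,r}$ with $\det f_\ast(\sO_X(r(K_{X/S}+B+aA)))$ and then embedding it into pushforwards over fiber products of $X/S$, the argument cannot get off the ground as written.

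The paper's proof is built precisely around this obstruction: it chooses a generically finite proper surjection $\sigma:\widetilde S\to S$ over which the base-changed family does extend, works with $W=\widetilde f_\ast(\sO_{\widetilde X}(rK_{\widetilde X/\widetilde S}+r\widetilde B+ra\widetilde A))$ and $(\xi\circ\sigma)^\ast\lambda_{a,r}\simeq\det(W)$ on $\widetilde S$, and then descends the resulting inclusion back to $S$ using Lemma \ref{lem_Viehweg_inclusion} (which compares pushforwards under the flat locus of $\sigma$, at the cost of an ideal sheaf $I$ supported in codimension $\geq 2$) together with the trick $\sigma_\ast\sO_{\widetilde S}\otimes M^{-1}\subset\sO_S^{\oplus N'}$ for an ample $M$. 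None of this machinery appears in your proposal, and it is not optional. By contrast, the issue you flag as ``the main obstacle'' --- that the codimension-one non-SNC locus of the model equals $D$ --- is handled essentially for free in the paper: the model $(Y_2^{(N)},\Delta_2^{(N)})\to S$ is by construction a completion of the simple normal crossing family over $S^o$ guaranteed by admissibility, so one simply takes $D_f=D$ in Theorem \ref{thm_VZ_construction}; the larger divisor $E$ (the paper's $D_g$) only enters for the cyclic-cover family, exactly as the statement of that theorem anticipates. The remainder of your argument (bigness of the polarization, $\det W\hookrightarrow W^{\otimes\rho}$, Lemma \ref{lem_mild_pushforward}, and the application of Theorem \ref{thm_VZ_construction}) matches the paper once the base-change and descent step is supplied.
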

	\begin{proof}
		\emph{Step 1: Compactify the family.}
		By the properness of $\sM_{\rm slc}(d, \Phi_c, \Gamma, \sigma)$, we obtain the following constructions:  
		\begin{itemize}  
			\item A generically finite, proper, and surjective morphism $\sigma: \widetilde{S} \to S$ from a smooth projective variety $\widetilde{S}$ such that $\sigma^{-1}(D)$ is a simple normal crossing divisor. The morphism $\sigma$ is constructed as a combination of smooth blow-ups, with centers lying over $S \setminus S^o$, and a finite flat morphism. 
			\item Let $\widetilde{S}^o := \sigma^{-1}(S^o)$ and $\widetilde{X}^o := \widetilde{S}^o \times_{S^o} X^o$. Denote by $\widetilde{A}^o$ and $\widetilde{B}^o$ the divisorial pullbacks of $A^o$ and $B^o$ on $\widetilde{X}^o$, respectively. There exists a completion $\widetilde{f}: (\widetilde{X}, \widetilde{B}), \widetilde{A} \to \widetilde{S}$ of the base change family $(\widetilde{X}^o, \widetilde{B}^o), \widetilde{A}^o \to \widetilde{S}^o$ such that $\widetilde{f} \in \sM_{\rm slc}(d, \Phi_c, \Gamma, \sigma)(\widetilde{S})$.  
		\end{itemize}
		\emph{Step 2: Take the log smooth birational models.} Let $(a, r, j) \in \bQ^{\geq 0} \times (\bZ^{>0})^2$ be a $(d, \Phi_c, \Gamma, \sigma)$-polarization datum. In this case, $(X^o, B^o + aA^o) \to S^o$ is a locally stable family. We denote $\Delta^o := B^o + aA^o$.  
		
		Since $f^o$ is birationally admissible, we may assume that $a$ is sufficiently small such that the following diagrams exist:  
		\begin{align}\label{align_log_smooth_models}
			\xymatrix{
				(X^o, \Delta^o) \ar[rd]_{f^o} & (X^o_1, \Delta^o_1) \ar[l] \ar[r] \ar[d] & (X^o_2, \Delta^o_2) \ar[ld]\\
				& S^o &
			}, \quad
			\xymatrix{
				(\widetilde{X}^o, \widetilde{\Delta}^o) \ar[rd]_{\widetilde{f}^o} & (\widetilde{X}^o_1, \widetilde{\Delta}^o_1) \ar[l] \ar[r] \ar[d] & (\widetilde{X}^o_2, \widetilde{\Delta}^o_2) \ar[ld]\\
				& \widetilde{S}^o &
			},
		\end{align}  
		where $(X^o_1, \Delta^o_1) \to (X^o, \Delta^o)$ and $(X^o_1, \Delta^o_1) \to (X^o_2, \Delta^o_2)$ are semi-log resolutions over $S^o$, and $(X^o_2, \Delta^o_2)$ is a simple normal crossing family over $S^o$. The diagram on the right is the base change of the diagram on the left.
		
		\emph{Step 3: Viehweg's fiber product trick.} Now we consider the fiber products of (\ref{align_log_smooth_models}). Let $N > 0$ be an integer. Denote by $X_{1S^o}^{o[N]}$ the main components of the $N$-fold fiber product $X_1^o \times_{S^o} \cdots \times_{S^o} X_1^o$. Let $p_i: X_{1S^o}^{o[N]} \to X_1^o$ be the projection map onto the $i$-th component. We define $\Delta_{1}^{o[N]} := \sum_{i=1}^N p_i^\ast(\Delta_1^o)$ and similarly define $X^{o[N]}_{2S^o}$, $\Delta^{o[N]}_2$, etc.  
		
		Let $\varrho: X_{1S^o}^{o(N)} \to X_{1S^o}^{o[N]}$ be a semi-log resolution of singularities such that ${\rm supp}(\varrho^\ast(\Delta_{1}^{o[N]})) \cup {\rm Exc}(\varrho)$ is a simple normal crossing divisor. Let $E^{+}, E^{-} \geq 0$ be the simple normal crossing $\bQ$-divisors on $X_{1S^o}^{o(N)}$ determined by  
		$$
		\varrho^\ast\left(\sum_{i=1}^N p_i^\ast(K_{X^o_1/S^o} + \Delta_1^o)\right) = K_{X_{1S^o}^{o(N)}/S^o} + E^{+} - E^{-},
		$$  
		where $E^{+}$ and $E^{-}$ have no common components and $E^-$ is $\varrho$-exceptional.  		
		Then the composition morphisms  
		$$
		(X_{1S^o}^{o(N)}, E^+) \to (X^{o[N]}_{S^o}, \Delta^{o[N]}), \quad (X_{1S^o}^{o(N)}, E^+) \to (X^{o[N]}_{2S^o}, \Delta^{o[N]}_2)
		$$  
		are log birational morphisms. Based on the discussions above, we obtain the following commutative diagram:  
		\begin{align}\label{align_fiberprods}
			\xymatrix{
				(\widetilde{X}^{o(N)}_{\widetilde{S}^o}, \widetilde{E}^+) \ar[d]^{\pi^o} \ar[r]^{h^o} & (X_{1S^o}^{o(N)}, E^+) \ar[d]^{\alpha} \ar[r] & (X^{o[N]}_{2S^o}, \Delta^{o[N]}_2) \ar[ddl]\\
				(\widetilde{X}^{o[N]}_{\widetilde{S}^o}, \widetilde{\Delta}^{o[N]}) \ar[r]^{h'} \ar[d] & (X^{o[N]}_{S^o}, \Delta^{o[N]}) \ar[d] &\\
				\widetilde{S}^o \ar[r] & S^o
			},
		\end{align}  
		where $\widetilde{X}^{o(N)}_{\widetilde{S}^o}$ is a semi-log resolution of the main components of $X_{1S^o}^{o(N)} \times_{S^o} \widetilde{S}^o$, such that $\widetilde{E}^+ := h^{o\ast}(E^+)$ is a simple normal crossing divisor. Consequently, 
		\begin{align}\label{align_adjunction_pi0}
			\pi^{o\ast}(K_{\widetilde{X}^{o[N]}_{\widetilde{S}^o}/\widetilde{S}^o}+\widetilde{\Delta}^{o[N]})&=(h'\pi^o)^\ast(K_{X^{o[N]}_{S^o}/S^o}+\Delta^{o[N]})\\\nonumber
			&=(\alpha h^o)^\ast(K_{X^{o[N]}_{S^o}/S^o}+\Delta^{o[N]})\\\nonumber
			&=h^{o\ast}(K_{X^{o(N)}_{1S^o}/S^o}+E^+-E')\quad(E'\textrm{ is }\alpha-\textrm{exceptional})\\\nonumber
			&=K_{\widetilde{X}^{o(N)}_{\widetilde{S}^o}/\widetilde{S}^o}+\widetilde{E}^+-F^o
		\end{align}
	    for some divisor $F^o\geq0$. 
		Taking a completion of (\ref{align_fiberprods}), we obtain the following diagram:  
		\begin{align}\label{align_fiberprods_complete}  
			\xymatrix{  
				(\widetilde{Y}^{(N)}, \widetilde{\Delta}^{(N)}) \ar[d]^{\pi} \ar[r]^{h} & (Y^{(N)}_1, \Delta^{(N)}_1) \ar[dd]^{g_1} \ar[r]^{g} & (Y^{(N)}_2, \Delta^{(N)}_2) \ar[ddl]^{g_2}\\  
				(\widetilde{X}^{[N]}_{\widetilde{S}}, \widetilde{\Delta}^{[N]}) \ar[d]^{\widetilde{f}^{[N]}} &&\\  
				\widetilde{S} \ar[r]^{\sigma} & S  
			}  
		\end{align}  
		where the following conditions are satisfied:  
		\begin{itemize}  
			\item $Y^{(N)}_2$ is a simple normal crossing projective variety such that $Y^{(N)}_2 \to S$ is a completion of $X^{o[N]}_{2S^o} \to S^o$, and $\Delta^{(N)}_2$, the closure of $\Delta^{o[N]}_2$ in $Y^{(N)}_2$, is a simple normal crossing divisor;  
			\item $Y^{(N)}_1$ is a simple normal crossing projective variety such that $Y^{(N)}_1 \to S$ is a completion of $X^{o(N)}_{1S^o} \to S^o$. The divisor $\Delta^{(N)}_1 \geq 0$ contains $E^+$ and is chosen such that $g$ is a semi-log resolution;  
			\item $\widetilde{Y}^{(N)}$ is a simple normal crossing projective variety such that $\widetilde{Y}^{(N)} \to \widetilde{S}$ is a completion of $\widetilde{X}^{o(N)}_{\widetilde{S}^o} \to \widetilde{S}^o$, and $(\widetilde{f}^{[N]} \pi)^{\ast}(D) \cup \overline{\widetilde{E}^+}$ is a simple normal crossing divisor. The divisor $\widetilde{\Delta}^{(N)} \geq 0$ is determined by  
			\begin{align}\label{align_logres_XN}  
				\pi^\ast(K_{\widetilde{X}_{\widetilde{S}}^{[N]}} + \widetilde{\Delta}^{[N]}) = K_{\widetilde{Y}^{(N)}} + \widetilde{\Delta}^{(N)} - F,  
			\end{align}  
			where $F \geq 0$ is a $\pi$-exceptional divisor that shares no common components with $\widetilde{\Delta}^{(N)}$.  
		\end{itemize}
	Observe that  
	$$
	\widetilde{\Delta}^{(N)}|_{\widetilde{X}^{o(N)}_{\widetilde{S}^o}} \leq \widetilde{E}^+ = h^{\ast}(\Delta_1^{(N)})|_{\widetilde{X}^{o(N)}_{\widetilde{S}^o}}
	$$  
	according to (\ref{align_adjunction_pi0}). Combining this with the fact that $(\widetilde{X}^{[N]}_{S}, \widetilde{\Delta}^{[N]})$ is an slc pair, we conclude that  
	\begin{align}\label{align_divisors}
		\widetilde{\Delta}^{(N)} \leq h^{\ast}(\Delta_1^{(N)}) + (\sigma\widetilde{f}^{[N]} \pi)^{\ast}(D).
	\end{align}
		According to (\ref{align_logres_XN}), we have  
		\begin{align}\label{align_log_res_directimage}  
			\widetilde{f}^{[N]}_\ast\left(\sO_{\widetilde{X}^{[N]}_{\widetilde{S}}}(rK_{\widetilde{X}^{[N]}_{\widetilde{S}}/\widetilde{S}} + r\widetilde{\Delta}^{[N]})\right) \simeq (\widetilde{f}^{[N]} \pi)_\ast\left(\sO_{\widetilde{Y}^{(N)}}(rK_{\widetilde{Y}^{(N)}/\widetilde{S}} + r\widetilde{\Delta}^{(N)})\right).  
		\end{align}  		
		Since $g$ is a semi-log resolution, there exists an isomorphism  
		\begin{align}  
			g_{1\ast}\left(\sO_{Y^{(N)}_1}(rK_{Y^{(N)}_1/S} + r\Delta^{(N)}_1)\right) \simeq g_{2\ast}\left(\sO_{Y^{(N)}_2}(rK_{Y^{(N)}_2/S} + r\Delta^{(N)}_2)\right).  
		\end{align}  		
		By Lemma \ref{lem_Viehweg_inclusion} and (\ref{align_divisors}), we obtain an inclusion  
		\begin{align}\label{align_Vieh_inclusion}  
			(\widetilde{f}^{[N]} \pi)_\ast\left(\sO_{\widetilde{Y}^{(N)}}(rK_{\widetilde{Y}^{(N)}/\widetilde{S}} + r\widetilde{\Delta}^{(N)})\right) \otimes \sigma^\ast(I) \subset \sigma^\ast g_{1\ast}\left(\sO_{Y^{(N)}_1}(rK_{Y^{(N)}_1/S} + r\Delta^{(N)}_1 + rg_1^\ast(D))\right),  
		\end{align}  
		where $I$ is an ideal sheaf on $S$ whose co-support lies in the codimension $\geq 2$ loci over which $\sigma$ is not flat.  		
		Combining (\ref{align_log_res_directimage})--(\ref{align_Vieh_inclusion}), we conclude that  
		\begin{align}\label{align_finally}  
			\widetilde{f}^{[N]}_\ast\left(\sO_{\widetilde{X}^{[N]}_{\widetilde{S}}}(rK_{\widetilde{X}^{[N]}_{\widetilde{S}}/\widetilde{S}} + r\widetilde{\Delta}^{[N]})\right) \otimes \sigma^\ast(I) \subset \sigma^\ast g_{2\ast}\left(\sO_{Y^{(N)}_2}(rK_{Y^{(N)}_2/S} + r\Delta^{(N)}_2 + rg_2^\ast(D))\right).  
		\end{align}
		\emph{Last step.} We complete the proof by applying Theorem \ref{thm_VZ_construction}. Let  
		$$
		W := \widetilde{f}_\ast\left(\sO_{\widetilde{X}}(rK_{\widetilde{X}/\widetilde{S}} + r\widetilde{B} + ra\widetilde{A})\right) \quad \text{and} \quad l := {\rm rank}(W).
		$$  
		By the construction of $\lambda_{a,r}$, we have  
		$$
		(\xi \circ \sigma)^\ast \lambda_{a,r} \simeq \det(W).
		$$  
		Since $\lambda_{a,r}$ is ample (Proposition \ref{prop_ample_line_bundle_moduli}) and $\xi \circ \sigma$ is generically finite, it follows that $\det(W)$ is big.  		
		Take an ample line bundle $M$ on $S$ such that there exists an inclusion  
		$$
		\sigma_\ast \sO_{\widetilde{S}} \otimes M^{-1} \subset \sO_S^{\oplus N'}
		$$  
		for some $N' > 0$. Since $\det(W)$ is big, there exists an inclusion  
		\begin{align}\label{align_big_eat_small}
			\sigma^\ast(\sL^{\otimes r} \otimes \sO_S(2rD) \otimes M) \subset \det(W)^{\otimes kr}
		\end{align} 
		for some $k > 0$.
		
		Now consider the diagram (\ref{align_fiberprods_complete}) in the case where $N = klr$. By Lemma \ref{lem_mild_pushforward}, there exists a natural inclusion  
		\begin{align*}  
			\det(W)^{\otimes kr} \to \left(\widetilde{f}_\ast\left(\sO_{\widetilde{X}}(rK_{\widetilde{X}/\widetilde{S}} + r\widetilde{\Delta})\right)^{\otimes klr}\right)^{\vee\vee} \simeq \widetilde{f}^{[klr]}_\ast\left(\sO_{\widetilde{X}^{[klr]}_{\widetilde{S}}}(rK_{\widetilde{X}^{[klr]}_{\widetilde{S}}/\widetilde{S}} + r\widetilde{\Delta}^{[klr]})\right).  
		\end{align*}  		
		Combining this with (\ref{align_finally}) and (\ref{align_big_eat_small}), we deduce that  
		$$  
		\sigma^\ast(\sL^{\otimes r} \otimes \sO_S(2rD) \otimes M \otimes I_{Z'}) \subset \det(W)^{\otimes kr} \otimes \sigma^\ast(I_{Z'}) \to \sigma^\ast g_{2\ast}\left(\sO_{Y_2^{(klr)}}(rK_{Y_2^{(klr)}/S} + r\Delta_2^{(klr)} + rg_2^\ast(D))\right),  
		$$  
		where $I_{Z'}$ is an ideal sheaf on $S$ whose co-support $Z'$ lies in the codimension $\geq 2$ loci over which $\sigma$ is not flat.  		
		This induces a non-zero map  
		\begin{align*}  
			\sL^{\otimes r} \otimes \sO_S(rD) \otimes I_{Z'} &\to g_{2\ast}\left(\sO_{Y_2^{(klr)}}(rK_{Y_2^{(klr)}/S} + r\Delta_2^{(klr)})\right) \otimes \sigma_\ast \sO_{\widetilde{S}} \otimes M^{-1} \\  
			&\subset g_{2\ast}\left(\sO_{Y_2^{(klr)}}(rK_{Y_2^{(klr)}/S} + r\Delta_2^{(klr)})\right)^{\oplus N'}.
		\end{align*}   		
		Hence, we obtain a non-zero map  
		\begin{align}\label{align_send_A_in1}  
			\sL^{\otimes r} \otimes \sO_S(rD)\otimes I_{Z'} \to g_{2\ast}\left(\sO_{Y_2^{(klr)}}(rK_{Y_2^{(klr)}/S} + r\Delta_2^{(klr)})\right).  
		\end{align}
		Applying Theorem \ref{thm_VZ_construction} to the restriction of the family $g_2:(Y_2^{(klr)},\Delta_2^{(klr)})\to S$ to $S\backslash Z'$ (which is a simple normal crossing family over $S^o$) and the line bundle $\sL\otimes \sO_S(D)|_{S\backslash Z'}$, the theorem have been proven.
	\end{proof}
	\section{Hyperbolicity properties for admissible families of stable minimal models}
	Based on the constructions presented in the preceding sections (particularly Theorem \ref{thm_big_Higgs_sheaf}), we proceed to prove the stratified hyperbolicity property of the moduli stack $\sM_{\rm slc}(d,\Phi_c,\Gamma,\sigma)$.
	\begin{thm}\label{thm_VH}
		Let $f^o: (X^o, B^o), A^o \to S^o$ be a birationally admissible family of $(d, \Phi_c, \Gamma, \sigma)$-stable minimal models over a smooth quasi-projective variety $S^o$, defining a generically finite morphism $\xi^o: S^o \to M_{\rm slc}(d, \Phi_c, \Gamma, \sigma)$. Let $S$ be a smooth projective variety containing $S^o$ as a Zariski open subset such that $D := S \setminus S^o$ is a divisor. Then the line bundle $\omega_S(D)$ is big.
	\end{thm}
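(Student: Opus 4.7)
The strategy is to iterate the logarithmic Higgs field produced by Theorem \ref{thm_big_Higgs_sheaf} and combine the resulting Kodaira--Spencer type maps with the positivity of Higgs kernels (Proposition \ref{prop_negative_Higgs_kernel}) via a Campana--P\v{a}un type bigness criterion for the logarithmic cotangent bundle.

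First I would apply Theorem \ref{thm_big_Higgs_sheaf} with $\sL := A$, where $A$ is a fixed ample line bundle on $S$. This produces a codimension $\geq 2$ subset $Z \subset S$, a simple normal crossing divisor $E \supset U \cap D$ on $U := S \setminus Z$, a lower canonical logarithmic Higgs bundle $(\widetilde{H} = \bigoplus_{p=0}^w \widetilde{H}^{p,w-p}, \theta)$ on $U$, and a logarithmic Higgs subsheaf $\bigoplus_{p=0}^w L^p$ generated by $L^0 = A|_U \subset \widetilde{H}^{w,0}$ whose Higgs field has poles only along $D \cap U$ (and not along the larger divisor $E$).

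Next, let $p_0$ be the largest integer for which the iterated Higgs field
$$
\theta^{(p_0)} : A|_U \otimes \mathrm{Sym}^{p_0} T_U(-\log D) \longrightarrow L^{p_0}
$$
is non-zero; by the bounded length $p_0 \leq w$ of the Higgs subsheaf, such a maximum exists. The Higgs integrability $\theta \wedge \theta = 0$ forces the image $K := \mathrm{Im}(\theta^{(p_0)}) \subset L^{p_0} \subset \widetilde{H}^{w-p_0,p_0}$ to satisfy $\theta(K) = 0$. Using the geometric description of Kashiwara's lower canonical extension (Theorem \ref{thm_log_Higgs_geo} and Corollary \ref{cor_Kashiwara_extendHodge}) together with the fact that $S \setminus U$ has codimension $\geq 2$, one extends $(\widetilde H, \theta)$ and $K$ across $Z$ to get a coherent subsheaf $\overline{K} \subset \widetilde{H}^{w-p_0,p_0}$ defined on the projective variety $S$ with $\theta(\overline{K}) = 0$. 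Proposition \ref{prop_negative_Higgs_kernel} then implies that $-c_1(\overline{K})$ is pseudo-effective. Observe also that $p_0 \geq 1$: otherwise $\theta(A) = 0$ and Proposition \ref{prop_negative_Higgs_kernel} would make $-c_1(A)$ pseudo-effective, contradicting ampleness of $A$.

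Finally, the adjoint of $\theta^{(p_0)}$ is a non-zero morphism
$$
A|_U \hookrightarrow K \otimes \mathrm{Sym}^{p_0} \Omega_U(\log D \cap U),
$$
which extends across $Z$ by torsion-freeness of the target to a non-zero morphism $A \hookrightarrow \overline{K} \otimes \mathrm{Sym}^{p_0} \Omega_S(\log D)$ on $S$. With $A$ big (indeed ample) and $-c_1(\overline{K})$ pseudo-effective, a suitable form of the Campana--P\v{a}un bigness criterion for subsheaves of $\mathrm{Sym}^{p_0} \Omega_S(\log D)$ twisted by a coherent sheaf with pseudo-effective dual determinant (see \cite{CP2019} and its variants in \cite{PS17,WeiWu2023,Taji2021}) forces $K_S + D$ to be big, which is equivalent to $\omega_S(D)$ being big. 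The main obstacle lies in this final step: translating the twisted morphism into the bigness of $K_S + D$ requires a careful saturation argument combined with the twisted version of Campana--P\v{a}un's theorem, since the Higgs kernel $\overline{K}$ need not be invertible and the available positivity comes only from its first Chern class.
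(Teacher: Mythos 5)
Your proposal follows essentially the same route as the paper: apply Theorem \ref{thm_big_Higgs_sheaf} to a big (in your case ample) line bundle, iterate the Higgs field until the image lands in the kernel $K$ of $\theta$, invoke Proposition \ref{prop_negative_Higgs_kernel} to get pseudo-effectivity of $-c_1(K)$, rule out the degenerate case $p_0=0$ by the same contradiction, and conclude via Campana--P\u{a}un. The only point where you stop short is exactly the step you flag as ``the main obstacle'': passing from the twisted inclusion $\sL|_U\subset K\otimes\Omega_U^{\otimes n_0}(\log D\cap U)$ with $K$ not invertible to the bigness of $K_S+D$. The paper closes this with a standard determinant trick rather than a ``twisted'' version of Campana--P\u{a}un: from the nonzero morphism $\sL|_U\otimes K^\vee\to\Omega_U^{\otimes n_0}(\log D\cap U)$ one takes the $r_K$-th tensor power ($r_K=\operatorname{rank} K$) and restricts to $\det(j_\ast(K^\vee))\subset (j_\ast(K^\vee)^{\otimes r_K})^{\vee\vee}$, obtaining a nonzero map from the genuine line bundle $\sL^{\otimes r_K}\otimes\det(j_\ast(K^\vee))$ into $\Omega_S^{\otimes n_0 r_K}(\log D)$; this source is big because $\sL$ is big and $-c_1(K)$ is pseudo-effective, so \cite[Theorem 7.11]{CP2019} (bigness of a line subsheaf of a tensor power of $\Omega_S(\log D)$ implies $K_S+D$ big) applies directly, with no saturation argument needed. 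Incorporating that one manoeuvre turns your outline into the paper's proof.
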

	\begin{proof}
		Since the property of $\omega_S(D)$ being big is independent of the choice of the compactification $S^o \subset S$, we may assume without loss of generality that $D$ is a simple normal crossing divisor and the morphism $\xi^o$ extends to a morphism $\xi: S \to M_{\rm slc}(d, \Phi_c, \Gamma, \sigma)$. Let $\sL$ be a big line bundle on $S$. By Theorem \ref{thm_big_Higgs_sheaf}, the following data exist: 
		\begin{enumerate}
			\item An algebraic subset $Z \subset S$ of codimension $\geq 2$ such that $(U := S \setminus Z, U \cap D)$ is a log smooth pair, and a simple normal crossing divisor $E \subset U$ containing $U \cap D$.  
			\item A graded $\bQ$-polarized admissible variation of mixed Hodge structures on $U\setminus E$, with $(\widetilde{H} = \bigoplus_{p=0}^w \widetilde{H}^{p, w-p}, \theta)$ as its associated lower canonical logarithmic Higgs bundle on $U$.  
		\end{enumerate}
		These data satisfy the following conditions: 
		\begin{enumerate}
			\item There exists a natural inclusion $\sL|_U \subset \widetilde{H}^{w, 0}$.  
			\item Let $(\bigoplus_{p=0}^w L^p, \theta)$ be the logarithmic Higgs subsheaf generated by $L^0 := \sL|_U$, where $L^p \subset \widetilde{H}^{w-p, p}$. Then the logarithmic Higgs field  
			$$
			\theta: L^p \to L^{p+1} \otimes \Omega_U(\log E)
			$$  
			is holomorphic over $U\setminus D$ for each $0 \leq p < w$, i.e.,  
			$$
			\theta(L^p) \subset L^{p+1} \otimes \Omega_U(\log D \cap U),\quad 0 \leq p < w.
			$$
		\end{enumerate}
		Consider the following diagram:  
		\begin{align*}
			L^0 \stackrel{\theta}{\to} L^1 \otimes \Omega_U(\log D \cap U) \stackrel{\theta \otimes {\rm Id}}{\to} L^2 \otimes \Omega_U^{\otimes 2}(\log D \cap U) \to \cdots.
		\end{align*}  
		Observe that there exists a minimal integer $n_0 \leq w$ such that $L^0$ is mapped into  
		$$
		\ker\left(L^{n_0} \otimes \Omega_U^{\otimes n_0}(\log D \cap U) \to L^{n_0+1} \otimes \Omega_U^{\otimes n_0+1}(\log D \cap U)\right) \subset K \otimes \Omega_U^{\otimes n_0}(\log D \cap U),
		$$  
		where  
		$$
		K = \ker\left(\theta : \widetilde{H}^w \to \widetilde{H}^w \otimes \Omega_U(\log E)\right).
		$$  
		Since $n_0$ is minimal and $K$ is torsion-free, we obtain the inclusion  
		\begin{align*}
			\sL|_U \subset K \otimes \Omega_U^{\otimes n_0}(\log D \cap U).
		\end{align*}  
		This induces a nonzero morphism  
		\begin{align*}
			\beta : \sL|_U \otimes K^\vee\to \Omega_U^{\otimes n_0}(\log D \cap U).
		\end{align*}
		Denote $r_K := \mathrm{rank}(K)$, and taking $\otimes^{r_K}$ on both sides yields a nonzero morphism  
		\begin{align*}
			\beta: \sL^{\otimes r_K} \otimes \det(j_\ast(K^\vee)) &\subset \sL^{\otimes r_K} \otimes (j_\ast(K^\vee)^{\otimes r_K})^{\vee\vee}
			\to \Omega_S^{\otimes n_0 r_K}(\log D),
		\end{align*}  
		where $j: U \to S$ is the immersion. Since $K \subset \widetilde{H}^w$ and $\theta(K) = 0$, it follows that $-c_1(K)$ is pseudo-effective by Proposition \ref{prop_negative_Higgs_kernel}. Moreover, since $\sL$ is big by assumption, the line bundle $\Omega_S^{\otimes n_0 r_K}(\log D)$ contains the big line bundle $\mathrm{Im}(\beta)$. This implies that $n_0 > 0$, and hence $\omega_S(D)$ is big by \cite[Theorem 7.11]{CP2019}.
	\end{proof}
	\subsection{Stratified hyperbolicity of $\sM_{\rm slc}(d,\Phi_c,\Gamma,\sigma)$}
	Every algebraic stack discussed in this section is defined over an algebraically closed field $k$ of characteristic zero. For the purposes of this paper, we restrict our attention to Deligne-Mumford stacks (DM-stacks for short).
	
	Let $\sX$ be a reduced, separated DM-stack of finite type over ${\rm Spec}(k)$. Let $\Delta$ be an effective $\bQ$-divisor on $\sX$. Let $\pi: X_0 \to \sX$ be an \'etale covering by a scheme $X_0$, and let $\Delta_0 := \pi^\ast(\Delta)$ denote the pullback of $\Delta$ via the \'etale map $\pi$. Such a pair $(X_0, \Delta_0)$ is referred to as a \emph{chart} of $(\sX, \Delta)$.
	
	The pair $(\sX, \Delta)$ is referred to as a \emph{stacky semi-pair} if one of its charts $(X_0, \Delta_0)$ is a semi-pair (Definition \ref{defn_semipair}). In this case, every chart of $(\sX, \Delta)$ is a semi-pair.
	
	Let $\sS$ be a smooth DM-stack. A \emph{projective simple normal crossing stacky pair over $\sS$} is defined as a representable, projective, and simple normal crossing morphism $\sX \to \sS$, together with a simple normal crossing effective $\bQ$-divisor $\Delta$ on $\sX$, such that every stratum of $(\sX,{\rm supp}(\Delta))$ is (representable) smooth over $\sS$.
	
	Let $\sS$ be a reduced DM-stack. Let $\pi: S_0 \to \sS$ be an \'etale cover from a reduced scheme $S_0$. A family of $(d, \Phi_c, \Gamma, \sigma)$-stable minimal models over $\sS$ consists of a representable projective morphism $f: \sX \to \sS$ and two effective $\bQ$-divisors $\sA$ and $\sB$ on $\sX$, such that the base change family $(X_0, B_0), A_0 \to S_0$, as depicted in the following diagram,  
	$$
	\xymatrix{
		(X_0, B_0), A_0 \ar[r] \ar[d]^{f_0} & (\sX, \sB), \sA \ar[d]^{f}\\
		S_0 \ar[r]^{\pi} & \sS
	}
	$$  
	is a family of $(d, \Phi_c, \Gamma, \sigma)$-stable minimal models over $S_0$. If $(\sX, \sB), \sA \to \sS$ is a family of $(d, \Phi_c, \Gamma, \sigma)$-stable minimal models and $T \to \sS$ is any \'etale covering map, then the base change family $(\sX, \sB), \sA \times_{\sS} T \to T$ is also a family of $(d, \Phi_c, \Gamma, \sigma)$-stable minimal models.
	\begin{defn}
		Let $(\sX, \Delta)$ be a stacky semi-pair. A \emph{semi-log resolution} is a representable, projective, and birational morphism $f: (\sX', \Delta') \to (\sX, \Delta)$ such that, for any base change diagram of charts  
		$$
		\xymatrix{
			(X'_0, \Delta'_0) \ar[d]^{\pi'} \ar[r]^g & (X_0, \Delta_0) \ar[d]^{\pi}\\
			(\sX', \Delta') \ar[r]^f & (\sX, \Delta)
		},
		$$  
		$g$ is a semi-log resolution of semi-pairs (Definition \ref{defn_semiresolution_semipair}).
	\end{defn}
	\begin{defn}
		Let $(\sX_1, \Delta_1) \to \sS$ and $(\sX_2, \Delta_2) \to \sS$ be two surjective morphisms from stacky semi-pairs to a smooth DM-stack $\sS$ over ${\rm Spec}(k)$. We say that $(\sX_1, \Delta_1)$ is strictly log birational to $(\sX_2, \Delta_2)$ over $\sS$ if there exists a common semi-log resolution $\pi_i: (\sX', \Delta') \to (\sX_i, \Delta_i)$, $i = 1, 2$, over $\sS$, such that the induced morphisms of fibers $\pi_{i,s}: (\sX'_s, \Delta'_s) \to (\sX_{i,s}, \Delta_{i,s})$, $i = 1, 2$, are semi-log resolutions for every $s \in \sS$.  
		
		Furthermore, we say that a morphism $f: (\sX, \Delta) \to \sS$ admits a \emph{simple normal crossing strictly log birational model} if there exists a simple normal crossing family $f': (\sX', \Delta') \to \sS$ that is strictly log birational to $(\sX, \Delta)$ over $\sS$.
	\end{defn}
	\begin{defn}
		Let $f: (\sX, \sB), \sA \to \sS$ be a family of $(d, \Phi_c, \Gamma, \sigma)$-stable minimal models over a smooth DM-stack $\sS$.  
		The morphism $f$ is said to be \emph{strictly birationally admissible} if $(\sX, \sB + \sA) \to \sS$ admits a simple normal crossing strictly log birational model.
	\end{defn}
    If $f: (\sX, \sB), \sA \to \sS$ is strictly birationally admissible and $S \to \sS$ is a morphism from a reduced scheme to $\sS$, then the base change of $f$ to $S$ is birationally admissible.
	\begin{defn}\label{defn_BAstr}
		Let $f:(\sX, \sB), \sA \to \sM_{\rm slc}(d, \Phi_c, \Gamma, \sigma)$ denote the universal family of $(d, \Phi_c, \Gamma, \sigma)$-stable minimal models.  		
		A \emph{birationally admissible stratification} of $\sM_{\rm slc}(d, \Phi_c, \Gamma, \sigma)$ is a filtration by Zariski closed reduced stacks
		$$
		\emptyset = S_{-1} \subset S_0 \subset \cdots \subset S_N = \sM_{\rm slc}(d, \Phi_c, \Gamma, \sigma)
		$$  
		satisfying that for each $i = 0, \dots, N$, the pullback family of $f$ to $S_i \setminus S_{i-1}$ is strictly birationally admissible.  

	    A stratum is defined as a connected component of $S_i\backslash S_{i-1}$ for some $i \in \{0, \dots, N\}$.
	\end{defn}   
    Since the moduli of smooth varieties may not itself be smooth, we do not impose the condition that a birationally admissible stratification must be regular (i.e., that every stratum is smooth).
	The subsequent lemma demonstrates that a birationally admissible stratification always exists and is uniquely determined by a functorial semi-log desingularization functor (\cite{Bierstone2013}).
	\begin{lem}
		Let $f: (\sX, \sB), \sA \to \sS$ be a family of stable minimal models over a reduced DM-stack $\sS$ of finite type over ${\rm Spec}(k)$. Then there exists a dense Zariski-open substack $U \subset \sS$ such that the restriction $f|_U$ is strictly birationally admissible.
	\end{lem}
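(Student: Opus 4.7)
The plan is to apply a functorial semi-log desingularization to the total space and then use generic smoothness to descend the good behavior to fibers. First I would choose an étale atlas $p:S_0\to\sS$ by a reduced scheme and pull back the family to obtain a family of stable minimal models $f_0:(X_0,B_0),A_0\to S_0$. The pair $(X_0,B_0+A_0)$ is a semi-pair (in fact an slc pair by definition of a stable minimal model), so the functorial semi-log desingularization procedure of Bierstone–Milman \cite{Bierstone2013} (cf.\ \cite[\S10.4]{Kollar2013}) produces a semi-log resolution $\pi_0:(X_0',\Delta_0')\to(X_0,B_0+A_0)$. Because this procedure is canonical and commutes with smooth (in particular étale) morphisms, it glues under the descent data for $p$ and yields a representable semi-log resolution of stacky semi-pairs $\pi:(\sX',\Delta')\to(\sX,\sB+\sA)$ over $\sS$.

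Next I would produce the required dense open. By generic smoothness (working on the atlas and descending), there exists a dense Zariski-open substack $U_1\subset\sS$ such that $\sX'\to\sS$ is smooth over $U_1$ and each stratum of the support of $\Delta'$ is smooth over $\sS$ above $U_1$; after shrinking to a further dense open $U\subset U_1$ we may in addition assume that every irreducible component of $\Delta'$ dominates its image in $\sS$ transversally to the strata, so that $\pi':(\sX',\Delta')\to\sS$ is a simple normal crossing family over $U$ in the sense of Definition~\ref{defn_SNC_family}. This uses only generic flatness together with the fact that smoothness and simple normal crossings are open conditions.

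It then remains to upgrade $\pi$ to a \emph{strictly} log birational equivalence over $U$, i.e.\ to verify that the induced morphisms on fibers are themselves semi-log resolutions. Here I invoke the functoriality of the desingularization once more: by Hironaka-type functoriality, formation of $\pi$ commutes with smooth base change, and over $U$ the inclusions of fibers of $\sX\to\sS$ factor through smooth charts on which the resolution is preserved. Consequently, for every geometric point $s\in U$ the restriction $\pi_s:(\sX_s',\Delta_s')\to(\sX_s,\sB_s+\sA_s)$ is itself a semi-log resolution, which is precisely the fiberwise condition entering the definition of strict birational admissibility. Taking $\pi':(\sX',\Delta')\to\sS$ over $U$ as the required simple normal crossing strictly log birational model completes the proof.

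The main obstacle is the fiberwise compatibility in the last step. Establishing that the canonically defined $\pi$ restricts to a semi-log resolution on each fiber above $U$ is the point at which one really uses the functorial (as opposed to merely existential) nature of semi-log desingularization; generic flatness alone controls only the generic fiber, and one needs to combine it with the compatibility of the algorithm with smooth morphisms (applied to the open embeddings of smooth fibers into the total space, which are étale-local, smooth in an ambient sense after further shrinking) to propagate the property to all points of $U$.
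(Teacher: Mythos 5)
Your proposal is correct and follows essentially the same route as the paper: take a (functorial) semi-log resolution of the total pair and then use generic smoothness to locate a dense open substack over which the resolved family is simple normal crossing and the resolution restricts to a semi-log resolution on every fiber. The paper's proof is just a terser version of yours (it resolves $(\sX,\operatorname{supp}(\sA+\sB))$ rather than $(\sX,\sA+\sB)$, an immaterial difference), so the extra detail you supply on functoriality and fiberwise compatibility is exactly what the paper leaves implicit.
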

	\begin{proof}
		Take a semi-log resolution $\pi:(\sX',\Delta')\to (\sX,\operatorname{supp}(\sA+\sB))$. By generic smoothness, there is a dense Zariski open substack $U\subset \sS$ such that $(\sX',\Delta')$ is a log smooth family over $U$ and $\pi_s:(\sX'_s,\Delta'_s)\to (\sX_s,\operatorname{supp}(\sA+\sB)_s)$ is a semi-log resolution for every $s\in U$. This proves the lemma.
	\end{proof}
    A DM-stack is refer to as \emph{hyperbolic} if for every generically finite morphism $T \to \sM$ from a smooth quasi-projective variety $T$, the variety $T$ is of log general type.  
	The subsequent theorem is a direct consequence of Theorem \ref{thm_main}.  
	\begin{thm}\label{thm_stratified_hyperbolic}  
		Each stratum of a birationally admissible stratification of $\sM_{\rm slc}(d, \Phi_c, \Gamma, \sigma)$ is hyperbolic.  
	\end{thm}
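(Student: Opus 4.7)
The plan is to derive Theorem \ref{thm_stratified_hyperbolic} as a direct corollary of Theorem \ref{thm_main}, by unwinding definitions. Let $\sS$ be a stratum of a birationally admissible stratification, and let $T \to \sS$ be a generically finite morphism from a smooth quasi-projective variety $T$. I would compose this with the locally closed immersion $\sS \hookrightarrow \sM_{\rm slc}(d,\Phi_c,\Gamma,\sigma)$ to obtain a classifying map $T \to \sM_{\rm slc}(d,\Phi_c,\Gamma,\sigma)$; since a locally closed immersion of DM-stacks is representable of relative dimension $0$, this composition remains generically finite. Pulling back the universal family along this composition yields a family $(X_T,B_T),A_T \to T$ of $(d,\Phi_c,\Gamma,\sigma)$-stable minimal models, by functoriality of $\sM_{\rm slc}(d,\Phi_c,\Gamma,\sigma)$.

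The key step is to verify that this pullback family is birationally admissible in the sense of Definition \ref{defn_admissible}. By Definition \ref{defn_BAstr}, the restriction of the universal family to $\sS$ is strictly birationally admissible, so there exists a simple normal crossing family $(\sX'_\sS,\Delta'_\sS) \to \sS$ together with a common semi-log resolution whose fiberwise restriction is a semi-log resolution at every point of $\sS$. The fiberwise character of this resolution is precisely the content that survives arbitrary base change: the pulled-back model $(X'_T,\Delta'_T) \to T$ remains a simple normal crossing family (this can be checked on an \'etale chart of $\sS$ where both $\sX_\sS$ and $\sX'_\sS$ become schemes, reducing to ordinary simple normal crossing base change over a smooth scheme), and the pulled-back common resolution witnesses that $(X_T,B_T+A_T)$ is log birational to $(X'_T,\Delta'_T)$ over $T$.

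With a birationally admissible family over the smooth quasi-projective variety $T$ whose classifying map to $\sM_{\rm slc}(d,\Phi_c,\Gamma,\sigma)$ is generically finite, Theorem \ref{thm_main} applies and yields that $T$ is of log general type. Since $T \to \sS$ was arbitrary, this is exactly the hyperbolicity of $\sS$. The main (and essentially only) obstacle is the base change compatibility invoked in the second paragraph: a merely birationally admissible stacky family could degenerate on proper substacks so that a simple normal crossing model on $\sS$ fails to restrict to one on $T$. This is precisely why the \emph{strict} condition in Definition \ref{defn_BAstr} is formulated via a functorial semi-log desingularization (cf.\ Remark \ref{rmk_semi_log_resolution}); that functoriality is what makes the pullback step go through and is the one place where the structure of the stratification genuinely enters the argument.
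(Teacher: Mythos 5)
Your proposal is correct and matches the paper's own (very brief) argument: the paper derives Theorem \ref{thm_stratified_hyperbolic} directly from Theorem \ref{thm_main}, using exactly the observation you isolate as the key step, namely that a strictly birationally admissible family remains birationally admissible after any base change to a reduced scheme (a fact the paper records just before Definition \ref{defn_BAstr} is applied). Your write-up simply makes explicit the composition $T\to\sS\hookrightarrow\sM_{\rm slc}(d,\Phi_c,\Gamma,\sigma)$ and the pullback of the universal family, which the paper leaves implicit.
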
 
    In general, the birationally admissible stratification of $\sM_{\rm slc}(d,\Phi_c,\Gamma,\sigma)$ is not unique due to the non-uniqueness of the desingularization process. However, for the moduli stack $\overline{\sM}_{g,n}$ of genus $g$ stable curves with $n$ marked points ($2g-2+n>0$), the boundary simple normal crossing divisor $\partial\sM_{g,n} := \overline{\sM}_{g,n} \setminus \sM_{g,n}$ induces a canonical stratification which is birationally admissible. The distinct strata in this stratification correspond precisely to the distinct topological types of the fibers.
    \begin{cor}
    	Every stratum of the canonical stratification of $\overline{\sM}_{g,n}$ is hyperbolic. 
    \end{cor}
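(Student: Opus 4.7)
The plan is to reduce directly to Theorem \ref{thm_stratified_hyperbolic} by showing that the canonical stratification of $\overline{\sM}_{g,n}$ --- indexed by the topological type (equivalently, the dual graph) of the fibers --- qualifies as a birationally admissible stratification in the sense of Definition \ref{defn_BAstr}. First, I would identify $\overline{\sM}_{g,n}$ with an open-and-closed substack of $\sM_{\rm slc}(1,\Phi_1,\Gamma,\sigma)$ for suitable $(\Gamma,\sigma)$: each stable $n$-pointed genus $g$ curve $(C,p_1+\cdots+p_n)$ is a stable minimal model in Birkar's sense with $d=1$, $c=1$, $B=p_1+\cdots+p_n$, $A=0$, and $\mathrm{vol}(K_C+B)=2g-2+n$, consistent with the KSBA-stable family example described in the introduction. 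Under this identification, the universal stable curve equipped with its marked sections becomes the universal family.

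Next, I would verify that on each stratum $\sS\subset\overline{\sM}_{g,n}$ corresponding to a dual graph $G$, the restriction of the universal family is already \emph{strictly} birationally admissible, with the identity morphism serving as the required simple normal crossing strictly log birational model. The key geometric input is the standard deformation theory of nodal curves: near any node of a stable curve there is a versal local model $xy=t$ in $\bC^{2}\times\bC_{t}$, in which $\{t=0\}$ is precisely the locus where that node persists. In particular, if $G$ has $k$ nodes, the versal deformation decomposes \'etale-locally as $(x_{1}y_{1}=t_{1},\ldots,x_{k}y_{k}=t_{k})$ together with $3g-3+n-k$ additional tangential parameters, and the stratum $\sS$ corresponds to $\{t_{1}=\cdots=t_{k}=0\}$. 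Over $\sS$, the total space is then \'etale-locally of the form $\{xy=0\}\times\sS$ at each node, while the marked sections remain disjoint smooth divisors of the form $\{y_{i}=0\}$. This matches Definition \ref{defn_SNC_family} verbatim, so the universal family over $\sS$ is itself a simple normal crossing family with SNC boundary; no semi-log resolution is needed, and strict birational admissibility holds tautologically.

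Having established that the canonical stratification is birationally admissible, Theorem \ref{thm_stratified_hyperbolic} immediately yields hyperbolicity of every stratum, and the special case of the top-dimensional stratum recovers the hyperbolicity of $\sM_{g,n}$. The main obstacle I foresee is not geometric but bookkeeping: one must match $\overline{\sM}_{g,n}$ precisely with an open-and-closed substack of the Birkar moduli, interpreting the condition $\Gamma\subset\bQ^{>0}$ consistently in the presence of $A=0$ (as the KSBA example in the introduction tacitly does), and ensuring that the DM-stack structure, including automorphisms coming from curve symmetries, is correctly transported under this identification. Once these compatibilities are confirmed, the local deformation-theoretic verification sketched above completes the argument.
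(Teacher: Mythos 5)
Your proposal is correct and follows essentially the same route as the paper: the corollary is deduced from Theorem \ref{thm_stratified_hyperbolic} by observing that the canonical stratification by topological type of the fiber is birationally admissible, which the paper simply asserts and you verify via the standard $xy=t$ local model of the universal curve (showing the universal family over each stratum is already a simple normal crossing family, so strict birational admissibility is tautological). The bookkeeping points you flag (embedding $\overline{\sM}_{g,n}$ into Birkar's moduli with $A=0$ and the convention $\mathrm{vol}(A|_F)=1$ for zero-dimensional $F$) are handled implicitly by the paper's KSBA-stable example and do not affect the argument.
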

	\bibliographystyle{plain}
	\bibliography{Hyper_Moduli}
\end{document}